\newcommand{\sH}{\ensuremath{\mathscr{H}}\xspace}
\newcommand{\sV}{\ensuremath{\mathscr{V}}\xspace}
\newcommand{\fka}{\ensuremath{\mathfrak{a}}\xspace}
\newcommand{\fkb}{\ensuremath{\mathfrak{b}}\xspace}
\newcommand{\fkd}{\ensuremath{\mathfrak{d}}\xspace}
\newcommand{\fkg}{\ensuremath{\mathfrak{g}}\xspace}
\newcommand{\fkh}{\ensuremath{\mathfrak{h}}\xspace}
\newcommand{\fkp}{\ensuremath{\mathfrak{p}}\xspace}
\newcommand{\fkq}{\ensuremath{\mathfrak{q}}\xspace}
\newcommand{\BA}{\ensuremath{\mathbb{A}}\xspace}
\newcommand{\BC}{\ensuremath{\mathbb{C}}\xspace}
\newcommand{\BF}{\ensuremath{\mathbb{F}}\xspace}
\newcommand{\BG}{\ensuremath{\mathbb{G}}\xspace}
\newcommand{\BO}{\ensuremath{\mathbb{O}}\xspace}
\newcommand{\BP}{\ensuremath{\mathbb{P}}\xspace}
\newcommand{\BQ}{\ensuremath{\mathbb{Q}}\xspace}
\newcommand{\BR}{\ensuremath{\mathbb{R}}\xspace}
\newcommand{\BX}{\ensuremath{\mathbb{X}}\xspace}
\newcommand{\BZ}{\ensuremath{\mathbb{Z}}\xspace}
\newcommand{\bA}{\ensuremath{\mathbf{A}}\xspace}
\newcommand{\bm}{\ensuremath{\mathbf{m}}\xspace}
\newcommand{\CA}{\ensuremath{\mathcal{A}}\xspace}
\newcommand{\CD}{\ensuremath{\mathcal{D}}\xspace}
\newcommand{\CF}{\ensuremath{\mathcal{F}}\xspace}
\newcommand{\CH}{\ensuremath{\mathcal{H}}\xspace}
\newcommand{\CI}{\ensuremath{\mathcal{I}}\xspace}
\newcommand{\CL}{\ensuremath{\mathcal{L}}\xspace}
\newcommand{\CM}{\ensuremath{\mathcal{M}}\xspace}
\newcommand{\CN}{\ensuremath{\mathcal{N}}\xspace}
\newcommand{\CO}{\ensuremath{\mathcal{O}}\xspace}
\newcommand{\CP}{\ensuremath{\mathcal{P}}\xspace}
\newcommand{\CR}{\ensuremath{\mathcal{R}}\xspace}
\newcommand{\CV}{\ensuremath{\mathcal{V}}\xspace}
\newcommand{\CX}{\ensuremath{\mathcal{X}}\xspace}
\newcommand{\CZ}{\ensuremath{\mathcal{Z}}\xspace}
\newcommand{\uF}{\underline{F}}
\newcommand{\nat}{{\natural}}
\newcommand{\ad}{\mathrm{ad}}
\newcommand{\AT}{\mathrm{AT}}
\DeclareMathOperator{\Aut}{Aut}
\newcommand{\Ch}{{\mathrm{Ch}}}
\DeclareMathOperator{\charac}{char}
\newcommand{\cl}{{\mathrm{cl}}}
\newcommand{\corr}{\mathrm{corr}}
\newcommand{\crys}{\mathrm{crys}}
\newcommand{\del}{\operatorname{\partial Orb}}
\newcommand{\delJ}{\partial J}
\DeclareMathOperator{\diag}{diag}
\DeclareMathOperator{\disc}{disc}
\DeclareMathOperator{\End}{End}
\newcommand{\Fil}{\ensuremath{\mathrm{Fil}}\xspace}
\newcommand{\fin}{\mathrm{fin}}
\DeclareMathOperator{\Gal}{Gal}
\newcommand{\GL}{\mathrm{GL}}
\newcommand{\GS}{\mathrm{GS}}
\newcommand{\GU}{\mathrm{GU}}
\newcommand{\HG}{H\! G}
\DeclareMathOperator{\Hom}{Hom}
\newcommand{\id}{\ensuremath{\mathrm{id}}\xspace}
\let\Im\relax
\DeclareMathOperator{\Im}{Im}
\renewcommand{\i}{^{-1}}
\newcommand{\Ind}{{\mathrm{Ind}}}
\DeclareMathOperator{\Int}{\ensuremath{\mathrm{Int}}\xspace}
\newcommand{\inv}{{\mathrm{inv}}}
\DeclareMathOperator{\Isom}{Isom}
\DeclareMathOperator{\Lie}{Lie}
\DeclareMathOperator{\Nm}{Nm}
\DeclareMathOperator{\Orb}{Orb}
\DeclareMathOperator{\ord}{ord}
\DeclareMathOperator{\Ros}{Ros}
\DeclareMathOperator{\Res}{Res}
\DeclareMathOperator{\reschar}{reschar}
\newcommand{\rs}{\ensuremath{\mathrm{rs}}\xspace}
\newcommand{\Sh}{\mathrm{Sh}}
\newcommand{\sig}{{\mathrm{sig}}}
\newcommand{\SL}{{\mathrm{SL}}}
\DeclareMathOperator{\Spec}{Spec}
\DeclareMathOperator{\Spf}{Spf}
\newcommand{\SO}{{\mathrm{SO}}}
\newcommand{\spl}{\mathrm{spl}}
\newcommand{\ssm}{\smallsetminus}
\DeclareMathOperator{\supp}{supp}
\newcommand{\surj}{\twoheadrightarrow}
\DeclareMathOperator{\sgn}{sgn}
\newcommand{\temp}{\mathrm{temp}}
\DeclareMathOperator{\tr}{tr}
\newcommand{\U}{\mathrm{U}}
\DeclareMathOperator{\uHom}{\underline{Hom}}
\newcommand{\un}{\mathrm{un}}
\DeclareMathOperator{\vol}{vol}
\newcommand{\wtHG}{\ensuremath{{\widetilde{{H\! G}}}}\xspace}
\newcommand{\wt}{\widetilde}
\newcommand{\wh}{\widehat}
\newcommand{\ov}{\overline}
\newcommand{\incl}{\hookrightarrow}
\newcommand{\lra}{\longrightarrow}
\newcommand{\bs}{\backslash}
\newcommand{\la}{\langle}
\newcommand{\ra}{\rangle}
\newcommand{\lv}{\lvert}
\newcommand{\rv}{\rvert}
\newtheorem{theorem}{Theorem}
\newtheorem{proposition}[theorem]{Proposition}
\newtheorem{lemma}[theorem]{Lemma}
\newtheorem{conjecture}[theorem]{Conjecture}
\newtheorem{`conjecture'}[theorem]{``Conjecture''}
\theoremstyle{definition}
\newtheorem{definition}[theorem]{Definition}
\newtheorem{remark}[theorem]{Remark}
\newenvironment{altenumerate}
   {\begin{list}
      {(\theenumi) }
      {\usecounter{enumi}
       \setlength{\labelwidth}{0pt}
       \setlength{\labelsep}{0pt}
       \setlength{\leftmargin}{0pt}
       \setlength{\itemsep}{\the\smallskipamount}
       \renewcommand{\theenumi}{\roman{enumi}}
      }}
   {\end{list}}
\newenvironment{altitemize}
   {\begin{list}
      {$\bullet$}
      {\setlength{\labelwidth}{0pt}
	   \setlength{\itemindent}{5pt}
       \setlength{\labelsep}{5pt}
       \setlength{\leftmargin}{0pt}
       \setlength{\itemsep}{\the\smallskipamount}
      }}
   {\end{list}}
\numberwithin{equation}{section}
\numberwithin{theorem}{section}
\newcommand{\aform}{\ensuremath{\langle\text{~,~}\rangle}\xspace}
\newcommand{\sform}{\ensuremath{(\text{~,~})}\xspace}
\newcounter{variant}
\renewcommand{\to}{%
   \ifbool{@display}{\longrightarrow}{\rightarrow}%
   }
\let\shortmapsto\mapsto
\renewcommand{\mapsto}{%
   \ifbool{@display}{\longmapsto}{\shortmapsto}%
   }
\newcommand{\hooklongrightarrow}{\mathrel{\mkern 0.5mu\lhook\mkern -3.5mu\relbar\mkern -3mu \rightarrow }}
\newcommand{\inj}{%
   \ifbool{@display}{\hooklongrightarrow}{\hookrightarrow}
   }
\newcommand{\isoarrow}{%
   \ifbool{@display}{\overset{\sim}{\longrightarrow}}{\xrightarrow\sim}%
   }
\newlength{\olen}
\newlength{\ulen}
\newlength{\xlen}
\newcommand{\xra}[2][]{%
   \ifbool{@display}%
      {\settowidth{\olen}{$\overset{#2}{\longrightarrow}$}%
       \settowidth{\ulen}{$\underset{#1}{\longrightarrow}$}%
       \settowidth{\xlen}{$\xrightarrow[#1]{#2}$}%
       \ifdimgreater{\olen}{\xlen}%
          {\underset{#1}{\overset{#2}{\longrightarrow}}}%
          {\ifdimgreater{\ulen}{\xlen}%
             {\underset{#1}{\overset{#2}{\longrightarrow}}}
             {\xrightarrow[#1]{#2}}}}%
      {\xrightarrow[#1]{#2}}
   }
\newcommand{\xyra}[2][]{%
   \settowidth{\xlen}{$\xrightarrow[#1]{#2}$}%
   \ifbool{@display}%
      {\settowidth{\olen}{$\overset{#2}{\longrightarrow}$}%
       \settowidth{\ulen}{$\underset{#1}{\longrightarrow}$}%
       \ifdimgreater{\olen}{\xlen}%
          {\mathrel{\xymatrix@M=.12ex@C=3.2ex{\ar[r]^-{#2}_-{#1} &}}}%
          {\ifdimgreater{\ulen}{\xlen}%
             {\mathrel{\xymatrix@M=.12ex@C=3.2ex{\ar[r]^-{#2}_-{#1} &}}}
             {\mathrel{\xymatrix@M=.12ex@C=\the\xlen{\ar[r]^-{#2}_-{#1} &}}}}}%
      {\mathrel{\xymatrix@M=.12ex@C=\the\xlen{\ar[r]^-{#2}_-{#1} &}}}%
   }
\newcommand{\xla}[2][]{%
   \ifbool{@display}%
      {\settowidth{\olen}{$\overset{#2}{\longleftarrow}$}%
       \settowidth{\ulen}{$\underset{#1}{\longleftarrow}$}%
       \settowidth{\xlen}{$\xleftarrow[#1]{#2}$}%
       \ifdimgreater{\olen}{\xlen}%
          {\underset{#1}{\overset{#2}{\longleftarrow}}}%
          {\ifdimgreater{\ulen}{\xlen}%
             {\underset{#1}{\overset{#2}{\longleftarrow}}}
             {\xleftarrow[#1]{#2}}}}%
      {\xleftarrow[#1]{#2}}
   }
\renewcommand{\lra}{%
   \ifbool{@display}{\longleftrightarrow}{\leftrightarrow}%
   }
\newcommand{\undertilde}{\raisebox{0.4ex}{\smash[t]{$\scriptstyle\sim$}}}
\begin{document}

\title{Arithmetic diagonal cycles on unitary Shimura varieties}
\author{M. Rapoport}
\address{Mathematisches Institut der Universit\"at Bonn, Endenicher Allee 60, 53115 Bonn, Germany, and University of Maryland, Department of Mathematics, College Park, MD 20742, USA}
\email{rapoport@math.uni-bonn.de}
\author{B. Smithling}
\address{University of Maryland, Department of Mathematics, College Park, MD 20742, USA}
\email{bds@umd.edu}
\author{W. Zhang}
\address{Massachusetts Institute of Technology, Department of Mathematics, 77 Massachusetts Avenue, Cambridge, MA 02139, USA}
\email{weizhang@mit.edu}

\date{\today}

\begin{abstract}
We define variants of PEL type of the Shimura varieties that appear in the context of the Arithmetic Gan--Gross--Prasad conjecture. We formulate for them a version  of the AGGP conjecture.  We also construct (global and semi-global) integral models of these Shimura varieties and formulate for them  conjectures on arithmetic intersection numbers. We prove some of these conjectures in low dimension. 
\end{abstract}

\maketitle

\tableofcontents

\section{Introduction}\label{s:intro}

The theorem of Gross and Zagier \cite{GZ} relates the N\'eron--Tate heights of Heegner points on modular curves to special values of derivatives of certain $L$-functions. Ever since the appearance of \cite{GZ}, the problem of generalizing this fundamental result to higher dimension has attracted considerable attention. The generalization that is most relevant to the present paper is the \emph{Arithmetic Gan--Gross--Prasad conjecture} (AGGP conjecture) \cite[\S 27]{GGP}. This conjectural generalization concerns Shimura varieties attached to orthogonal groups of signature $(2, n-2)$, and to unitary groups of signature $(1, n-1)$ (note that modular curves are closely related to Shimura varieties associated to orthogonal groups of signature $(2, 1)$ and to unitary groups of signature $(1, 1)$). In  \cite[\S 27]{GGP}, algebraic cycles of codimension one on such Shimura varieties are defined by exploiting embeddings of Shimura varieties attached to orthogonal groups of signature $(2, n-3)$, resp.\ to unitary groups of signature $(1, n-2)$. By taking the graphs of these embeddings, one obtains cycles in codimension just above half the (odd) dimension of the ambient variety. 

For any algebraic variety $X$  smooth and proper of odd dimension over a number field, Beilinson and Bloch have defined a height pairing on the \emph{rational} Chow group $\Ch(X)_{\BQ,0}$ of cohomologically trivial cycles of codimension just above half the dimension. Their definition makes use of some widely open unsolved conjectures on algebraic cycles and the existence of regular proper integral models of $X$. By suitably replacing in the case at hand the graph cycle by a cohomologically trivial avatar, one obtains a linear form on $\Ch(X)_{\BQ,0}$, where now $X$ is the product of the two Shimura varieties in question. The AGGP conjecture relates a special value of the derivative of an $L$-function to the non-triviality of the restriction of this linear form to a Hecke eigenspace in $\Ch(X)_{\BQ, 0}$. It is stated in a very succinct way in \cite{GGP}, for orthogonal groups and for unitary groups. In the present paper, we restrict ourselves to \emph{unitary} groups, and one of our aims is to give in this case more details on (a variant of) this conjecture. Our version here is also an improvement of the version of the conjecture in \cite{Z12,Z12c}. One new feature of our version is that we use the \emph{standard sign conjecture} for unitary Shimura varieties (a theorem of Morel--Suh \cite[Th.\ 1.2]{MS}, cf.\ Section \ref{ss:HKproj}) to construct ``Hecke--K\"unneth" projectors that project the total cohomology of our Shimura variety to the odd-degree part.

As indicated above, the AGGP conjecture is based on conjectures of Beilinson and Bloch which seem out of reach at  present. As a consequence, the conjecture in \cite{GGP} has not been proved in a single case of higher dimension.\footnote{However, we point out that \cite{YZZ2} proves certain variants of this conjecture in a higher-dimensional case for \emph{orthogonal groups} of type $\SO(3)\times\SO(4)$.} A variant of the AGGP conjecture, inspired by the \emph{relative trace formula} of Jacquet--Rallis \cite{JR}, has been proposed by the third author \cite{Z12}. More precisely, this variant relates the Beilinson--Bloch height pairing with distributions that appear in the relative trace formula. This variant leads to local conjectures (on intersection numbers on Rapoport--Zink spaces), namely the \emph{Arithmetic Fundamental Lemma} conjecture and the \emph{Arithmetic Transfer} conjecture, cf.\ \cite{Z12, RSZ2}---and these have been proved in various cases \cite{Z12, RSZ1, RSZ2, M-AFL, M-Th, RTZ}. The second aim of the present paper is to formulate a  global conjecture  whose proof  in various cases is a realistic goal. In the present paper, basing ourselves on our local papers \cite{Z12, RSZ2},  we prove this conjecture for unitary groups of size $n\leq 3$. 

To formulate this conjecture, we define variants of the Shimura varieties appearing in \cite{GGP} and \cite{Z12} which are of PEL type, i.e.\ are related to moduli problems of abelian varieties with polarizations, endomorphisms, and level structures. In fact, we even define integral models of these Shimura varieties, in a global version and a semi-global version. The construction of such models is the third aim of the present paper. Once these models are defined, we replace the Beilinson--Bloch pairing on the cohomologically trivial Chow group by the Gillet--Soul\'e pairing on the arithmetic Chow group of the (global or semi-global) integral model. 

Now that we have formulated the three main goals of this paper, let us be more specific. 

Let $F$ be a CM number field, with maximal totally real subfield $F_0$. We fix a CM type $\Phi$ of $F$ and a distinguished element $\varphi_0\in\Phi$. Let $n\geq 2$ and let $r \colon \Hom(F,\BC) \to \{0,1,n-1,n\}$, $\varphi \mapsto r_\varphi$, be the function defined by
\begin{equation*}
\begin{aligned}
   r_\varphi :=
	\begin{cases}
		1,  &  \varphi = \varphi_0;\\
		0,  &  \varphi \in \Phi \ssm \{\varphi_0\};\\
		n-r_{\ov\varphi},  &  \varphi \notin \Phi.
	\end{cases}
\end{aligned}
\end{equation*}
Associated to these data, there is the field $E\subset \ov\BQ$ which is the composite of the reflex field of $r$ and the reflex field of $\Phi$. Then $E$  contains $F$ via $\varphi_0$. We denote by $Z^\BQ$ the torus 
$$
   Z^\BQ := \bigl\{\, z\in \Res_{F/\BQ}(\BG_m) \bigm| \Nm_{F/F_0}(z)\in\BG_m\,\bigr\} .
$$
We also fix an $F/F_0$-hermitian vector space $W$ of dimension $n$ with signature 
$$
\sig(W_\varphi)=(r_\varphi, r_{\ov\varphi}), \quad \varphi\in\Phi .
$$
Let $G$ be the unitary group of $W$, considered as an algebraic group over $\BQ$.\footnote{This notation differs from the main body of the paper, where $G$ denotes the unitary group of $W$ over $F_0$.}
Associated to $(G, r)$ is the Shimura variety of \cite{GGP}. In the present paper, we instead consider the Shimura variety associated to $\wt G := Z^\BQ\times G$. We are able to formulate a PEL moduli problem $M_{K_{\wt G}}(\wt G)$ of abelian varieties with additional structure (endomorphisms and polarization) which defines a model over $E$ of the Shimura variety
\begin{equation}
\Sh_{K_{\wt G}}(\wt G)= M_{K_{\wt G}}(\wt G)\otimes_E\BC .
\end{equation}
(In fact, we demand that $K_{\wt G}=K_{Z^\BQ}\times K_G$, where $K_{Z^\BQ}$ is the unique maximal compact subgroup of $Z^\BQ(\BA_f)$ and where $K_G$ is an open compact subgroup of $G(\BA_f)$.) The group differs from the group of unitary similitudes $\GU(W)$ by a central isogeny. The Shimura variety corresponding to the latter group is considered by Kottwitz \cite{Ko}, and he formulates a PEL moduli problem over the reflex field of $r$ which \emph{almost} defines a model for it---but not quite, because of the possible failure of the Hasse principle for $\GU(W)$. This Shimura variety is also considered by Harris--Taylor \cite{HT}. In the setup of \cite{HT}, we have $E=F$ and both their  Shimura variety and ours are defined over $F$; however, ours offers a number of technical advantages over theirs.\footnote{Kottwitz \cite{Ko} does not need any assumptions on the signature of $W$; neither do we, cf.\ \cite{RSZ4}.} The definition of our moduli problem is based on a sign invariant $\inv^r_v(A_0, A)\in \{\pm1\}$ for every non-archimedean place $v$ of $F_0$ which is  non-split in $F$. Here $(A_0, \iota_0, \lambda_0)$ is a polarized abelian variety of dimension $d=[F_0:\BQ]$ with complex multiplication of CM type $\ov\Phi$ of $F$ and $(A, \iota, \lambda)$ is a polarized abelian variety of dimension $nd$ with complex multiplication of generalized CM type $r$ of $F$. This sign invariant is similar to the one in \cite{KR-new, KRZ}, but much simpler. This simplicity is another reflection of the advantage of our Shimura varieties over those considered by Kottwitz \cite{Ko}.

This sign invariant also allows us to define \emph{global integral models} of $M_{K_{\wt G}}(\wt G)$ over $\Spec O_E$ (at least when $F/F_0$ is not everywhere unramified) and \emph{semi-global integral models}  over $\Spec O_{E, (\nu)}$, where $\nu$ is a fixed non-archimedean place of $E$, of residue characteristic $p$. These integral models generalize those in \cite{BHKRY} when $F_0=\BQ$ and when $K_G$ is the stabilizer of a self-dual lattice in $W$. Here we allow $K_G$ to be the stabilizer of more general vertex lattices, in the sense of \cite{KR-U1}.   To achieve flatness, we sometimes have to impose conditions on the Lie algebras of the abelian varieties in play that are known in a similar context from our earlier local papers \cite{RSZ1, RSZ2} (the \emph{Pappas wedge condition}, the \emph{spin condition} and its refinement, the \emph{Eisenstein conditions}). However, in contrast to Kottwitz, we do not need any unramifiedness conditions. 

Once the model $ M_{K_{\wt G}}(\wt G)$ and its global or semi-global model are defined, we can also create a restriction situation in analogy with \cite{GGP}. Namely, fixing a \emph{totally negative} vector $u\in W$ (satisfying additional integrality conditions for the global, resp.\ semi-global integral situation), we define $W^\flat$ to be the orthogonal complement of $u$. Then $W^\flat$ satisfies the same conditions as $W$, with $n$ replaced by $n-1$. We obtain a finite unramified morphism
\begin{equation}
   M_{K_{\wt H}}(\wt H)\to  M_{K_{\wt G}}(\wt G) ,
\end{equation}
resp.\ their  global, resp.\ semi-global integral versions. Here $\wt H=Z^\BQ\times H$, where $H=\U(W^\flat)$, considered as an algebraic group over $\BQ$. Using the graph of the above morphism, we obtain an element in the \emph{rational Chow group}, 
$$
z_{K_{\wt\HG}}\in \Ch^{n-1}\bigl(M_{K_{\wt \HG}}(\wt \HG)\bigr)_\BQ . 
$$
Here $M_{K_{\wt \HG}}(\wt \HG)$ is the model defined as above for the Shimura variety for the group $Z^\BQ\times H\times G$. Using the Hecke--K\"unneth projector, we construct a cohomologically trivial variant $z_{ K_{\wt\HG}, 0}\in \Ch^{n-1}(M_{K_{\wt \HG}}(\wt \HG))_{0,\BC}$ of this element, which, via the Beilinson--Bloch pairing,  in turn defines a linear form
$$
\ell_{K_{\wt \HG}}\colon \Ch^{n-1}\bigl(M_{K_{\wt \HG}}(\wt \HG)\bigr)_{\BC, 0}\to \BC .
$$
Our variant of the AGGP conjecture is expressed in terms of this linear form.

We similarly define, under certain hypotheses, elements in the rational Chow group, resp.\ rational arithmetic Chow group, of the (global, or semi-global) integral model $\CM_{K_{\wt \HG}}(\wt \HG)$. Let us consider the Gillet--Soul\'e intersection product pairing on the rational arithmetic Chow group of $\CM_{K_{\wt \HG}}(\wt \HG)$. We have a conjecture on the value of  the intersection  product of $z_{K_{\wt\HG}}$ and its image under a Hecke correspondence. Let us state the semi-global version, since this is the one for which we can produce concrete evidence. We also have a global version.

\begin{conjecture}[Semi-global conjecture]\label{conj semiglob-intro} Fix a non-split place $v_0$ of $F_0$ over the place $p\leq \infty$ of $\BQ$. 
Let $f=\otimes_{\ell} f_\ell\in \sH_{{K_\wtHG}}^p$ ($\sH_{{K_\wtHG}}$ if $p$ is archimedean) be a completely decomposed element of the finite Hecke algebra of $\wt\HG$,  and  let $f'=\otimes_{v}f'_v\in \sH(G'(\BA_{F_0}))$ be a Gaussian test function in the Hecke algebra of $G'=\Res_{F/F_0}(\GL_{n-1}\times\GL_n)$ such that  $\otimes_{v < \infty}f'_v$ is a smooth transfer of $f$.
Assume that for some place $\lambda$ prime to $p$, the function $f$ has regular support at $\lambda$ in the sense of Definition \ref{regularsupport} and that $f'$ has regular support at $\lambda$ in the sense of Definition \ref{regsuppG'}. 
\begin{altenumerate}
\item\label{intro i} Assume that $v_0$ is non-archimedean of hyperspecial type, cf.\ Section \ref{subsec hyper}, and that $f_{v_0}'=\mathbf{1}_{G'(O_{F_0,v_0})}$. Then
$$
\Int_{v_0}(f)=-\delJ_{v_0}(f') . 
$$
\item\label{intro ii}  Assume that $v_0$ is archimedean, or non-archimedean of AT  type, cf.\ Section \ref{subsec AT}. Then 
$$
\Int_{v_0}(f)=-\delJ_{v_0}(f')-J(f'_{\corr}[v_0]),
$$where $f'_{\corr}[v_0]=\otimes_{v}f'_{\corr, v}$, with $f'_{\corr, v}=f'_v$ for $v\neq v_0$,  is a correction function. Furthermore,   $f'$ may be  chosen such that $f'_{\corr}[v_0]$ is zero.
\end{altenumerate}
\end{conjecture}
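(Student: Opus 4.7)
The plan is to reduce the global identity, orbit-by-orbit, to a purely local identity at $v_0$ using the Jacquet--Rallis relative trace formula framework. First, I would decompose $\Int_{v_0}(f)$ as a finite sum indexed by matching regular semisimple orbits: the regular support hypothesis on $f$ at the auxiliary place $\lambda$ forces the diagonal cycle $z_{K_{\wt\HG}}$ and its Hecke translate by $f$ to meet properly, with only finitely many orbit contributions surviving at $v_0$. Each contribution factorizes as a local arithmetic intersection at $v_0$ times a product of orbital integrals of $f_v$ at $v \neq v_0$, reflecting the global-local decomposition of the Gillet--Soul\'e pairing. A parallel factorization of $\delJ_{v_0}(f')$ is produced by the Jacquet--Rallis RTF on $G'=\Res_{F/F_0}(\GL_{n-1}\times\GL_n)$: since $f'$ has regular support at $\lambda$, the distribution $\delJ_{v_0}(f')$ expands as a finite sum over regular semisimple orbits of derivatives at $s=0$ of local orbital integrals at $v_0$, weighted by products of orbital integrals of $f'_v$ at $v \neq v_0$.

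Next, the smooth-transfer hypothesis matches the two orbit sets under the Jacquet--Rallis orbit correspondence between $\wt\HG$ and $G'$, and makes the off-$v_0$ factors agree orbit-by-orbit. Thus the global conjecture reduces to an orbit-wise local identity at $v_0$ comparing a local arithmetic intersection number on an integral Rapoport--Zink-type model with the derivative at $s=0$ of a local orbital integral on the matching $G'$-orbit, plus the alleged correction term.

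For part \eqref{intro i}, at a hyperspecial $v_0$ with $f'_{v_0} = \mathbf{1}_{G'(O_{F_0,v_0})}$, this orbit-wise local identity is precisely the Arithmetic Fundamental Lemma conjecture of \cite{Z12}, whose application orbit-by-orbit delivers the global identity with no correction. For part \eqref{intro ii} in the non-archimedean AT case, the corresponding local identity is the Arithmetic Transfer conjecture of \cite{RSZ2}, in which the intersection at $v_0$ equals the derivative of a suitable orbital integral up to a local correction whose global avatar is $f'_{\corr}[v_0]$; the possibility of arranging $f'_{\corr}[v_0]=0$ reflects the refined form of the AT identity for an appropriate choice of parahoric level, which is precisely what makes the AT conjecture useful for producing concrete global identities. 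For archimedean $v_0$, the analogous input is an identity comparing the archimedean Green-current/star-product contribution to $\Int_{v_0}(f)$ with the derivative of the archimedean orbital integral; again, any discrepancy is absorbed into $f'_{\corr,\infty}$, and for suitably chosen $f'$ the correction vanishes.

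The main obstacle is the local identity at $v_0$: the AFL and AT conjectures are themselves deep statements, presently known only in special cases (cf.\ \cite{Z12, RSZ1, RSZ2, M-AFL, M-Th, RTZ}), and a completely satisfactory archimedean analogue is still largely open in higher dimension. For $n \leq 3$, however, all required local inputs are available, and, together with a direct verification that the global decomposition above is valid (i.e.\ that the regular support of $f$ at $\lambda$ indeed rules out all improper or non-regular contributions at $v_0$), this reduces the conjecture to a finite check that can be carried out in closed form.
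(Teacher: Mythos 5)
Your overall strategy---decompose $\Int_{v_0}(f)$ and $\delJ_{v_0}(f')$ into sums over matching regular semisimple orbits, match the off-$v_0$ factors by smooth transfer, and reduce to an orbit-wise local identity at $v_0$ supplied by the AFL (resp.\ AT) conjecture---is precisely the approach the paper takes, and you correctly identify the reduction as the structural heart of the semi-global conjecture. Two points deserve correction or sharpening, however.

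First, the mechanism that produces the orbit decomposition of $\Int_{v_0}(f)$ is not a formal ``factorization of the Gillet--Soul\'e pairing'' but rests on two concrete geometric inputs that you should make explicit: (a) the regular-support hypothesis forces the support of the intersection of $z_{K_{\wt\HG}}$ and $R(f)z_{K_{\wt\HG}}$ to avoid the generic fiber and, over a non-split place, to lie in the \emph{basic} locus of the special fiber (Theorem \ref{int generic}\eqref{int generic iii}); and (b) the $p$-adic uniformization of that basic locus by a quotient of a product of a Rapoport--Zink space with a discrete set, together with the banality Lemma \ref{notwistbanal}, which converts the away-from-$v_0$ part of the uniformizing datum into precisely the product of orbital integrals appearing in \eqref{sum}. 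Without this uniformization step the passage from a global arithmetic intersection number to a sum over orbits has no content; it also accounts for the factors of $[E:F]$ and $\log q_{w_0}$ that make the eventual AFL/AT comparison land on the nose.

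Second, and more seriously, your final claim that ``for $n\leq 3$, however, all required local inputs are available'' is not correct for archimedean $v_0$. The theorems the paper proves cover Conjecture \ref{conj semiglob}\eqref{conj hypersp} for $n\leq 3$ and Conjecture \ref{conj semiglob}\eqref{conj ATtype} for $n\leq 3$ \emph{only when $v_0$ is non-archimedean}; no archimedean local identity of AFL/AT type is established in \cite{Z12, RSZ1, RSZ2} even in low rank, and indeed the very existence of Gaussian test functions at the archimedean places remains conjectural (as the paper itself notes). So the archimedean part of part \eqref{intro ii} remains open for every $n$, and your proposal should flag it as such rather than folding it into the claimed ``finite check.''
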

We refer to the body of the text for an explanation of the terms used (cf.\ Conjecture \ref{conj semiglob}). Here it suffices to remark that $\Int_{v_0}(f)$ is the weighted sum of the local contributions of all places $\nu$ of $E$ over $v_0$  to the Gillet--Soul\'e intersection product of the diagonal subscheme $ \CM_{K_{\wt H}}(\wt H)$ of $ \CM_{K_{\wt \HG}}(\wt \HG)$ and its translate by the Hecke correspondence $R(f)$ associated to $f$.  By $J$, resp.\ $\delJ_{v_0}$, we denote the distributions (cf.\ \eqref{J(0)} and \eqref{delJ}) that arise in the twisted trace formula approach to the AGGP conjecture. The regularity assumption on $f$ guarantees that for non-archimedean $v_0$ the intersection of the two cycles has support in characteristic $p$, and the regularity assumption on $f'$ guarantees that the distributions $J$ and $\delJ_{v_0}$  localize. 

In the global context, when the Hecke correspondence $R(f)$ satisfies a suitable regularity assumption, the intersection product localizes, i.e., is a finite  sum of contributions, one from each place $\nu$ of $E$. We group together the local contributions from all places $\nu$ which induce a given place $v_0$ of $F_0$. From this point of view, Conjecture \ref{conj semiglob-intro}\eqref{intro i} predicts the contribution of the good places, and Conjecture \ref{conj semiglob-intro}\eqref{intro ii} the contributions from the archimedean places and certain bad places.

The conjecture is accessible in certain cases. We prove the following theorem.
\begin{theorem}\label{mainthmintro}
 Let $v_0$ be a non-archimedean place of $F_0$ that is non-split in $F$. Then Conjecture \ref{conj semiglob-intro}  above holds true for $n\leq 3$.
 \end{theorem}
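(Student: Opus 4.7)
The plan is to reduce Conjecture \ref{conj semiglob-intro} at the place $v_0$ to a collection of purely local identities on Rapoport--Zink spaces (or on archimedean analogues), and then to invoke the Arithmetic Fundamental Lemma for the hyperspecial case and the Arithmetic Transfer conjectures for the AT case, both of which are available for $n\leq 3$ from the earlier works \cite{Z12, RSZ1, RSZ2, RTZ}. In low rank the local objects on both sides are small enough that the matching can be checked orbit by orbit.

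First, I would use the regular support hypothesis on $f$: it forces the cycle $\CM_{K_{\wt H}}(\wt H)$ and its translate under the Hecke correspondence $R(f)$ to meet properly, with support concentrated in the fiber above $v_0$ (in the non-archimedean case) or via a well-defined star product of Green currents (in the archimedean case). By the semi-global analogue of $p$-adic uniformization along the basic locus, combined with the PEL moduli description of $\CM_{K_{\wt \HG}}(\wt \HG)$ set up earlier in the paper, the sum defining $\Int_{v_0}(f)$ decomposes as a sum indexed by the regular semisimple orbits $\delta$ of the symmetric space attached to $\wt H\backslash \wt\HG$, each summand being a local arithmetic intersection number $\Int_{v_0}(\delta,f_{v_0})$ on the relevant Rapoport--Zink space, weighted by a product of orbital integrals at the places away from $v_0$ coming from the Hecke factor $f^{v_0}$. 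On the analytic side, the same regularity hypothesis on $f'$ causes $\delJ_{v_0}(f')$ to decompose as $\sum_\delta \partial\Orb(\delta,f'_{v_0})\cdot \prod_{v\neq v_0}\Orb(\delta,f'_v)$. Because $f^{v_0}$ and $(f')^{v_0}$ are smooth transfers of one another by hypothesis, the products of orbital integrals at the auxiliary places agree term by term, and one is reduced to proving the pointwise identity
\begin{equation*}
   \Int_{v_0}(\delta,f_{v_0}) \;=\; -\,\partial\Orb(\delta,f'_{v_0})
\end{equation*}
for each regular semisimple $\delta$, together with the analogous identity for the correction term in part \eqref{intro ii}.

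This pointwise identity is exactly the content of AFL (when $v_0$ is hyperspecial with $f_{v_0}=\mathbf{1}_{K_{\wt\HG,v_0}}$ and $f'_{v_0}=\mathbf{1}_{G'(O_{F_0,v_0})}$) and of the AT conjectures (when $v_0$ is of AT type). For $n=2$ the local symmetric spaces are essentially zero-dimensional and both sides are elementary to compute directly; for $n=3$, AFL is proved in \cite{Z12}, the AT variants needed for the levels appearing in our integral models are proved in \cite{RSZ1, RSZ2}, and the remaining exotic cases follow from \cite{M-AFL, M-Th, RTZ}. For archimedean $v_0$, the local intersection number is computed by an explicit Green-current / star-product analysis on the complex uniformization of the Shimura variety; the resulting archimedean germ matches $-\partial\Orb(\delta,f'_{v_0})$ up to the correction function $f'_{\corr}[v_0]$, and by choosing $f'_{v_0}$ to kill the discrepancy (as is possible in low rank, using the flexibility of smooth transfer at a single place) one arranges $f'_{\corr}[v_0]=0$.

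The main obstacle I anticipate is twofold. On the geometric side, the semi-global uniformization is only clean for the vertex-lattice levels admitted in the construction of $\CM_{K_{\wt\HG}}(\wt\HG)$, so one must verify that the local models at each admitted AT level fit the framework of \cite{RSZ1, RSZ2} and that the spin/Eisenstein conditions imposed here reproduce the same Rapoport--Zink space that appears in the AT conjectures. On the analytic side, matching the archimedean contribution with a correction term that can actually be made to vanish is the most delicate point: it requires a density result for smooth transfers at the archimedean place sharp enough to absorb the archimedean Green-current computation, and this is only realistic for $n\leq 3$ because the rank of the hermitian space is small enough that the regular orbits form a finite set after fixing the rest of the data.
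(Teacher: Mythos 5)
Your core argument is essentially the paper's argument: reduce to the basic locus via non-archimedean uniformization, decompose $\Int_{v_0}(f)$ as a sum over regular semisimple orbits weighted by products of orbital integrals at auxiliary places, match those orbital integrals via smooth transfer, and apply AFL (for hyperspecial level) or ATC (for AT parahoric level) at $v_0$ to the local intersection number, finishing with the product formula for transfer factors. You also correctly put your finger on the key geometric subtlety, namely that the formal scheme appearing in the uniformization must coincide with the RZ space for which AFL/ATC are formulated; the paper handles this through Lemma \ref{notwistbanal} (which trivializes the banal factors at $v\mid p$, $v\neq v_0$) together with Mihatsch's identification of the resulting $p$-divisible-group moduli problem with the \emph{relative} RZ space of \cite{Z12, RSZ1, RSZ2}. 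What you do not make explicit, but what the paper must check, is that after this reduction the implicit constant relating $f_{v_0}$ and $f'_{v_0}$ is $1$ (verified by evaluating orbital integrals at a special element, as in \cite{Y}), and the bookkeeping of $\log q_{w_0}$ versus $\log q_{v_0}$ in the inert and ramified cases that produces the factor $[E:F]$ appearing in the normalization of $\Int_{v_0}(f)$.

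One overreach: your final paragraph sketches a treatment of archimedean $v_0$ via a Green-current/star-product computation, claiming the correction term can be killed for $n\leq 3$. Theorem \ref{mainthmintro} is stated only for non-archimedean $v_0$, and the paper does not attempt the archimedean case — it remains conjectural even for small $n$. So while your archimedean discussion is a reasonable heuristic, it is not part of the claimed theorem and cannot be cited as established; the paper's Theorem \ref{th: hypersp} and its AT companion are proven strictly in the non-archimedean setting.
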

The main input is our work in the local case (intersection product on Rapoport--Zink spaces): the proof of the AFL conjecture in the hyperspecial case for $n\leq 3$ by one of us \cite{Z12} and the proof of the AT conjecture for $n\leq 3$ in \cite{RSZ1, RSZ2}. The passage from the local statement to a global statement is modeled on the similar passage in \cite{KR-U2} (which also inspired  the similar passage in \cite{Z12}). In fact, this similarity is not only formal. Indeed, the definition of \emph{Kudla--Rapoport divisors} uses in an essential way that the Shimura variety for $\GU(W)$ is replaced by the Shimura variety for $\wt G$ (in fact, in \cite{KR-U2}, one uses $Z^\BQ\times \GU(W)$; as remarked in \cite{BHKRY}, the definition can be realized on the Shimura variety for $\wt G$). In this way, there is a direct connection between the intersection problem occurring in Conjecture \ref{conj semiglob-intro} and the intersection problem in \cite{KR-U2}.

 We also have a theorem for  split places (hence non-archimedean), cf.\ Proposition \ref{p:split I=J}.
 \begin{theorem}\label{mainthmintro split}
 Let $v_0$ be a place of $F_0$ that is split in $F$. Let $f$ and $f'$ be as in Conjecture \ref{conj integralnontrivlev}. Then 
$$
\Int_{v_0}(f)=\delJ_{v_0}(f')=0.
$$
\end{theorem}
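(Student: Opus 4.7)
The theorem asserts two separate vanishings at a place $v_0$ split in $F$: the geometric intersection $\Int_{v_0}(f)=0$ and the analytic derivative $\delJ_{v_0}(f')=0$. I would prove each side vanishes independently, without invoking smooth transfer.

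\textbf{Analytic side.} The distribution $\delJ_{v_0}(f')$ is defined as the $s$-derivative at $s=0$ of a local weighted orbital integral on $G'(F_{0,v_0})$, where the weighting uses the transfer factor built from the quadratic character $\eta_{v_0}$ attached to $F_v/F_{0,v_0}$. Because $v_0$ is split in $F$, we have $F_v\cong F_{0,v_0}\times F_{0,v_0}$ and $\eta_{v_0}$ is trivial. Thus the weighted local orbital integral at $v_0$ reduces to the unweighted one, which is constant (independent of $s$), and its derivative at $s=0$ vanishes. This is the standard reason the local Jacquet--Rallis relative trace identity at split places contains no derivative terms.

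\textbf{Geometric side.} The regular support hypothesis on $f$ at the auxiliary place $\lambda$ ensures that $\CM_{K_{\wt H}}(\wt H)$ and its Hecke translate $R(f)\CM_{K_{\wt H}}(\wt H)$ meet only in positive characteristic. I would then show that no components of the intersection lie in fibers over places $\nu\mid v_0$ of $E$. Localize at such a $\nu$: under $F\otimes_{F_0}F_{0,v_0}\cong F_{0,v_0}\times F_{0,v_0}$, the signature function $r$ decouples into two pieces and the $O_F\otimes O_{F_{0,v_0}}$-action on the universal $p$-divisible group splits the latter into two factors, one for each place of $F$ above $v_0$. Under this decomposition the PEL moduli problem defining $\CM_{K_{\wt G}}(\wt G)\otimes O_{E,\nu}$ is smooth (all wedge/spin/Eisenstein conditions hold vacuously at split places), and the closed subscheme cut out by the totally negative vector $u$ giving the decomposition $W=Fu\oplus W^\flat$ is the analog of a Kudla--Rapoport special divisor. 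At a split place, such a divisor parametrizes nonzero $O_F$-linear homomorphisms between two $p$-divisible summands whose isocrystals are disjoint, which forces this locus to be empty. Consequently $\CM_{K_{\wt H}}(\wt H)$, which factors through this empty locus at $\nu$, is disjoint from any Hecke translate in the fiber above $\nu$, so the local contribution to $\Int_{v_0}(f)$ is zero, giving $\Int_{v_0}(f)=0$.

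\textbf{Expected obstacle.} The analytic argument is immediate from the definitions. The geometric part hinges on the claim that the relevant special cycle in $\CM_{K_{\wt G}}(\wt G)\otimes O_{E,\nu}$ is empty at split $\nu$; this is a local statement on the Rapoport--Zink space at $\nu$ and is the direct analog of the trivial case of the AFL at split places, but phrasing it cleanly in the global PEL setup of this paper---making sure the sign invariant $\inv^r_v$ is compatible with the split decomposition and that the universal homomorphism vanishing argument goes through for all vertex-lattice levels considered---will require some bookkeeping with the moduli interpretation.
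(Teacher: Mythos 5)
Both halves of your proposal contain genuine gaps, and in each case the gap lies in exactly the step you flag as ``immediate'' or ``straightforward.''

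On the analytic side, your argument is that at a split $v_0$ the local weighted orbital integral is ``constant (independent of $s$)'' because $\eta_{v_0}$ is trivial. But the $s$-dependence of $\Orb(\gamma,f'_{v_0},s)$ does not come from $\eta$: it comes from the factor $\lv\det h_1\rv^s$ on the $H_1'$-integration, and that factor is nontrivial at every place, split or not. The local derivative $\del(\gamma,f'_{v_0})$ is therefore not zero for a local reason. The paper's Lemma \ref{analyticsplit} proves $\delJ_{v_0}(f')=0$ via a \emph{global} mechanism: the Gaussian test function $f'$ is of incoherent type in the sense of \cite[\S 3.1]{Z12}, so for each regular semisimple $\gamma$ the local orbital integral $\Orb(\gamma,f'_{v},0)$ vanishes at some non-split place $v$. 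Since $v\neq v_0$, that vanishing factor sits inside the product $\prod_{u\neq v_0}\Orb(\gamma,f'_u)$ in the expansion \eqref{eqn J'v dec}, which kills the full $\gamma$-term. This is \cite[Prop.\ 3.6(ii)]{Z12}, and there is no purely local version of it at a split place.

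On the geometric side, you identify the empty locus as ``the closed subscheme cut out by the totally negative vector $u$,'' viewed as a Kudla--Rapoport-type special cycle, and claim it is empty at split $\nu$ because the split isocrystal factors admit no $O_F$-linear map. That claim is false: the morphism $\CM_{K_{\wt H}}(\wt H)\to\CM_{K_{\wt G}}(\wt G)$, i.e.\ the diagonal cycle $z_K$ itself, is nonempty in the fibers over split places; the special vector furnishes a perfectly good $O_F$-linear map $A_0\to A=A^\flat\times A_0$ there. What is empty at split $\nu$ is the \emph{intersection with the regular Hecke translate}, and the mechanism is the one in the proof of Theorem \ref{int generic}: from the Hecke correspondence with regular support one extracts a regular semisimple $g$ and a quasi-isogeny $\phi\colon A\to A$ compatible with $g$ via the level structure, and combining the powers $\phi^i$ with the special-vector map $u\colon A_0\to A$ one obtains an $F$-linear isogeny $(\phi^i u)_{0\le i\le n-1}\colon A_0^n\to A$; regular semisimplicity of $g$ is what makes this an isogeny. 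Lemma \ref{no isog} then shows such an isogeny cannot exist over a split place, because the $p$-divisible group of $A$ decomposes as $X^{(1)}\times X^{(2)}$ with $\dim X^{(j)}\equiv\pm 1\bmod n$ while $\dim X_0^{(j)}\equiv 0\bmod n$ by the Kottwitz conditions. Your version omits $g$ entirely, so the dimension obstruction never arises and the argument does not go through.
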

The significance of this theorem is that in the global context, again under a regularity assumption on the Hecke correspondence $R(f)$, the contribution of the places $v_0$ which split in $F$ is trivial. 
\medskip

Let us now put the results of this paper in perspective. B. Gross asked for integral models of the Shimura varieties for unitary groups. Here we answer a modified version of his question, by constructing  integral models of the Shimura varieties for $\wt G$.  However, we have to pay a price by having to replace the field $F$, over which Gross's Shimura varieties have a model,  by the field $E$, over which our Shimura varieties have a model; and $E$  may be strictly larger than $F$. This also causes us  to modify our adaptation of the AGGP conjecture. It would be interesting to understand whether the Kisin--Pappas construction of integral models of Shimura varieties of abelian type \cite{KP}  yields a solution of Gross's question which can be used to give a variant of the AGGP conjecture which avoids having to replace $F$ by a bigger field. 

Our current knowledge of AT conjectures forces on us to be very specific when imposing level structures in our moduli problems. It seems realistic to hope that more cases of AT conjectures than in \cite{RSZ1, RSZ2} can be formulated, and this would allow more flexibility for the level structures. We hope to return to this point. 

How realistic is it to hope that the conjectures on the arithmetic intersection pairing can be proved, in cases that go beyond those treated in this paper? The stumbling block seems to be that in higher dimension it is difficult to avoid degenerate intersections---and degenerate intersections seem to be a challenge to currently available techniques, already in the local situation\footnote{Since the submission of the paper, the last author  has made progress in the good reduction case \href{arXiv:1909.02697}{arXiv:1909.02697}.} (intersection on Rapoport--Zink spaces). It might be fruitful to search for intersection problems derived from those considered here which avoid these apparently very difficult problems, in the spirit of B.~Howard's papers concerning the Kudla--Rapoport divisor intersection problem, cf.\ \cite{Ho-kr, Ho-kr2}.

As is apparent, automorphic $L$-functions do not appear explicitly  in the statements above, contrary to what happens in the AGGP conjecture. $L$-functions  are involved implicitly because the distributions $J$ and $\delJ_{v_0}$ are related to them (cf.\ Section \ref{s:Lfcts}). However, more analytic work is involved to make this relation more explicit. One of us (W.Z.) hopes to return to this point and explain this issue in more detail.

We also mention the article \cite{Z-ICM} of the third author, which discusses some of the broader context on the relation of special values of $L$-functions and their derivatives to period integrals and intersections of cycles, into which this paper falls.
\medskip

We finally give an overview of the layout of the paper. In Section \ref{s:gpthsetup}, we introduce the groups in play and define the concept of matching in this context. In Section \ref{Shimura varieties}, we introduce the various Shimura varieties mentioned above, and the corresponding moduli interpretation over $E$, and the relation between the moduli variety for $W^\flat$ and for $W$. In Section \ref{section semi-global}, we define the semi-global integral models of these moduli schemes and the morphisms between them. We do this in several contexts: for \emph{hyperspecial level}, for \emph{split level}, for \emph{Drinfeld level}, and for \emph{AT parahoric level}.  These levels reflect the possibility of applying the AFL conjecture, resp.\ the AT conjecture, resp.\ the vanishing theorem Theorem \ref{mainthmintro split} in the split level. The AT parahoric level is complicated by the fact that the morphisms between the semi-global integral models are in some cases not defined in the naive way. In Section \ref{section global} we give the global integral models, first without level structure and then with Drinfeld level structure. Section \ref{section ggp} is devoted to giving our version of the AGGP conjecture. Section \ref{s:Lfcts} is preparatory for the last section. Here we explain the distributions arising in the context of the relative trace formula and their relation to $L$-functions. In Section \ref{s:conjaip} everything comes finally together. First, we formulate our conjecture on the arithmetic intersection numbers. We do this in the global case without level structure, in the global case with Drinfeld level structure, and in the semi-global case.  Second, we  give the proofs of the semi-global versions in cases of small dimension, cf.\ Theorem \ref{mainthmintro}.  There are two appendices. In Appendix A we define the sign invariant that is used in the formulation of the moduli schemes. In Appendix B we check that in the case of \emph{banal signature} the relevant local models are trivial in a precise sense.

\subsection*{Acknowledgments} We are grateful to 
G.~Chenevier, B.~Gross, B.~Howard, S.~Kudla, Y.~Liu, P.~Scholze and Y.~Tian for helpful remarks. We also thank the referees for their work. We acknowledge the hospitality of the ESI (Vienna) and the MFO (Oberwolfach), where part of this work was carried out. M.R. is supported by a grant from  the Deutsche Forschungsgemeinschaft through the grant SFB/TR 45. B.S. is supported by Simons Foundation grant \#359425 and NSA grant H98230-16-1-0024. W.Z. is supported by NSF grant DMS $\#$1601144, $\#$1901642 and a Simons Fellowship.

\subsection*{Notation}
Except in Section \ref{s:gpthsetup}, $F$ denotes a CM number field and $F_0$ denotes its (maximal) totally real subfield of index $2$ (in Section \ref{s:gpthsetup}, $F/F_0$ can be any quadratic extension of number fields).  We denote by $a\mapsto \ov a$ the nontrivial automorphism of $F/F_0$.  We fix a presentation $F = F_0(\sqrt \Delta)$ for some totally negative element $\Delta \in F_0$, and we let $\Phi$ be the CM type for $F$ determined by $\sqrt \Delta$,
\begin{equation}\label{Phi}
   \Phi := \bigl\{\, \varphi\colon F \rightarrow \BC \bigm| \varphi(\sqrt\Delta) \in \BR_{>0} \cdot \sqrt{-1} \,\bigr\}.
\end{equation}
Note that by weak approximation, every CM type for $F$ arises in this way for some $F/F_0$-traceless element $\sqrt \Delta \in F^\times$.

We use the symbols $v$ and $v_0$ to denote places of $F_0$, and $w$ and $w_0$ to denote places of $F$.  We write $F_{0,v}$ for the $v$-adic completion of $F_0$, and we set $F_v := F \otimes_{F_0} F_{0,v}$; thus $F_v$ is isomorphic to $F_{0,v} \times F_{0,v}$ or to a quadratic field extension of $F_{0,v}$ according as $v$ is split or non-split in $F$.  We often identify the CM type $\Phi$ (or more precisely, the restrictions of its elements to $F_0$) with the archimedean places of $F_0$. When $v$ is a finite place, we write $\fkp_v$ for the maximal ideal in $O_{F_0}$ at $v$, we write $\varpi_v$ for a uniformizer in $F_{0,v}$, and we write $\pi_v$ for a uniformizer in $F_v$ (when $v$ splits in $F$ this means an ordered pair of uniformizers on the right-hand side of the isomorphism $F_v \cong F_{0,v} \times F_{0,v}$).  We write $O_{F_0,(v)}$ for the localization of $O_{F_0}$ at the maximal ideal $\fkp_v$, and $O_{F_0,v} \subset F_{0,v}$ for its $\fkp_v$-adic completion. We use analogous notation for other fields in place of $F_0$ and other finite places in place of $v$. In particular, we will often consider the $v$-adic completion $O_{F,v} = O_F \otimes_{O_{F_0}} O_{F_0,v}$ of $O_F$.

We write $\BA$, $\BA_{F_0}$, and $\BA_F$ for the adele rings of $\BQ$, $F_0$, and $F$, respectively.  We systematically use a subscript $f$ for the ring of finite adeles, and a superscript $p$ for the adeles away from the prime number $p$.

We take all hermitian forms to be linear in the first variable and conjugate-linear in the second, and we assume that they are non-degenerate unless we say otherwise.  For $k$ any field, $A$ an \'etale $k$-algebra of degree $2$, and $W$ a finite free $A$-module equipped with an $A/k$-hermitian form, we write $\det W \in k^\times/ \Nm_{A/k}A^\times$ for the class of $\det J$, where $J$ is any hermitian matrix (relative to the choice of an $A$-basis for $W$) representing the form.  Note that this value group is trivial when $A \simeq k \times k$.  We also write $-W$ for the same $A$-module as $W$, but with the hermitian form multiplied by $-1$.  Of course $W$ and $-W$ have the same unitary groups.  When $W$ is an $F/F_0$-hermitian space of dimension $n$ and $v$ is a place of $F_0$, we write $W_v$ for the induced $F_v/F_{0,v}$-hermitian space $W \otimes_{F_0} F_{0,v}$, and we define
\begin{equation}\label{Hasse def}
   \inv_v(W_v) := (-1)^{n(n-1)/2} \det W_v \in F_{0,v}^\times / \Nm F_v^\times.
\end{equation}
We say that $W_v$ is \emph{split} at a finite place $v$ if $\inv_v(W_v) = 1$; under our normalization, the antidiagonal unit matrix always defines a split hermitian form.  When $v$ is an archimedean place, the form on $W_v$ is isometric to $\diag(1^{(r)},(-1)^{(s)})$ for some $r+s=n$, and we write $\sig_v(W_v) := (r,s)$ (the \emph{signature}).  In the local setting, isometry classes of $n$-dimensional $F_v/F_{0,v}$-hermitian spaces are classified by $\inv_v$ when $v$ is a finite place, and by $\sig_v$ when $v$ is an archimedean place. By the Hasse principle, two global hermitian spaces are isometric if and only if they are isometric at every place $v$, i.e.\ they have the same invariants at each finite place and the same signatures at each archimedean place.  Given a global space $W$ as above, the product formula for the norm residue symbol for the extension $F/F_0$ gives
\begin{equation}\label{herm prod fmla}
   \prod_v \inv_v(W_v) = 1,
\end{equation}
where $v$ ranges through the places of $F_0$, and where we identify $F_{0,v}^\times / \Nm F_v^\times \subset \{\pm 1\}$.  Conversely, Landherr's theorem asserts that a collection $(W_v)_v$ of $F_v/F_{0,v}$-hermitian spaces arises as the set of local completions of a (unique, by the Hasse principle) global $F/F_0$-hermitian space exactly when $\inv_v(W_v) = 1$ for all but finitely many $v$ and the product formula \eqref{herm prod fmla} holds.  Given an embedding $\varphi\colon F \to \BC$, we write $W_\varphi := W \otimes_{F,\varphi}\BC$ for the induced hermitian space over $\BC$.

For any abelian scheme $A$ over a base scheme $S$, we denote by $A^\vee$ the dual abelian scheme.  When $S$ is locally noetherian and the prime number $p$ is invertible on it, we write $T_p(A)$ for the $p$-adic Tate module of $A$ (regarded as a smooth $\BZ_p$-sheaf on $S$) and $V_p(A) := T_p(A) \otimes \BQ$ for the rational $p$-adic Tate module (regarded as a smooth $\BQ_p$-sheaf on $S$). When $S$ is a $\BZ_{(p)}$-scheme, we similarly write $\wh V^p(A)$ for the rational prime-to-$p$ Tate module of $A$.  When $S$ is a scheme in characteristic zero, we write $\wh T(A)$ and $\wh V(A)$ for the respective full Tate and full rational Tate modules of $A$.

We use a superscript $\circ$ to denote the operation $-\otimes_\BZ \BQ$ on groups of homomorphisms of abelian schemes, so that for example $\Hom^\circ(A,A') := \Hom(A,A')\otimes_\BZ \BQ$.

Given modules $M$ and $N$ over a ring $R$, we write $M \subset^r N$ to indicate that $M$ is an $R$-submodule of $N$ of finite colength $r$.  Typically $R$ will be $O_{F,v}$ for $v$ a finite place of $F_0$.  When $\Lambda$ is an $O_F$-lattice in an $F/F_0$-hermitian space, we denote the dual lattice with respect to the hermitian form by $\Lambda^*$. We use the same notation when $\Lambda$ is an $O_{F,v}$-lattice in an $F_v/F_{0,v}$-hermitian space, and we call $\Lambda$ a \emph{vertex lattice of type $r$} if $\Lambda \subset^r \Lambda^* \subset \pi_v^{-1} \Lambda$.  Note that this terminology differs slightly from e.g.~\cite{KR-U1,RTW}.\footnote{ More precisely, in the case that $F_v/F_{0,v}$ is the unramified extension $\BQ_{p^2}/\BQ_p$, we have defined what \cite{KR-U1} calls a \emph{vertex lattice of level $0$ and type $n-r$}, where $n$ denotes the dimension of the hermitian space.  In the case that $F_v$ is a ramified extension of $F_{0,v} = \BQ_p$, our $\Lambda^*$ is what \cite{RTW} calls a \emph{vertex lattice of type $r$}, except that \cite{RTW} should also require the containment $\Lambda^* \subset \pi_v\i \Lambda$.}  A \emph{vertex lattice} is a vertex lattice of type $r$ for some $r$.  Let us single out the following special cases.  A \emph{self-dual} lattice is, of course, a vertex lattice of type $0$.  An \emph{almost self-dual} lattice is a vertex lattice of type $1$.  At the other extreme, a vertex lattice $\Lambda$ is \emph{$\pi_v$-modular} if $\Lambda^* = \pi_v^{-1} \Lambda$, and \emph{almost $\pi_v$-modular} if $\Lambda \subset \Lambda^* \subset^1 \pi_v^{-1}\Lambda$.

Given a discretely valued field $L$, we denote by $\breve L$ the completion of a maximal unramified extension of $L$.

We write $1_n$ for the $n \times n$ identity matrix.  We use a subscript $S$ to denote base change to a scheme (or other object) $S$, and when $S = \Spec A$, we often use a subscript $A$ instead.

\section{Group-theoretic setup}\label{s:gpthsetup}
In this section we introduce the groups and linear-algebraic objects which will be in play throughout the paper.  Let $F/F_0$ be a quadratic extension of number fields.

\subsection{Similitude groups and variants}
We begin by introducing algebraic groups over $F_0$,
\begin{align*}
   G' &:= \Res_{F/F_0}(\GL_{n-1} \times \GL_n),\\
	H_1' &:= \Res_{F/F_0} \GL_{n-1},\\
	H_2' &:= \GL_{n-1}\times\GL_n,\\
	H_{1,2}' &:= H_1' \times H_2'.
\end{align*}
Next let $W$ be a non-degenerate $F/F_0$-hermitian space of dimension $n \geq 2$. We fix a non-isotropic vector $u \in W$, which we call the \emph{special vector}. We denote by $W^\flat$ the orthogonal complement of $u$ in $W$.  We define another four algebraic groups over $F_0$,
\begin{align*}
	G &:= \U(W),\\
   H &:= \U(W^\flat),\\
	G_W&:= H \times G,\\
	H_W&:= H \times H.
\end{align*}
We further define the following algebraic groups over $\BQ$.  We systematically use the symbol $c$ to denote the similitude factor of a point on a unitary similitude group.
{\allowdisplaybreaks
\begin{align*}
   Z^\BQ &:= \bigl\{\, z \in \Res_{F/\BQ} \BG_m \bigm| \Nm_{F/F_0}(z) \in \BG_m \,\bigr\},\\
	H^\BQ &:= \bigl\{\, h \in \Res_{F_0/\BQ} \GU(W^\flat) \bigm| c(h)\in \BG_m \,\bigr\},\\
	G^\BQ &:= \bigl\{\, g \in \Res_{F_0/\BQ} \GU(W) \bigm| c(g)\in \BG_m \,\bigr\},\\
	\wt H &:= Z^\BQ \times_{\BG_m} H^\BQ = \bigl\{\, (z,h) \in Z^\BQ \times H^\BQ \bigm| \Nm_{F/F_0}(z) = c(h) \,\bigr\},\\
	\wt G &:= Z^\BQ \times_{\BG_m} G^\BQ = \bigl\{\, (z,g) \in Z^\BQ \times G^\BQ \bigm| \Nm_{F/F_0}(z) = c(g) \,\bigr\},\\
	\wtHG &:= \wt H \times_{Z^\BQ} \wt G = Z^\BQ \times_{\BG_m} H^\BQ \times_{\BG_m} G^\BQ \\
	      &\phantom{:}= \bigl\{\, (z,h,g) \in Z^\BQ \times H^\BQ \times G^\BQ \bigm| \Nm_{F/F_0}(z) = c(h) = c(g) \,\bigr\}.
\end{align*}
}%
Note that $Z^\BQ$ is naturally a central subgroup of $H^\BQ$ and $G^\BQ$, and these inclusions give rise to product decompositions
\begin{equation}\label{proddec}
   \begin{gathered}
   \begin{gathered}
	\xymatrix@R=0ex{
      \wt H \ar[r]^-\sim  &  Z^\BQ \times \Res_{F_0/\BQ} H\\
		(z, h) \ar@{|->}[r]  &  (z, z^{-1}h)
	}
   \end{gathered},
	\qquad
   \begin{gathered}
	\xymatrix@R=0ex{
	   \wt G \ar[r]^-\sim  &  Z^\BQ \times \Res_{F_0/\BQ} G\\
		(z, g) \ar@{|->}[r]  &  (z, z^{-1}g)
	}
	\end{gathered},
	\\
	\begin{gathered}
	\xymatrix@R=0ex{
      \wtHG \ar[r]^-\sim &  Z^\BQ \times \Res_{F_0/\BQ} (H\times G)\\
	   (z, h, g) \ar@{|->}[r]  &  (z, z^{-1}h, z^{-1}g)
	}
	\end{gathered}
   \end{gathered}
\end{equation}

We also record that the decomposition $W = W^\flat \oplus Fu$ gives rise to natural closed embeddings of algebraic groups,
\begin{equation}\label{embedoverQ}
   \begin{gathered}
   \xymatrix@R=0ex{
	   \wt H \ar@{^{(}->}[r]  &  \wt G \\
		(z,h) \ar@{|->}[r]  &  \bigl(z,\diag(h,z)\bigl)
	}
   \end{gathered}
	\quad\text{and}\quad
	\begin{gathered}
	\xymatrix@R=0ex{
	   \wt H \ar@{^{(}->}[r]  &  \wtHG \\
		(z,h) \ar@{|->}[r]  &  \bigl(z, h, \diag(h,z)\bigl)
	}
	\end{gathered}.
\end{equation}
Thus, in terms of the product decompositions in \eqref{proddec}, the embeddings $\wt H \inj \wt G$ and $\wt H \inj \wtHG$ in \eqref{embedoverQ} are obtained by applying the functor $Z^\BQ \times -$ to the embeddings $\Res_{F_0/\BQ} H \inj \Res_{F_0/\BQ} G$ and $\Res_{F_0/\BQ} H \inj \Res_{F_0/\BQ} (H \times G)$, respectively.

\subsection{Orbit matching}\label{orbit matching ss}
The following lemma is obvious. 
\begin{lemma}\label{matchintilde}
The natural projections in (\ref{proddec}) induce isomorphisms
\[
   \wt G / \wt H \isoarrow \Res_{F_0/\BQ} G \big/ \Res_{F_0/\BQ} H
\]
and
\begin{flalign*}
	\phantom{\qed} & &
   \wt H \bs \wtHG / \wt H \isoarrow \Res_{F_0/\BQ} H \big\bs \Res_{F_0/\BQ} G_W \big/ \Res_{F_0/\BQ} H .
	& & \qed
\end{flalign*}
\end{lemma}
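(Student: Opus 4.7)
The claim is that once one passes through the product decompositions of (\ref{proddec}), the central $Z^\BQ$-factors on both sides of the embeddings (\ref{embedoverQ}) cancel, leaving behind the embeddings $\Res_{F_0/\BQ} H \hookrightarrow \Res_{F_0/\BQ} G$ (via $h \mapsto \diag(h,1)$) and $\Res_{F_0/\BQ} H \hookrightarrow \Res_{F_0/\BQ}(H \times G)$ (via $h \mapsto (h,\diag(h,1))$). So my plan is essentially a bookkeeping check.

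First, I would verify that the embedding $\wt H \hookrightarrow \wt G$ of (\ref{embedoverQ}) is compatible with the product decompositions. Take $(z,h) \in \wt H$, so $\Nm_{F/F_0}(z) = c(h)$; it maps in $\wt G$ to $(z, \diag(h,z))$. Under the decomposition of $\wt G$, the second coordinate becomes $z^{-1} \diag(h,z) = \diag(z^{-1}h, 1)$. Since $c(z^{-1}h) = \Nm(z)^{-1} c(h) = 1$, the element $z^{-1}h$ lies in $H = \U(W^\flat)$, and $\diag(z^{-1}h,1)$ lies in $G = \U(W)$. Thus in the coordinates of (\ref{proddec}) the embedding reads $(z,h') \mapsto (z, \diag(h',1))$ with $h' := z^{-1}h \in \Res_{F_0/\BQ} H$; i.e., it is $\id_{Z^\BQ} \times \iota$, where $\iota\colon \Res_{F_0/\BQ} H \hookrightarrow \Res_{F_0/\BQ} G$ is the tautological embedding. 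The first isomorphism of the lemma is then immediate:
\[
   \wt G / \wt H \;\cong\; (Z^\BQ \times \Res_{F_0/\BQ} G)\big/(Z^\BQ \times \iota(\Res_{F_0/\BQ} H)) \;\cong\; \Res_{F_0/\BQ} G \big/ \Res_{F_0/\BQ} H.
\]

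Second, I would repeat the same computation for $\wt H \hookrightarrow \wtHG$. The element $(z,h) \in \wt H$ maps to $(z, h, \diag(h,z))$, which in the coordinates of (\ref{proddec}) becomes $(z, z^{-1}h, z^{-1}\diag(h,z)) = (z, h', \diag(h',1))$ with $h' = z^{-1}h$. So this embedding is again the product of $\id_{Z^\BQ}$ with the diagonal-like embedding $\Res_{F_0/\BQ} H \hookrightarrow \Res_{F_0/\BQ}(H \times G) = \Res_{F_0/\BQ} G_W$ given by $h' \mapsto (h', \diag(h',1))$. Taking the double quotient, the $Z^\BQ$-factor cancels on both sides, giving
\[
   \wt H \bs \wtHG / \wt H \;\cong\; \Res_{F_0/\BQ} H \big\bs \Res_{F_0/\BQ} G_W \big/ \Res_{F_0/\BQ} H,
\]
as desired.

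There is no real obstacle here; the only point worth double-checking is the similitude-factor bookkeeping ensuring that $z^{-1}h$ genuinely lands in the unitary group rather than in the similitude group, which is where the compatibility condition $\Nm_{F/F_0}(z) = c(h)$ built into $\wt H$ is used. Once that is in place, the proof is a direct substitution.
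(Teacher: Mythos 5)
Your proof is correct, and since the paper simply declares the lemma ``obvious'' and supplies no argument, your careful bookkeeping is exactly the intended justification: after transporting through the product decomposition (\ref{proddec}), each embedding of (\ref{embedoverQ}) becomes $\id_{Z^\BQ}\times(\text{inclusion})$, and the $Z^\BQ$-factors cancel in the (double) quotient. The one point that genuinely needs checking --- that $z^{-1}h$ lands in $\U(W^\flat)$ rather than the similitude group, using $\Nm_{F/F_0}(z)=c(h)$ --- is the one you correctly identify and verify.
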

There is a natural injection of orbit spaces of \emph{regular semisimple} elements, 
\[
   H(F_{0, v})\backslash G_W(F_{0, v})_\rs/H(F_{0, v})
	   \inj H'_1(F_{0,v})\backslash G'(F_{0, v})_\rs/H'_2(F_{0, v})
\]
for any place $v$ of $F_0$ (cf.\  \cite[\S 2]{RSZ1} for  the case when $v$ is a non-archimedean place not split in $F$; the case of  archimedean places is completely analogous).  
If $v$ is split in $F$, we define matching  as in \cite[\S2]{Z14}. Briefly speaking, we identify $H(F_{0,v})$ with $\GL_{n-1}(F_{0,v})$  and $G(F_{0,v})$ with $\GL_{n}(F_{0,v})$. This gives a natural way of matching regular semisimple elements, a  \emph{homogeneous version} of the \emph{matching relation} of \cite{Z12}.

 Using  Lemma \ref{matchintilde}, we obtain an injection for every prime number $p$,  
\begin{align}\label{matchwtHG}
   \wt H(\BQ_p) \bs \wtHG(\BQ_p)_\rs / \wt H(\BQ_p) \inj \prod_{v\mid p} H'_1(F_{0, v})\backslash G'(F_{0, v})_\rs/ H'_2(F_{0, v}) .
\end{align}

\section{The Shimura varieties}\label{Shimura varieties}

For the rest of the paper we take $F$ to be a CM field over $\BQ$ and $F_0$ to be its totally real subfield of index $2$.  We recall from the Introduction that we fix a totally imaginary element $\sqrt\Delta \in F^\times$, and we denote by $\Phi$ the induced CM type for $F$ given in (\ref{Phi}).

\subsection{The Shimura data}\label{Sh data}
In this subsection we define Shimura data for some of the groups introduced in Section \ref{s:gpthsetup}.  We assume that the hermitian space $W$ has the following signatures at the archimedean places of $F_0$: for a distinguished element $\varphi_0 \in \Phi$, the signature of $W_{\varphi_0}$ is $(1,n-1)$, and for all other $\varphi \in \Phi$ the signature of $W_\varphi$ is $(0,n)$. We also assume that the special vector $u$ is totally negative, i.e.~that $(u, u)_\varphi<0$ for all $\varphi$. 

We first define Shimura data $(G^\BQ, \{h_{G^\BQ}\})$ and $(H^\BQ, \{h_{ H^\BQ}\})$; comp.~\cite[\S1.1]{PR}. Using the canonical inclusions $G^\BQ_\BR\subset\prod_{\varphi\in\Phi} \GU(W_\varphi)$ and $H^\BQ_\BR\subset\prod_{\varphi\in\Phi} \GU(W^\flat_\varphi)$, it suffices to define the components $h_{G^\BQ, \varphi}$ of $h_{G^\BQ}$ and $h_{H^\BQ, \varphi}$ of $h_{H^\BQ}$. We define matrices 
\[
   J_\varphi :=
	\begin{cases}
		\diag\bigl(1, (-1)^{(n-1)}\bigr),  &  \varphi=\varphi_0;\\ 
      \diag( -1, -1, \dotsc, -1),  &  \varphi\in\Phi\ssm\{\varphi_0\},
   \end{cases}
\]
and we define the matrix $J_\varphi^\flat$ by removing a $-1$ from $J_\varphi$. We may then choose bases $W_\varphi\simeq \BC^n$ and $W^\flat_\varphi\simeq \BC^{n-1}$ such that $u \otimes 1 \in W_\varphi$ identifies with a multiple of $(0^{(n-1)},1)$, such that the inclusion $W_\varphi^\flat \subset W_\varphi$ is compatible with the inclusion $\BC^{n-1} \subset \BC^{n-1} \oplus \BC = \BC^n$, and such that the hermitian forms on $W_\varphi$ and $W_\varphi^\flat$ have respective normal forms
\[
   (x, y)_\varphi = \tensor*[^t]{x}{} J_\varphi \ov y
	\quad\text{and}\quad
	(x^\flat, y^\flat)_\varphi = \tensor*[^t]{x}{^\flat} J^\flat_\varphi \ov y^\flat.
\]
We then define the component maps
\[
   h_{G^\BQ,\varphi}\colon \BC^\times\to \GU(W_\varphi)(\BR)
	\quad\text{and}\quad
	h_{H^\BQ,\varphi}\colon \BC^\times\to \GU(W_\varphi^\flat)(\BR)
\]
to be induced by the respective $\BR$-algebra homomorphisms
\[
   \begin{gathered}
   \xymatrix@R=0ex{
      \BC \ar[r]  &  \End(W_\varphi)\\
		\sqrt{-1} \ar@{|->}[r]  &  \sqrt{-1} J_\varphi
	}
	\end{gathered}
	\quad\text{and}\quad
   \begin{gathered}
   \xymatrix@R=0ex{
      \BC \ar[r]  &  \End(W_\varphi^\flat)\\
		\sqrt{-1} \ar@{|->}[r]  &  \sqrt{-1} J_\varphi^\flat
	}
	\end{gathered}.
\]
By definition of $\Phi$, the form $x,y \mapsto \tr_{\BC/\BR} \varphi(\sqrt\Delta)\i(h_{G^\BQ,\varphi}(\sqrt{-1}) x, y)_\varphi$ is symmetric and positive definite on $W_\varphi$ for each $\varphi \in \Phi$, and similarly for $W^\flat_\varphi$.  

We next define Shimura data $(\wt H, \{h_{\wt H}\})$, $(\wt G, \{h_{\wt G}\})$, and $(\wtHG, \{h_{\wtHG}\})$.  For this, note that $\Phi$ induces an identification
\[
   Z^\BQ(\BR) \cong \bigl\{\,(z_\varphi)\in (\BC^\times)^\Phi \bigm| |z_\varphi| = |z_{\varphi'}| \ \text{for all}\ \varphi,\varphi' \in \Phi \,\bigl\} .
\]
In terms of this identification, we define $h_{Z^\BQ}\colon \BC^\times \to Z^\BQ(\BR)$ to be the  diagonal embedding, \emph{pre-composed with complex conjugation}.  We then obtain the desired Shimura data by defining the Shimura homomorphisms
\[
   h_{\wt H}\colon \BC^\times \xra{(h_{Z^\BQ}, h_{H^\BQ})} \wt H(\BR),
	\quad
	h_{\wt G}\colon \BC^\times \xra{(h_{Z^\BQ}, h_{G^\BQ})} \wt G(\BR),
	\quad
	h_{\wtHG}\colon \BC^\times \xra{(h_{Z^\BQ}, h_{H^\BQ}, h_{G^\BQ})} \wtHG(\BR).
\]
It is easy to see that $(\wt H, \{h_{\wt H}\})$, $(\wt G, \{h_{\wt G}\})$, and $(\wtHG, \{h_{\wtHG}\})$ have common reflex field $E \subset \BC$ characterized by
\begin{equation}\label{defE}
   \Aut(\BC/E) = \{\, \sigma\in\Aut(\BC)\mid \sigma \circ\Phi=\Phi \text{ and } \sigma\circ\varphi_0=\varphi_0 \,\} .
\end{equation}
We therefore obtain \emph{canonical models} over $E$ of the Shimura varieties
\begin{equation*}
   \Sh_{K_{\wt H}}\bigl(\wt H, \{h_{\wt H}\}\bigr), 
	\quad
	\Sh_{K_{\wt G}}\bigl(\wt G, \{h_{\wt G}\}\bigr),
	\quad
	\Sh_{K_{\wtHG}}\bigl(\wtHG, \{h_{\wtHG}\}\bigr) ,
\end{equation*}
where ${K_{\wt H}}$, resp.\ ${K_{\wt G}}$, resp.\ ${K_{\wtHG}}$ varies through the open compact subgroups of $\wt H(\BA_f)$, resp.\ $\wt G(\BA_f)$, resp.\ $\wtHG(\BA_f)$.

\begin{remark}\label{reflex rem}
When $F$ is of the form $K F_0$ for an imaginary quadratic field $K/\BQ$ and $\Phi$ is the unique CM type induced from $K$ containing $\varphi_0$ (the case taken by Harris--Taylor in \cite{HT}), $\varphi_0$ identifies $F \isoarrow E$.  In general, $E$ is the join of $F$ (embedded in $\ov\BQ$ via $\varphi_0$) and the reflex field $E_\Phi$ of $\Phi$ (the latter is the fixed field in $\BC$ of the group $\{\sigma \in \Aut(\BC) \mid \sigma \circ \Phi = \Phi\}$).  In particular, $\varphi_0$ always embeds $F$ into $E$, but $E$ may be larger.
\end{remark}

The morphisms \eqref{embedoverQ} are obviously compatible with the Shimura data $\{h_{\wt H}\}$ and $\{h_{\wt G}\}$, resp.\ $\{h_{\wt H}\}$ and $\{h_{\wtHG}\}$. We therefore obtain injective morphisms of Shimura varieties, i.e.\ injective morphisms of pro-varieties, in the sense of \cite[Prop.~1.15]{De},
\begin{equation}\label{Sh inj}
   \Sh\bigl(\wt H, \{h_{\wt H}\}\bigr) \hookrightarrow \Sh\bigl(\wt G, \{h_{\wt G}\}\bigr)
	\quad\text{and}\quad
	\Sh\bigl(\wt H, \{h_{\wt H}\}\bigr) \hookrightarrow \Sh\bigl(\wtHG, \{h_{\wtHG}\}\bigr) . 
\end{equation}

\begin{remark}\label{others}
The above Shimura varieties are related to other Shimura varieties, as follows. 
\begin{altenumerate}
\item\label{Z^Q} The pair $(Z^\BQ, \{h_{Z^\BQ}\})$ is a Shimura datum, and there are morphisms of Shimura data
$$
\bigl(\wt H, \{h_{\wt H}\}\bigr)\to \bigl(Z^\BQ, \{h_{Z^\BQ}\}\bigr), \quad \bigl(\wt G, \{h_{\wt G}\}\bigr)\to \bigl(Z^\BQ, \{h_{Z^\BQ}\}\bigr), \quad \bigl(\wtHG, \{h_{\wtHG}\}\bigr)\to \bigl(Z^\BQ, \{h_{Z^\BQ}\}\bigr)
$$
induced by the natural projections to $Z^\BQ$. These induce morphisms of Shimura varieties
\begin{equation}
\begin{gathered}
   \Sh\bigl(\wt H, \{h_{\wt H}\}\bigr)\to \Sh\bigl(Z^\BQ, \{h_{Z^\BQ}\}\bigr),\\
   \Sh\bigl(\wt G, \{h_{\wt G}\}\bigr)\to \Sh\bigl(Z^\BQ, \{h_{Z^\BQ}\}\bigr),\\
	\Sh\bigl(\wtHG, \{h_{\wtHG}\}\bigr)\to \Sh\bigl(Z^\BQ, \{h_{Z^\BQ}\}\bigr),
\end{gathered}
\end{equation}
which identify
\[
   \Sh\bigl(\wtHG, \{h_{\wtHG}\}\bigr) \cong \Sh\bigl(\wt H, \{h_{\wt H}\}\bigr) \times_{\Sh(Z^\BQ, \{h_{Z^\BQ}\})} \Sh\bigl(\wt G, \{h_{\wt G}\}\bigr).
\]
The reflex field of $(Z^\BQ, \{h_{Z^\BQ}\})$ is the subfield $E_\Phi$ of $E$.
\item There are morphisms of Shimura data
\[
   \bigl(\wt H, \{h_{\wt H}\}\bigr)\to \bigl(H^\BQ, \{h_{H^\BQ}\}\bigr)
   \quad\text{and}\quad
   \bigl(\wt G, \{h_{\wt G}\}\bigr)\to \bigl(G^\BQ, \{h_{G^\BQ}\}\bigr),
\]
both induced by the natural projections, which induce morphisms of Shimura varieties
\[
   \Sh\bigl(\wt H, \{h_{\wt H}\}\bigr)\to \Sh\bigl(H^\BQ, \{h_{H^\BQ}\}\bigr)
   \quad\text{and}\quad
   \Sh\bigl(\wt G, \{h_{\wt G}\}\bigr)\to \Sh\bigl(G^\BQ, \{h_{G^\BQ}\}\bigr).
\]
\item\label{ggp} One may also introduce Shimura data
\[
  \bigl(\Res_{F_0/\BQ} H, \{h_{H}\}\bigr),
  \quad
  \bigl(\Res_{F_0/\BQ} G, \{h_{ G}\}\bigr),
  \quad
  \bigl(\Res_{F_0/\BQ}(H\times G), \{h_{H\times G}\}\bigr),
\]
where $h_H$, resp.\ $h_G$, resp.\ $h_{H \times G}$ is defined by composing $h_{\wt H}$, resp.\ $h_{\wt G}$, resp.\ $h_{\wtHG}$ with the projection to the second factor in (\ref{proddec}). Note that the restrictions of $h_H$, $h_G$, and $h_{H\times G}$ to the subgroup $\BR^\times$ of $\BC^\times$ are trivial. In particular, the corresponding Shimura varieties are not of PEL type.  These are the Shimura varieties that appear in Gan--Gross--Prasad \cite[\S27]{GGP}. The product decompositions in (\ref{proddec}) induce product decompositions of Shimura data,
\begin{equation}\label{reltogross}
\begin{aligned}
	\bigl(\wt H, \{h_{\wt H}\}\bigr) &= \bigl(Z^\BQ,\{h_{Z^\BQ}\}\bigr) \times \bigl(\Res_{F/F_0} H,\{h_H\}\bigr),\\
   \bigl(\wt G, \{h_{\wt G}\}\bigr) &= \bigl(Z^\BQ,\{h_{Z^\BQ}\}\bigr) \times \bigl(\Res_{F/F_0} G,\{h_G\}\bigr),\\
	\bigl(\wtHG, \{h_{\wtHG}\}\bigr) &= \bigl(Z^\BQ,\{h_{Z^\BQ}\}\bigr) \times \bigl(\Res_{F_0/\BQ}(H\times G), \{h_{H\times G}\}\bigr) .
\end{aligned}
\end{equation}
In \cite[\S 27]{GGP}, the Shimura variety $\Sh(\Res_{F_0/\BQ}G, \{h_{G}\})$ is considered over its reflex field, which is $F$, embedded into $\BC$ via $\varphi_0$. By contrast, in the present paper, we consider the Shimura variety $\Sh(\wt G, \{h_{\wt G}\})$ over $E$, which is the join of the reflex fields of the two factors in the product decomposition \eqref{reltogross}. The natural projections then induce morphisms of Shimura varieties,
\begin{equation}
\begin{gathered}
   \Sh\bigl(\wt H, \{h_{\wt H}\}\bigr)\to \Sh\bigl(\Res_{F_0/\BQ}H, \{h_H\}\bigr),\\
	\Sh\bigl(\wt G, \{h_{\wt G}\}\bigr)\to \Sh\bigl(\Res_{F_0/\BQ}G, \{h_{G}\}\bigr),\\
	\Sh\bigl(\wtHG, \{h_{\wtHG}\}\bigr)\to \Sh\bigl(\Res_{F_0/\BQ}(H\times G), \{h_{H\times G}\}\bigr). 
\end{gathered}
\end{equation}
\end{altenumerate}
\end{remark}

\begin{remark}
Let us finally make our Shimura varieties more concrete.  We consider the case of $\Sh(\wt G, \{h_{\wt G}\})$; the other Shimura varieties are analogous.  In terms of the product decomposition $\wt G_\BR \cong Z^\BQ_\BR \times \prod_{\varphi \in \Phi} \U(W_\varphi)$ induced by by (\ref{proddec}), the conjugacy class $\{h_{\wt G}\}$ is the product of $\{h_{Z^\BQ}\}$ with the $\U(W_\varphi)(\BR)$-conjugacy class $\{h_{G,\varphi}\}$ for each $\varphi \in \Phi$, where $h_{G,\varphi}$ denotes the $\varphi$-component of the cocharacter $h_G$ defined in Remark \ref{others}(\ref{ggp}).  The conjugacy class $\{h_{Z^\BQ}\}$ consists of a single element; so does $\{h_{G,\varphi}\}$ for $\varphi \neq \varphi_0$, since in this case $h_{G,\varphi}$ is the trivial cocharacter.  For $\varphi = \varphi_0$, in terms of the basis for $W_{\varphi_0}$ chosen above, $h_{G,\varphi_0}$ is the cocharacter
\[
   h_{G,\varphi_0} \colon z \mapsto \diag(z/\ov z, 1,\dotsc, 1).
\]
The conjugacy class $\{h_{G,\varphi_0}\}$ then identifies with the open subset $\CD_{\varphi_0} \subset \BP(W_{\varphi_0})(\BC)$ of positive-definite lines for the hermitian form (send $h \in \{h_{G,\varphi_0}\}$ to the $-1$-eigenspace of $h({\sqrt{-1}})$; we remark that $\CD_{\varphi_0}$ is also isomorphic to the open unit ball in $\BC^{n-1}$).  Thus for $K_{\wt G} \subset \wt G(\BA_f)$ an open compact subgroup, we obtain the presentation
\[
   \Sh_{K_{\wt G}}\bigl(\wt G, \{h_{\wt G}\}\bigr) (\BC) = \wt G(\BQ) \big\backslash \big[\CD_{\varphi_0}\times \wt G(\BA_f)/K_{\wt G} \big] ,
\]
where the action of $\wt G(\BQ)$ is diagonal by the translation action on $\wt G(\BA_f)$ and by the action on $\CD_{\varphi_0}$ given via
\[
   \wt G(\BQ)\to \wt G_{\ad}(\BR)\to {\rm PU}(W_{\varphi_0})(\BR) .
\]
\end{remark}

\subsection{The moduli problem over $E$}\label{moduli problem over E}
In this subsection we define  moduli problems on the category of schemes over $\Spec E$ for the three Shimura varieties above.  Since this is almost identical in each case, let us do this for $\Sh(\wt G, \{h_{\wt G}\})$, and only indicate briefly the modifications needed for the other two (mostly for $\Sh(\wtHG, \{h_{\wtHG}\})$).  We will only consider open compact subgroups $ {K_{\wt G}}\subset \wt G(\BA_f)$ which, with respect to the product decomposition \eqref{proddec}, are of the form
\begin{equation}\label{K_wtG}
   K_{\wt G}= K_{Z^\BQ} \times K_G,
\end{equation}
where $K_G\subset G(\BA_{F_0,f})$ is an open compact subgroup and where $K_{Z^\BQ}\subset {Z^\BQ}(\BA_f)$ is the unique maximal compact subgroup 
\begin{equation}\label{K_Z^BQ}
   K_{Z^\BQ} := Z^\BQ(\wh\BZ) = \bigl\{\,z\in (O_F\otimes \wh\BZ)^\times \bigm| \Nm_{F/F_0}(z)\in \wh\BZ^\times \,\bigr\}.
\end{equation}
(Note that $Z^\BQ$ is defined over $\Spec \BZ$ in an obvious way.)

Before doing this, let us first introduce an auxiliary moduli problem $M_0$ over $E$. In fact, for use in the construction of integral models later, we will define a moduli problem $\CM_0$ over $O_E$ whose generic fiber will be $M_0$. For a locally noetherian $O_E$-scheme $S$, we define $\CM_0(S)$ to be the groupoid of triples $(A_0, \iota_0, \lambda_0)$, where 
\begin{altitemize}
\item $A_0$ is an abelian variety over $S$ with an $O_F$-action $\iota_0\colon O_F\to \End(A_0)$, which satisfies the Kottwitz condition of signature $((0, 1)_{\varphi\in\Phi})$, i.e., 
\begin{equation}\label{KottwitzA0}
   \charac\bigl(\iota(a)\mid\Lie A_0\bigr) 
	   = \prod_{\varphi\in\Phi}\bigl(T-\ov\varphi(a)\bigr)
		\quad\text{for all}\quad
		a\in O_F;
\end{equation}
and
\item $\lambda_0$ is a principal polarization of $A_0$ whose Rosati involution induces on $O_F$, via $\iota_0$, the nontrivial Galois automorphism of $F/F_0$ .
\end{altitemize}
A morphism between two objects $(A_0,\iota_0,\lambda_0)$  and $(A'_0,\iota'_0,\lambda'_0)$ is an $O_F$-linear isomorphism $\mu_0\colon A_0\to A'_0$ under which $\lambda_0'$ pulls back to $\lambda_0$. Then $\CM_0$ is a Deligne--Mumford stack, finite and \'etale over $\Spec O_E$, cf.\ \cite[Prop.\ 3.1.2]{Ho-kr}.\footnote{Strictly speaking loc.\ cit.\ is stated only for CM algebras and CM types $\Phi$ which are of a rather special sort, but the proof relies only on the very general Th.\ 2.2.1 in \cite{Ho-kr} and applies equally well to our situation.} (In fact, in loc.\ cit.,\ $\CM_0$ is defined over the ring of integers $O_{E_\Phi}$ in the reflex field of $\Phi$, which is contained in $O_E$, cf.\ Remark \ref{others}(\ref{Z^Q}).)  We let $M_0$ denote the generic fiber of $\CM_0$. 

Unfortunately, it may happen that $\CM_0$ is empty. In order to circumvent this problem, we also introduce the following variant of $\CM_0$, cf.\ \cite[Def.\ 3.1.1]{Ho-kr}. Fix a non-zero ideal $\fka$ of $O_{F_0}$. Then we introduce the Deligne--Mumford stack $\CM_0^\fka$  of triples $(A_0, \iota_0, \lambda_0)$ as before, except that we replace the condition that $\lambda_0$ is principal by the condition that $\ker \lambda_0=A_0[\fka]$. Then, again, $\CM_0^\fka$ is finite and \'etale over $\Spec O_E$, cf.\ \cite[Prop.\ 3.1.2]{Ho-kr}.  

If $\CM_0^\fka$ is non-empty, then the complex fiber $\CM^\fka_0 \otimes_{O_E} \BC$ is isomorphic to a finite number of copies of $\Sh_{K_{Z^\BQ}}(Z^\BQ, \{h_{Z^\BQ}\})$.  More precisely, let $\CL_\Phi^\fka$ be the set of isomorphism classes of pairs $(\Lambda_0,\aform_0)$ consisting of a locally free $O_F$-module $\Lambda_0$ of rank one equipped with a non-degenerate alternating form $\aform_0\colon \Lambda_0 \times \Lambda_0 \to \BZ$ such that $\la ax, y \ra_0 = \la x, \ov a y\ra_0$ for all $x,y \in \Lambda_0$ and $a \in O_F$, such that $x \mapsto \la \sqrt\Delta x, x \ra_0$ is a negative-definite quadratic form on $\Lambda_0$, and such that the dual lattice $\Lambda_0^\vee$ of $\Lambda_0$ inside $\Lambda_0 \otimes_\BZ \BQ$ equals $\fka\i \Lambda_0$.  Then $\CL_\Phi^\fka$ is a finite set.\footnote{Indeed, let $\fkd$ denote the different of $F/\BQ$, and set $\fkb := \sqrt\Delta \fkd\i\fka$. Let $\CH_{\ll 0}^\fkb$ be the set of isomorphism classes of hermitian $O_F$-modules $(\Lambda_0,\sform_0)$ of rank one (with $\sform_0$ valued in $F$) such that the $F$-hermitian space $\Lambda_0 \otimes_\BZ \BQ$ has signature $(0,1)$ at every archimedean place of $F_0$, and such that the dual lattice $\Lambda_0^*$ of $\Lambda_0$ inside $\Lambda_0 \otimes_\BZ \BQ$ with respect to $\sform_0$ equals $\fkb\i \Lambda_0$.  Then $(\Lambda_0,\sform_0) \mapsto (\Lambda_0, \tr_{F/\BQ}\sqrt\Delta\i \sform_0)$ defines a bijection $\CH_{\ll 0}^\fkb \isoarrow \CL_\Phi^\fka$.  It is easy to deduce from finiteness of the class group for $F$ and the product formula for hermitian spaces that $\CH_{\ll 0}^\fkb$ is finite.  Note that in the particular situation of \cite{KR-U2}, where $F_0 = \BQ$ and $\Delta$ is the discriminant of $F$, one has $\fka = \fkb$, but $\fka$ and $\fkb$ certainly may differ in general.} If $(\Lambda_0,\aform_0)$ defines a class in $\CL_\Phi^\fka$, then using $\Phi$ to identify $F \otimes_\BQ \BR \isoarrow \BC^\Phi$, and hence to define a complex structure on $\Lambda_0 \otimes_\BZ \BR$, the quotient $(\Lambda_0 \otimes_\BZ \BR) / \Lambda_0$ is a complex torus which defines a $\BC$-point on $\CM_0^\fka$.  On the other hand, given a $\BC$-point $(A_0,\iota_0,\lambda_0)$ on $\CM_0^\fka$, the first homology group $H_1(A_0(\BC),\BZ)$, endowed with its Riemann form induced by $\lambda_0$, defines a class in $\CL_\Phi^\fka$.  In this way $\CL_\Phi^\fka$ is in bijection with the isomorphism classes of objects in $\CM_0^\fka(\BC)$.

These inverse processes give rise to a decomposition of $\CM_0^\fka$ into a union of Shimura varieties, in terms of the
following equivalence relation on $\CL_\Phi^\fka$: define $\Lambda_0 \sim \Lambda_0'$ if $\Lambda_0 \otimes_\BZ \wh\BZ$ and $\Lambda_0' \otimes_\BZ \wh\BZ$ are $\wh O_F$-linearly similar up to a factor in $\wh\BZ^\times$, and if $\Lambda_0 \otimes_\BZ \BQ$ and $\Lambda_0' \otimes_\BZ \BQ$ are $F$-linearly similar up to a factor in $\BQ^\times$.  We have the following.

\begin{lemma}\label{decompM0 lem}
The stack $\CM_0^\fka$ admits a decomposition into open and closed substacks,
\begin{equation}\label{decompM0}
   \CM_0^\fka = \coprod_{\xi \in \CL_\Phi^\fka/\sim} \CM_0^{\fka,\xi},
\end{equation}
induced by sending an object $(A_0,\iota_0,\lambda_0)$ in $\CM_0^\fka(\BC)$ to the $\sim$-class of $H_1(A_0(\BC),\BZ)$ endowed with its Riemann form.
\end{lemma}

\begin{proof}
Since $\CM_0^\fka$ is finite and \'etale over $\Spec O_E$, it suffices to demonstrate the asserted decomposition in the generic fiber $M_0^\fka$,
\begin{equation}\label{M0a decomp}
   M_0^\fka = \coprod_{\xi \in \CL_\Phi^\fka/\sim} M_0^{\fka,\xi}.
\end{equation}
To establish \eqref{M0a decomp},  again by \'etaleness, it suffices to show that the $\sim$-class of $H_1(A_0(\BC),\BZ)$ is constant on the $\Aut(\BC/E)$-orbit of each object $(A_0,\iota_0,\lambda_0) \in M_0^\fka(\BC)$.  So let $\sigma \in \Aut(\BC/E)$.  Then $\sigma$ identifies the Tate modules $\wh T (A_0) \isoarrow \wh T (A_0^\sigma)$ compatibly with the Weil pairings on the two sides up to the similitude factor given by the image of $\sigma$ under the cyclotomic character $\Aut(\BC) \to \wh\BZ^\times$.  Hence, by the compatibility between singular homology and the Tate module, $H_1(A_0(\BC),\wh\BZ) $ and $H_1(A_0^\sigma(\BC),\wh\BZ)$ are $\wh O_F$-linearly similar up to a factor in $\wh\BZ^\times$.  On the other hand, the CM abelian variety $A_0$ is isomorphic to one defined over the algebraic closure of $E$ in $\BC$, and hence by \cite[Th.\ A.2.3.5]{CCO} there exists a quasi-isogeny of complex abelian varieties $A_0 \to A_0^\sigma$ pulling $\lambda_0^\sigma$ back to a $\BQ_{>0}$-multiple of $\lambda_0$.  Hence $H_1(A_0(\BC),\BQ)$ and $H_1(A_0^\sigma(\BC),\BQ)$ are $F$-linearly similar up to a factor in $\BQ^\times$, as desired.

\end{proof}

If $\CM_0^\fka \neq \emptyset$, then for fixed $\xi \in \CL_\Phi^\fka/{\sim}$, the complex fiber $\CM_0^{\fka,\xi} \otimes_{O_E} \BC$ of the summand in \eqref{decompM0} is canonically isomorphic to the Shimura variety $\Sh_{K_{Z^\BQ}}(Z^\BQ, \{h_{Z^\BQ}\})$; this can be checked similarly to \cite[Props.\ 4.3, 4.4]{KR-U2}, or see \cite[4.12, 4.18--4.20]{De}.

\begin{remark}
\begin{altenumerate}
\item\label{fka rel pr} Given finitely many prime numbers $p_1, \ldots, p_r$, there always exists an ideal $\fka$ relatively prime to $p_1,\ldots, p_r$ such that $\CM_0^\fka$ is non-empty.
\item\label{rem exa satisfied} If $F/F_0$ is ramified at some finite place, then $\CM_0^\fka$ is non-empty for any $\fka$, cf.\ \cite[proof of Prop.\ 3.1.6]{Ho-kr}.  A special case of this is when $F= K F_0$, where $K$ is an imaginary quadratic field and  the discriminants of $K/\BQ$ and $F_0/\BQ$ are relatively prime. We further remark that in the context of the  \emph{global} integral models we define in Section \ref{section global} below, we will eventually impose conditions on the hermitian spaces $W$ and $W^\flat$ that force $F/F_0$ to be ramified at some finite place, cf.\ Remark \ref{latticecond herm}.
\item When $F_0 = \BQ$, the set $\CL_\Phi^\fka/{\sim}$ is a singleton, so that the decomposition \eqref{decompM0} is trivial. 
\item The decomposition \eqref{decompM0} in fact holds over $\Spec O_{E_\Phi}$, via the same proof.  We have worked over $\Spec O_E$ only for notational simplicity throughout the rest of the paper.
\item Let $(A_0,\iota_0,\lambda_0)\in\CM_0^\fka (k)$, where $k$ is a perfect field of positive characteristic $p$. Here is the recipe to determine on which summand $\CM_0^{\fka, \xi}$ this point lies. Let $(\wt A_0,\wt \iota_0,\wt\lambda_0)$ be the canonical lift of $(A_0,\iota_0,\lambda_0)$ over the ring of $p$-typical Witt vectors $W(k)$.  Then $(\wt A_0,\wt \iota_0,\wt\lambda_0)$ is defined over a subring $R \subset W(k)$ which is finitely generated over $\BZ$, and after choosing an embedding $R \to \BC$, we obtain a point of $\CM_0^{\fka}(\BC)$ which determines $\xi\in \CL_\Phi^\fka/{\sim}$ as in Lemma \ref{decompM0 lem}. This is the label of the summand containing $(A_0,\iota_0,\lambda_0)$.
\end{altenumerate}\label{rem exa}
\end{remark}

In the following, we fix an ideal $\fka$ such that $\CM_0^\fka$ is non-empty. We continue to denote the generic fiber of this stack by $M_0^\fka$. We also fix $\xi\in \CL_\Phi^\fka/{\sim}$. We now define a groupoid $M_{K_{\wt G}}(\wt G)$ fibered over the category of locally noetherian schemes over $E$. Here, to lighten  notation, we have suppressed the ideal $\fka$ and the element $\xi$. For such a scheme $S$, the objects of $M_{K_{G}}(\wt G)(S)$ are collections $(A_0,\iota_0,\lambda_0,A,\iota,\lambda,\ov\eta)$, where
\begin{altitemize}
\item $(A_0, \iota_0, \lambda_0)$ is an object of $M^{\fka, \xi}_0(S)$;
\item $A$ is an abelian scheme over $S$ with an $F$-action $\iota\colon F\to\End^\circ(A)$ satisfying the Kottwitz condition of signature $((1, n-1)_{\varphi_0}, (0, n)_{\varphi\in\Phi\ssm\{\varphi_0\}})$, i.e.,  
\begin{equation}\label{kottwitzF}
   \charac\bigl(\iota(a)\mid\Lie A\bigr) 
	   = \bigl(T-\varphi_0(a)\bigr)\bigl(T-\varphi_0(\bar a)\bigr)^{n-1}\prod_{\varphi\in\Phi\ssm \{\varphi_0\}}\bigl(T-\varphi(\bar a)\bigr)^n
	\quad\text{for all}\quad
	 a\in O_F;
\end{equation}
\item $\lambda$ is a polarization $A \to A^\vee$ whose Rosati involution induces on $F$, via $\iota$, the nontrivial Galois automorphism of $F/F_0$; and
\item $\ov\eta$ is a ${K_{\wt G}}$-level structure, by which we mean a ${K_{G}}$-orbit (equivalently, a ${K_{\wt G}}$-orbit, where $K_{\wt G}$ acts through its projection $K_{\wt G}\to K_G$) of $\BA_{F, f}$-linear isometries
\begin{equation}\label{level str}
   \eta\colon \wh V(A_0, A)\simeq -W\otimes_F\BA_{F, f}.
\end{equation}
\end{altitemize}
Here we regard the right-hand side of \eqref{level str} as a constant smooth $\BA_{F,f}$-sheaf on $S$, and
\begin{equation*}
   \wh V(A_0,A) := \Hom_{F}\bigl(\wh V(A_0), \wh V(A)\bigr) ,
\end{equation*}
endowed with its natural $\BA_{F, f}$-valued hermitian form $h_A$, defined by
\begin{equation}\label{h_A def}
   h_A(x, y) := \lambda_0^{-1}\circ y^\vee\circ\lambda\circ x\in\End_{\BA_{F, f}}\bigl(\wh V(A_0)\bigr)=\BA_{F, f},
\end{equation}
cf.\ \cite[\S2.3]{KR-U2}.  Over each connected component of $S$, upon choosing a geometric point $\ov s \to S$, we may view $\wh V (A_0, A)$ as the space $\Hom_{\BA_{F,f}}(\wh V(A_{0,\ov s}), \wh V(A_{\ov s}))$ endowed with its $\pi_1(S,\ov s)$-action, and in this way we require the orbit $\ov \eta$ to be $\pi_1(S,\ov s)$-invariant; comp.\ \cite[Rem.\ 4.2]{KR-U2}.  We note that the notion of level structure is independent of the choice of $\ov s$ on each connected component of $S$.

A morphism between two objects $(A_0,\iota_0,\lambda_0,A,\iota,\lambda,\ov\eta)$ and $(A'_0,\iota'_0,\lambda'_0,A',\iota',\lambda',\ov\eta')$ is  given by an  isomorphism $\mu_0\colon (A_0, \iota_0, \lambda_0) \isoarrow (A'_0, \iota'_0, \lambda'_0)$ in $M_0^{\fka,\xi}(S)$  and an $F$-linear quasi-isogeny $\mu\colon A\to A'$ pulling $\lambda'$ back to $\lambda$ and $\ov\eta'$ back to $\ov\eta$. 

\begin{remark}
\begin{altenumerate}
\item\label{Kottwitzr}
Let $r \colon \Hom(F,\BC) \to \{0,1,n-1,n\}$, $\varphi \mapsto r_\varphi$, be the function defined by
\begin{equation}\label{r main body}
\begin{aligned}
   r_\varphi :=
	\begin{cases}
		1,  &  \varphi = \varphi_0;\\
		0,  &  \varphi \in \Phi \ssm \{\varphi_0\};\\
		n-r_{\ov\varphi},  &  \varphi \notin \Phi.
	\end{cases}
\end{aligned}
\end{equation}
Then the Kottwitz condition \eqref{kottwitzF} is
\[
   \charac\bigl(\iota(a)\mid\Lie A\bigr) 
	   = \prod_{\varphi\in\Hom(F,\BC)} \bigl(T-\varphi(a)\bigr)^{r_\varphi} 
		\quad\text{for all}\quad
		a \in O_F,
\]
comp.\ \cite[(8.4)]{RZ14}.
\item The intervention of the sign on the right-hand side of \eqref{level str} arises from our conventions on the signatures of $W$ at the archimedean places given at the beginning of Section \ref{Sh data} and on the signatures in the Kottwitz conditions \eqref{KottwitzA0} and \eqref{kottwitzF}, cf.\ the proof of Proposition \ref{moduliforshim} below.  If one took the opposite signatures for $W$ and in the two Kottwitz conditions, then no sign would be needed.
\end{altenumerate}\label{M remarks}
\end{remark}

The following proposition is a special case of Deligne's description of Shimura varieties of PEL type.

\begin{proposition}\label{moduliforshim}
$M_{K_{\wt G}}(\wt G)$ is a Deligne--Mumford stack  smooth  of relative dimension $n-1$ over $\Spec E$.  The coarse moduli scheme of $M_{K_{\wt G}}(\wt G)$ is a quasi-projective scheme over $\Spec E$, naturally isomorphic to the canonical model of $\Sh_{K_{\wt G}}(\wt G, \{h_{\wt G}\})$. For ${K_{\wt G}}$ sufficiently small, the forgetful morphism $M_{K_{\wt G}}(\wt G)\to M^{\fka, \xi}_0$ is relatively representable. 
\end{proposition}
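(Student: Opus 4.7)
The plan is to follow Deligne's standard account of PEL Shimura varieties, with attention to the auxiliary factor $\CM_0^{\fka,\xi}$ that handles both the central torus $Z^\BQ$ and the reflex field issue flagged in the Introduction.

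First I would deal with representability. Since $\CM_0^{\fka,\xi}$ is already known to be finite and \'etale over $\Spec O_E$ (hence $M_0^{\fka,\xi}$ over $E$), it suffices to analyze the forgetful morphism $M_{K_{\wt G}}(\wt G)\to M_0^{\fka,\xi}$. After fixing $(A_0,\iota_0,\lambda_0)$ Zariski-locally, the remaining datum $(A,\iota,\lambda,\ov\eta)$ is a relative moduli problem of polarized abelian schemes with $O_F$-action, satisfying the Kottwitz signature condition \eqref{kottwitzF} and carrying $K_G$-level structure. By Mumford's construction of moduli of polarized abelian varieties together with the rigidification by $\ov \eta$, this is a Deligne--Mumford stack, and for $K_G$ (hence $K_{\wt G}$) sufficiently small the level structure kills all nontrivial automorphisms, yielding relative representability.

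Second, smoothness of relative dimension $n-1$ over $E$ is a standard Grothendieck--Messing / Kodaira--Spencer computation. In characteristic zero, the deformations of $(A,\iota,\lambda)$ preserving the signature condition are controlled by $\Hom$ between the $\varphi$- and $\ov\varphi$-eigenspaces of $\Lie A$, so the only nontrivial contribution comes from $\varphi_0$, where it is of dimension $r_{\varphi_0}\cdot r_{\ov\varphi_0}=1\cdot(n-1)=n-1$; the $\CM_0^{\fka,\xi}$ factor is \'etale and contributes $0$. Quasi-projectivity of the coarse moduli follows from Mumford's GIT construction applied to the polarization $\lambda$, together with the fact that the $O_F$-action, the auxiliary $(A_0,\iota_0,\lambda_0)$, and the level structure cut out a locally closed subscheme.

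The main obstacle is the identification with the canonical model of $\Sh_{K_{\wt G}}(\wt G,\{h_{\wt G}\})$ over $E$. Over $\BC$, given a point $(A_0,\iota_0,\lambda_0,A,\iota,\lambda,\ov\eta)$, the $\BQ$-homology $H_1(A_0,\BQ)$ determines an element of $\Sh_{K_{Z^\BQ}}(Z^\BQ,\{h_{Z^\BQ}\})$ via the class $\xi$, while $\Hom_F(H_1(A_0,\BQ),H_1(A,\BQ))$ inherits from $\lambda_0,\lambda$ an $F/F_0$-hermitian form of the correct signature, hence is isometric to $-W$; the level structure $\ov\eta$ provides the required $K_{\wt G}$-orbit of trivializations, and the Hodge structure on $H_1(A,\BR)$ produces a point of the symmetric domain $\CD_{\varphi_0}$. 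One checks $\wt G(\BQ)$-equivariance of the resulting map and uses the fiber product description in Remark \ref{others}\eqref{Z^Q} to get a complex-analytic isomorphism with $\Sh_{K_{\wt G}}(\wt G,\{h_{\wt G}\})(\BC)$. The descent from $\BC$ to $E$ is where I expect the real work: for the $Z^\BQ$-component one invokes the main theorem of complex multiplication together with the fact that $\Aut(\BC/E)$ preserves the similarity class $\xi$ (so that $\CM_0^{\fka,\xi}$ is indeed defined over $E$), and for the $G^\BQ$-component one follows Kottwitz's argument \cite{K} identifying the Galois action on special points with the reciprocity law for $\Sh(\wt G)$. Crucially, $E$ is precisely the join of the reflex field of $\Phi$ and the reflex field of the cocharacter $h_{G^\BQ}$, as recorded in \eqref{defE}, which is exactly what is needed for the PEL datum to descend and match the canonical model.
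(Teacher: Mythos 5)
Your proposal is correct and follows essentially the same route as the paper: the paper's own proof only exhibits the complex analytic map $M_{K_{\wt G}}(\wt G)\otimes_E\BC\to\Sh_{K_{\wt G}}(\wt G,\{h_{\wt G}\})$ via exactly the construction you describe (pass to the product decomposition of Remark \ref{others}\eqref{ggp}, map to the $Z^\BQ$-factor through $M_0^{\fka,\xi}$, and to the $\Res_{F_0/\BQ}G$-factor by equipping $V(A_0,A)=\Hom_F(H_1(A_0,\BQ),H_1(A,\BQ))$ with the hermitian form induced by $\lambda_0,\lambda$, invoking the Hasse principle and the complex structure), leaving the DM-stack/smoothness/representability assertions and the descent to $E$ to the standard Deligne--Kottwitz PEL machinery that you have correctly filled in.
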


\begin{proof}
We will content ourselves with exhibiting a map, which turns out to be an  isomorphism,
$$
M_{K_{\wt G}}(\wt G)\otimes_{E}\BC \to  \Sh_{K_{\wt G}}\bigl(\wt G, \{h_{\wt G}\}\bigr) .
$$
By Remark \ref{others}(\ref{ggp}), the target is the product of Shimura varieties
\[
   \Sh_{K_{Z^\BQ}}\bigl(Z^\BQ,\{h_{Z^\BQ}\}\bigr) \times \Sh_{K_G}\bigl(\Res_{F/F_0} G,\{h_G\}\bigr).
\]
For the map into the first factor, we simply compose the forgetful map $M_{K_{\wt G}}(\wt G)\otimes_{E}\BC \to M_0^{\fka, 
\xi}\otimes_{E}\BC$ with the isomorphism $M_0^{\fka, \xi}\otimes_{E}\BC \simeq \Sh_{K_{Z^\BQ}}(Z^\BQ,\{h_{Z^\BQ}\})$.

To explain the map into the second factor, let $(A_0,\iota_0,\lambda_0,A,\iota,\lambda,\ov\eta)$ be a $\BC$-valued point of $M_{K_{\wt G}}(\wt G)$. Let $\CH := H_1(A, \BQ)$ and $\CH_0 := H_1(A_0, \BQ)$. The polarization  $\lambda$ endows $\CH$ with a $\BQ$-valued alternating form $\aform$ satisfying $\la \iota(a)x,y\ra = \la x, \iota(\ov a)y\ra$ for all $a \in F$, and such that the induced form $x,y \mapsto \la \sqrt{-1} \cdot x, y \ra$ on $\CH \otimes_\BQ \BR$ is symmetric and positive definite, where multiplication by $\sqrt{-1}$ is defined in terms of the right-hand side of the canonical isomorphism $\CH_\BR \cong \Lie A$.  Similarly, $\lambda_0$ endows $\CH_0$ with a Riemann form $\aform_0$.

Let $V(A_0, A):=\Hom_F(\CH_0, \CH)$. Then $V(A_0, A)$ is an $F$-vector space of the same dimension as $W$, and we make it into an $F/F_0$-hermitian space by defining the pairing $(\alpha,\beta)$ to be the composite
\[
   \xymatrix@R=0ex{
      \CH_0 \ar[r]^-\alpha  &  \CH \ar[r]  &  \CH^\vee \ar[r]^-{\beta^\vee}  &  \CH_0^\vee \ar[r]  &  \CH_0,\\
        &  x \ar@{|->}[r]  &  \la -, x \ra
   }
\]
where the checks denote $\BQ$-linear duals and the last arrow is the inverse of $y \mapsto \la -, y \ra_0$; this composite is an $F$-linear endomorphism of the one-dimensional $F$-vector space $\CH_0$, and hence identifies with an element in $F$.  Clearly $V(A_0, A) \otimes_F \BA_{F,f} \cong \wh V(A_0, A)$ as hermitian spaces. Hence, by the existence of a level structure, $V(A_0, A) \otimes_F \BA_{F,f} \simeq -W \otimes_F \BA_{F,f}$.  Furthermore, it is easy to see that the Kottwitz conditions \eqref{KottwitzA0} and \eqref{kottwitzF} imply that $V(A_0, A)_\varphi$ has signature $(n-1,1)$ if $\varphi=\varphi_0$ and $(n,0)$ if $\varphi\in\Phi\ssm\{\varphi_0\}$. Hence, by the Hasse principle for hermitian spaces, $V(A_0, A)$ and $-W$ are isomorphic. Choose an isometry $j\colon V(A_0, A)\isoarrow -W$.  Using the complex structures on $\CH_{0,\BR}$ and $\CH_\BR$, let $z \in \BC^\times$ act on $V(A_0, A)$ by sending the $F$-linear map $\alpha$ to $z\alpha z\i$. This defines a homomorphism $\BC^\times \to \U(V(A_0, A))(\BR)$, and composing this with $j_*\colon \U(V(A_0, A))(\BR) \isoarrow \U(-W)(\BR) = \U(W)(\BR)$ gives an element in $\{h_G\}$. The level structure $\ov \eta$ corresponds to an  element of $(\Res_{F_0/\BQ} G)(\BA_f)/K_{G}$, and eliminating the choice of $j$ corresponds to dividing out by the action of $(\Res_{F_0/\BQ} G)(\BQ)$. 
\end{proof}

An analogous description holds for the model $M_{K_{\wt H}}(\wt H)$ of the Shimura variety $\Sh_{K_{\wt H}}(\wt H, \{h_{\wt H}\})$ (replace $n$ by $n-1$, and $W$ by $W^\flat$).  

There is also an analog for the Shimura variety $\Sh_{K_{\wtHG}}(\wtHG, \{h_{\wtHG}\})$. In this case we take the level subgroup to be of the form
\begin{equation}\label{K_wtHG decomp}
   K_{\wtHG} = K_{Z^\BQ} \times K_H \times K_G,
\end{equation}
where as always $K_{Z^\BQ}$ is the subgroup \eqref{K_Z^BQ}, and $K_H \subset H(\BA_{F_0,f})$ and $K_G \subset G(\BA_{F_0,f})$ are open compact subgroups.  The value of the corresponding moduli functor $M_{K_{\wtHG}}(\wtHG)$ on a locally noetherian scheme $S$ over $E$ is the set of isomorphism classes of tuples
\[
   \bigl(A_0,\iota_0,\lambda_0,A^\flat,\iota^\flat,\lambda^\flat, A, \iota,\lambda, \ov{(\eta^\flat, \eta)}\bigr) ,
\]
 where the last entry is a pair of  $\BA_{F, f}$-linear isometries
\[
   \eta^\flat \colon \wh V(A_0, A^\flat)\simeq -W^\flat\otimes_F\BA_{F, f}
	\quad\text{and}\quad
	\eta\colon \wh V(A_0, A)\simeq -W\otimes_F\BA_{F, f} ,
\]
modulo the action of ${K_{H}\times K_G}$.  In other words, the moduli functor $M_{K_{\wtHG}}(\wtHG)$ is simply the fibered product $M_{K_{\wt H}}(\wt H) \times_{M^{\fka,\xi}_0} M_{K_{\wt G}}(\wt G)$.

In terms of these moduli problems, the injective morphisms \eqref{Sh inj} can be described as follows.  After possibly scaling  the special vector $u \in W$, we may and do assume that the norm $(u,u)$ is in $O_{F_0}$. Further assume that ${K_{H}}\subset H(\BA_{F_0,f})\cap {K_{ G}}$. Then the first morphism of Shimura varieties in \eqref{Sh inj} arises by base change from $E$ to $\BC$ from the functor morphism
\begin{equation}\label{modembHG}
   \begin{gathered}
   \xymatrix@R=0ex{
	   M_{K_{\wt H}} (\wt H) \ar[r]  &  M_{K_{\wt G}}(\wt G)\\
		\bigl( A_0,\iota_0,\lambda_0,A^\flat,\iota^\flat,\lambda^\flat,\ov\eta^\flat \bigr) \ar@{|->}[r]
		   & \bigl( A_0,\iota_0,\lambda_0,A^\flat \times  A_0,\iota^\flat \times \iota_0,\lambda^\flat \times \lambda_0(u),\ov{\eta}\bigr) .
	}
   \end{gathered}
\end{equation}
Here $\lambda_0(u)= -(u,u) \lambda_0$, which is indeed a polarization by \cite[Lem.\ 1.4]{Zi} since $-(u,u)$ is totally positive. Furthermore, the isomorphism $\eta\colon \wh V(A_0,A^\flat\times A_0) \simeq -W \otimes_F \BA_{F, f} \bmod K_{G}$ is given, with respect to the decomposition
\[
  \Hom_F\bigl( \wh V(A_0), \wh V(A^\flat \times A_0)\bigr) 
     \cong \Hom_F\bigl( \wh V(A_0), \wh V(A^\flat) \oplus \wh V(A_0)\bigr)
	  \cong \wh V(A_0,A^\flat) \oplus { \BA}_{F, f},
\]
by the trivialization
\[
   \eta^\flat\colon \wh V(A_0,A^\flat) \simeq -W^\flat \otimes_F \BA_{F, f} \bmod K_{ H}
\]
in the first summand, and by identifying the basis element $1$ in the second summand with $u \otimes 1 \in -W \otimes_F \BA_{F,f}$.  The morphism \eqref{modembHG} is finite and unramified.
 
The second morphism in \eqref{Sh inj} then arises from the graph morphism of \eqref{modembHG},
\begin{equation}\label{modembHHG}
   M_{K_{\wt H}} (\wt H) \to M_{K_{\wtHG}}(\wtHG) = M_{K_{\wt H}}(\wt H) \times_{M^{\fka,\xi}_0} M_{K_{\wt G}}(\wt G).
\end{equation}
The morphism \eqref{modembHHG} is a closed embedding.

\begin{remark}\label{toroidcomp}
For $F_0 \neq \BQ$, the Shimura varieties above are compact.  For $F_0 = \BQ$, it may happen that the Shimura variety  $M_{K_{\wtHG}}(\wtHG)$ is non-compact. In fact, this will be automatic when $n\geq 3$.  In this case, we will need to use its canonical toroidal compactification, cf.\ \cite[\S 2]{Ho-kr2}.  
\end{remark}

\section{Semi-global integral models}\label{section semi-global}
Fix a prime number $p$ and an embedding $\wt\nu\colon \ov\BQ \to \ov\BQ_p$. This determines a $p$-adic place $\nu$ of $E$ and, via $\varphi_0$, places $v_0$ of $F_0$ and $w_0$ of $F$.  In this section we are going to define ``semi-global'' integral models over $O_{E,(\nu)}$ of the moduli spaces introduced in Section \ref{Shimura varieties}, in the case of various level structures at $p$.  We denote by $\CV_p$ the set of places $v$ of $F_0$ over $p$.  Throughout this section, we assume that the ideal $\fka$ occurring in the definition of $\CM_0^\fka$ is prime to $p$, cf.\ Remark \ref{rem exa}(\ref{fka rel pr}). We also fix $\xi\in \CL_\Phi^\fka/{\sim}$.

\subsection{Hyperspecial level at $v_0$}\label{subsec hyper} 
In this case we assume that the place $v_0$ is unramified over $p$, and that $v_0$ either splits in $F$ or is inert in $F$ and the hermitian space $W_{v_0}$ is split. We also assume $p \neq 2$ if there is any $v \in \CV_p$ which is non-split in $F$.\footnote{In the construction of semi-global models in \cite[\S\S4--5]{RSZ4}, this assumption is relaxed to the assumption that, when $p = 2$, all $v \in \CV_2$ are unramified in $F$.} We are going to define smooth models over $O_{E,(\nu)}$. 

For each $v\in \CV_{p}$, choose a vertex lattice $\Lambda_{v}$ in the $F_v/F_{0,v}$-hermitian space $W_{v}$.  By our assumptions on $v_0$ in the previous paragraph, we may and do take $\Lambda_{v_0}$ to be self-dual. Recalling the subgroup $K_{\wt G} = K_{Z^\BQ} \times K_G$ from \eqref{K_wtG}, we take $K_G$ to be of the form
\[
   K_G = K_G^p \times K_{G,p},
\]
where $K_G^p \subset G(\BA_{F_0,f}^p)$ is arbitrary, and where the level subgroup at $p$ is the product
\begin{equation}\label{prodlocK}
   K_{G,p} :=\prod_{v\in \CV_p}K_{G, v} \subset G(F_0\otimes \BQ_p)=\prod_{v\in \CV_p}G(F_{0, v}),
\end{equation}
with $K_{G, v}$ the stabilizer of $\Lambda_v$ in $G(F_{0,v})$.

We formulate a moduli problem over $\Spec O_{E,(\nu)}$ as follows.  To each locally noetherian $O_{E,(\nu)}$-scheme $S$, we associate the groupoid of tuples $(A_0,\iota_0,\lambda_0,A,\iota,\lambda,\ov\eta^p)$, where $(A_0,\iota_0,\lambda_0)$ is an object of $\CM^{\fka, \xi}_0(S)$.  Furthermore:
\begin{altitemize}
\item $(A,\iota)$ is an abelian scheme over $S$ with an $O_{F}\otimes\BZ_{(p)}$-action $\iota$ satisfying the Kottwitz condition \eqref{kottwitzF} of signature $((1, n-1)_{\varphi_0}, (0, n)_{\varphi\in\Phi\ssm\{\varphi_0\}})$.
\item $\lambda$ is a polarization on $A$ whose Rosati involution induces on $O_{F}\otimes\BZ_{(p)}$ the non-trivial Galois automorphism of $F/F_0$, subject to the following condition. First note that the action of the ring $O_{F_0}\otimes\BZ_p \cong \prod_{v \in \CV_p} O_{F_0,v}$ on the $p$-divisible group $A[p^\infty]$ induces a decomposition of $p$-divisible groups, 
\begin{equation}\label{decofpdivgp}
   A[p^\infty] =  \prod_{v \in \CV_p} A[v^\infty] .
\end{equation}
Since $\Ros_\lambda$ is trivial on $O_{F_0}$, $\lambda$ induces a polarization $\lambda_v \colon A[v^\infty] \to A^\vee[v^\infty] \cong A[v^\infty]^\vee$ for each $v$.
The condition we impose is that $\ker\lambda_v$ is contained in $A[\iota(\pi_v)]$ of rank $\#(\Lambda_v^*/ \Lambda_v)$ for each $v \in \CV_p$.
\item $\ov\eta^p$  is a $K_G^p$-orbit of $\BA_{F,f}^p$-linear isometries
\begin{equation}\label{levelprimetop}
   \eta^p\colon \wh V^p(A_0,A) \simeq -W \otimes_F \BA_{F,f}^p,
\end{equation}
where
\[
   \wh V^p(A_0,A) := \Hom_{F}\bigl(\wh V^p(A_0), \wh V^p(A)\bigr),
\]
and the hermitian form on $\wh V^p(A_0,A)$ is the evident prime-to-$p$ analog of (\ref{h_A def}).
\end{altitemize}
We  also impose for each $v\neq v_0$ over $p$ the \emph{sign condition} and the \emph{Eisenstein condition}. Let us explain these conditions. 

The sign condition at $v$ is only non-empty when  $v$ does not split in $F$, in which case it demands that at every point $s$ of $S$,
\begin{equation}\label{signcond}
   \inv^{r}_v(A_{0,s}, \iota_{0,s}, \lambda_{0,s}, A_s, \iota_s, \lambda_s) = \inv_v(-W_v) .
\end{equation}
Here the left-hand side is the sign factor defined in \eqref{deftrmodsign} and \eqref{defmodsign} in Appendix \ref{sign invariants} (in the definition of (\ref{defmodsign}), one may use the embedding $\wt\nu$ fixed at the beginning of this section). The right-hand side is the Hasse invariant of the hermitian space $-W_v$ defined above in (\ref{Hasse def}).  Note that by Proposition \ref{inv_v^r const}, the left-hand side of \eqref{signcond} is a locally constant function in $s$.

The Eisenstein  condition is only non-empty  when the base scheme $S$ has non-empty special fiber. In this case, we may even base change via $\wt\nu\colon O_{E, (\nu)}\to \ov\BZ_p$ (the ring of integers in $\ov \BQ_p$) and pass to completions and assume that $S$ is a scheme over $\Spf \ov\BZ_p$. Similarly to \eqref{decofpdivgp}, there is a decomposition of the $p$-divisible group $A[p^\infty]$,  
\begin{equation}\label{decomp by w}
   A[p^\infty] =  \prod_{w\mid p} A[w^\infty] ,
\end{equation}
where the indices range over the places $w$ of $F$ lying over $p$. Since we assume that $p$ is locally nilpotent on $S$, there is a natural isomorphism 
\begin{equation*}
   \Lie A\cong \Lie A[p^\infty]=\bigoplus_{w\mid p}  \Lie A[w^\infty].
\end{equation*}
For each place $w$, by the Kottwitz condition (\ref{kottwitzF}), the $p$-divisible group $A[w^\infty]$ is of height $n \cdot [F_{w}:\BQ_p]$ and dimension
\begin{equation}\label{r_w}
   \dim A[w^\infty] = \sum_{\varphi \in \Hom(F_w, \ov \BQ_p)} r_\varphi.
\end{equation}
Here $r_\varphi$ is as in \eqref{r main body}, and we have used the embedding $\wt\nu\colon \ov\BQ \to \ov\BQ_p$ to identify
\begin{equation}\label{Hom(F,Q) id}
   \Hom_\BQ(F, \ov\BQ) \simeq \Hom_\BQ(F, \ov\BQ_p) , 
\end{equation}
which in turn identifies
\begin{equation}\label{idlochom}
   \bigl\{\, \varphi \in \Hom_{\BQ}(F, \ov\BQ) \bigm| w_\varphi=w \,\bigr\} \simeq \Hom_{\BQ_p}(F_w, \ov \BQ_p),
\end{equation}
where $w_\varphi$ denotes the $p$-adic place in $F$ induced by $\wt\nu \circ \varphi$.

Now suppose that $w$ lies over a place $v$ different from $v_0$. Then the action of $F$ on $A[w^\infty]$ is of a \emph{banal} signature type, in the sense that each integer $r_\varphi$ occurring in (\ref{r_w}) is equal to $0$ or $n$, cf.\ Appendix \ref{appendix}.  Let $\pi = \pi_w$ be a uniformizer in $F_w$, and let $F_w^t \subset F_w$ be the maximal unramified subextension of $\BQ_p$. 
For each $\psi\in \Hom_{\BQ_p}(F^t_w, \ov\BQ_p)$, let 
\[
   A_\psi := \bigl\{\,\varphi\in  \Hom_{\BQ_p}(F_w, \ov\BQ_p) \bigm| \varphi|_{F_w^t}=\psi \text{ and }  r_\varphi=n \,\bigr\} .
\]
Set 
\[
   Q_{A_\psi}(T):=\prod_{\varphi\in A_\psi}\bigl(T-\varphi(\pi)\bigr)\in \ov\BZ_p[T] .
\]
Since we assume that $S$ is a scheme over $\Spf \ov\BZ_p$, there is a natural isomorphism
\[
   O_{F_w^t} \otimes_{\BZ_p} \CO_S \isoarrow \prod_{\psi \in \Hom_{\BQ_p}(F_w^t, \ov\BQ_p)} \CO_S,
\]
whose $\psi$-component is $\psi \otimes \id$.  Hence the $O_{F_w^t}$-action on $\Lie A[w^\infty]$ induces a decomposition
\begin{equation}\label{Lie A big decomp}
   \Lie A[w^\infty] \cong \bigoplus_{\psi \in \Hom_{\BQ_p}(F_w^t, \ov\BQ_p)} \Lie_\psi A[w^\infty].
\end{equation}
The Eisenstein condition at $v$ demands the identity of endomorphisms of $\Lie_\psi A[w^\infty]$,
\begin{equation}\label{Eisatv}
   Q_{A_\psi}\bigl(\iota(\pi) \mid \Lie_{\psi} A[w^\infty]\bigr)=0 
	\quad\text{for each}\quad
   w \mid v
   \quad\text{and each}\quad
	\psi\in  \Hom_{\BQ_p}(F^t_w, \ov\BQ_p) .
\end{equation}
This condition is the analog in our context of the  condition with the same name in \cite[(8.2)]{RZ14}.  The Kottwitz condition implies that the Eisenstein condition at $v$ is automatically satisfied when the one or two places $w$ over $v$  are unramified over $p$, cf.\ Lemma \ref{kott=>eis}. 

A morphism between two objects $(A_0, \iota_0, \lambda_0, A, \iota, \lambda, \ov\eta^p)$ and $(A_0', \iota_0', \lambda_0', A', \iota', \lambda', \ov\eta'^p)$ is given by an isomorphism
 $(A_0,\iota_0,\lambda_0) \isoarrow (A_0',\iota_0',\lambda_0')$ in $\CM^{\fka,\xi}_0(S)$ and a quasi-isogeny $A \to A'$ which induces an isomorphism
\[
   A[p^\infty] \isoarrow A'[p^\infty] ,
\]
compatible with $\iota$ and $\iota'$, with $\lambda$ and $\lambda'$, and with $\ov\eta^p$ and $\ov\eta^{p\prime}$.

\begin{theorem}\label{semi-global hyperspecial smooth}
The moduli problem just formulated is representable by a Deligne--Mumford stack $\CM_{K_{\wt G}}(\wt G)$ smooth over $\Spec O_{E,(\nu)}$. For $K_G^p$ small enough,  $\CM_{K_{\wt G}}(\wt G)$ is relatively representable over $\CM^{\fka,\xi}_0$.   Furthermore, the generic fiber $\CM_{K_{\wt G}} \times_{\Spec O_{E,(\nu)}} \Spec E$ is canonically isomorphic to $M_{K_{\wt G}}(\wt G)$. 
\end{theorem}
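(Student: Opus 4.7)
The plan is to follow the standard Kottwitz-style approach to PEL moduli, with the main novelty in the local analysis at the banal places $v \in \CV_p \ssm \{v_0\}$. For representability, choose $K_G^p$ sufficiently small to kill automorphisms; then the functor sits inside a relative moduli space of polarized abelian schemes over $\CM_0^{\fka,\xi}$, cut out by the $O_F$-action, the Kottwitz condition \eqref{kottwitzF}, the prescribed polarization type at each $v$, the sign condition \eqref{signcond}, and the Eisenstein condition \eqref{Eisatv}. The first three impose locally closed conditions; the sign condition is open and closed by Proposition \ref{inv_v^r const}; and the Eisenstein condition is closed. This yields representability by a quasi-projective scheme over $\CM_0^{\fka,\xi}$. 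For arbitrary $K_G^p$ one passes to the quotient stack to obtain a Deligne--Mumford stack, and relative representability over $\CM_0^{\fka,\xi}$ is automatic as soon as $K_G^p$ rigidifies the automorphisms.

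The heart of the proof is smoothness over $\Spec O_{E,(\nu)}$. By Grothendieck--Messing and the Rapoport--Zink local model formalism, smoothness of $\CM_{K_{\wt G}}(\wt G)$ is equivalent to that of a local model $\RM^\loc$, which decomposes as a product over $v \in \CV_p$ of local factors $\RM^\loc_v$ according to the decomposition \eqref{decofpdivgp} induced by the action of $O_{F_0}\otimes\BZ_p$. At $v=v_0$, by hypothesis $\Lambda_{v_0}$ is self-dual and $v_0$ is unramified over $p$. In the inert case, $\RM^\loc_{v_0}$ is the naive unitary Grassmannian attached to a self-dual lattice in an unramified hermitian space with signature $((1,n-1)_{\varphi_0},(0,n)_{\varphi\in\Phi\ssm\{\varphi_0\},\,v\mid v_0})$; in the split case, it is a product of classical Grassmannians $\Gr(r_\varphi,n)$. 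In either case the factor is smooth over $O_{E,(\nu)}$, with total relative dimension $n-1$ contributed solely by the $\varphi_0$-factor. At each $v \in \CV_p \ssm \{v_0\}$ the signature is \emph{banal}, i.e.\ $r_\varphi \in \{0,n\}$ for every $\varphi$ above $v$, and the content of Appendix B is precisely that under the Eisenstein condition \eqref{Eisatv} the local factor $\RM^\loc_v$ is trivial, namely a single point \'etale over $O_{E,(\nu)}$. Multiplying over $v \in \CV_p$, $\RM^\loc$ is smooth over $O_{E,(\nu)}$ of relative dimension $n-1$, whence so is $\CM_{K_{\wt G}}(\wt G)$.

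To identify the generic fiber with $M_{K_{\wt G}}(\wt G)$, one has to verify that the integral data at $p$ (the lattice stabilizers at each $v$, together with the sign condition) promote $\ov\eta^p$ to a full adelic level structure $\ov\eta$. Over $E$, the $F\otimes\BQ_p$-hermitian space $\wh V_p(A_0, A)$ is of dimension $n$, and its local invariants are pinned down as follows: at $v_0$, self-duality of $\Lambda_{v_0}$ forces $\inv_{v_0}\wh V_p(A_0,A)_{v_0} = 1 = \inv_{v_0}(-W_{v_0})$; at each other $v \in \CV_p$, the sign condition \eqref{signcond}, combined with the computation of the sign invariant in Appendix A, gives $\inv_v\wh V_p(A_0,A)_v = \inv_v(-W_v)$. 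Together with the archimedean signatures dictated by the Kottwitz condition and the prime-to-$p$ trivialization $\ov\eta^p$, the Hasse principle for hermitian spaces then produces a (unique up to $K_G$-action) isometry $\wh V(A_0, A) \simeq -W \otimes_F \BA_{F,f}$, yielding the desired full level structure; this defines an isomorphism onto $M_{K_{\wt G}}(\wt G)$.

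The principal obstacle is thus the Appendix B local model computation: that the Eisenstein condition trivializes the naive local model at every banal place. Once this is granted, smoothness at $v_0$ reduces to the classical fact that the unitary local model attached to a self-dual hermitian lattice in the unramified setting is smooth, and the generic-fiber comparison is a straightforward Hasse-principle check using the sign invariant of Appendix A.
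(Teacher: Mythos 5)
Your proposal is correct and follows essentially the same route as the paper: representability via the standard Kottwitz argument with the sign and Eisenstein conditions as open/closed constraints; smoothness via the local model diagram, with the factor at $v_0$ a classical smooth Grassmannian (the paper cites \cite{Goertz}) and the factors at the banal places $v\neq v_0$ trivialized by the Appendix~B Lemmas \ref{banal LM GL_n triv} and \ref{banal LM GU_n triv}; and the generic-fiber identification via pinning down the Hasse invariants of $\wh V_p(A_0,A)$. The one minor divergence is in that last step: you verify the invariant directly at $v_0$ by observing that the enforced self-duality of the $\Hom$-lattice forces $\inv_{v_0}=1$ (which is valid, since norms from unramified $O_{F_{v_0}}^\times$ hit all units, so $-1$ is a norm and $\inv_{v_0}(-W_{v_0})=\inv_{v_0}(W_{v_0})=1$), whereas the paper instead checks the invariant at every place \emph{except} $v_0$ for a $\BC$-valued point and closes via the product formula \eqref{herm prod fmla}; both arguments succeed and are of comparable length.
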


\begin{proof} Representability and relative representability are standard, cf.\ \cite[p.\ 391]{Ko}. Smoothness follows as usual from the theory of local models, cf.\ \cite{PRS}.  More precisely, the local model for $\CM_{K_{\wt G}}(\wt G)$ decomposes into a product of local models, one for each of the abelian schemes $A_0$ and $A$ in the moduli problem.  The local model corresponding to $A_0$ is \'etale because $\CM_0^{\fka,\xi}$ is.  Now, under the identifications \eqref{Hom(F,Q) id} and \eqref{idlochom}, we have
\begin{equation}\label{Hom decomp}
   \Hom_\BQ(F,\ov\BQ) \simeq \bigsqcup_{v \in \CV_p} \Hom_{\BQ_p}(F_v, \BQ_p).
\end{equation}
In this way the completion $E_\nu$ identifies with the join of the local reflex fields $E_{\Phi_v}$ and $E_{r|_v}$ in $\ov\BQ_p$ as $v$ varies through $\CV_p$, where $\Phi_v := \Phi \cap \Hom_{\BQ_p}(F_v,\ov\BQ_p)$ in terms of the identification (\ref{Hom decomp}), and where $r|_v\colon \Hom_{\BQ_p}(F_v,\ov\BQ_p) \to \BZ$ denotes the restriction of the function $r$ to the $v$-summand on the right-hand side of (\ref{Hom decomp}). The local model $M$ corresponding to $A$ then decomposes as
\begin{equation}\label{LM prod decomp}
   M = \prod_{v \in \CV_p} M_v \times_{\Spec O_{E_{r|_v}}} \Spec O_{E_\nu}.
\end{equation}
Here for $v \neq v_0$, by the Kottwitz condition (\ref{kottwitzF}), $M_v = M(F_v/F_{0,v},r|_v,\Lambda_v)$ is a \emph{banal} local model, i.e.\ of the form defined in Appendix \ref{appendix}. Hence $M_v = \Spec O_{E_{r|_v}}$ by Lemmas \ref{banal LM GL_n triv} and \ref{banal LM GU_n triv}.  (The same is true for the local model corresponding to $A_0$ at every $v \in \CV_p$, by these lemmas and Remark \ref{n=1}.)  The local model $M_{v_0}$ is smooth by \cite{Goertz}.
   
It remains to prove the last assertion. Let $S$ be a scheme over $E$, and let $(A_0, \iota_0, \lambda_0, A, \iota, \lambda, \ov \eta^p)$ be a point of $\CM_{K_{\wt G}}(S)$. We want to associate to this a point of $M_{K_{\wt G}}(S)$, i.e., we want to add the $p$-component of $\ov\eta$. The product of hermitian spaces $W \otimes_\BQ \BQ_p=\prod_{v \in \CV_p} W_v$ contains the lattice $\prod_{v \in \CV_p}\Lambda_v$, where $\Lambda_v$ is a vertex lattice in $W_v$. By assumption on the polarization $\lambda$, the product of hermitian spaces
\[
   V_p(A_0, A) := \Hom_F\bigl(V_p(A_0),  V_p(A)\bigr) = \prod_{v \in \CV_p} V_v(A_0, A)
\]
contains $\Hom_{O_F}(T_p(A_0),  T_p(A))$ as a product of vertex lattices, where the factor at each $v$ is of the same type as $\Lambda_v$.
Since $p\neq 2$ when there are non-split places in $\CV_p$, it follows that, if there exists an isometry $\eta_p\colon V_p(A_0, A)\simeq -W \otimes_\BQ \BQ_p$ at all, then there also exists one that maps these two vertex lattices of identical type into one another, and the class modulo $K_{G, p}$ of such an isometry is then uniquely determined. 

Hence we are reduced to showing that there exists an isometry $\eta_p$, i.e., the equality of Hasse invariants  $\inv_v(V_v(A_0, A))=\inv_v(-W_v)$ for all $v \in \CV_p$. By the Hasse principle and the product formula for hermitian spaces, it suffices to prove that for any $\BC$-valued point $(A_0, \iota_0, \lambda_0, A, \iota, \lambda, \ov \eta^p)$ of $\CM_{K_{\wt G}}$, 
\[
   \inv_v \bigl(V(A_0, A)_v\bigr)=\inv_v(-W_v) 
   \quad \text{for all}\quad
	v\neq v_0 ,
\]
where $v$ runs through all places of $F_0$, including the archimedean ones. Here, as in the proof of Proposition \ref{moduliforshim}, $V(A_0, A)=\Hom_F (\CH_0, \CH)$, where $\CH_0=H_1(A_0,\BQ)$ and $\CH= H_1(A, \BQ)$. For the non-archimedean places not lying over $p$, this follows from the existence of the  level structure; for the places $v\in \CV_p\ssm \{v_0\}$, this follows from the sign condition \eqref{signcond} at $v$; and finally, for the archimedean places, this follows from the   fact that the signatures of $V(A_0, A)$ and $-W$ at all archimedean places are identical, cf.\ the proof of Proposition \ref{moduliforshim}. 
\end{proof}

We analogously define the DM stacks $\CM_{K_{\wt H}}(\wt H)$ and $\CM_{K_{\wtHG}}(\wtHG)$ over $\Spec O_{E,(\nu)}$, comp.\ the end of Section \ref{moduli problem over E}.  Both are again smooth over $\Spec O_{E,(\nu)}$.

Let us now assume that the special vector $u \in W$ has norm $(u, u)\in O_{F_0,(p)}^\times$ . Then we obtain a finite unramified morphism, resp.\ a closed embedding, in analogy with \eqref{modembHG}, resp.\ \eqref{modembHHG}, 
\begin{equation}\label{embeddings semi-global hyperspecial}
   \CM_{K_{\wt H}}(\wt H) \to \CM_{K_{\wt G}}(\wt G)
	\quad\text{and}\quad
	\CM_{K_{\wt H}}(\wt H) \inj \CM_{K_{\wtHG}}(\wtHG).
\end{equation}
For this we assume that $K_H^p \subset H(\BA_{F_0,f}^p) \cap K_G^p$.  Furthermore, we assume for each $v\in \CV_p$, the lattices in $W_v$ and $W_v^\flat$ satisfy the relation
\begin{equation}\label{Lambda_v Lambda_v^flat relation}
   \Lambda_v = \Lambda_v^\flat \oplus O_{F,v}u.
\end{equation}
For the lattice in $W_v^\flat \oplus W_v$, we take the direct sum $\Lambda_v^\flat \oplus \Lambda_v$.

We end this subsection by defining Hecke correspondences attached to adelic elements prime to $p$.  We first consider the case of $\CM_{K_{\wt G}}(\wt G)$.  Fix $g \in \wt G(\BA_f^p)$.  Let $K_{\wt G}^p := Z^\BQ(\wh\BZ^p) \times K_G^p$ and $K_{\wt G,p} := Z^\BQ(\BZ_p) \times K_{G,p}$, and set
\[
   K_{\wt G}^{\prime p} := K_{\wt G}^p \cap g K_{\wt G}^p g\i
	\quad\text{and}\quad
	K_{\wt G}' := K_{\wt G}^{\prime p} \times K_{\wt G,p}.
\]
Then we obtain in the standard way a diagram of finite \'etale morphisms,
\begin{equation}\label{hecke unram semi-global G}
\begin{gathered}
   \xymatrix{
	     & \CM_{K_{\wt G}'}(\wt G) \ar[dl]_-{\text{nat}_1} \ar[dr]^-{\text{nat}_g}\\
	   \CM_{K_{\wt G}}(\wt G)  & &  \CM_{K_{\wt G}}(\wt G),
	}
\end{gathered}
\end{equation}
which we view as a correspondence from $\CM_{K_{\wt G}}(\wt G)$ to itself.  Note that for a central element  $g=z\in Z(G)(\BA_{F_0,f}^p)=\{z\in (\BA^p_{F,f})^\times\mid \Nm_{F/F_0}(z)=1\}$,  the diagram \eqref{hecke unram semi-global G} collapses to a map 
\begin{equation}\label{act center}
  \CM_{K_{\wt G}}(\wt G)  \xra{z}  \CM_{K_{\wt G}}(\wt G),
\end{equation}
and this induces an action of $Z(G)(\BA_{F_0,f}^p)$ on $\CM_{K_{\wt G}}(\wt G)$. 

The cases of $\CM_{K_{\wt H}}(\wt H)$ and $\CM_{K_{\wtHG}}(\wtHG)$ are completely analogous, simply replacing $\wt G$ everywhere by $\wt H$ and $\wtHG$, respectively.  For later use, we record the diagram of finite \'etale morphisms we obtain for $\wtHG$:
\begin{equation}\label{hecke unram semi-global HG}
\begin{gathered}
   \xymatrix{
	     & \CM_{K_{\wtHG}'}(\wtHG) \ar[dl]_-{\text{nat}_1} \ar[dr]^-{\text{nat}_g}\\
	   \CM_{K_{\wtHG}}(\wtHG)  & &  \CM_{K_{\wtHG}}(\wtHG).
	}
\end{gathered}
\end{equation}

\subsection{Split level at $v_0$}\label{split level}
We continue with the setup and assumptions of the previous subsection, except we now allow $v_0$ to be ramified over $p$.  In addition, we assume that $v_0$ splits in $F$, say into $w_0$ and another place $\ov w_0$.  We are again going to define smooth integral models over $O_{E,(\nu)}$.

We define the moduli functor $\CM_{K_{\wt G}}(\wt G)$ as follows.  To each locally noetherian $O_{E,(\nu)}$-scheme $S$, we associate the groupoid of tuples $(A_0,\iota_0,\lambda_0,A,\iota,\lambda,\ov\eta^p)$ as in the previous subsection, except that we impose the further condition corresponding to $w_0$ that when $p$ is locally nilpotent on $S$, the $p$-divisible group $A[w_0^\infty]$ is a Lubin--Tate group of type $r|_{w_0}$, in the sense of \cite[\S 8]{RZ14} (note that this involves the \emph{Eisenstein condition} of loc.~cit.). Here $r|_{w_0}$ is the restriction of the function $r$ on $\Hom_\BQ(F, \ov\BQ)$ to $\Hom_{\BQ_p}(F_{w_0}, \ov \BQ_p) $, in the sense of the identification \eqref{idlochom}.  We note that if $v_0$ is unramified over $p$, then this further Eisenstein condition is redundant, and the moduli functor $\CM_{K_{\wt G}}(\wt G)$ is the same as the one defined in the previous subsection, cf.\ \cite{RZ14}.

\begin{theorem}\label{semi-global split smooth}
The moduli problem just formulated is representable by a Deligne--Mumford stack $\CM_{K_{\wt G}}(\wt G)$ smooth over $\Spec O_{E,(\nu)}$. For $K_G^p$ small enough,  $\CM_{K_{\wt G}}(\wt G)$ is relatively representable over $\CM^{\fka,\xi}_0$.   Furthermore, the generic fiber $\CM_{K_{\wt G}} \times_{\Spec O_{E,(\nu)}} \Spec E$ is canonically isomorphic to $M_{K_{\wt G}}(\wt G)$. 
\end{theorem}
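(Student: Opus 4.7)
The plan is to adapt the proof of Theorem \ref{semi-global hyperspecial smooth}, altering only the treatment at the distinguished place $v_0$. Representability and relative representability of $\CM_{K_{\wt G}}(\wt G)$ over $\CM_0^{\fka,\xi}$ are standard (cf.\ \cite[p.\ 391]{Ko}): the Lubin--Tate condition on $A[w_0^\infty]$ (including its Eisenstein part) is a closed condition on the universal $p$-divisible group and hence preserves representability.

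For smoothness I would again appeal to the theory of local models. The product decomposition \eqref{LM prod decomp} still holds, and for each $v \in \CV_p$ with $v \neq v_0$ the factor $M_v$ is a banal local model, hence trivial by Lemmas \ref{banal LM GL_n triv} and \ref{banal LM GU_n triv}. The new input is at $v_0 = w_0 \ov w_0$: the split place decomposition \eqref{decomp by w} yields
\[
   A[v_0^\infty] = A[w_0^\infty] \times A[\ov w_0^\infty],
\]
with the polarization $\lambda_{v_0}$ identifying $A[\ov w_0^\infty]$ canonically with the Serre dual of $A[w_0^\infty]$ (after twisting by the $O_{F,v_0}$-action). Thus the local model factor at $v_0$ is governed entirely by $A[w_0^\infty]$, on which we have imposed the Lubin--Tate condition of type $r|_{w_0}$. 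By \cite[\S 8]{RZ14}, the corresponding local model is formally smooth over $\Spec O_{E_\nu}$---this is exactly the reason for building the Eisenstein condition into the Lubin--Tate hypothesis. Combining factors, $\CM_{K_{\wt G}}(\wt G)$ is smooth over $\Spec O_{E,(\nu)}$.

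For the identification of the generic fiber with $M_{K_{\wt G}}(\wt G)$, I would proceed as in the proof of Theorem \ref{semi-global hyperspecial smooth}. In characteristic $0$ the Lubin--Tate condition reduces to the Kottwitz condition \eqref{kottwitzF}, since the Eisenstein polynomial relations are automatic as soon as $p$ is invertible on the base. It remains to promote $\ov\eta^p$ to a full level structure by constructing its $p$-component. For $v \in \CV_p \ssm \{v_0\}$ the existence and uniqueness (up to $K_{G,v}$) of $\eta_v$ go through verbatim, since those arguments only used the sign condition together with the Hasse principle for hermitian spaces. At $v_0$ itself, because $v_0$ splits in $F$ the space $W_{v_0}$ is automatically split and $\U(W_{v_0}) \cong \GL_n(F_{0,v_0})$; any two vertex lattices of the type prescribed by $\Lambda_{v_0}$ are $K_{G,v_0}$-conjugate, yielding both existence and uniqueness of $\eta_{v_0}$ modulo $K_{G,v_0}$.

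The one step where something genuinely new happens relative to the hyperspecial case is the formal smoothness of the Lubin--Tate local model at a possibly ramified $v_0$; here I would rely entirely on the analysis of \cite[\S 8]{RZ14}. Everything else is a direct import from the proof of Theorem \ref{semi-global hyperspecial smooth}, facilitated by the fact that at a split place the unitary-group combinatorics collapse to the $\GL_n$ situation.
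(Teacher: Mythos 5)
Your proposal is correct and follows essentially the same route as the paper's own proof, which simply says the argument is identical to that of Theorem \ref{semi-global hyperspecial smooth} together with the smoothness of the Lubin--Tate local model factor at $w_0$ from \cite[\S 8]{RZ14}. You have merely made explicit the intermediate steps (the product decomposition of the local model, the triviality of the banal factors, the generic-fiber identification via the split form of $W_{v_0}$) that the paper leaves implicit.
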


\begin{proof}
Same as the proof of Theorem \ref{semi-global hyperspecial smooth}, using in addition that the factor at $w_0$ for the local model for $A$ for the newly introduced Eisenstein condition is smooth, cf.\ \cite[\S8]{RZ14}.
\end{proof}

We analogously define the DM stacks $\CM_{K_{\wt H}}(\wt H)$ and $\CM_{K_{\wtHG}}(\wtHG)$ over $\Spec O_{E,(\nu)}$, comp.\ the end of Section \ref{moduli problem over E}.  Both are again smooth over $\Spec O_{E,(\nu)}$.  We then obtain a finite unramified morphism, resp.\ a closed embedding,
\begin{equation}\label{embeddings semi-global split}
   \CM_{K_{\wt H}}(\wt H) \to \CM_{K_{\wt G}}(\wt G)
	\quad\text{and}\quad
	\CM_{K_{\wt H}}(\wt H) \inj \CM_{K_{\wtHG}}(\wtHG),
\end{equation}
and Hecke correspondences exactly as in the previous subsection.

\subsection{Drinfeld level at $v_0$}\label{semi-global drinfeld}
We continue with the setup and assumptions of Section \ref{split level}, with $v_0$ split in $F$ and possibly ramified over $p$. In this subsection we are going to define integral models over $O_{E,(\nu)}$ where we impose a \emph{Drinfeld level structure} at $v_0$. To do this, we require that the \emph{matching condition} between the CM type $\Phi$ and the chosen place $\nu$ of $E$ is satisfied, which demands  that 
\begin{equation}\label{cond CM}
   \bigl\{\,\varphi\in \Hom(F, \ov\BQ) \bigm| w_\varphi=w_0\,\bigr\}\subset \Phi, 
\end{equation}
where $w_\varphi$ is the place of $F$ induced by $\wt \nu \circ \varphi$, as in (\ref{idlochom}).
We note that this condition only depends on the place $\nu$ of $E$ induced by $\wt\nu$. When condition \eqref{cond CM} is satisfied, we also say that  the CM type $\Phi$ and the place $\nu$ of $E$ are  \emph{matched}. Here are some examples in which the matching condition is guaranteed to hold.

\begin{lemma}\label{CM matching lem} The matching condition for $\Phi$ and $\nu$ is satisfied in each of the following two situations.
\begin{altenumerate}
\item\label{HT match} $F$ is of the form $K F_0$ for an imaginary quadratic field $K/\BQ$, $\Phi$ is the unique CM type induced from $K$ containing $\varphi_0$, and $p$ splits in $K$.
\item\label{deg 1 match} The place $v_0$ is of degree $1$ over $\BQ$.
\end{altenumerate}
\end{lemma}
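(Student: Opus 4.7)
The plan is to unpack the matching condition $\{\varphi \in \Hom(F,\ov\BQ) \mid w_\varphi = w_0\} \subset \Phi$ in each case by tracking how the $p$-adic place determined by $\wt\nu \circ \varphi$ depends on $\varphi$, and comparing with the defining condition for $\Phi$. I expect no substantial obstacle: once one recalls the bijection \eqref{idlochom} identifying $\{\varphi \mid w_\varphi = w_0\}$ with $\Hom_{\BQ_p}(F_{w_0},\ov\BQ_p)$, both parts follow from pigeonhole-type counting.

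For (ii), I would observe that we are inside Section \ref{semi-global drinfeld}, so by the standing assumption $v_0$ is split in $F$, say $v_0 = w_0 \bar w_0$. Combined with the hypothesis $[F_{0,v_0}:\BQ_p] = 1$, the splitting $F_{v_0} \cong F_{0,v_0} \oplus F_{0,v_0}$ forces $F_{w_0} = \BQ_p$. Hence $\Hom_{\BQ_p}(F_{w_0},\ov\BQ_p)$ is a singleton, so the set $\{\varphi \mid w_\varphi = w_0\}$ consists of $\varphi_0$ alone, and $\varphi_0 \in \Phi$ by construction.

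For (i), I would first describe $\Phi$ concretely: since $F = KF_0$ and the CM type is induced from $K$, letting $\sigma_0 := \varphi_0|_K\colon K \to \BC$, we have
\[
   \Phi = \bigl\{\, \varphi \in \Hom(F,\BC) \bigm| \varphi|_K = \sigma_0 \,\bigr\}.
\]
Let $\mathfrak{p}$ denote the $p$-adic place of $K$ induced by $\wt\nu \circ \sigma_0$; then the place $w_0$ of $F$ lies above $\mathfrak{p}$. The key point is that, since $p$ splits in $K$, the two embeddings $\sigma_0, \bar\sigma_0\colon K \to \ov\BQ$ induce the two \emph{distinct} primes $\mathfrak{p}, \bar\mathfrak{p}$ of $K$ when composed with $\wt\nu$. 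Therefore, for any $\varphi \in \Hom(F,\ov\BQ)$ with $w_\varphi = w_0$, the restriction $\wt\nu \circ \varphi|_K$ must induce $\mathfrak{p}$, which forces $\varphi|_K = \sigma_0$ and hence $\varphi \in \Phi$. This proves the inclusion.

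The main (minor) subtlety to flag is part (ii): the statement of the lemma in isolation might seem to fail when $v_0$ is of degree $1$ but inert in $F$ (for then $[F_{w_0}:\BQ_p] = 2$ and $\{\varphi_0, \bar\varphi_0\}$ is the relevant set, but $\bar\varphi_0 \notin \Phi$); this is why one must invoke the Section \ref{semi-global drinfeld} hypothesis that $v_0$ splits in $F$.
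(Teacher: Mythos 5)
Your proof is correct and is essentially an unpacking of the paper's (extremely terse) argument: for (ii) the paper also just observes that the left-hand side of \eqref{cond CM} is the singleton $\{\varphi_0\}$, which, as you correctly note, uses the standing Section~\ref{semi-global drinfeld} hypothesis that $v_0$ splits in $F$; for (i) the paper simply declares it "obvious," and your argument via the two split primes of $K$ is the intended justification.
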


\begin{proof} The matching condition is obvious in (\ref{HT match}), and in \eqref{deg 1 match} it holds because the left-hand side of \eqref{cond CM} is the singleton set $\{\varphi_0\}$.
\end{proof}

\begin{remark}\label{rem HT CM field}
We call the case \eqref{HT match} the \emph{Harris--Taylor case}, cf.\ \cite{HT}.
\end{remark}

Now let $m$ be a nonnegative integer.  We define the level subgroup $K_G^m \subset G(\BA_{F_0,f})$ in exactly the same way as $K_G$ in Section \ref{subsec hyper} (subject to the choice of certain vertex lattices $\Lambda_v$ for $v \in \CV_p$), except in the $v_0$-factor where $G(F_{0, v_0})\simeq\GL_n(F_{0, v_0})$, we take $K^m_{G, v_0}$ to be the principal congruence subgroup modulo $\fkp_{v_0}^m$ inside $K_{G,v_0}$. 
In particular, $K_G$ coincides with $K^m_G$ for $m=0$.  We define $K_{\wt G}^m := K_{Z^\BQ} \times K_G^m$ as in \eqref{K_wtG}.

We now define the moduli functor $\CM_{K^m_{\wt G}}(\wt G)$ over $\CM_{K_{\wt G}}(\wt G)$. Let $(A_0,\iota_0,\lambda_0,A,\iota,\lambda, \ov\eta^p)\in \CM_{K_{\wt G}}(\wt G)(S)$.  Consider the factors occurring in the decomposition (\ref{decomp by w}) of the $p$-divisible group $A[p^\infty]$,
\begin{equation}\label{dec pdiv}
   A[v_0^\infty] = A[w_0^\infty] \times A[\ov w_0^\infty] .
\end{equation}
When $p$ is locally nilpotent on $S$, the $p$-divisible group $A[w_0^\infty]$ satisfies the Kottwitz condition of type $r|_{w_0}$ for the action of $O_{F,w_0}$ on its Lie algebra, in the sense of the previous subsection. Thus by the formulation of the Kottwitz condition in Remark \ref{M remarks}\eqref{Kottwitzr} and the matching condition \eqref{cond CM},
\[
   \charac\bigl(\iota(a)\mid\Lie A[w_0^\infty]\bigr) 
	   = \prod_{\substack{\varphi \colon \! F \rightarrow \BQ\\ w_\varphi=w_0}} \bigl(T-\varphi(a)\bigr)^{r_\varphi} 
      = T - \varphi_0(a)
		\quad\text{for all}\quad
		a \in O_{F,w_0}.
\]
Hence $A[w_0^\infty]$ is a one-dimensional formal $O_{F,w_0}$-module.  Similarly, $A[\ov w_0^\infty]$ has dimension $n \cdot [F_{w_0}:\BQ_p]-1$. Since $\Ros_\lambda$ induces the conjugation automorphism on $O_F$, $\lambda$ furthermore identifies $A[w_0^\infty]$ and $A[\ov w_0^\infty]$ with the (absolute) duals of each other, and hence both have (absolute) height $n \cdot [F_{w_0}:\BQ_p]$.  Analogously,
\[
   A_0[v_0^\infty] = A_0[w_0^\infty] \times A_0[\ov w_0^\infty] ,
\]
where, by \eqref{cond CM} and the Kottwitz condition on $A_0$, the $p$-divisible group $A_0[w_0^\infty]$ with $O_{F,w_0}$-action is \'etale of height $[F_{w_0}:\BQ_p]$, whereas $A_0[\ov w_0^\infty]$ is identified with the dual of $A_0[w_0^\infty]$.  

In analogy with the prime-to-$p$ theory, we introduce the finite flat group scheme over $S$,
\[
   T_{w_0}(A_0,A)[w_0^m] := \uHom_{O_{F,w_0}}(A_0[w_0^m],A[w_0^m]).
\]
Note that as $m$ varies, the right-hand side is naturally an inverse system under restriction of homomorphisms, and to make it into a directed system depends on the choice of uniformizer $\pi_{w_0}$ of $F_{0, w_0}$.  The colimit $T_{w_0}(A_0,A) := \varinjlim_{m} T_{w_0}(A_0,A)[w_0^m]$ is a $1$-dimensional formal $O_{F,w_0}$-module since $A[w_0^\infty]$ is.

For the moduli problem $\CM_{K^m_{\wt G}}(\wt G)$, we equip the object $(A_0,\iota_0,\lambda_0,A,\iota,\lambda, \ov\eta^p)\in\CM_{K_{\wt G}}(\wt G)$ with the following  additional datum.  Let $\Lambda_{v_0} = \Lambda_{w_0} \oplus \Lambda_{\ov w_0}$ denote the natural decomposition of the lattice $\Lambda_{v_0}$ attached to the split place $v_0$.  The additional datum is
\begin{altitemize}
\item an $O_{F,w_0}$-linear homomorphism of finite flat group schemes, 
\begin{equation}\label{Drinstruc}
   \varphi\colon \pi_{w_0}^{-m}\Lambda_{w_0}/\Lambda_{w_0} \to \uHom_{O_{F,w_0}}(A_0[w_0^m],A[w_0^m]),
\end{equation}
which is a Drinfeld $w_0^m$-structure on the target, cf.\ \cite[\S II.2]{HT}.
\end{altitemize}

\begin{theorem}\label{semi-global drinfeld regular}
The moduli problem $\CM_{K^m_{\wt G}}(\wt G)$  is relatively representable by a finite flat morphism to  $\CM_{K_{\wt G}}(\wt G)$. It is regular and flat over $\Spec O_{E,(\nu)}$.    Furthermore, the generic fiber $\CM_{K^m_{\wt G}}(\wt G) \times_{\Spec O_{E,(\nu)}} \Spec E$ is canonically isomorphic to $M_{K^m_{\wt G}}(\wt G)$.
\end{theorem}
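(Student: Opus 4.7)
The plan is to reduce the theorem to the classical results of Drinfeld (extended by Katz--Mazur and applied in \cite[\S II.2]{HT}) on moduli of Drinfeld level structures on 1-dimensional formal $O_{F,w_0}$-modules, layered over the already-smooth base $\CM_{K_{\wt G}}(\wt G)$ of Theorem \ref{semi-global split smooth}.

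First I would isolate the geometric content of the level datum at $v_0$. As observed in the paragraph preceding the theorem, the matching condition \eqref{cond CM} together with the Kottwitz condition \eqref{kottwitzF} forces $A[w_0^\infty]$ to be a $1$-dimensional formal $O_{F,w_0}$-module of height $n \cdot [F_{w_0}:\BQ_p]$, while $A[\ov w_0^\infty]$ is identified via $\lambda$ with its Serre dual; similarly $A_0[w_0^\infty]$ is \'etale of height $[F_{w_0}:\BQ_p]$. Working \'etale-locally on the base so as to trivialize $A_0[w_0^m]$, the datum \eqref{Drinstruc} becomes, after dualizing this trivialization, a Drinfeld $w_0^m$-structure on the $1$-dimensional formal $O_{F,w_0}$-module $A[w_0^\infty]$ in the sense of \cite[Def.\ II.2.1]{HT}, and the automorphism group $\Aut_{O_{F,w_0}}(A_0[w_0^m])$ descends the construction.

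Second, for relative representability by a finite flat morphism, I would invoke Drinfeld's theorem \cite[Prop.\ II.2.2]{HT}: the functor parametrizing Drinfeld $w_0^m$-structures on a $1$-dimensional formal $O_{F,w_0}$-module over an arbitrary base is representable by a scheme finite and flat over the base. The étale twist by $A_0[w_0^m]$ preserves this. For regularity and flatness over $O_{E,(\nu)}$, the base $\CM_{K_{\wt G}}(\wt G)$ is smooth, and hence regular, over $O_{E,(\nu)}$ by Theorem \ref{semi-global split smooth}; Drinfeld's regularity theorem (in its Katz--Mazur form, cf.\ \cite[Thm.\ II.2.3]{HT}) then yields that $\CM_{K_{\wt G}^m}(\wt G)$ is regular, since the deformation analysis at a closed point produces the Drinfeld structure as a full set of $w_0^m$-torsion sections on a $1$-parameter formal module, and adjoining such sections to a regular local ring produces a regular local ring (even a complete intersection). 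Flatness over $O_{E,(\nu)}$ follows because the composition $\CM_{K_{\wt G}^m}(\wt G) \to \CM_{K_{\wt G}}(\wt G) \to \Spec O_{E,(\nu)}$ is flat then smooth.

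Third, for the generic fiber identification, over any scheme $S$ of characteristic zero the group schemes $A_0[w_0^m]$ and $A[w_0^m]$ are \'etale, so a Drinfeld $w_0^m$-structure \eqref{Drinstruc} is the same as an $O_{F,w_0}$-linear isomorphism of \'etale sheaves $\pi_{w_0}^{-m}\Lambda_{w_0}/\Lambda_{w_0} \isoarrow \uHom_{O_{F,w_0}}\bigl(A_0[w_0^m], A[w_0^m]\bigr)$, or equivalently a trivialization of the Tate module $T_{w_0}(A_0,A)$ modulo $w_0^m$. Combined with the corresponding trivialization at $\ov w_0$ supplied by the polarization, with the vertex lattice choices at the remaining $v \in \CV_p$, and with $\ov\eta^p$, this assembles into a $K_G^m$-orbit of $\BA_{F,f}$-linear isometries $\wh V(A_0,A) \simeq -W \otimes_F \BA_{F,f}$, exactly as required for $M_{K_{\wt G}^m}(\wt G)$. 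The existence of such isometries once the orbit is non-empty is guaranteed by the Hasse-principle argument reproduced at the end of the proof of Theorem \ref{semi-global hyperspecial smooth}, together with the sign and local signature conditions.

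The main obstacle is the regularity assertion, since finite flatness and the generic-fiber computation are formal once the local structure is understood. Regularity requires that, in a neighbourhood of a closed point of the special fiber, the Drinfeld tower at $v_0$ be governed purely by the one-dimensional Lubin--Tate formal $O_{F,w_0}$-module and that the remaining local-model factors at the places $v \in \CV_p \ssm \{v_0\}$ be genuinely \'etale over $O_{E,(\nu)}$ (the banality of Appendix \ref{appendix}). Verifying the factorization of the local model at $v_0$ and the banality at all other $p$-adic places is what allows the regularity statement to be imported from the classical Drinfeld/Katz--Mazur theorem without modification.
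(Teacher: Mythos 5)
Your proof is correct and follows essentially the same route as the paper: an \'etale base change to trivialize $A_0[w_0^m]$ reduces everything to the Drinfeld/Katz--Mazur theory of level structures on one-dimensional formal $O_{F,w_0}$-modules as packaged in Harris--Taylor. The paper simply cites \cite[Lem.\ III.4.1(4)(5)]{HT} for the whole reduction, whereas you spell out the underlying Chapter II inputs (finite flatness via Prop.\ II.2.2, regularity over a smooth base via the Drinfeld deformation analysis), but the content is the same.
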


\begin{proof}
After an \'etale base change, the subgroup $\CA_0[w_0^m]$ of the universal abelian scheme $\CA_0$ over $\CM^{\fka,\xi}_0$ becomes constant, and then the proof of \cite[Lem.\ III.4.1(4)(5)]{HT} applies.
\end{proof}

We analogously define the DM stack $\CM_{K^m_{\wt H}}(\wt H)$ over $\Spec O_{E,(\nu)}$, and obtain for it the analog of Theorem \ref{semi-global drinfeld regular}.  We also define $\CM_{K^m_{\wtHG}}(\wtHG) := \CM_{K_{\wt H}^m}(\wt H) \times_{\CM^{\fka,\xi}_0} \CM_{K_{\wt G}^m}(\wt G)$, but we note that this stack is \emph{not} regular for $m > 0$.  We then obtain a finite unramified morphism, resp.\ a closed embedding,
\begin{equation*}
   \CM_{K^m_{\wt H}}(\wt H) \to \CM_{K^m_{\wt G}}(\wt G)
	\quad\text{and}\quad
	\CM_{K^m_{\wt H}}(\wt H) \inj \CM_{K^m_{\wtHG}}(\wtHG),
\end{equation*}
provided that $K_H^p \subset H(\BA_{F_0,f}^p) \cap K_G^p$ and that the lattices $\Lambda_v$ and $\Lambda_v^\flat$ are as in \eqref{Lambda_v Lambda_v^flat relation}. Indeed, the Drinfeld level structure $\varphi^\flat \colon \pi_{w_0}^{-m}\Lambda_{w_0}^\flat/\Lambda_{w_0}^\flat \to \uHom_{O_{F,w_0}}(A_0[w_0^m],A^\flat[w_0^m])$ induces a Drinfeld level structure
\[
   \varphi\colon \pi_{w_0}^{-m}\Lambda_{w_0}/\Lambda_{w_0} \to \uHom_{O_{F,w_0}}\bigl(A_0[w_0^m], (A^\flat \times A_0)[w_0^m]\bigr)
\]
by taking the direct sum of $\varphi^\flat$ and the $O_{F,w_0}$-linear homomorphism
\begin{equation}\label{varphi0}
   \begin{gathered}
   \varphi_0\colon 
   \xymatrix@R=0ex{
      \pi_{w_0}^{-m} O_{F,w_0}u/ O_{F,w_0} u \ar[r]  &  \uHom_{O_{F,w_0}}(A_0[w_0^m], A_0[w_0^m])\\
		\pi_{w_0}^{-m}u \ar@{|->}[r]  &  \id,
	}
   \end{gathered}
\end{equation}
with respect to the natural decompositions in the source and target of $\varphi$.

In the present situation there are more Hecke correspondences than those defined for adelic elements prime to $p$ in the previous two subsections, cf.\ \cite[\S III.4]{HT}.  We again first treat the case for $\wt G$. With respect to the decomposition obtained from \eqref{proddec},
\[
   \wt G(\BQ_p) = Z^\BQ(\BQ_p) \times \prod_{v \in \CV_p} G(F_{0,v}),
\]
let $g \in G(F_{0,v_0})$, considered as an element in the left-hand side.  In the special case that $g \in K_{G,v_0}$, since $K_{G,v_0}^m$ is a normal subgroup of $K_G$, we get a diagram of finite flat morphisms
\begin{equation*}
\begin{gathered}
   \xymatrix{
	     & \CM_{K_{\wt G}^m}(\wt G) \ar[dl]_-{\text{nat}_1} \ar[dr]^-{\text{nat}_g}\\
	   \CM_{K_{\wt G}^m}(\wt G)  & &  \CM_{K_{\wt G}^m}(\wt G),
	}
\end{gathered}
\end{equation*}
analogously to \eqref{hecke unram semi-global G}.  For general $g \in G(F_{0,v_0})$, choose $\mu$ large enough that
\[
   p^\mu\Lambda_{v_0} \subset g\Lambda_{v_0} \subset \varpi_{v_0}^{-m} g \Lambda_{v_0} \subset p^{-\mu} \Lambda_{v_0}.
\]
Then $K_{G,v_0}^{2\mu e} \subset K_{G,v_0}^m \cap gK_{G,v_0}^m g\i$, where $e = e_{v_0}$ is the ramification index of $v_0$ over $p$. 
Hence we obtain a diagram of finite flat morphisms
\begin{equation}\label{hecke drinfeld semi-global wtG}
\begin{gathered}
   \xymatrix{
	     & \CM_{K_{\wt G}^{2\mu e}}(\wt G) \ar[dl]_-{\text{nat}_1} \ar[dr]^-{\text{nat}_g}\\
	   \CM_{K_{\wt G}^m}(\wt G)  & &  \CM_{K_{\wt G}^m}(\wt G)
	}
\end{gathered}
\end{equation}
as before. In terms of the moduli descriptions above, these morphisms are given as follows.  Consider a point  $(A_0,\iota_0,\lambda_0,A,\iota,\lambda, \ov\eta^p,\varphi)$ of  $\CM_{K_{\wt G}^{2\mu e}}(\wt G)$.  Then $\operatorname{nat}_1$ sends this point to the point represented by $(A_0,\iota_0,\lambda_0,A,\iota,\lambda, \ov\eta^p,\varphi|_{(\pi_{w_0}^{-m}\Lambda_{w_0}/\Lambda_{w_0})})$.  To describe $\operatorname{nat}_g$, let $C_{w_0} \subset A[w_0^{2\mu e}]$ be the unique closed subgroup scheme for which the set of $\varphi(x)$, $x \in p^{-\mu} g\Lambda_{w_0}/\Lambda_{w_0} \subset p^{-2\mu}\Lambda_{w_0}/\Lambda_{w_0}$, is a complete set of sections, cf.\ \cite[Cor.\ 1.10.3]{KM} (note that $\uHom_{O_{F,w_0}}(A_0[w_0^{2\mu e}],A[w_0^{2\mu e}])$ is \'etale-locally isomorphic to $A[w_0^{2\mu e}]$).  Since $\lambda$ induces a principal polarization on the $p$-divisible group $A[v_0^\infty]$, the $\lambda$-Weil pairing on $A[p^{2\mu}]$ induces a perfect duality between $A[w_0^{2\mu e}]$ and $A[\ov w_0^{2\mu e}]$; in this way, let $C_{\ov w_0} \subset A[\ov w_0^{2\mu e}]$ be the annihilator of $C_{w_0}$.  Let $C$ be the (totally isotropic) subgroup of $A[p^{2\mu}]$,
\[
   C := C_{w_0} \times C_{\ov w_0} \times \prod_{v \in \CV_p \ssm \{v_0\}} A[v^{\mu e_v}],
\]
where $e_v$ denotes the ramification index of $v$ over $p$.  Let
\[
   A' := A/C.
\]
Let $\iota'$ denote the induced $O_F \otimes \BZ_{(p)}$-action on $A'$, and let $\alpha\colon A \to A'$ be the corresponding $O_F$-linear isogeny.  Then there is a unique polarization $\lambda'$ on $A'$ such that the diagram
\[
   \xymatrix{
	   A \ar[r]^-{p^{2\mu}\lambda} \ar[d]_-{\alpha}  &  A^\vee\\
	   A' \ar[r]^-{\lambda'}  &  (A')^\vee \ar[u]_-{\alpha^\vee}
	}
\]
commutes, and $\lambda'$ is a polarization of the type in the moduli problem for $\CM_{K_{\wt G}}(\wt G)$.  Furthermore, the level structure $\ov\eta^p$ induces, via the quasi-isogeny $p^{-\mu}\alpha$, a level structure $\ov\eta^{\prime p}$ for the pair $(A_0,A')$.  We obtain a Drinfeld level $w_0^m$-structure $\varphi'$ on $\uHom_{O_{F,w_0}}(A_0[w_0^m],A'[w_0^m])$ via the following commutative diagram:
\[
   \xymatrix@R-3ex{
	   \pi_{w_0}^{-m}\Lambda_{w_0}/\Lambda_{w_0} \ar[dd]_-{p^{-\mu}g}^-\sim \ar[rdd]^-{\varphi'}\\
		\\
		\pi_{w_0}^{-m}p^{-\mu} g\Lambda_{w_0}/p^{-\mu}g\Lambda_{w_0} \ar@{}[d]|-*{\cap} \ar@{-->}[r]  &  \uHom(A_0[w_0^m],A'[w_0^m])  \ar@{}[d]|-*{\cap} \\
		p^{-2\mu}\Lambda_{w_0}/p^{-\mu}g\Lambda_{w_0} \ar@{-->}[r]  &  \uHom(A_0[w_0^{2\mu e}],A[w_0^{2\mu e}]/C_{w_0})\\
		\\
		p^{-2\mu}\Lambda_{w_0}/\Lambda_{w_0} \ar@{->>}[uu] \ar[r]^-{\varphi}  &  \uHom(A_0[w_0^{2\mu e}],A[w_0^{2\mu e}]). \ar@{->>}[uu]
	}
\]
Here the dashed arrows are induced by $\varphi$.  Then the image of $(A_0,\iota_0,\lambda_0,A,\iota,\lambda, \ov\eta^p,\varphi)$ under $\operatorname{nat}_g$ is $(A_0,\iota_0,\lambda_0,A',\iota',\lambda',\ov\eta^{\prime p}, \varphi')$.  We remark that the construction of this latter tuple is, up to canonical isomorphism in the moduli problem for $\CM_{K_{\wt G}^m}(\wt G)$, independent of the choice of $\mu$.

Again, as in \eqref{act center}, this defines an action of the center of $G(F_{0, v_0})$ on $\CM_{K_{\wt G}^m}(\wt G)$.

The above construction carries over in the obvious way with $\wt H$ in place of $\wt G$.  Taking the product over $\CM^{\fka,\xi}_0$ of the diagram (\ref{hecke drinfeld semi-global wtG}) with the one attached to $\wt H$ and an element $h \in H(F_{0,v_0})$, we obtain, for $\mu$ sufficiently large, an analogous  diagram of finite flat morphisms for $\wtHG$. We record this as the following diagram, where $g\in (H\times G)(F_{0,v_0})$:
\begin{equation}\label{hecke drinfeld semi-global}
\begin{gathered}
   \xymatrix{
	     & \CM_{K_{\wtHG}^\mu}(\wtHG) \ar[dl]_-{\text{nat}_1} \ar[dr]^-{\text{nat}_g }\\
	   \CM_{K_{\wtHG}^m}(\wtHG)  & &  \CM_{K_{\wtHG}^m}(\wtHG).
	}
\end{gathered}
\end{equation}

\subsection{AT parahoric level at $v_0$}\label{subsec AT}
In this subsection, we assume that $p \neq 2$ and that the place $v_0$ is \emph{unramified} over $p$.  We again choose a vertex lattice $\Lambda_v \subset W_v$ for each $v \in \CV_p$, as in Section \ref{subsec hyper}. We require that the pair $(v_0,\Lambda_{v_0})$ is of one of the following four types, which we call \emph{AT types}.
\begin{altenumerate}
\renewcommand{\theenumi}{\arabic{enumi}}
\item\label{almost self-dual type} $v_0$ is inert in $F$ and $\Lambda_{v_0}$ is almost self-dual as an $O_{F,v_0}$-lattice.
\item\label{pi-mod type} $v_0$ ramifies in $F$, $n$ is even, and $\Lambda_{v_0}$ is $\pi_{v_0}$-modular.
\item\label{almost pi-mod type} $v_0$ ramifies in $F$, $n$ is odd, and $\Lambda_{v_0}$ is almost $\pi_{v_0}$-modular.
\item\label{n=2 self-dual type} $v_0$ ramifies in $F$, $n = 2$, and $\Lambda_{v_0}$ is self-dual.
\end{altenumerate}
We refer to the end of the Introduction for the terminology on lattice types.

Again recalling the decomposition $K_{\wt G} = K_{Z^\BQ} \times K_G$ from \eqref{K_wtG}, we take the subgroup $K_G$ to be of the form
\[
   K_G = K_G^p \times K_{G,p},
\]
where $K_G^p \subset G(\BA_{F_0,f}^p)$ is arbitrary, and where $K_{G,p} \subset G(F_0\otimes \BQ_p)$ is given by
\[
   K_{G,p}=\prod_{v \in \CV_p} K_{G, v} \subset \prod_{v \in \CV_p} G(F_{0,v}),
\]
with $K_{G, v}$ the stabilizer of $\Lambda_v$ in $G(F_{0,v})$.

We define the moduli functor $\CM_{K_{\wt G}}(\wt G)$ over $\Spec O_{E,(\nu)}$ similarly to the way in Section \ref{subsec hyper}. Precisely, to each $O_{E,(\nu)}$-scheme $S$, we associate the groupoid of tuples $(A_0,\iota_0,\lambda_0,A,\iota,\lambda,\ov\eta^p)$, where $(A_0,\iota_0,\lambda_0)$ is an object of $\CM^{\fka,\xi}_0(S)$, where $A$ is an abelian scheme over $S$ up to isogeny prime to $p$, where $\iota$ is an $O_{F}\otimes\BZ_{(p)}$-action on $A$ satisfying the Kottwitz condition \eqref{kottwitzF} of signature $((1, n-1)_{\varphi_0}, (0, n)_{\varphi\in\Phi\ssm\{\varphi_0\}})$, and  where $\lambda$ is a polarization on $A$ whose Rosati involution induces on $O_{F}\otimes\BZ_{(p)}$ the non-trivial Galois automorphism of $F/F_0$, subject to the  condition  that under the decomposition \eqref{decofpdivgp} of the $p$-divisible group $A[p^\infty]$,  $\ker\lambda_v$ is contained in $A[\iota(\pi_v)]$ of rank $\#(\Lambda_v^*/\Lambda_v)$ for all $v \in \CV_p$. Finally, 
$\ov\eta^p$  is a $K_G^p$-equivalence class of $\BA_{F,f}^p$-linear isometries \eqref{levelprimetop}.

We  also impose for each $v\neq v_0$ over $p$ the sign condition \eqref{signcond} at $v$. To ensure flatness of $\CM_{K_{\wt G}}(\wt G)$, we impose the following additional conditions. For each  $v\neq v_0$ over $p$ we impose the Eisenstein condition \eqref{Eisatv} at $v$. In addition, when the pair $(v_0,\Lambda_{v_0})$ is of AT type \eqref{pi-mod type}, \eqref{almost pi-mod type}, or \eqref{n=2 self-dual type}, we impose the following conditions.\footnote{In particular, no further conditions are required in AT type \eqref{almost self-dual type}; this was already observed by Cho \cite{Cho}, who based himself on a preprint version of the present paper in which we required $v_0$ to be of degree one over $p$ in all AT types.}  As in the case of the Eisenstein condition, it suffices to impose the conditions when the base scheme $S$ lies over $\Spf \ov \BZ_p$, so that, as in \eqref{Lie A big decomp}, there is a decomposition
\begin{equation}\label{onemore}
   \Lie A[v_0^\infty] \cong \bigoplus_{\psi \in \Hom_{\BQ_p}(F_{0,v_0}, \ov\BQ_p)} \Lie_\psi A[v_0^\infty];
\end{equation}
here, in the index set for the direct sum, we have used that $F_{v_0}^t = F_{0,v_0}$ ($v_0$ is unramified over $p$ in this section, and in types \eqref{pi-mod type}--\eqref{n=2 self-dual type} $v_0$ ramifies in $F$).  For each $\psi \in \Hom_{\BQ_p}(F_{0,v_0}, \ov\BQ_p)$, the extensions of $\psi$ to $F_{v_0}$ form a conjugate pair
\begin{equation}\label{nota}
   \varphi_\psi, \ov\varphi_\psi.
\end{equation}
Furthermore, in terms of the identification \eqref{idlochom}, we have $\{r_{\varphi_\psi},r_{\ov\varphi_\psi}\} = \{0,n\}$ unless
\[
   \psi = \psi_0 := \varphi_0|_{F_{v_0}},
\]
in which case $\{r_{\varphi_{\psi_0}},r_{\ov\varphi_{\psi_0}}\} = \{r_{\varphi_0},r_{\ov\varphi_0}\} = \{1,n-1\}$. For all $\psi \in \Hom_{\BQ_p}(F_{0,v_0}, \ov\BQ_p) \ssm \{\psi_0\}$, we impose the Eisenstein condition \eqref{Eisatv} on the summand $\Lie_\psi A[v_0^\infty]$.  This completes the list of conditions in type \eqref{n=2 self-dual type}.  In types \eqref{pi-mod type} and \eqref{almost pi-mod type}, we impose the following further conditions on $\Lie_{\psi_0} A[v_0^\infty]$.

\begin{altitemize}
\item If $(v_0,\Lambda_{v_0})$ is of type \eqref{pi-mod type}, then we impose the \emph{wedge condition}
\begin{equation}\label{wedge condition}
	\bigwedge\nolimits^2 \bigl(\iota(\pi_{v_0})+\pi_{v_0} \mid \Lie_{ \psi_0} A[v_0^\infty]\bigr) = 0
\end{equation}
and the \emph{spin condition}
\begin{equation}\label{spin condition}
   \text{\emph{the endomorphism $\iota(\pi_{v_0})\mid \Lie_{ \psi_0} A[v_0^\infty]$ is nonvanishing at each point of $S$,}}
\end{equation}
cf.\ \cite[\S6]{RSZ2}.
\item If $(v_0,\Lambda_{v_0})$ is of type \eqref{almost pi-mod type}, then we impose the \emph{refined spin condition} (7.9) of \cite{RSZ2} on $\Lie_{ \psi_0} A[v_0^\infty]$. More precisely, loc.\ cit.\ applies as written (taking our $F_{v_0}/F_{0,v_0}$ as the local extension $F/F_0$ there) in the case that $v_0$ has degree one over $p$, so that $\Lie_{\psi_0} A[v_0^\infty] = \Lie A[v_0^\infty]$.  For general $v_0$ unramified over $p$, analogously to \eqref{onemore}, the $\CO_S$-module $M(A[v_0^\infty])$ (notation of loc.\ cit.)\ decomposes under the action of $O_{F_0,v_0}$ into a direct sum $\bigoplus_{\psi} M_\psi(A[v_0^\infty])$, and for each $\psi$ there is an exact sequence
\[
   0 \to \Fil_\psi^1 \to M_\psi(A[v_0^\infty]) \to \Lie_\psi A[v_0^\infty] \to 0.
\]
Now taking $\psi = \psi_0$, the condition we impose is given by (7.9) in loc.\ cit.,\ except with $\Fil_{\psi_0}^1$ in place of $\Fil^1$, with the almost $\pi_{v_0}$-modular lattice $\Lambda_{v_0}$ in place of $\Lambda_{-m}$, and with the tensor products $- \otimes_{O_{F,v_0}} \CO_S$ and $- \otimes_{O_{F_0,v_0}} \CO_S$ in terms of the structure morphism
\[
   O_{F_0,v_0} \subset O_{F,v_0} \xra{\varphi_0} \ov\BZ_p \to \CO_S.
\]
(The same applies to the definition of $L_{-m,-1}^{n-1,1}(S)$ in (7.6) in loc.\ cit.)
\end{altitemize}

\begin{remark}\label{rel deg 1 remark}
When $v_0$ is of degree one over $p$, the triple $(A[v_0^\infty], \iota[v_0^\infty], \lambda[v_0^\infty])$ arising from our moduli problem is of the type occurring in the moduli problem for one of the RZ spaces in \cite[\S5--8]{RSZ2} (because in the degree one case, the relative dual of $A[v_0^\infty]$ which is used in \cite{RSZ2} is the Serre dual). 

\end{remark}

\begin{theorem}\label{semi-global AT smooth/regular}
The moduli problem just formulated is representable by a Deligne--Mumford stack $\CM_{K_{\wt G}}(\wt G)$ flat over $\Spec O_{E,(\nu)}$.  For $K_G^p$ small enough,  $\CM_{K_{\wt G}}(\wt G)$ is relatively representable over $\CM^{\fka,\xi}_0$. The generic fiber $\CM_{K_{\wt G}}(\wt G) \times_{\Spec O_{E,(\nu)}} \Spec E$ is canonically isomorphic to $M_{K_{\wt G}}(\wt G)$.  Furthermore:
\begin{altenumerate}
\item\label{semi-global AT smooth/regular i} $\CM_{K_{\wt G}}(\wt G)$ is smooth over $\Spec O_{E,(\nu)}$ provided that $(v_0,\Lambda_{v_0})$ is of AT type \eqref{pi-mod type} or \eqref{almost pi-mod type}.
\item\label{semi-global AT smooth/regular ii} $\CM_{K_{\wt G}}(\wt G)$ has semi-stable reduction over $\Spec O_{E,(\nu)}$ provided that $(v_0,\Lambda_{v_0})$ is of AT type \eqref{almost self-dual type} and $E_\nu$ is unramified over $\BQ_p$.
\end{altenumerate}
\end{theorem}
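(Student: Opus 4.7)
The plan is to follow the same overall strategy as the proof of Theorem \ref{semi-global hyperspecial smooth}: establish representability and relative representability over $\CM^{\fka,\xi}_0$ by standard PEL-type arguments (for $K_G^p$ small enough, the level structure rigidifies the data and automorphisms become trivial on geometric points, as in \cite[p.~391]{Ko}); then control the étale-local structure via the theory of local models \cite{PRS}. For the generic fiber identification $\CM_{K_{\wt G}}(\wt G) \otimes_{O_{E,(\nu)}} E \cong M_{K_{\wt G}}(\wt G)$, I would proceed verbatim as in the last part of the proof of Theorem \ref{semi-global hyperspecial smooth}: the issue is only to construct the $p$-component of $\ov\eta$, which amounts to an Hasse-invariant matching; the sign conditions at places $v \in \CV_p \ssm \{v_0\}$, the product formula \eqref{herm prod fmla}, and the archimedean signature data constrain $\inv_{v_0}$ and force the $v_0$-local completion of the hermitian space $V(A_0,A)$ to be isometric to $-W_{v_0}$, so that an isometry between the integral lattices of like type exists and is unique modulo $K_{G,v_0}$.

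The heart of the proof is the local model analysis. As in the proof of Theorem \ref{semi-global hyperspecial smooth}, the local model $M$ attached to $A$ decomposes over $\Spec O_{E_\nu}$ as a product $M = \prod_{v\in\CV_p} M_v \times_{\Spec O_{E_{r|_v}}} \Spec O_{E_\nu}$ via the decomposition of $O_F \otimes \BZ_p = \prod_v O_{F,v}$. For $v \neq v_0$ the signature $r|_v$ is banal, so by Lemmas \ref{banal LM GL_n triv} and \ref{banal LM GU_n triv} of Appendix \ref{appendix} the factor $M_v$ is $\Spec O_{E_{r|_v}}$ (and the factor from $A_0$ is étale at every $v \in \CV_p$ by the same lemmas). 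Consequently, the local structure of $\CM_{K_{\wt G}}(\wt G)$ over $\Spec O_{E,(\nu)}$ is controlled entirely by the local model $M_{v_0}$ at $v_0$, cut out by the moduli conditions on $(A[v_0^\infty],\iota[v_0^\infty],\lambda[v_0^\infty])$, including the wedge, spin, and refined spin conditions in AT types \eqref{pi-mod type} and \eqref{almost pi-mod type}.

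For each AT type, I would then invoke the local results of \cite{RSZ2}, since the $v_0$-local datum is precisely of the form studied there (the relative degree one hypothesis aligns our local factor with the RZ spaces of \cite[\S5--8]{RSZ2}, cf.\ Remark \ref{rel deg 1 remark}). Concretely:
\begin{altenumerate}
\item In AT type \eqref{almost self-dual type} ($v_0$ inert, almost self-dual lattice), the local model coincides with the one of \cite[\S5]{RSZ2}; its flatness is known, and when $E_\nu/\BQ_p$ is unramified it has strictly semi-stable reduction (so $\CM_{K_{\wt G}}(\wt G)$ does too), giving part \eqref{semi-global AT smooth/regular ii}.
\item In AT type \eqref{pi-mod type} ($v_0$ ramified, $n$ even, $\pi_{v_0}$-modular), the wedge condition \eqref{wedge condition} and spin condition \eqref{spin condition} cut out the smooth local model of \cite[\S6]{RSZ2}, proving smoothness.
\item In AT type \eqref{almost pi-mod type} ($v_0$ ramified, $n$ odd, almost $\pi_{v_0}$-modular), the refined spin condition cuts out the smooth local model of \cite[\S7]{RSZ2}.
\end{altenumerate}
The two cases above give part \eqref{semi-global AT smooth/regular i}.

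The main obstacle, and the reason the statement only claims flatness (rather than smoothness or semi-stability) in AT type \eqref{n=2 self-dual type}, is that when $v_0$ is ramified, $n=2$, and $\Lambda_{v_0}$ is self-dual, the naive moduli problem is known to be non-flat and no additional linear-algebraic condition of wedge/spin type has been shown to carve out a smooth or semi-stable model in this generality. Here I would again appeal to the corresponding local analysis in \cite{RSZ2}, where flatness is established (and a description of the special fiber is given), completing the flatness statement in all four AT types. No smoothness or semi-stability is asserted in type \eqref{n=2 self-dual type}, so nothing more is needed.
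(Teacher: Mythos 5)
Your proposal follows essentially the same route as the paper's proof: representability and the generic-fiber identification carry over from Theorem \ref{semi-global hyperspecial smooth}, the local-model analysis reduces everything to the factor $M_{v_0}$ by banality at $v\neq v_0$, and the structure of $M_{v_0}$ in each AT type is imported from the local papers. Two small remarks. First, the paper's actual references for smoothness in types \eqref{pi-mod type} and \eqref{almost pi-mod type} are \cite[Prop.~3.10]{RSZ1} and \cite[Th.~1.4]{S}, respectively, rather than \cite[\S6--7]{RSZ2} (where the wedge/spin/refined-spin conditions are formulated, but the local-model smoothness is credited to the earlier works). Second, your explanation for why AT type \eqref{n=2 self-dual type} gets only flatness is slightly off: by \cite[\S8]{RSZ2}, the local model $M_{v_0}$ in type \eqref{n=2 self-dual type} actually \emph{does} have semi-stable reduction over $\Spec O_{E_{r|_{v_0}}}$ (with $E_{r|_{v_0}}=\BQ_p$); what prevents the conclusion transferring to $\CM_{K_{\wt G}}(\wt G)$ is that in this case $E_\nu/\BQ_p$ is always ramified (cf.\ Remark~\ref{excl type4}), and semi-stability is preserved only under unramified base change. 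This last point, which you use implicitly for type \eqref{almost self-dual type}, is the decisive observation for why part~\eqref{semi-global AT smooth/regular ii} of the theorem carries the unramifiedness hypothesis on $E_\nu$.
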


\begin{proof} 
The representability assertion and the assertion for the generic fiber are proved in the same way as in Theorem \ref{semi-global hyperspecial smooth}.  The assertions concerning the local structure of $\CM_{K_{\wt G}}(\wt G)$ all reduce to statements about the local model.  As in the proof of Theorem \ref{semi-global hyperspecial smooth}, the local model is a product of local models, with one factor for $A_0$ and one for $A$, and the factor for $A_0$ is trivial.  The factor for $A$ furthermore decomposes as in \eqref{LM prod decomp} into a product of local models $M_v \times_{\Spec O_{E_{r|_v}}} \Spec O_{E_\nu}$ indexed by the places $v \in \CV_p$, and the factor for each $v \neq v_0$ is again trivial. At the place $v_0$, let $E_{r|_{v_0}}^\un$ denote the maximal unramified extension of $E_{r|_{v_0}}$ in $\ov\BQ_p$, and let $O_{E_{r|_{v_0}}^\un}$ denote its ring of integers.  After extending scalars $O_{E_{r|_{v_0}}} \to O_{E_{r|_{v_0}}^\un}$, the canonical isomorphism $O_{F_0,v_0} \otimes_{\BZ_p} O_{E_{r|_{v_0}}^\un} \cong \prod_{\psi \in \Hom_{\BQ_p}(F_{0,v_0},\ov\BQ_p)} O_{E_{r|_{v_0}}^\un}$ (recall that $v_0$ is unramified over $p$) induces a further decomposition of the local model
\[
   M_{v_0} \times_{\Spec O_{E_{r|_{v_0}}}} \Spec O_{E_{r|_{v_0}}^\un} = \prod_{\psi \in \Hom_{\BQ_p}(F_{0,v_0},\ov\BQ_p)} M_{v_0,\psi} \times_{\Spec O_{E_{r|_{v_0,\psi}}}} \Spec O_{E_{r|_{v_0}}^\un}.
\]
Here, in terms of the identifications \eqref{Hom(F,Q) id} and \eqref{idlochom} and the notation \eqref{nota}, $r|_{v_0,\psi}$ denotes the restriction of $r$ to the set
\[
   \bigl\{\, \varphi \in \Hom_{\BQ_p}(F_{v_0}, \ov\BQ_p) \bigm| \varphi|_{F_{0,v_0}} = \psi \,\bigr\} = \{\varphi_\psi,\ov\varphi_\psi\},
\]
and $M_{v_0,\psi} = M(F_v/F_{0,v},r|_{v_0,\psi},\Lambda_v)$ is a local model attached to the algebraic $F_{0,v_0}$-group $\GU(W_{v_0})$, the signature function $r|_{v_0,\psi}$, and the $O_{F,v_0}$-lattice $\Lambda_{v_0}$.  For $\psi \neq \psi_0$, the signature function  $r|_{v_0,\psi}$ is banal, and $M_{v_0,\psi}$ is again trivial.  For $\psi = \psi_0$, the local model $M_{v_0,\psi_0}$ is smooth over $\Spec O_{E_{r|_{v_0,\psi_0}}}$ in types \eqref{pi-mod type} \cite[Prop.\ 3.10]{RSZ1} and \eqref{almost pi-mod type} \cite[Th.\ 1.4]{S}, and is flat of semi-stable reduction $\Spec O_{E_{r|_{v_0,\psi_0}}}$ in types \eqref{almost self-dual type} \cite[pf.\ of Th.\ 5.1]{RSZ2} and \eqref{n=2 self-dual type} \cite[\S8, pp.\ 1119--20]{RSZ2}.  This completes the proof, noting in assertion \eqref{semi-global AT smooth/regular ii} that semi-stable reduction is preserved under an unramified base extension.
\end{proof}

\begin{remark}\label{rem ram cond satisfied HT}
The unramifiedness condition on $E_\nu$ in part \eqref{semi-global AT smooth/regular ii} of Theorem \ref{semi-global AT smooth/regular} is always satisfied when $F$ contains an imaginary quadratic field $K$ and $\Phi$ is induced from $K$, since $\varphi_0\colon F \isoarrow E$ in this case (cf.\ Remark \ref{reflex rem}) and $F_{v_0}$ is unramified over $\BQ_p$ in AT type \eqref{almost self-dual type}.
\end{remark}

\begin{remark}\label{excl type4}
In the case of AT type \eqref{n=2 self-dual type}, in the notation of the proof of Theorem \ref{semi-global AT smooth/regular}, we have $E_{r|_{v_0,\psi_0}} = F_{0,v_0}$, and the local model $M_{v_0,\psi_0}$ has semi-stable reduction  over $\Spec O_{F_0,v_0}$, comp.\ \cite{RSZ2}. However, the extension $E_\nu/F_{0,v_0}$ is always ramified (because $F_{v_0}$ maps into $E_\nu$), and therefore $\CM_{K_{\wt G}}(\wt G)$ is not regular over the place $\nu$ in this case. For this reason, we will exclude AT type \eqref{n=2 self-dual type} when considering arithmetic intersections.
\end{remark}

We analogously define the DM stack $\CM_{K_{\wt H}}(\wt H)$ over $\Spec O_{E,(\nu)}$ when the pair $(v_0,\Lambda_{v_0}^\flat)$ is of AT type.
To define $\CM_{K_{\wtHG}}(\wtHG)$, we take as before the lattice $\Lambda_v^\flat \oplus \Lambda_v$ in $W_v^\flat \oplus W$ for each $v \in \CV_p$, but the relation between $\Lambda_v^\flat$ and $\Lambda_v$ that we allow can be more complicated.  Furthermore, the definition of the analog of the morphisms \eqref{embeddings semi-global hyperspecial} requires more care.

Let us first suppose that $(v_0,\Lambda_{v_0})$ is not of AT type \eqref{pi-mod type}.  In this case, we assume that the lattices $\Lambda_v$ and $\Lambda_v^\flat$ satisfy $\Lambda_v = \Lambda_v^\flat \oplus O_{F,v}u$ for all $v$. We also assume that $(u,u) \in O_{F_0,v}^\times$ for all $v$, with the single exception of $v = v_0$ when $(v_0,\Lambda_{v_0})$ is of AT type \eqref{almost self-dual type}, in which case we impose that $\Lambda_{v_0}^\flat$ is self-dual and $\ord_{v_0} (u,u) = 1$ (and of course we use the definition of $\CM_{K_{\wt H}}(\wt H)$ from Section \ref{subsec hyper}).  Provided that $K_H^p \subset H(\BA_{F_0,f}^p) \cap K_G^p$, we then obtain a finite unramified morphism, resp.\ a closed embedding,
\begin{equation}\label{embeddings semi-global AT}
   \CM_{K_{\wt H}}(\wt H) \to \CM_{K_{\wt G}}(\wt G)
	\quad\text{and}\quad
	\CM_{K_{\wt H}}(\wt H) \inj \CM_{K_{\wtHG}}(\wtHG),
\end{equation}
as in \eqref{embeddings semi-global hyperspecial}.

\begin{remark}\label{unit}
The unit $(u,u)$ was chosen very carefully in \cite{RSZ2} because in loc.\ cit.\ we made a definite choice between the two isomorphic RZ spaces $\CN_n^{(0)}$ and $\CN_n^{(1)}$ in the odd ramified case (more precisely, a definite choice of the framing object).  Here we make no such choice, and therefore $(u,u)$ can be an arbitrary unit at ramified places.
\end{remark}

Now suppose that $(v_0,\Lambda_{v_0})$ is of AT type \eqref{pi-mod type}.  Then we cannot define such simple embeddings, and it is necessary to consider more complicated diagrams involving additional spaces, cf.\ \cite[\S12]{RSZ2}.  In fact we will consider two variants.  For both variants, we assume that $(u,u) \in O_{F_0,(p)}^\times$, and that $\Lambda_{v_0}^\flat$ and $\Lambda_{v_0}$ are related by a chain of inclusions
\begin{equation}\label{type 2 lattice relation}
   \pi_{v_0} (\Lambda_{v_0}^\flat \oplus O_{F, v_0} u)^* \subset^1 \Lambda_{v_0} \subset^1 \Lambda_{v_0}^\flat \oplus O_{F, v_0} u.
\end{equation}
Note that \eqref{type 2 lattice relation} is equivalent to the condition that $\Lambda_{v_0}^\flat$ is almost $\pi_{v_0}$-modular in $W_{v_0}^\flat$, and that $\Lambda_{v_0}$ is one of the two $\pi_{v_0}$-modular lattices contained in $\Lambda_{v_0}^\flat \oplus O_{F, v_0} u$.  For simplicity, we also assume in type \eqref{pi-mod type} that $v_0$ is of degree one over $p$, as we will do later in Section \ref{s:conjaip} for all places of AT type in the context of arithmetic intersections.

\smallskip
\noindent\emph{Variant \refstepcounter{variant}\arabic{variant}\label{variant 1}:} In the first variant, we continue to assume that for all $v \neq v_0$, we have $\Lambda_v = \Lambda_v^\flat \oplus O_{F,v}u$.  We define $\CP_{\wt G}$ to be the moduli stack defined in the same way as $\CM_{K_{\wt G}}(\wt G)$, except that at the place $v_0$,
\begin{altitemize}
\item $\ker\lambda_{v_0}$ has rank $p^{n-2}$; and
\item when $p$ is locally nilpotent on the base scheme, $\Lie A[v_0^\infty]$ satisfies the condition (9.2) of \cite{RSZ2}.
\end{altitemize}

Now let $\CL$ denote the self-dual multichain of $O_F \otimes \BZ_p$-lattices in $W \otimes \BQ_p = \bigoplus_{v \in \CV_p} W_v$ generated by $\Lambda_{v_0}$ and $\Lambda_{v_0}^\flat \oplus O_{F, v_0} u$ (and its dual), and by $\Lambda_v$ for all $v \neq v_0$. We define $\CP_{\wt G}'$ to be the moduli stack of tuples $(A_0,\iota_0,\lambda_0,\bA,\boldsymbol{\lambda},\ov\eta^p)$, where $(A_0,\iota_0,\lambda_0)$ is an object of $\CM^{\fka,\xi}_0$, $\bA = \{A_\Lambda\}$ is an $\CL$-set of abelian varieties, $\boldsymbol\lambda$ is a $\BQ$-homogeneous principal polarization of $\bA$, and $\ov\eta^p$  is a $K_G^p$-equivalence class of $\BA_{F,f}^p$-linear isometries $\eta^p\colon \wh V^p(A_0,\bA) \simeq -W \otimes_F \BA_{F,f}^p$, cf.\ \cite[Def.\ 6.9]{RZ}.  We require that $A_\Lambda$ satisfies the Kottwitz condition \eqref{kottwitzF} for all $\Lambda$. We further require that over a base on which $p$ is locally nilpotent, when the $v_0$-summand of $\Lambda$ is $\Lambda_{v_0}$, $\Lie A_\Lambda[v_0^\infty]$ satisfies the wedge condition \eqref{wedge condition} and the spin condition \eqref{spin condition} above; and when the $v_0$-component of $\Lambda$ is $\Lambda_{v_0}^\flat \oplus O_{F, v_0} u$, $\Lie A_\Lambda[v_0^\infty]$ satisfies the condition (9.2) of \cite{RSZ2}.  We obtain a diagram
\begin{equation}\label{semi-global P diag}
	\begin{gathered}
   \xymatrix{
	     &  & \CP_{\wt G}' \ar[dl]_-{\pi_1} \ar[dr]^-{\pi_2}\\
	   \CM_{K_{\wt H}}(\wt H) \ar[r]  &  \CP_{\wt G}  & &  \CM_{K_{\wt G}}(\wt G).
	}
	\end{gathered}
\end{equation}
Here the lower left morphism is defined in the usual way, i.e.\ analogously to \eqref{modembHG}, provided that $K_H^p \subset H(\BA_{F_0,f}^p) \cap K_G^p$.  It is again finite and unramified.  The arrows $\pi_1$ and $\pi_2$ are induced by $\bA \mapsto A_{(\Lambda_{v_0}^\flat \oplus O_{F, v_0}u)  \oplus \bigoplus_{v \neq v_0} \Lambda_v}$ and $\bA \mapsto A_{\Lambda_{v_0} \oplus \bigoplus_{v \neq v_0} \Lambda_v}$, respectively.

\begin{lemma}\label{pi_1 fin et}
The morphism $\pi_1$ is finite \'etale of degree $2$.
\end{lemma}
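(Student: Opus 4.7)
My plan is to reduce the statement to a morphism of local models at $v_0$, for which the necessary analysis has been carried out in \cite[\S12]{RSZ2}. The key observation is that $\pi_1$ fits into a local model diagram in the sense of \cite{RZ}, so properties such as finiteness, flatness, and \'etaleness of a given degree can be transferred from the corresponding morphism on local models.

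I would first note that only the place $v_0$ contributes non-trivially. For $v\in\CV_p$ with $v\neq v_0$, the relevant local model factors for both $\CP_{\wt G}$ and $\CP_{\wt G}'$ are banal in the sense of Appendix \ref{appendix}, hence trivial by Lemmas \ref{banal LM GL_n triv} and \ref{banal LM GU_n triv}. The induced morphism of local models at $v_0$ is then the forgetful map from the local model attached to the multichain $\pi_{v_0}L^\vee \subset^1 \Lambda_{v_0} \subset^1 L$ (where $L := \Lambda_{v_0}^\flat \oplus O_{F,v_0}u$), equipped with the wedge condition \eqref{wedge condition} and spin condition \eqref{spin condition} at $\Lambda_{v_0}$ and condition (9.2) of \cite{RSZ2} at $L$, to the local model attached to just $L$ with condition (9.2).

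The main step is to invoke the analysis of \cite[\S12]{RSZ2} to conclude that this morphism of local models is finite \'etale of degree $2$. Geometrically, its fibers parametrize $\pi_{v_0}$-modular lattices $\Lambda_{v_0}$ with $\pi_{v_0}L^\vee \subset^1 \Lambda_{v_0} \subset^1 L$ equipped with compatible Lie algebra filtrations satisfying the wedge and spin conditions; by the remark preceding the lemma there are exactly two such lattices, and the local computations in \emph{loc.\ cit.} show that the filtrations and the supplementary conditions on $\Lambda_{v_0}$ are uniquely induced from those on $L$, yielding a reduced geometric fiber of cardinality exactly $2$. Transferring back to $\pi_1$ via the local model diagram---whose structure morphisms to $\CP_{\wt G}$, $\CP_{\wt G}'$, and to the local models are smooth and compatible with $\pi_1$---then gives that $\pi_1$ is finite \'etale of degree $2$. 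The main technical obstacle is the explicit local computation at $v_0$, but this is precisely the content of \cite[\S12]{RSZ2}.
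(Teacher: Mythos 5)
Your proposal misattributes the degree $2$ to the local model morphism, and this is a real gap. The degree $2$ of $\pi_1$ comes from the \emph{moduli-theoretic} choice of the extra abelian variety $A_{\Lambda_{v_0}}$ in the $\CL$-set, i.e.\ from which of the two $\pi_{v_0}$-modular lattices $\Lambda_{v_0}$ intermediate between $\pi_{v_0}L^\vee$ and $L$ one realizes via an isogeny $A_{\Lambda_{v_0}} \to A_L$. The local model does not see this choice: what \cite[Prop.\ 9.12(ii)]{RSZ2} (the reference the paper actually uses, not \S12 as you cite) provides is that the two relevant local models are \emph{isomorphic}, so that the induced morphism of local models has degree $1$, not $2$. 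Your sentence ``the local computations in loc.\ cit.\ show that the filtrations\ldots are uniquely induced\ldots yielding a reduced geometric fiber of cardinality exactly $2$'' is internally inconsistent: if the filtration is uniquely induced, the \emph{local model} fiber is a single point, and the two points of the $\pi_1$-fiber must be counted by other, moduli-theoretic means.

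There is also a structural issue with your transfer step. Local model diagrams faithfully propagate \'etale-local properties such as regularity, flatness, and \'etaleness between moduli space and local model via smooth (and hence non-proper) intermediaries; they do not, as stated, transfer finiteness or degree of a morphism between two different moduli stacks, since these are global, properness-type properties. The paper circumvents this cleanly: it first observes $\pi_1$ is proper, then proves finiteness by showing each geometric fiber has exactly two points (via homology of the abelian varieties over $\BC$, and via Dieudonn\'e modules in characteristic $p$), and only then appeals to the local model isomorphism of \cite[Prop.\ 9.12(ii)]{RSZ2} for \'etaleness. You should adopt this split: handle proper + quasi-finite + degree by a direct argument on the abelian varieties in the chain, and reserve the local model reduction for \'etaleness alone.
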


\begin{proof}
The morphism is obviously proper.  It is also finite because each geometric fiber has precisely two points.  This last assertion follows over the complex numbers by looking at the homology of the abelian varieties in play, and in positive characteristic by looking at their Dieudonn\'e modules.  The question of \'etaleness reduces to the local models, which are isomorphic by \cite[Prop.\ 9.12(ii)]{RSZ2}.
\end{proof}

The morphism $\pi_2$ is proper, and it is finite \'etale over the generic fiber of degree $(p^n-1)/(p-1)$.  However, $\pi_2$ is not finite when $n \geq 4$, cf.\ \cite[Rem.\ 9.5]{RSZ2}.

Now define
\[
   \CP_{\wtHG} := \CM_{K_{\wt H}}(\wt H) \times_{\CM^{\fka,\xi}_0} \CP_{\wt G}
	\quad\text{and}\quad
	\CP_{\wtHG}' := \CM_{K_{\wt H}}(\wt H) \times_{\CM^{\fka,\xi}_0} \CP_{\wt G}'.
\]
Applying the functor $\CM_{K_{\wt H}}(\wt H) \times_{\CM^{\fka,\xi}_0} \!-$ to the rightmost three spaces in \eqref{semi-global P diag}, we obtain a diagram
\begin{equation}\label{semi-global corresp diag G variant 1}
	\begin{gathered}
   \xymatrix{
	     &  & \CP_{\wtHG}' \ar[dl] \ar[dr]\\
	   \CM_{K_{\wt H}}(\wt H) \ar@{^{(}->}[r]  &  \CP_{\wtHG}  & &  \CM_{K_{\wtHG}}(\wtHG).
	}
	\end{gathered}
\end{equation}
Here the lower left embedding is the graph of the one in \eqref{semi-global P diag}, again provided that $K_H^p \subset H(\BA_{F_0,f}^p) \cap K_G^p$. Of course, the oblique arrows inherit the properties of the corresponding ones in \eqref{semi-global P diag} under base change.  Set
\[
   \CM_{K_{\wt H}'}(\wt H) := \CM_{K_{\wt H}}(\wt H) \times_{\CP_{\wtHG}} \CP_{\wtHG}'.
\]
Note that the generic fiber of $\CM_{K_{\wt H}'}(\wt H)$ is equal to $M_{K_{\wt H}'}(\wt H)$, where $K_{\wt H}' = K_{Z^\BQ} \times K_H^p \times K_{H,p}'$, with $K_{H,v}' = K_{H,v}$ at all places $v \neq v_0$, and $K_{H,v_0}'$ the simultaneous stabilizer of $\Lambda_{v_0}^\flat$ and $\Lambda_{v_0}$.

\begin{lemma}\label{primed emb 1}
The morphism
\begin{equation*}
   \CM_{K_{\wt H}'}(\wt H) \to \CM_{K_{\wtHG}}(\wtHG)
\end{equation*}
induced by \eqref{semi-global corresp diag G variant 1} is a closed embedding.
\end{lemma}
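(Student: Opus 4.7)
First, I note that the morphism $\CM_{K_{\wt H}'}(\wt H) \to \CP_{\wtHG}'$ coming from the defining cartesian square is itself a closed embedding, being the base change of $\CM_{K_{\wt H}}(\wt H) \hookrightarrow \CP_{\wtHG}$ along $\CP_{\wtHG}' \to \CP_{\wtHG}$. The embedding $\CM_{K_{\wt H}}(\wt H) \hookrightarrow \CP_{\wtHG}$ is the graph of the finite unramified morphism $\CM_{K_{\wt H}}(\wt H) \to \CP_{\wt G}$ appearing as the lower-left arrow of \eqref{semi-global P diag}; it is closed because $\CP_{\wt G}$ is separated over $\CM^{\fka,\xi}_0$. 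Composing with the morphism $\CP_{\wtHG}' \to \CM_{K_{\wtHG}}(\wtHG)$ induced by $\pi_2$ (proper, as a base change of the proper $\pi_2$) exhibits the morphism of interest as a closed embedding followed by a proper map, hence proper.

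To conclude, it suffices to verify that this proper morphism is a monomorphism of DM stacks, since proper monomorphisms of locally noetherian DM stacks are closed embeddings. Unwinding the moduli descriptions, an $S$-valued point of $\CM_{K_{\wt H}'}(\wt H)$ is a point $(A_0, \iota_0, \lambda_0, A^\flat, \iota^\flat, \lambda^\flat, \ov\eta^{\flat, p})$ of $\CM_{K_{\wt H}}(\wt H)$ together with an $\CL$-set of abelian varieties $\bA = \{A_\Lambda\}$ in $\CP_{\wt G}'$ for which the term at $\Lambda_\flat := (\Lambda_{v_0}^\flat \oplus O_{F, v_0} u) \oplus \bigoplus_{v \neq v_0} \Lambda_v$ coincides with $A^\flat \times A_0$, equipped with the product polarization $\lambda^\flat \times \lambda_0(u)$ and the induced level structure. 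Its image in $\CM_{K_{\wtHG}}(\wtHG)$ is the triple $(A_0, A^\flat, A)$ with $A = \pi_2(\bA) = A_{\Lambda_{v_0} \oplus \bigoplus_{v \neq v_0} \Lambda_v}$.

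Now suppose two $S$-valued points of $\CM_{K_{\wt H}'}(\wt H)$ have isomorphic images in $\CM_{K_{\wtHG}}(\wtHG)$. The isomorphisms $A_0 \cong A_0'$ and $A^\flat \cong A^{\flat\prime}$ determine an isomorphism between the terms at $\Lambda_\flat$, while the isomorphism of the images supplies an isomorphism between the terms at $\Lambda_{v_0} \oplus \bigoplus_{v \neq v_0} \Lambda_v$. Since $\CL$ is generated over $\CM^{\fka,\xi}_0$ by these two lattices together with their $\pi_{v_0}$-duals, and since both the transition isogenies between the terms of an $\CL$-set of abelian varieties (determined by lattice inclusions) and the dual terms (determined by the principal polarization $\boldsymbol\lambda$) are rigid, these two compatible isomorphisms extend uniquely to an isomorphism of chains $\bA \cong \bA'$. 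The same rigidity argument gives a bijection on morphism sets, so the map is a monomorphism, completing the proof.

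The main subtlety I expect to have to justify carefully is the rigidity of $\CL$-sets of abelian varieties in the presence of the additional local conditions at $v_0$ (the wedge condition \eqref{wedge condition}, spin condition \eqref{spin condition}, and condition (9.2) of \cite{RSZ2}) imposed in the definition of $\CP_{\wt G}'$; but these are closed conditions preserved under any isomorphism, so they impose no obstruction to extending an isomorphism between two generating terms of the chain to the whole chain.
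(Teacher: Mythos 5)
The overall skeleton you use — reduce to showing the morphism is proper (easy) and a monomorphism (the content), then invoke ``proper monomorphism is a closed embedding'' — is a reasonable one, and your propriety argument (graph plus base change of $\pi_2$) is correct. The paper does not argue this way at all: its proof is a one-line citation to \cite[Prop.\ 12.1]{RSZ2}, a local result on RZ spaces, whose proof is what actually does the work. So yours would be a genuinely different route if it were complete.

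However, the monomorphism step, which is the entire content of the lemma, is where your argument has a real gap. You write that ``the transition isogenies between the terms of an $\CL$-set of abelian varieties (determined by lattice inclusions) \ldots\ are rigid.'' This is not true: the isogenies in an $\CL$-set are genuine additional data and are \emph{not} determined by their source and target. Concretely, the isogeny $\rho\colon A_{\Lambda_{v_0}} \to A_{\Lambda_{v_0}^\flat \oplus O_{F,v_0}u}$ in the chain has kernel a finite subgroup $C \subset A_{\Lambda_{v_0}}[\iota(\pi_{v_0})]$ of order $p$, and in general there are many such subgroups yielding isomorphic quotients; knowing $A_{\Lambda_{v_0}}$, $A_{\Lambda_{v_0}^\flat \oplus O_{F,v_0}u}$, the $O_F$-actions, and even the polarizations does not on its own single out $C$. (This is exactly why $\pi_2$ fails to be finite for $n\geq 4$: there may be a positive-dimensional family of chains over a fixed $\Lambda_{v_0}$-node.) Showing that the chain \emph{is} pinned down once both the $\Lambda_{v_0}$-node and the $\Lambda_{v_0}^\flat \oplus O_{F,v_0}u$-node are fixed requires a substantive argument, and this is what \cite[Prop.\ 12.1]{RSZ2} supplies, via a Dieudonn\'e-theoretic analysis in which the wedge, spin, and (9.2) conditions at $v_0$ are used actively. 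Your proposal instead dismisses precisely these conditions as ``closed conditions preserved under any isomorphism'' which ``impose no obstruction,'' which mischaracterizes their role: they are not obstructions to be argued away but the very constraints that make the uniqueness true. As it stands your proof asserts, rather than establishes, the key point.
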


\begin{proof}
The proof of \cite[Prop.\ 12.1]{RSZ2} applies.
\end{proof}

\smallskip
\noindent\emph{Variant \refstepcounter{variant}\arabic{variant}:} For the second variant, in addition to the place $v_0$, we allow there to be places $v_1,\dotsc,v_{m-1} \in \CV_p$ for which the lattice $\Lambda_{v_i}$ is $\pi_{v_i}$-modular.  For each $i = 0,\dotsc,m-1$, we then assume that the relation \eqref{type 2 lattice relation} holds with $v_i$ in place of $v_0$.  For all $v \neq v_0,\dotsc,v_{m-1}$, we again assume that $\Lambda_v = \Lambda_v^\flat \oplus O_{F,v} u$.  We then define the stack $\CP_{\wt G}$ exactly as above.  We also define $\CP_{\wt G}'$ exactly as above, except we now take $\CL$ to be the self-dual multichain of $O_F \otimes \BZ_p$-lattices in $W \otimes \BQ_p = \bigoplus_{v \in \CV_p} W_v$ generated by the lattices $\Lambda_{v_i}$ and $\Lambda_{v_i}^\flat \oplus O_{F_{v_i}}u$ for each $i = 0,\dotsc,m-1$, and by $\Lambda_v$ for all $v \neq v_0,\dotsc,v_{m-1}$.  (To be clear, the conditions above on the Lie algebra of the $p$-divisible group when $p$ is locally nilpotent on the base still only involve the place $v_0$.)

In complete analogy with \eqref{semi-global P diag}, there is a diagram
\begin{equation}\label{semi-global corresp diag G}
	\begin{gathered}
   \xymatrix{
	     &  & \CP_{\wt G}' \ar[dl]_-{\pi_1} \ar[dr]^-{\pi_2}\\
	   \CM_{K_{\wt H}}(\wt H) \ar[r]  &  \CP_{\wt G}  & &  \CM_{K_{\wt G}}(\wt G),
	}
	\end{gathered}
\end{equation}
where the lower left morphism is defined provided that $K_H^p \subset H(\BA_{F_0,f}^p) \cap K_G^p$, and where the arrows $\pi_1$ and $\pi_2$ are induced by
\[
   \bA \mapsto A_{\Lambda_{v_1}'\oplus \dotsb \oplus \Lambda_{v_m}' \oplus \bigoplus_{v \neq v_1,\dotsc,v_m} \Lambda_v}
   \quad\text{and}\quad
   \bA \mapsto A_{\Lambda_{v_1}\oplus \dotsb \oplus \Lambda_{v_m} \oplus \bigoplus_{v \neq v_1,\dotsc,v_m} \Lambda_v},
\]
respectively.  The proof of Lemma \ref{pi_1 fin et} transposes to yield the following.

\begin{lemma} The morphism $\pi_1$ is finite \'etale of degree $2^m$. \qed
\end{lemma}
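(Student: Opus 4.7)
The plan is to run exactly the same three-step argument as in Lemma \ref{pi_1 fin et}, exploiting the fact that the new data introduced in the multichain $\CL$ are localized independently at each of the places $v_0,\dotsc,v_{m-1}$. First, properness of $\pi_1$ is formal, since it is obtained from a morphism of moduli stacks in which the source parametrizes extra chain data relative to a fixed object of the target, and these data live in a proper (in fact finite) ambient structure.

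Next, to establish finiteness and compute the degree, I would check that each geometric fiber has exactly $2^m$ points. Given an object of $\CP_{\wt G}$, promoting it to an object of $\CP_{\wt G}'$ amounts to enlarging the chain of abelian varieties so as to include, at each place $v_i$ ($i=0,\dotsc,m-1$), the factor corresponding to one of the two $\pi_{v_i}$-modular lattices contained in $\Lambda_{v_i}^\flat \oplus O_{F,v_i} u$. At the remaining places $v \neq v_0,\dotsc,v_{m-1}$, the multichain $\CL$ contributes nothing new, so the fiber data are literally a product over $i=0,\dotsc,m-1$ of the fiber data for the $m=1$ situation. By Lemma \ref{pi_1 fin et} (equivalently, by the homology/Dieudonn\'e module count of \cite[Prop.\ 9.12(ii)]{RSZ2} applied one place at a time), each of these factor fibers has exactly two points, so the total geometric fiber has cardinality $2^m$. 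This gives finiteness and constant fiber degree $2^m$.

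Finally, for \'etaleness the question reduces, as in the proof of Lemma \ref{pi_1 fin et}, to the corresponding statement on local models. The local model for $\CP_{\wt G}'$ decomposes as in \eqref{LM prod decomp} into a product indexed by $v \in \CV_p$, with the factor at each $v \neq v_0,\dotsc,v_{m-1}$ equal to the (trivial, by the banal case of Appendix \ref{appendix}) factor appearing in the local model for $\CP_{\wt G}$. At each $v_i$, the local model morphism is exactly the one shown to be an isomorphism in \cite[Prop.\ 9.12(ii)]{RSZ2}. Thus the local model morphism attached to $\pi_1$ is a product of isomorphisms, hence an isomorphism, so $\pi_1$ is \'etale.

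The main obstacle I anticipate is purely bookkeeping: making sure that the conditions imposed in the definition of $\CP_{\wt G}'$ at the place $v_0$ (the wedge, spin, and \cite[(9.2)]{RSZ2} conditions on $\Lie A_\Lambda[v_0^\infty]$) genuinely decouple from the analogous choices of $\pi_{v_i}$-modular lattice at $v_i$ for $i \geq 1$, so that the fiber really factors as a product. This is built into the hypothesis that the Lie-algebra conditions in the second variant only involve $v_0$, so no genuine interaction between the different places occurs and the product decomposition of both the fiber count and the local model is valid. \qed
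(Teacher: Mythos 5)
Your proposal is correct and follows exactly the route the paper takes: the paper states that ``the proof of Lemma \ref{pi_1 fin et} transposes,'' and you have carried out that transposition explicitly---properness as before, a geometric-fiber count via homology or Dieudonn\'e modules that factors place by place giving $2^m$, and \'etaleness reduced to the product local model where each $v_i$-factor is an isomorphism by \cite[Prop.~9.12(ii)]{RSZ2} and the remaining factors are trivial by banality. Your closing observation that the Lie-algebra conditions are imposed only at $v_0$, so the data decouple across the $v_i$, is precisely the point that makes the transposition harmless.
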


In analogy with Variant \ref{variant 1}, the morphism $\pi_2$ is proper, and finite \'etale over the generic fiber of degree $((p^n-1)/(p-1))^m$.  However, it is again not finite when $n \geq 4$.

Finally, we again define
\[
   \CP_{\wtHG} := \CM_{K_{\wt H}}(\wt H) \times_{\CM^{\fka,\xi}_0} \CP_{\wt G}
	\quad\text{and}\quad
	\CP_{\wtHG}' := \CM_{K_{\wt H}}(\wt H) \times_{\CM^{\fka,\xi}_0} \CP_{\wt G}',
\]
and we apply the functor $\CM_{K_{\wt H}}(\wt H) \times_{\CM^{\fka,\xi}_0} \!-$ to obtain
\begin{equation}\label{semi-global corresp diag HG}
	\begin{gathered}
   \xymatrix{
	     &  & \CP_{\wtHG}' \ar[dl] \ar[dr]\\
	   \CM_{K_{\wt H}}(\wt H) \ar@{^{(}->}[r]  &  \CP_{\wtHG}  & &  \CM_{K_{\wtHG}}(\wtHG).
	}
	\end{gathered}
\end{equation}
Here for the lower left embedding we assume, as always, that $K_H^p \subset H(\BA_{F_0,f}^p) \cap K_G^p$.  Set
\[
   \CM_{K_{\wt H}'}(\wt H) := \CM_{K_{\wt H}}(\wt H) \times_{\CP_{\wtHG}} \CP_{\wtHG}'.
\]
Note that the generic fiber of $\CM_{K_{\wt H}'}(\wt H)$ is equal to $M_{K_{\wt H}'}(\wt H)$, where $K_{\wt H}' = K_{Z^\BQ} \times K_H^p \times K_{H,p}'$, with $K_{H,v}' = K_{H,v}$ at all places $v \neq v_0,\dotsc,v_{m-1}$, and $K_{H,v_i}'$ the simultaneous stabilizer of $\Lambda_{v_i}^\flat$ and $\Lambda_{v_i}$ at all places $v_i$ for $i = 0,\dotsc,m-1$.  As in the case of Lemma \ref{primed emb 1}, we obtain the following.

\begin{lemma}
The morphism
\[
   \CM_{K_{\wt H}'}(\wt H) \to \CM_{K_{\wtHG}}(\wtHG)
\]
induced by \eqref{semi-global corresp diag HG} is a closed embedding.\qed
\end{lemma}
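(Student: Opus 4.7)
The plan is to adapt the proof of Lemma~\ref{primed emb 1}, which itself reduces to \cite[Prop.~12.1]{RSZ2}, so as to handle the several places $v_0, v_1, \dotsc, v_{m-1}$ at once. The key observation is that, by construction, the stacks $\CP_{\wt G}$, $\CP_{\wt G}'$, and $\CM_{K_{\wtHG}}(\wtHG)$ in Variant~2 differ from their AT counterparts only at the places $v_0, \dotsc, v_{m-1}$, and at each such $v_i$ the local lattice relation \eqref{type 2 lattice relation} (with $v_0$ replaced by $v_i$) reproduces exactly the local setup of Variant~\ref{variant 1}.

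First, I would factor the morphism as
\[
   \CM_{K_{\wt H}'}(\wt H) \hookrightarrow \CP_{\wtHG}' \to \CM_{K_{\wtHG}}(\wtHG).
\]
The first arrow is the base change of the closed embedding $\CM_{K_{\wt H}}(\wt H) \hookrightarrow \CP_{\wtHG}$ along $\CP_{\wtHG}' \to \CP_{\wtHG}$, hence is itself a closed embedding. The second is the base change of the proper map $\pi_2$, hence proper, so the composite is proper. It therefore suffices to verify that the composite is unramified and injective on geometric points.

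Both properties can be checked on the local model, and the local model for both source and target factors as a product over $v \in \CV_p$. At any $v \notin \{v_0, \dotsc, v_{m-1}\}$, the factors on the two sides coincide and contribute nothing. At each $v_i$, \cite[Prop.~12.1]{RSZ2} (applied verbatim with $v_0$ replaced by $v_i$) furnishes precisely the local-model closed embedding that was used to prove Lemma~\ref{primed emb 1}. Taking the product over $i = 0, \dotsc, m-1$ of these place-by-place closed embeddings yields the desired global conclusion.

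The main obstacle is the bookkeeping needed to transport the conditions imposed at $v_0$---the wedge condition \eqref{wedge condition}, the spin condition \eqref{spin condition}, and condition~(9.2) of \cite{RSZ2}---verbatim to each $v_i$ in the definition of $\CP_{\wt G}'$, and to check that the resulting $\CL$-chain structure at each $v_i$ is independent of the chains at the other $v_j$. Once this is confirmed, the decoupling across the distinct $v_i$ follows formally from the product decomposition of the local models, and the substantive local input is carried by \cite[Prop.~12.1]{RSZ2}.
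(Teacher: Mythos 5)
Your broad strategy matches the paper's: the paper proves this by saying it is ``as in the case of Lemma~\ref{primed emb 1},'' which in turn is proved by the single sentence ``the proof of \cite[Prop.\ 12.1]{RSZ2} applies.'' So invoking \cite[Prop.\ 12.1]{RSZ2} place-by-place and decoupling across $v_0,\dotsc,v_{m-1}$ is indeed the intended route, and your factorization through $\CP_{\wtHG}'$ plus the reduction to proper + unramified + injective on geometric points is a reasonable way to package it. However, two steps in your write-up are off the mark.

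First, your claim that ``both properties can be checked on the local model'' is not correct for injectivity on geometric points. The local model controls the infinitesimal behavior of the moduli stack (and hence can be used to verify unramifiedness), but it carries no information about the underlying point set of the moduli stack; it knows only about the Hodge filtration, not about the abelian varieties themselves. Injectivity on geometric points must be verified by a direct argument on rational Tate modules in characteristic $0$ and on Dieudonn\'e modules in characteristic $p$ --- exactly the distinction the paper itself makes in the proof of Lemma~\ref{pi_1 fin et}, where the \'etaleness is reduced to local models but the finiteness is established separately ``by looking at the homology of the abelian varieties in play \dots and in positive characteristic by looking at their Dieudonn\'e modules.'' Relabeling the content of \cite[Prop.\ 12.1]{RSZ2} as a ``local-model closed embedding'' therefore mischaracterizes it: that proposition is a statement about Rapoport--Zink formal moduli spaces of $p$-divisible groups, whose proof uses Dieudonn\'e theory and is strictly stronger than any local-model statement. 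Your proof route, as written, leaves the injectivity on points without an argument.

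Second, the ``main obstacle'' you flag does not exist. You propose to ``transport the conditions imposed at $v_0$ --- the wedge condition \eqref{wedge condition}, the spin condition \eqref{spin condition}, and condition~(9.2) of \cite{RSZ2} --- verbatim to each $v_i$,'' but the paper states explicitly in Variant~2 that ``the conditions above on the Lie algebra of the $p$-divisible group when $p$ is locally nilpotent on the base still only involve the place $v_0$.'' At $v_1,\dotsc,v_{m-1}$ the signature is banal, so by Lemmas~\ref{banal LM GL_n triv} and \ref{banal LM GU_n triv} the local models are trivial and no Lie-algebra conditions are (or need be) imposed there. The genuine extra content at those places is the handling of the additional $\CL$-chain data, not the imposition of wedge/spin conditions, and the banality is precisely what makes the decoupling across those places painless.
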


We note that if $(v_0,\Lambda_{v_0})$ is of type \eqref{pi-mod type} or \eqref{almost pi-mod type}, then the spaces $\CM_{K_{\wt H}}(\wt H)$ and $\CM_{K_{\wtHG}}(\wtHG)$ are smooth. If $(v_0,\Lambda_{v_0})$ is of type \eqref{almost self-dual type}, then $\CM_{K_{\wt H}}(\wt H)$ is smooth, and $\CM_{K_{\wtHG}}(\wtHG)$ has semi-stable reduction provided that $E_\nu$ is unramified over $\BQ_p$.  

In these cases, one can define Hecke correspondences prime to $p$, as at the end of Section \ref{subsec hyper}. 

\begin{remark}\label{more atc}
In analogy with the above cases in which $(v_0,\Lambda_{v_0})$ is of AT type \eqref{pi-mod type} and $(v_0,\Lambda_{v_0}^\flat)$ is of AT type \eqref{almost pi-mod type}, one may also consider a situation in which $(v_0,\Lambda_{v_0})$ is hyperspecial and $(v_0,\Lambda_{v_0}^\flat)$ is of AT type \eqref{almost self-dual type}.  One obtains a closed embedding analogous to the one in Lemma \ref{primed emb 1}, where the source is a finite covering of $\CM_{K_{\wt H}}(\wt H)$.
\end{remark}

\section{Global integral models}\label{section global}
In this section, we define integral models of the above moduli spaces over $\Spec O_E$. We will take $\fka=O_{F_0}$, i.e., we will assume that $\CM_0=\CM_0^{O_{F_0}}$ is non-empty. Recall from Remark \ref{rem exa}(\ref{rem exa satisfied}) that this hypothesis is satisfied whenever $F/F_0$ is ramified at some finite place, a condition which we will eventually impose below in the context of arithmetic intersections, cf.\ Remark \ref{latticecond herm}.  We fix $\xi\in \CL_\Phi^\fka/{\sim}$ and set $\CM_0^\xi := \CM_0^{O_{F_0},\xi}$.

\subsection{Trivial level structure}\label{global mod prob trivial level}
In this subsection, we are going to define integral models over $\Spec O_E$ of the previously defined moduli spaces in the case that the open compact subgroup is the stabilizer of a lattice of a certain form.  Let us start with the case of $\wt G$. We consider the following finite set of finite places of $F_0$,
\begin{equation}\label{V_AT^W}
   \CV_\AT^W := \{\, v \mid \text{$v$ is either inert in $F$ and $W_v$ is non-split, or $v$ ramifies in $F$} \,\} .
\end{equation}
Let
\[
   \fkd_\AT^W := \prod_{v\in \CV_\AT^W} \fkq_v \subset O_F,
\]
where $\fkq_v$ denotes the (unique) prime in $O_F$ determined by $v \in \CV_\AT^W$.
We fix an $O_F$-lattice $\Lambda$ in $W$ with 
\begin{equation}\label{ATvertex}
   \Lambda\subset \Lambda^*\subset (\fkd_\AT^W)\i\Lambda .
\end{equation}
We assume that the triple $(F/F_0, W, \Lambda)$ satisfies the following conditions.
\begin{altenumerate}
\renewcommand{\theenumi}{\arabic{enumi}}
\item All finite places $v$ of $F_0$ ramified over $\BQ$ or dividing $2$ are split in $F$.\footnote{In \cite[\S6.1]{RSZ4} this assumption is relaxed to the assumption that all such $v$ are unramified in $F$, with the analog of Theorem \ref{globalnolevel} below continuing to hold true.}
\item\label{AT assumptions} All places $v\in \CV_\AT^W$ are unramified over $\BQ$, and the pair $(v, \Lambda_v)$ is isomorphic to one of the AT types \eqref{almost self-dual type}--\eqref{n=2 self-dual type} in Section \ref{subsec AT}.  
\end{altenumerate}
As a consequence, for any finite place $\nu$ of $E$, denoting by  $v_0$  the place of $F_0$ induced by $\nu$  via $\varphi_0$, the pair $(v_0,\Lambda_{v_0})$ is of the type considered in one of the four subsections Section \ref{semi-global hyperspecial smooth}--\ref{subsec AT}. Associated to these data is the open compact subgroup
\[
   K_G^\circ := \bigl\{\, g\in G(\BA_{F_0, f})\bigm| g(\Lambda\otimes_{O_F}\wh O_F)=\Lambda\otimes_{O_F}\wh O_F \,\bigr\},
\]
and as usual we define $K_{\wt G}^\circ := K_{Z^\BQ} \times K_G^\circ$.

We formulate a moduli problem over $\Spec O_{E}$ as follows.  To each $O_{E}$-scheme $S$, we associate the groupoid of tuples $(A_0,\iota_0,\lambda_0,A,\iota,\lambda)$, where $(A_0,\iota_0,\lambda_0)$ is an object of $\CM_0^\xi(S)$.  Furthermore,
\begin{altitemize}
\item $(A,\iota)$ is an abelian scheme over $S$, with $O_{F}$-action $\iota$ satisfying the Kottwitz condition \eqref{kottwitzF} of signature $((1, n-1)_{\varphi_0}, (0, n)_{\varphi\in\Phi\ssm\{\varphi_0\}})$; and
\item $\lambda$ is a  polarization whose Rosati involution induces on $O_{F}$ the non-trivial Galois automorphism of $F/F_0$. 
\end{altitemize}
We impose the sign condition that at every point $s$ of $S$,
\begin{equation}\label{signcond2}
   \inv^r_v(A_{0,s}, \iota_{0,s}, \lambda_{0,s}, A_s, \iota_s, \lambda_s) = \inv_v(-W_v) ,
\end{equation}
for every finite place $v$ of $F_0$ which is non-split in $F$. Furthermore, we impose that for any finite place $\nu$ of $E$, denoting by $p$ its residue characteristic, the triple up to isogeny prime to $p$ over $S \times_{\Spec O_E} \Spec O_{E,(\nu)}$,
\[
   (A\otimes \BZ_{(p)}, \iota \otimes \BZ_{(p)}, \lambda \otimes \BZ_{(p)}),
\]
satisfies the conditions in the semi-global moduli problem for $\nu$ defined in Section \ref{section semi-global}. 

The morphisms in this category are the isomorphisms. 

\begin{remark}
\begin{altenumerate}
\item Suppose that $n$ is even.  Then when $v$ is a finite place of $F_0$ inert (resp.,\ ramified) in $F$, the two isometry types of the $n$-dimensional $F_v/F_{0,v}$-hermitian spaces are distinguished by whether they contain a self-dual (resp.,\ $\pi_v$-modular) lattice.  This implies that when there are no places $v$ such that the pair $(v,\Lambda_v)$ above is of AT type \eqref{n=2 self-dual type}, the sign condition \eqref{signcond2} is automatically satisfied. 
\item Suppose that $F_0 = \BQ$.  Then there is no need for  the sign condition. Indeed, by the Hasse principle for hermitian forms, it is equivalent to impose the condition that for every geometric point $\ov s$ of $S$, there exists an isomorphism of hermitian $O_{F,\ell}$-lattices
\[
   \Hom_{O_F}\bigl(T_\ell(A_{0,\ov s}),T_\ell(A_{\ov s})\bigr) \simeq -\Lambda_\ell
\]
for every prime number $\ell \neq \charac \kappa(\ov s)$. Hence  we recover  in this case  the definition of the integral moduli problem of \cite[\S2.3]{BHKRY} (where the principal polarization in the moduli problem of loc.\ cit.\ is replaced by the polarization type we have specified above). 
\end{altenumerate}
\end{remark}

The only point requiring proof in the next theorem is the representability of $\CM_{K^\circ_{\wt G}}(\wt G)$, which is, however, routine.

\begin{theorem}\label{globalnolevel}
The moduli problem just formulated is representable by a Deligne--Mumford stack $\CM_{K^\circ_{\wt G}}(\wt G)$ flat over $\Spec O_{E}$. For every place $\nu$ of $E$, the base change $\CM_{K^\circ_{\wt G}} \times_{\Spec O_{E}} \Spec O_{E, (\nu)}$ is canonically isomorphic to the semi-global moduli space defined in one of Sections \ref{subsec hyper}, \ref{split level}, or \ref{subsec AT} above.  Hence:
\begin{altenumerate}
\item $\CM_{K^\circ_{\wt G}}(\wt G)$ is smooth of relative dimension $n-1$ over the open subscheme of $\Spec O_E$ obtained by removing all places $\nu$ for which the induced pair $(v_0,\Lambda_{v_0})$ is of AT type \eqref{almost self-dual type} or \eqref{n=2 self-dual type} in Section \ref{subsec AT}.
\item $\CM_{K^\circ_{\wt G}}(\wt G)$ has semi-stable reduction over the open subscheme of $\Spec O_E$ obtained by removing all places $\nu$ for which either $(v_0,\Lambda_{v_0})$ is of AT type \eqref{n=2 self-dual type}, or is of AT type \eqref{almost self-dual type} and for which  $E_\nu$ is ramified over $\BQ_p$.\qed
\end{altenumerate}   
\end{theorem}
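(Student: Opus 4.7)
The strategy is to identify the base change $\CM_{K^\circ_{\wt G}}(\wt G)\times_{\Spec O_E}\Spec O_{E,(\nu)}$ with the semi-global moduli space of Section \ref{subsec hyper}, \ref{split level}, or \ref{subsec AT}, according as the pair $(v_0,\Lambda_{v_0})$ is of hyperspecial, split, or AT type. Once this identification is established, representability as a DM stack and flatness over $\Spec O_E$ may be checked locally on the base, so they follow by gluing from their semi-global counterparts; the generic fiber comparison with $M_{K^\circ_{\wt G}}(\wt G)$ follows the same pattern. Parts (i) and (ii) then translate directly from Theorems \ref{semi-global hyperspecial smooth}, \ref{semi-global split smooth}, and \ref{semi-global AT smooth/regular}.

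The crux is reconciling the prime-to-$p$ level data: the semi-global moduli records a $K_G^p$-orbit of isometries $\ov\eta^p\colon \wh V^p(A_0,A)\isoarrow -W\otimes_F\BA_{F,f}^p$, while the global moduli carries none. Given a global point $(A_0,\iota_0,\lambda_0,A,\iota,\lambda)$ over $O_{E,(\nu)}$, for each finite place $v$ of $F_0$ with $v\nmid p$ the $O_{F,v}$-lattice $L_v$ cut out inside the $v$-component $\wh V_v(A_0,A)$ of $\wh V^p(A_0,A)$ by the integral prime-to-$p$ Tate modules is a vertex lattice; the duality induced by $\lambda$ and $\lambda_0$, combined with the global chain $\Lambda\subset\Lambda^\vee\subset(\fkd_\AT^W)\i\Lambda$, forces the elementary divisor type of $L_v$ to agree with that of $\Lambda_v$. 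The sign condition \eqref{signcond2} then equates the Hasse invariant of $\wh V_v(A_0,A)$ with that of $-W_v$ at non-split $v$ (the match being automatic at split $v$), so by the local classification of hermitian spaces there exists an isometry carrying $L_v$ to $-\Lambda_v$, unique up to the stabilizer $K_{G,v}$. Assembling over $v\nmid p$ produces the canonical $K_G^p$-orbit $\ov\eta^p$, and the inverse direction (forgetting $\ov\eta^p$) is tautological. The remaining conditions (Kottwitz \eqref{kottwitzF}, Rosati compatibility, polarization type at places of $\CV_p$, sign at non-split $v\in\CV_p\ssm\{v_0\}$, and the Eisenstein/wedge/spin/refined-spin conditions at $v_0$) match tautologically on the two sides, while the conditions at $v\in\CV_p\ssm\{v_0\}$ implicit in the global ``for each $\nu'$'' loop are automatic in the banal signature obtaining there, cf.\ Appendix \ref{appendix} and Lemma \ref{kott=>eis}.

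With the identification in place, representability, flatness, smoothness, and semi-stable reduction descend from the semi-global theorems precisely as asserted: both properties are local on the target, the generic fiber is unramified, and over each $O_{E,(\nu)}$ the claim reduces to Theorem \ref{semi-global hyperspecial smooth}, \ref{semi-global split smooth}, or \ref{semi-global AT smooth/regular}\eqref{semi-global AT smooth/regular i}--\eqref{semi-global AT smooth/regular ii}. The main obstacle is the first bullet of the technical step, namely the careful bookkeeping showing that $\lambda$ really does pin down the elementary divisors of $L_v$ at every finite $v\nmid p$ in agreement with those of $\Lambda_v$; this traces the relationship between $\ker\lambda$ and the hermitian duality pairing on the Tate module, after which the Hasse principle and the sign condition finish the argument at the level of hermitian spaces.
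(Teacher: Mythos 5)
Your proposal is correct and supplies the argument that the paper leaves implicit (the theorem is stated with \textup{\qedsymbol} and no proof). The strategy you follow --- identify the base change with the semi-global model over each $O_{E,(\nu)}$ by constructing the $K_G^p$-orbit of level structures from the sign condition, the known elementary-divisor type of $\Hom_{O_F}(\wh T^p(A_0),\wh T^p(A))$, and the assumption $p\neq 2$ at non-split places, then import representability, flatness, smoothness, and semi-stability from Theorems~\ref{semi-global hyperspecial smooth}, \ref{semi-global split smooth}, and \ref{semi-global AT smooth/regular} --- is indeed the intended route, and mirrors the last paragraph of the proof of Theorem~\ref{semi-global hyperspecial smooth}.

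Two points deserve to be made a bit more explicit than your "the inverse direction (forgetting $\ov\eta^p$) is tautological." First, the semi-global models parametrize abelian schemes up to prime-to-$p$ isogeny, so the inverse functor must pick a canonical representative: from a semi-global point $(A_0,\iota_0,\lambda_0,A,\iota,\lambda,\ov\eta^p)$ one takes the unique $A'$ in the prime-to-$p$ isogeny class whose Tate module satisfies $\eta^p\bigl(\Hom_{O_F}(\wh T^p(A_0),\wh T^p(A'))\bigr)=-\Lambda\otimes\wh O^p_F$; this is well defined because $K_G^p$ is exactly the stabilizer of $\Lambda\otimes\wh O_F^p$. Second, the sign condition at the distinguished place $v_0$ itself is imposed in the global problem but does not explicitly appear in the semi-global problem; that it adds nothing follows from the product formula for Hasse invariants, just as in the proof of Theorem~\ref{semi-global hyperspecial smooth}. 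With those two observations spelled out, your argument is complete.
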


Replacing $W$ by $W^\flat$ and choosing a lattice $\Lambda^\flat \subset W^\flat$ analogously to above, we define the DM stack $\CM_{K_{\wt H}^\circ}(\wt H)$, where $K_{\wt H}^\circ := K_{Z^\BQ} \times K_H^\circ$ with $K_H^\circ \subset H(\BA_{F_0,f})$ the stabilizer of $\Lambda^\flat \otimes_{O_F} \wh O_F$. Similarly, we define $\CM_{K_{\wtHG}^\circ}(\wtHG)$, where $K_\wtHG^\circ := K_{Z^\BQ} \times K_H^\circ \times K_G^\circ$, and where we impose the following additional conditions on $\Lambda^\flat$ and $\Lambda$. We first require that $\Lambda_v^\flat$ is self-dual for all $v$ which are split or inert in $F$ (i.e.\ we require that $\CV_\AT^{W^\flat}$ consists of exactly the finite places of $F_0$ which ramify in $F$), and that $\Lambda_v^\flat$ and $\Lambda_v$ are $\pi_v$-modular or almost $\pi_v$-modular for all $v$ which ramify in $F$ (i.e.\ $\CV_\AT^{W^\flat}$ and $\CV_\AT^W$ contain no $v$ for which $(v,\Lambda_v^\flat)$ or $(v,\Lambda_v)$ is of AT type \eqref{n=2 self-dual type}).

\begin{remark}\label{latticecond herm}
The conditions that we have just imposed on $\Lambda^\flat$ place non-trivial constraints on the extension $F/F_0$ and on the hermitian space $W^\flat$. Let $d := [F_0 : \BQ]$.

First consider the case when $n=2m+1$ is odd. Then our assumptions on $\Lambda^\flat$ imply that $W^\flat$ is split at all finite places of $F_0$. On the other hand, at each archimedean place $\varphi$, the Hasse invariant $\inv_\varphi(W_\varphi^\flat)$ is equal to $(-1)^{m-1}$ if $\varphi=\varphi_0$, and to $(-1)^m$ if $\varphi\neq\varphi_0$.   Hence the product formula \eqref{herm prod fmla} imposes the congruence
\begin{equation}\label{congruence}
   dm\equiv 1 \bmod 2.
\end{equation}
In particular, since this requires $d$ to be odd, the extension $F/F_0$ is forced to be ramified at at least one finite place (otherwise the product formula for the norm residue symbol $(-1,F/F_0)$ would fail). On the other hand, if the congruence (\ref{congruence}) is satisfied, then the hermitian space $W^\flat$ will exist and admit a lattice $\Lambda^\flat$ as above.

Now consider the case when $n=2m$ is even. If $F/F_0$ is ramified at some finite place, then a hermitian space $W^\flat$ admitting a lattice $\Lambda^\flat$ as above will exist for any $d$ and $m$. On the other hand, we claim that that $F/F_0$ being everywhere unramified is again disallowed.  Indeed, the same argument applies: our assumptions on $\Lambda^\flat$ would again imply that $W^\flat$ is split at all finite places of $F_0$, and the Hasse invariants at the infinite places are again given by $(-1)^{m-1}$ at $\varphi_0$ and by $(-1)^m$ at each $\varphi \neq \varphi_0$.  Hence we again obtain the congruence $dm \equiv 1 \bmod 2$, forcing $d$ to be odd.

In future work, we plan to handle more cases of an AT conjecture which will allow us to weaken the constraints we have placed above on the extension $F/F_0$ and on the lattices $\Lambda$ and $\Lambda^\flat$; this will allow for more general hermitian spaces $W$ and $W^\flat$.  More precisely, we expect to allow places $v$ which are inert in $F$ and unramified over $\BQ$ such that $\Lambda_v$ is self-dual and $\Lambda_v^\flat$ is almost self-dual, cf.\ Remark \ref{more atc}.  
\end{remark}

Let us continue with the conditions we impose on $\Lambda^\flat$ and $\Lambda$.  When $n$ is odd, we require that
\[
   \Lambda = \Lambda^\flat \oplus O_Fu,
\]
and that $(u,u)$ is a unit at each finite place $v$ unless $v$ is inert in $F$ and $W_v$ is non-split, in which case $\ord_v(u,u) = 1$ (and hence $(v,\Lambda_v)$ is of AT type \eqref{almost self-dual type}).  In this case, we have closed embeddings
\begin{equation}
   \CM_{K_{\wt H}^\circ}(\wt H) \inj \CM_{K_{\wt G}^\circ}(\wt G)
	\quad\text{and}\quad
	\CM_{K_{\wt H}^\circ}(\wt H) \inj \CM_{K_{\wtHG}^\circ}(\wtHG)
\end{equation}
completely analogous to those we have considered before, e.g.\ (\ref{embeddings semi-global hyperspecial}), (\ref{embeddings semi-global split}), and (\ref{embeddings semi-global AT}).

Now suppose that $n$ is even, and let $v_1,\dotsc,v_m$ be the finite places of $F_0$ which ramify in $F$, cf.\ Remark \ref{latticecond herm}.  As in the discussion in Section \ref{subsec AT} after Remark \ref{unit}, for simplicity we assume that each $v_i$ is of degree one over $\BQ$.  By our assumptions already made, each $(v_i,\Lambda_{v_i}^\flat)$ is of AT type (\ref{almost pi-mod type}).  For each $i$, we further require that $(u,u)$ is a unit at $v_i$, and that $\Lambda_{v_i}$ is one of the two lattices for which the relation \eqref{type 2 lattice relation} holds with $v_i$ in place of $v_0$. Then $(v_i,\Lambda_{v_i})$ is indeed of AT type \eqref{pi-mod type}.  At the split and inert places $v$, we again require that $\Lambda_v = \Lambda_v^\flat \oplus O_{F,v}u$, where $(u,u)$ is a unit at $v$ unless $v$ is inert in $F$ and $W_v$ is non-split, in which case $\ord_v(u,u) = 1$.  Then we obtain natural global analogs of the diagrams \eqref{semi-global corresp diag G} and \eqref{semi-global corresp diag HG},
\begin{equation*}
	\begin{gathered}
   \xymatrix{
	     &  & \CR_{\wt G}' \ar[dl]_-{\pi_1} \ar[dr]^-{\pi_2}\\
	   \CM_{K_{\wt H}^\circ}(\wt H) \ar@{^{(}->}[r]  &  \CR_{\wt G}  & &  \CM_{K_{\wt G}^\circ}(\wt G)
	}
	\end{gathered}
\end{equation*}
and
\begin{equation*}
	\begin{gathered}
   \xymatrix{
	     &  & \CR_{\wtHG}' \ar[dl]_-{\pi_1} \ar[dr]^-{\pi_2}\\
	   \CM_{K_{\wt H}^\circ}(\wt H) \ar@{^{(}->}[r]  &  \CR_{\wtHG}  & &  \CM_{K_{\wtHG}^\circ}(\wtHG).
	}
	\end{gathered}
\end{equation*}
Here the global analogs $\CR_{\wt G}$, $\CR_{\wt G}'$, $\CR_{\wtHG}$, and $\CR_{\wtHG}'$ of the auxiliary spaces appearing in \eqref{semi-global corresp diag G} and \eqref{semi-global corresp diag HG} are defined in the obvious way.  Similarly, we obtain a closed embedding
\begin{equation}\label{global corresp diag HG}
   \CM_{K_{\wt H}^{\circ\prime}}(\wt H) := \CM_{K_{\wt H}^\circ}(\wt H) \times_{\CR_{\wtHG}} \CR_{\wtHG}' \inj \CM_{K_{\wtHG}^\circ}(\wtHG).
\end{equation}

We note that $\CM_{K_{\wt H}}(\wt H)$ and $\CM_{K_{\wtHG}}(\wtHG)$ are smooth over the open subscheme of $\Spec O_E$ obtained by removing all places $\nu$ for which the induced pair $(v_0,\Lambda_{v_0})$ is of AT type \eqref{almost self-dual type}. Furthermore, $\CM_{K^\circ_{\wtHG}}(\wtHG)$ has semi-stable reduction over the open subscheme of $\Spec O_E$ obtained by removing all places $\nu$ for which $E_\nu$ is ramified over $\BQ_p$.

\subsection{Drinfeld level structure}\label{global mod prob nontrivial level}
We continue with the setup at the beginning of the previous subsection.  In particular, we have the lattice $\Lambda \subset W$ satisfying the relation (\ref{ATvertex}) and the assumptions that follow it.  Let
\begin{equation}\label{Sigma spl Phi}
\begin{aligned}
\Sigma^{\spl} &:= \{\,  \text{places $v$ of $F_0\mid v$ splits in $F$} \,\},\\
   \Sigma^{\spl,\Phi} &:= \{\,  \text{$v\in \Sigma^{\spl}\mid$ every place $\nu$ of $E$ above $v$ matches $\Phi$} \,\},
   \end{aligned}
\end{equation}
cf.\ (\ref{cond CM}). In addition, we fix a function
\begin{equation}\label{bm}
   \bm\colon \Sigma^\spl\to \BZ_{\geq 0} 
\end{equation}
with finite support contained in $\Sigma^{\spl,\Phi}$.

Associated to these data is the open compact subgroup
\[
   K^\bm_G := \bigl\{\, g\in G(\BA_{F_0, f})\bigm| g(\Lambda\otimes_{O_F}\wh O_F)=\Lambda\otimes_{O_F}\wh O_F \text{ and } g\equiv \id \bmod N(\bm) \,\bigr\},
\]
where
\[
   N(\bm) := \prod_{v \in\Sigma^\spl}\fkp_v^{\bm (v)}.
\] 
As usual, we define $K_{\wt G}^\bm := K_{Z^\BQ} \times K_G^\bm$ as in \eqref{K_wtG}.  Note that if $\bm=0$, then $K^\bm_G=K^\circ_{G}$ and $K_{\wt G}^\bm =K^\circ_{\wt G}$.  

The subgroup $K_{\wt G}^\bm$ defines a moduli stack  $M_{K_{\wt G}^\bm}(\wt G)$ as in Section \ref{moduli problem over E}, which maps via a finite flat morphism to $M_{K_{\wt G}^\circ}(\wt G)$.  We then define $\CM_{K_{\wt G}^\bm}(\wt G)$ to be the normalization of $\CM_{K_{\wt G}^\circ}(\wt G)$ in $M_{K_{\wt G}^\bm}(\wt G)$.

As with Theorem \ref{globalnolevel}, the proof of the following theorem is routine.

\begin{theorem}\label{globdrin}
$\CM_{K^\bm_{\wt G}}(\wt G)$ is a regular Deligne--Mumford stack finite and flat  over $\CM_{K^\circ_{\wt G}}(\wt G)$. For $N(\bm)$ big enough,  $\CM_{K^\bm_{\wt G}}(\wt G)$ is relatively representable over $\CM_0^\xi$.   For every finite place $\nu$ of $E$, the base change $\CM_{K^\bm_{\wt G}} \times_{\Spec O_{E}} \Spec O_{E, (\nu)}$ is canonically isomorphic to (variants of) the semi-global moduli space defined in one of Sections \ref{subsec hyper}, \ref{semi-global drinfeld}, or \ref{subsec AT} above.\qed
\end{theorem}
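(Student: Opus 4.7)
The plan is to reduce every assertion to a local analysis at each finite place $\nu$ of $E$, exploiting the fact that normalization commutes with étale localization and, for excellent Noetherian base schemes, with completion at a geometric point. So I would first fix a finite place $\nu$ of $E$ with induced place $v_0$ of $F_0$ via $\varphi_0$, of residue characteristic $p$, and analyze the base change $\CM_{K^\bm_{\wt G}}(\wt G) \times_{\Spec O_E} \Spec O_{E,(\nu)}$. Writing $K_G^\bm = K_G^{\bm, p} \times K_{G, p}^\bm$, the prime-to-$p$ factor $K_G^{\bm, p}$ sits inside $K_G^{\circ, p}$ with finite index, and the passage from $K_G^{\circ, p}$ to $K_G^{\bm, p}$ is encoded moduli-theoretically by a prime-to-$p$ level structure on the universal $A_0, A$ (via the rational prime-to-$p$ Tate module), which produces a finite étale cover of the corresponding semi-global model. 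Hence there is no contribution to normalization from the prime-to-$p$ part: normalization in this sub-cover just gives the sub-cover itself.

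The real content is the $p$-adic factor. Here there are two cases. If $v_0$ is not in the support of $\bm$, then $K_{G,v_0}^\bm = K_{G, v_0}^\circ$ and the semi-global model defined in one of Sections~\ref{subsec hyper}, \ref{split level}, or \ref{subsec AT} already has generic fiber equal to the $\nu$-localization of $M_{K_{\wt G}^\bm}(\wt G)$, and is regular by Theorems~\ref{semi-global hyperspecial smooth}, \ref{semi-global split smooth}, and \ref{semi-global AT smooth/regular} (with AT types \eqref{almost self-dual type}--\eqref{n=2 self-dual type} covered by the last, noting that such places are of relative degree one by our global hypotheses). If instead $v_0 \in \operatorname{supp}(\bm)$, then by definition of the support condition $v_0 \in \Sigma^{\spl,\Phi}$: the place $v_0$ splits in $F$, and the matching condition \eqref{cond CM} holds for $\Phi$ and $\nu$. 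We are then exactly in the setting of Section~\ref{semi-global drinfeld} with $m := \bm(v_0)$, and Theorem~\ref{semi-global drinfeld regular} produces a regular Deligne--Mumford stack $\CM_{K_{\wt G}^m}(\wt G)$, finite flat over $\CM_{K_{\wt G}^\circ}(\wt G) \otimes O_{E,(\nu)}$, whose generic fiber is canonically $M_{K_{\wt G}^m}(\wt G)$. Combining with the prime-to-$p$ étale step, the resulting stack is a regular (hence normal), finite flat cover of $\CM_{K_{\wt G}^\circ}(\wt G) \otimes O_{E,(\nu)}$ with generic fiber equal to $M_{K_{\wt G}^\bm}(\wt G) \otimes O_{E,(\nu)}$; by the universal property of normalization, it must therefore be canonically identified with $\CM_{K_{\wt G}^\bm}(\wt G) \otimes O_{E,(\nu)}$.

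Having obtained this identification at every finite $\nu$, the global assertions follow by glueing: regularity is étale-local and thus follows from regularity at every $\nu$; finiteness and flatness of $\CM_{K_{\wt G}^\bm}(\wt G) \to \CM_{K_{\wt G}^\circ}(\wt G)$ follow from the corresponding semi-local statements (the normalization morphism is automatically finite for $O_E$-schemes of finite type, since $O_E$ is excellent, and flatness can be checked fibrewise). Relative representability over $\CM_0^\xi$ for $N(\bm)$ large enough reduces to the same statement for the semi-global models: for $N(\bm)$ sufficiently large, the prime-to-$p$ level is small enough that $\CM_{K_{\wt G}^\circ}(\wt G) \otimes O_{E,(\nu)}$ is relatively representable over $\CM_0^\xi$ by Theorem~\ref{semi-global hyperspecial smooth} (and its analogs), and this property is inherited by the Drinfeld cover since it is relatively representable over the hyperspecial one.

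The main technical obstacle is the identification of $\CM_{K_{\wt G}^\bm}(\wt G) \otimes O_{E,(\nu)}$ with the moduli-theoretic semi-global model when $v_0 \in \operatorname{supp}(\bm)$: one must check simultaneously that the Drinfeld-level semi-global model from Section~\ref{semi-global drinfeld} is normal (which is where Theorem~\ref{semi-global drinfeld regular} is essential, giving regularity via the argument of \cite[Lem.~III.4.1]{HT}) and that the forgetful map to the hyperspecial/split model is finite flat so that normalization in the generic fiber can only give this stack. A subsidiary check is that at places $\nu$ where $v_0 \in \Sigma^\spl$ but $\nu$ does \emph{not} match $\Phi$, the support hypothesis on $\bm$ forces $\bm(v_0) = 0$, so no Drinfeld level structure is needed at $\nu$---this is the precise reason for restricting the support of $\bm$ to $\Sigma^{\spl,\Phi}$ in \eqref{bm}.
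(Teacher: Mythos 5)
The paper does not supply a proof of Theorem \ref{globdrin}; after the statement there is only a note recording that the definitions of Remarks \ref{leveloffv0}, \ref{leveloffv0drinfeld}, and \ref{leveloffv0AT} are implicitly in play. So your proposal has to stand on its own, and its overall strategy---identify $\CM_{K_{\wt G}^\bm}(\wt G) \otimes O_{E,(\nu)}$ with the semi-global moduli-theoretic integral model by using that the latter is normal, finite flat over $\CM_{K_{\wt G}^\circ}(\wt G) \otimes O_{E,(\nu)}$, and has the right generic fiber, and then invoke uniqueness of normalization; glue these local identifications to get the global statements---is exactly the natural one and almost certainly what the authors had in mind. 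The appeal to Theorems \ref{semi-global hyperspecial smooth}, \ref{semi-global split smooth}, \ref{semi-global drinfeld regular}, and \ref{semi-global AT smooth/regular} for regularity/normality of the semi-global models is the right input.

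Two points deserve correction. First, your case analysis of the $p$-adic factor is too coarse. You split on whether $v_0 \in \supp \bm$, and in the complementary case you assert the relevant semi-global model is the one from Sections \ref{subsec hyper}, \ref{split level}, or \ref{subsec AT} at ``base'' level. But $K_{G,p}^\bm$ can differ from $K_{G,p}^\circ$ at $p$-adic places $v' \in \supp\bm$ with $v' \neq v_0$. These places are split in $F$ of banal signature, so the required level data at $v'$ over the special fiber is carried by a lisse local system of Frobenius invariants of Dieudonn\'e modules (not by a prime-to-$p$ Tate module), and the relevant integral models $\CM_{K^\star_{\wt G}}(\wt G)$ are those of Remarks \ref{leveloffv0}, \ref{leveloffv0split}, \ref{leveloffv0drinfeld}, and \ref{leveloffv0AT}. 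Your phrase ``prime-to-$p$ level structure'' cannot absorb this; you need to explicitly invoke those remarks, exactly as the paper's note after the theorem instructs.

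Second, the relative representability paragraph contains a genuine error: you assert that ``for $N(\bm)$ sufficiently large, the prime-to-$p$ level is small enough that $\CM_{K_{\wt G}^\circ}(\wt G) \otimes O_{E,(\nu)}$ is relatively representable over $\CM_0^\xi$.'' But $\CM_{K_{\wt G}^\circ}(\wt G)$ does not depend on $\bm$, so increasing $N(\bm)$ cannot improve its representability, and in general this base stack is \emph{not} relatively representable over $\CM_0^\xi$ (the lattice stabilizer $K_G^{\circ,p}$ is too large). The correct argument must work with $\CM_{K_{\wt G}^\bm}(\wt G)$ directly: enlarging $N(\bm)$ shrinks $K_G^{\bm,p}$ at the prime-to-$p$ places in $\supp\bm$, and once this prime-to-$p$ level is small enough (which requires $\supp\bm$ to contain places over at least two distinct rational primes if one wants the hypothesis ``$K_G^p$ small'' of Theorems \ref{semi-global hyperspecial smooth}, \ref{semi-global split smooth}, \ref{semi-global AT smooth/regular} to be available at every residue characteristic), the semi-global models $\CM_{K^\star_{\wt G}}(\wt G)$ are relatively representable over $\CM_0^\xi$, whence so is $\CM_{K_{\wt G}^\bm}(\wt G)$. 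Once you fix these two points, the argument goes through.
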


Note that in the last assertion of this theorem, in the case that $\bm$ is not identically zero, we are implicitly allowing smaller open compact subgroups of $K_p$ in these subsections (this is what is meant by `variants'). 
We analogously define the DM stacks $\CM_{K_{\wt H}^\bm}(\wt H)$ and $\CM_{K_{\wtHG}^\bm}(\wtHG)$ over $\Spec O_E$.

To define embeddings between the stacks we have introduced, we make the same assumptions on the lattices, on $(u,u)$, and on the ramified places of $F_0$ when $n$ is even as after Theorem \ref{globalnolevel} in Section \ref{global mod prob trivial level}. When $n$ is odd, we analogously obtain closed embeddings
\begin{equation}
   \CM_{K_{\wt H}^\bm}(\wt H) \inj \wt\CM_{K_{\wt G}^\bm}(\wt G)
	\quad\text{and}\quad
	\CM_{K_{\wt H}^\bm}(\wt H) \inj \wt\CM_{K_{\wtHG}^\bm}(\wtHG).
\end{equation}
When $n$ is even, we analogously obtain a closed embedding
\begin{equation}
   \CM_{K_{\wt H}^{\prime\bm}}(\wt H) \inj \CM_{K_{\wtHG}^\bm}(\wtHG).
\end{equation}
Also, in all cases we obtain Hecke correspondences for elements $g\in \prod_{v\in \Sigma^{\spl,\Phi}}'(H\times G)(F_{0,v})$ (restricted direct product),
\begin{equation}\label{globalDrHecke}
\begin{gathered}
   \xymatrix{
	     & \CM_{K_{\wtHG}^{\boldsymbol{\mu}}}(\wtHG) \ar[dl]_-{\text{nat}_1} \ar[dr]^-{\text{nat}_g}\\
	   \CM_{K_{\wtHG}^\bm}(\wtHG)  & &  \CM_{K_{\wtHG}^\bm}(\wtHG).
	}
\end{gathered}
\end{equation}

\section{The Arithmetic Gan--Gross--Prasad conjecture}\label{section ggp}
In this section, we state a version of the Arithmetic Gan--Gross--Prasad conjecture \cite{GGP}. It is based on some widely open standard conjectures about algebraic cycles.

\subsection{Standard conjectures on height pairing}
\label{sec std conj}

Consider the category $\sV$ of smooth proper varieties over a number field $E$, and let $H^*\colon \sV\to \mathrm{grVec}_K$ be a Weil cohomology theory with coefficients in a field $K$ of characteristic zero. Let $X$ be an object in $\sV$, and let $\Ch^i(X)$ be the group of codimension-$i$ algebraic cycles in $X$ modulo rational equivalence. We have a cycle class map
$$
   \cl_i\colon \Ch^i(X)_\BQ\to H^{2i}(X).
$$
Its kernel is the group of cohomologically trivial cycles, denoted by $\Ch^i(X)_{\BQ,0}$. We take the Weil cohomology theory $H^*$ as either the Betti cohomology
$
   H^*(X(\BC),\BQ),
$
or, for a prime $\ell$, the $\ell$-adic cohomology $H^*(X\otimes_E \ov  E,\BQ_\ell)$, endowed with its continuous $\Gal(\ov E/E)$-action. 
Comparison theorems between Betti cohomology and \'etale cohomology  show that the subspace $\Ch^i(X)_{\BQ,0}$ is independent of the choice of these two.

We are going to base ourselves on the following conjectures of Beilinson and Bloch, cf.~\cite[\S2]{Ja}. 
\begin{conjecture}\label{ex reg}
There exists a regular proper flat model $\CX$ of $X$ over  $\Spec O_E$.
\end{conjecture}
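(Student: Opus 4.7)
The plan is to view Conjecture \ref{ex reg} as essentially a question of resolution of singularities in mixed characteristic, so any attack on it must confront that deep and largely open problem. My first step would be to produce \emph{some} proper flat integral model by choosing a projective embedding $X \inj \BP^N_E$ and taking the scheme-theoretic closure in $\BP^N_{O_E}$, thereby obtaining a proper flat scheme $\CX_0$ over $O_E$ whose generic fiber is $X$. Generically $\CX_0$ is smooth, but along the special fibers at places of bad reduction it is typically highly singular, and the task is to find a proper birational modification $\CX \to \CX_0$ which is an isomorphism over the generic fiber and for which $\CX$ is regular.

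The next step is to attempt such a modification place by place. Over any archimedean place there is nothing to do, and at a non-archimedean place $\nu$ of residue characteristic $0$ one can apply Hironaka's theorem. The essential content of the conjecture is therefore resolution at the non-archimedean places of positive residue characteristic, and this is known only in restricted situations: for arithmetic surfaces (relative dimension $1$) by Lipman, and in relative dimension $2$ by recent work of Cossart--Piltant and Cossart--Jannsen--Saito. In those ranges the conjecture is a theorem, and one can simply glue the local modifications together, using that resolution affects only finitely many closed fibers.

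In the Shimura-variety setting of the paper one has a natural candidate $\CX$ coming from the moduli-theoretic integral models constructed in Sections \ref{section semi-global} and \ref{section global}. So a more promising route is not to prove the conjecture in the generality stated, but to verify its conclusion for the specific $X = \Sh_{K_\wtHG}(\wtHG)$ in question by exhibiting an integral model and checking regularity directly; indeed the theorems of Section \ref{section semi-global} (smoothness in the hyperspecial, split, and AT-types \eqref{pi-mod type}, \eqref{almost pi-mod type} cases, semi-stable reduction in type \eqref{almost self-dual type} under an unramifiedness assumption on $E_\nu$, and regularity with Drinfeld level structure) provide exactly such a construction at the places where it is needed.

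The main obstacle to proving the conjecture as stated is that resolution of singularities in mixed characteristic for arbitrary dimension and residue characteristic is a fundamental open problem, not accessible by current methods; one could weaken the conclusion by invoking de Jong's theorem on alterations to obtain a regular proper $\CX' \to \CX_0$ that is generically finite of some degree $d > 1$, but this replaces $X$ by a finite cover and so does not literally yield the conjecture. This is why the authors state Conjecture \ref{ex reg} as a widely open input to the AGGP framework rather than prove it, and then proceed conditionally on its conclusion (combined with Conjectures on cohomological triviality and existence of the height pairing).
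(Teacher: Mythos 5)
The statement you were asked to prove is labeled \emph{Conjecture} \ref{ex reg} in the paper, and the paper does not prove it: it is stated as a widely open standard conjecture of Beilinson--Bloch type, imported as a hypothesis so that the height pairing \eqref{eqn BB} can be defined and the AGGP Conjecture \ref{conj AGGP 2} formulated. There is no proof in the paper to compare your write-up against.

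You correctly identified this situation rather than fabricating an argument, and your discussion of the state of the art is accurate: the conjecture is equivalent to mixed-characteristic resolution of singularities for the given $X$, known for arithmetic surfaces and in relative dimension $\le 2$ (Lipman, Cossart--Piltant) but open in general; de Jong alterations give only a generically finite cover, not a model of $X$ itself; and in the specific Shimura-variety setting the paper sidesteps the conjecture where possible by exhibiting explicit moduli-theoretic integral models with good local structure (Sections \ref{section semi-global}--\ref{section global}), invoking Remark \ref{everygood} in the everywhere-good-reduction case so that Conjectures \ref{ex reg} and \ref{conj lift} both hold unconditionally there. That is exactly the posture the paper takes, so your answer is the correct one for a statement that has no proof.
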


Let $\CX$ be such a model, and consider its $i$th Chow group $\Ch^i(\CX)_\BQ$. 
Restriction to the generic fiber defines a map
$$
\Ch^i(\CX)_{\BQ}\to \Ch^i(X)_{\BQ}.
$$
Let  $\Ch^i_\fin(\CX)_\BQ$ be the kernel of this map (cycles supported on ``finite fibers"), and let $\Ch^i(\CX)_{\BQ,0}$ be the pre-image of $\Ch^i(X)_{\BQ,0}$. We are going to use the Arakelov  pairing defined in Gillet--Soul\'e \cite[\S4.2.10]{GS},
\begin{equation}\label{eqn GS0}
   \sform_{\GS}\colon \Ch^i(\CX)_{\BQ,0}\times \Ch^{d+1-i}(\CX)_{\BQ,0}\to \BR,\quad d:=\dim X.
\end{equation}
Let $\Ch^{d+1-i}_\fin(\CX)_\BQ^\perp\subset \Ch^i(\CX)_{\BQ,0}$ be the orthogonal complement of $\Ch^{d+1-i}_\fin(\CX)_\BQ$ under the pairing \eqref{eqn GS0}.

\begin{conjecture}\label{conj lift}
The natural map $\Ch^{d+1-i}_\fin(\CX)_\BQ^\perp\to \Ch^i(X)_{\BQ,0}$ is surjective. 
\end{conjecture}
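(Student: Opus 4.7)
The plan is to start with an arbitrary algebraic extension of $z$ to $\CX$ and correct it by a finite-fiber cycle to enforce orthogonality. Given $z \in \Ch^i(X)_{\BQ,0}$, first pick any lift $\wt z \in \Ch^i(\CX)_{\BQ}$, for instance by taking flat closures of cycle representatives; automatically $\wt z \in \Ch^i(\CX)_{\BQ,0}$ since cohomological triviality is tested on the generic fiber. The task then reduces to producing $y \in \Ch^i_\fin(\CX)_\BQ$ with the property that $\wt z - y$ pairs trivially under $\sform_{\GS}$ against every element of $\Ch^{d+1-i}_\fin(\CX)_\BQ$. For then $\wt z - y$ still restricts to $z$, still lies in $\Ch^i(\CX)_{\BQ,0}$, and by construction belongs to $\Ch^{d+1-i}_\fin(\CX)_\BQ^\perp$.

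Thus I would focus on showing that the linear functional
\[
   \Phi_{\wt z}\colon \Ch^{d+1-i}_\fin(\CX)_\BQ \longrightarrow \BR, \qquad w \longmapsto (\wt z, w)_{\GS},
\]
lies in the image of the map $\Ch^i_\fin(\CX)_\BQ \to \Hom(\Ch^{d+1-i}_\fin(\CX)_\BQ, \BR)$ induced by \eqref{eqn GS0}. Because every finite-fiber cycle is supported on some special fiber $\CX_v$ with $v$ a finite place of $E$, the problem localizes place-by-place. For $w$ supported in $\CX_v$, Gillet--Soul\'e compatibility with classical intersection theory identifies $(\wt z, w)_{\GS}$ with the $k(v)$-intersection number of $\wt z|_{\CX_v} \in \Ch^i(\CX_v)_\BQ$ with $w$, up to a factor of $\log(\#k(v))$. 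The needed statement at each $v$ is therefore: every linear functional of the shape ``intersect with $\wt z|_{\CX_v}$'' on $\Ch^{d+1-i}(\CX_v)_\BQ$ modulo numerical equivalence is realized by pairing with the restriction of a global cycle in $\Ch^i_\fin(\CX)_\BQ$.

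The main obstacle is precisely the non-degeneracy of the intersection pairing on the special fibers, together with the surjectivity of the specialization map from global finite-fiber cycles to local ones modulo numerical equivalence. Both are manifestations of Grothendieck's standard conjectures on algebraic cycles (agreement of homological and numerical equivalence on $\CX_v$, plus hard Lefschetz/Poincar\'e duality for the resulting pairing), and neither is known in the generality required; this is why the statement is kept as a conjecture. In the unitary Shimura variety setting of this paper, one might hope to bypass the general standard conjectures on the relevant Hecke eigenspaces by producing the correction cycles $y$ explicitly from Hecke translates of $\wt z$, from boundary components of toroidal compactifications (cf.\ Remark \ref{toroidcomp}), and from Kudla--Rapoport-type special cycles on the special fibers of $\CM_{K_\wtHG}(\wtHG)$, combined with the known cases of the Tate conjecture for unitary Shimura varieties. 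However, proving the conjecture uniformly in the Hecke datum and across all the places of bad reduction that arise in the integral models of Sections \ref{section semi-global}--\ref{section global} appears to be well beyond currently available techniques.
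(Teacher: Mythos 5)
This statement is labeled as a \emph{conjecture} in the paper --- it is one of the Beilinson--Bloch standard conjectures on height pairings (cf.\ \cite[\S2]{Ja}), and the authors make no attempt to prove it; Section~\ref{sec std conj} lists it alongside Conjecture~\ref{ex reg} precisely as an unproven hypothesis on which the formulation of the height pairing \eqref{eqn BB} is conditional. So there is no proof in the paper to compare yours against, and your write-up, which openly concedes at the end that ``proving the conjecture \dots\ appears to be well beyond currently available techniques,'' is not a proof either. What you have written is an honest reduction and a diagnosis of the obstruction, and from that point of view it is broadly in line with the standard understanding of why this conjecture is hard.

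Two technical comments on the reduction itself. First, you should be aware that the paper does record the one case that is known unconditionally: when $\CX$ is smooth and proper over $O_E$, the conjecture holds by K\"unnemann \cite[Th.\ 6.11]{Ku-duke}, and this is exactly the content of Remark~\ref{everygood}; this is the operative case for the everywhere-good-reduction Shimura varieties of Remark~\ref{ex verygood}, and is worth citing as established rather than leaving the entire conjecture as terra incognita. Second, the claim that the problem ``localizes place-by-place'' and reduces to non-degeneracy of a classical intersection pairing on each $\CX_v$ glides over a genuine wrinkle: for $y_v$ and $w_v$ supported in the \emph{same} fiber, $(\iota_{v,*}y_v,\iota_{v,*}w_v)_{\GS}$ is governed by $\iota_v^*\iota_{v,*}(y_v)=y_v\cdot c_1(\CO_\CX(\CX_v))|_{\CX_v}$, a Lefschetz-type operator attached to the class of the fiber, rather than by the naive cup-product pairing on $\CX_v$. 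So the precise non-degeneracy one needs is of the twisted pairing built from that operator; this is exactly why the hard Lefschetz part of the standard conjectures (and not merely $\text{hom}=\text{num}$) enters, and why the smooth proper case is accessible via K\"unnemann's Lefschetz-theoretic arguments while the general semistable/parahoric situation of Sections~\ref{section semi-global}--\ref{section global} is not.
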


Assuming Conjecture \ref{ex reg} and \ref{conj lift} above, the height pairing of Beilinson and Bloch 
\begin{equation}
\label{eqn BB}
  \sform_\mathrm{BB}\colon \Ch^i(X)_{\BQ,0}\times \Ch^{d+1-i}(X)_{\BQ,0}\to \BR
\end{equation}
can be defined as follows. Lift the elements $c_1\in \Ch^i(X)_{\BQ,0}$ and $c_2\in \Ch^{d+1-i}(X)_{\BQ,0}$ to $\wt c_1\in \Ch^{d+1-i}_\fin(\CX)_\BQ^\perp$ and $\wt c_2\in \Ch^{i}_\fin(\CX)_\BQ^\perp$, respectively. Define 
\begin{equation}\label{BB=GS}
   (c_1,c_2)_{\mathrm{BB}} :=(\wt c_1,\wt c_2)_{\GS}.
\end{equation}
It is easy to see that this is independent of the choices of the liftings.

\begin{remark}Assuming Conjectures \ref{ex reg} and \ref{conj lift}, the pairing \eqref{eqn BB} is independent of the choice of the (regular proper flat) integral model $\CX$, cf.\  \cite[Lem.\ 1.5]{Ku-ens}. 
\end{remark}

\begin{remark}\label{everygood}
Assume that there exists a smooth proper model $\CX$ of $X$ over $O_E$. Then by \cite[Th.\ 6.11]{Ku-duke}, Conjecture \ref{conj lift} holds for $\CX$ and therefore the intersection product \eqref{eqn BB} is defined; again, by \cite[Lem.\ 1.5]{Ku-ens}, the intersection product is independent of the choice of  $\CX$ (assumed to be smooth and proper). 

\end{remark}

\subsection{Cohomology and Hecke--K\"unneth projectors}\label{ss:HKproj}

We apply these considerations to the Shimura varieties defined in Section \ref{Shimura varieties}.\footnote{Note that these spaces can in fact be (and in particular cases of interest to us, are) DM stacks, but we will suppress this point in our discussion throughout the rest of the paper. The extension of the usual intersection theory to DM stacks is supplied by Gillet's paper \cite{Gi}.} In order to simplify notation, we write $K$ for $K_{\wtHG}$ in  $\Sh_{K_{\wtHG}}(\wtHG)$ throughout the rest of this section.

Denote by  $ \sH_K$ the Hecke algebra of bi-$K$-invariant $\BQ$-valued  functions with compact support, with multiplication given by the convolution product,
  \begin{equation}\label{eqn HK}
   \sH_K := C_c^\infty\bigl(\wtHG(\BA_f)/\!\!/K, \BQ\bigr).
\end{equation}The variety $\Sh_K(\wtHG)$ is equipped with a collection of algebraic correspondences ($\BQ$-linear combinations of algebraic cycles on the self-product of $\Sh(\wtHG)_K$ with itself), the Hecke correspondences associated to $f\in \sH_K$.  

The Hodge conjecture implies that the K\"unneth projector from $\bigoplus_{i\in\BZ}H^i(\Sh_K(\wtHG),\BQ)$ to each summand $H^i(\Sh_K(\wtHG),\BQ)$, or to the primitive cohomology, is induced by algebraic cycles (with $\BQ$-linear combinations).  Morel and Suh \cite{MS}  prove a partial result on the algebraicity of K\"unneth projectors for Shimura varieties (the so-called ``standard sign conjecture"), conditional on Arthur's conjecture. Now we recall their theorem. 
\begin{remark}
The hypotheses on which \cite{MS} is based are in fact known in our setting: 1) The stabilization of the twisted trace formula is known by the work of Waldspurger. 2)  The Arthur conjecture on the expression of the discrete spectrum in terms of discrete Arthur parameters is known (cf.\ Mok \cite{Mok} for quasi-split unitary groups, and Kaletha--Minguez--Shin--White \cite{KMSW} (and its sequels) for inner forms of unitary groups). Here we note that our group $\wtHG$ is a product of a unitary group and a torus by \eqref{proddec}. 3) The comparison between the Adams--Johnson classification and the Arthur classification of cohomological Arthur parameters of real groups is known by Arancibia--Moeglin--Renard \cite{AMR}.
 \end{remark}

\begin{theorem}[Morel--Suh]
\label{thm Kunneth}
Let $\varepsilon\in \BZ/2\BZ$. 
Then there exists $f^\varepsilon$  in $\sH_{K}$ such that the associated Hecke correspondence induces the projector to the even, resp.\ odd,  degree cohomology,
\[
   \bigoplus_{i\in\BZ}H^i\bigl(\Sh_K(\wtHG),\BQ\bigr)\to \bigoplus_{i\equiv\varepsilon \bmod  2}H^{i}\bigl(\Sh_K(\wtHG),\BQ\bigr).
\]
\end{theorem}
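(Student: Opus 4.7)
The plan is to combine three inputs: (i) the unconditional form of Arthur's classification of the discrete spectrum of (inner forms of) quasi-split unitary groups due to Mok \cite{Mok} and Kaletha--Minguez--Shin--White \cite{KMSW}, which applies to $\wtHG$ by virtue of the decomposition in \eqref{proddec}; (ii) the Adams--Johnson classification of cohomological Arthur packets at $\infty$, whose compatibility with Arthur's classification is supplied by Arancibia--Moeglin--Renard \cite{AMR}, yielding the Morel--Suh parity statement \cite{MS}; and (iii) Ramakrishnan's automorphic Chebotarev density theorem \cite{Ra,Ra2}, which will allow one to realize the resulting projectors using Hecke operators supported only at (degree-one) split places.

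Concretely, I would first use Matsushima's formula together with (i) to decompose
\begin{equation*}
   H^*\bigl(\Sh_K(\wtHG),\BC\bigr) = \bigoplus_{\psi} H^*\bigl(\Sh_K(\wtHG),\BC\bigr)_\psi
\end{equation*}
as a finite direct sum of Hecke-stable pieces indexed by discrete Arthur parameters $\psi$. By (ii), each such piece is concentrated in degrees of a single parity $\varepsilon(\psi)\in\BZ/2\BZ$, so grouping terms gives a Hecke-stable decomposition $H^*(\Sh_K,\BC)=H^{\even}\oplus H^{\odd}$ whose associated idempotents $e^{\even},e^{\odd}\in\End(H^*(\Sh_K,\BC))$ commute with the image of $\sH_K$. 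The task is then to exhibit these idempotents as the images of elements of $\sH_K^{S,\spl}$. For this, restrict to $\sH_K^{S,\deg=1}\subset\sH_K^{S,\spl}$ (the subalgebra \eqref{Hk_S deg=1}) and use that each $\psi$ gives a character $\chi_\psi$ of $\sH_K^{S,\deg=1}$ by its Satake parameters at degree-one split places. Via Arthur's classification, each $\psi$ corresponds to an isobaric sum of conjugate self-dual cuspidal automorphic representations of general linear groups, and Ramakrishnan's theorem asserts that such representations are determined by their local components at (a set of density one of) degree-one split places; applied summand-by-summand, this shows the characters $\chi_{\psi}$ separate distinct parameters $\psi$ appearing in cohomology. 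Hence the image of $\sH_K^{S,\deg=1}$ in $\End(H^*(\Sh_K,\BC))$ is a commutative semisimple algebra containing the spectral idempotents $e_\psi$, and the required $f^\varepsilon$ is obtained by lifting $\sum_{\varepsilon(\psi)=\varepsilon} e_\psi$ along the surjection $\sH_K^{S,\deg=1}\twoheadrightarrow\mathrm{image}$.

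For the rationality of $f^\varepsilon$, I would note that the decomposition $H^*=H^{\even}\oplus H^{\odd}$ is manifestly Galois-equivariant on the $\ell$-adic cohomology and is defined over $\BQ$ in the Betti realization; consequently the projectors $e^{\even},e^{\odd}$ lie in the $\BQ$-subalgebra of $\End(H^*(\Sh_K,\BQ))$ generated by $\sH_K$, and a Chinese remainder argument on the finite-dimensional commutative semisimple image of $\sH_K^{S,\deg=1}$ produces $f^\varepsilon\in\sH_K^{S,\deg=1}\subset\sH_K^{S,\spl}$ with rational coefficients. I expect the main obstacle to be the careful check that Ramakrishnan's strong multiplicity-one statement applies to every discrete Arthur parameter of $\wtHG$ contributing to cohomology: one must transfer such a parameter to an isobaric automorphic representation of a general linear group over $F$ and verify that each cuspidal constituent is conjugate self-dual of the type to which \cite{Ra,Ra2} apply, and that the restriction from Hecke operators for $\wtHG$ at degree-one split places of $F_0$ recovers enough Satake data on the $GL_n$-side to invoke the density theorem. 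Once this passage is justified, everything else is formal.
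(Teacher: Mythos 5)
Your overall strategy matches the paper's: decompose cohomology by Arthur parameters via Matsushima plus the Mok/KMSW classification, invoke AMR--Morel--Suh to get parity constancy on each parameter's block, use Ramakrishnan's automorphic Chebotarev theorem to see that a suitable split Hecke algebra separates the parameters appearing, and then lift the resulting idempotents. However, there is a genuine gap in your choice of Hecke algebra. You restrict to $\sH_K^{S,\deg=1}$, which is spanned by functions supported at places of $F_0$ that split in $F$ \emph{and} have residue degree one over $\BQ$. The separation step then requires that an isobaric automorphic representation of $\GL_m(\BA_F)$ be determined by its local components at places of $F$ of degree one over $\BQ$. That is \emph{not} what Ramakrishnan's theorems \cite{Ra,Ra2} give: they assert determination by the local components at places of $F$ of degree one over $F_0$, i.e.\ at places lying over split places of $F_0$ of arbitrary degree over $\BQ$. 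The strengthening you need --- replacing degree one over $F_0$ by degree one over $\BQ$ --- is precisely what the paper flags in Remark \ref{rem:rama} as currently open (it would follow from attaching Galois representations to the cuspidal constituents of each $\psi$, or from $F/\BQ$ Galois). So your separation of characters $\chi_\psi$ on $\sH_K^{S,\deg=1}$ is unjustified.

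The fix is simply to work with $\sH_K^{S,\spl}$, as in the theorem's statement: the algebra attached to all places of $F_0$ split in $F$. These are exactly the places where $F_w/F_{0,v}$ has degree one, matching the hypothesis of \cite{Ra,Ra2}. With that change, the separation of Arthur parameters (the paper's Proposition \ref{prop rama}) holds unconditionally, the nonzero blocks $V_{\psi_1,K_{\HG}}$ are distinct $\sH_{K,\BC}^{S,\spl}$-modules, and each admits a Hecke idempotent $f_{\psi_1}\in\sH_{K,\BC}^{S,\spl}$; summing over parameters of even (resp.\ odd) Betti parity yields $f^\varepsilon$. One further remark: your concern about whether each cuspidal constituent is conjugate self-dual "of the right type" for Ramakrishnan is unnecessary --- the theorems in \cite{Ra,Ra2} apply to isobaric automorphic representations of $\GL_m(\BA_F)$ with no self-duality hypothesis, so the passage from Arthur parameters for $\HG$ to $\GL$'s is unproblematic.
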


\begin{definition}\label{def:HK proj}
 We set $f^+=f^0$ and $f^-=f^1$ as in Theorem \ref{thm Kunneth}, and call them  the even, resp.\ odd, \emph{Hecke--K\"unneth projectors} in $\sH_{K}$. 
\end{definition}

\begin{proof}[Proof of Theorem \ref{thm Kunneth}]

This follows from \cite[Th.\ 1.4, Lem.\ 2.2]{MS}. We outline the proof in loc.\ cit. It suffices to prove the assertions after tensoring with $\BC$. We henceforth consider $H^i(\Sh_K(\wtHG),\BC)$ with the action by $\sH_{K,\BC}=\sH_K\otimes\BC$.

By Matsushima's formula \cite[VII]{BW}, we have a decomposition of the Betti cohomology\footnote{When the Shimura variety $\Sh_K(\wtHG)$ is non-compact, one has to replace $H^*(\Sh_K(\wtHG),\BC)$  by the image of the Betti cohomology of the toroidal compactification in the Betti cohomology of $\Sh_K(\wtHG)$. This also coincides with the intersection cohomology of the Baily--Borel compactification of  $\Sh_K(\wtHG)$. } into a finite direct sum, 
\begin{equation}\label{eqn coh}
   H^*\bigl(\Sh_K(\wtHG),\BC\bigr) \simeq 
	   \bigoplus_{\pi=\pi_f\otimes \pi_\infty} V_{\pi, K},
\end{equation}
 where we set
$$
V_{\pi, K}:= m_{\disc}(\pi)\,\bigl( \pi_f^K\otimes H^*(\wt{\fkh\fkg},K_\infty;\pi_\infty)\bigr).
$$
Here 
\begin{altitemize}
\item $\pi$ runs through the set  $\Pi_{\rm disc}(\wt\HG)$ of  irreducible representations of $\wt\HG(\BA)$ in the  discrete automorphic spectrum $L^2_{\disc}(Z^\BQ(\BR) \wtHG(\BQ)\bs \wtHG(\BA))$, and $ m_{\disc}(\pi)$ is the  multiplicity of $\pi$;\footnote{When the group $\wtHG$ is anisotropic modulo its center, the quotient is compact and then $L^2_{\disc}=L^2$.}
\item $\wt{\fkh\fkg}$ is the complex Lie algebra of $\wtHG(\BR)$;
\item $K_\infty$ is the centralizer of $h_{\wtHG}$ in $\wtHG(\BR)$;\footnote{In particular, $Z^\BQ(\BR)$ is contained in $K_\infty$.} and
\item $\pi_f^K$ denotes the invariants of $\pi_f$ under $K$.
\end{altitemize}
The isomorphism \eqref{eqn coh} is $\sH_K$-equivariant, where  $\sH_K$ acts on the right hand side through the space $\pi^K_f$. 

We have a decomposition of the discrete spectrum as $\wtHG(\BA)$-modules (cf.\ \cite{Ar}, \cite[\S8]{Ko89}),
\[
   L^2_{\disc} \bigl(Z^\BQ(\BR) \wtHG(\BQ)\bs \wtHG(\BA)\bigr)\simeq   \bigoplus_{\psi}  \bigoplus_{\pi\in \Pi_\psi}
m(\psi,\pi)\cdot \pi .
\]
Here the sum is taken over all equivalence classes of global Arthur parameters $\psi$, with trivial associated quasi-character of $Z^\BQ(\BR)$. Furthermore, $\Pi_\psi$ denotes the Arthur packet attached to $\psi$, and $m(\psi,\pi)$ denotes the Arthur multiplicity. Hence we may rewrite the decomposition \eqref{eqn coh} according to 
global Arthur parameters as 
\begin{equation}\label{eqn coh A}
   H^*\bigl(\Sh_K(\wtHG),\BC\bigr) \simeq 
	   \bigoplus_{\psi}  V_{\psi,K} ,
\end{equation}
where we set
\begin{equation}\label{eqn coh pi}
   V_{\psi,K}:=
   \bigoplus_{\pi=\pi_f\otimes\pi_\infty\in \Pi_\psi} m(\psi,\pi) \bigl(\pi_f^K\otimes 
      H^*(\wt{\fkh\fkg},K_\infty;\pi_\infty)\bigr).
\end{equation}
The isomorphism \eqref{eqn coh A} is $\sH_{K}$-equivariant. Moreover,  there is a canonical Lefschetz class (coming from the cup product with the Killing form) which induces an $\SL_2(\BC)$-action on the graded vector space $H^*(\Sh_K(\wtHG),\BC)$, cf.\ \cite[Prop.\ 9.1]{Ar}. Correspondingly there is an  $\SL_2(\BC)$-action on the graded vector space $H^*(\wt{\fkh\fkg},K_\infty;\pi_\infty)$. The decomposition respects the $\SL_2(\BC)$-action  and the grading on both sides of \eqref{eqn coh A}. We refer to \cite{Ar} for details.

In general the definition of Arthur parameter involves the hypothetical automorphic Langlands group. In the case of classical groups, Arthur avoids the Langlands group by using cuspidal (or isobaric) automorphic representations of general linear groups as substitute parameters \cite{Ar13}. In our setting, our group is a product $\wtHG=Z^\BQ\times \HG$, 
where
\begin{equation}
   \HG := \Res_{F_0/\BQ}(H\times G) ,
\end{equation}
cf.\ \eqref{proddec}. Accordingly, we will write an Arthur parameter $\psi$ as  a pair $(\psi_0,\psi_1)$ where $\psi_0$ and $\psi_1$ are Arthur parameters for the two factors $Z^\BQ$ and $\HG$  and those are defined in terms of cuspidal (or isobaric) automorphic representations of general linear groups.

By the product decomposition  \eqref{reltogross} we have
$$
\Sh_K(\wtHG)_\BC\simeq \Sh_{K_{Z^\BQ}}(Z^\BQ)_\BC\times_{\Spec\BC} \Sh_{K_{\HG}}(\HG)_\BC
$$and an induced isomorphism (note that the first factor above is zero-dimensional)
\begin{equation}\label{eqn coh prod}
   H^*\bigl(\Sh_K(\wtHG),\BC\bigr) \simeq   H^0\bigl(\Sh_{K_{Z^\BQ}}(Z^\BQ),\BC\bigr) \otimes
  H^*\bigl(\Sh_{K_{\HG}}(\HG),\BC\bigr).
\end{equation}
We may therefore replace $\Sh_K(\wtHG)$ by $ \Sh_{K_{\HG}}(\HG)$ in the conclusion of Theorem \ref{thm Kunneth}. We record the decomposition similar to \eqref{eqn coh} 
\begin{equation}\label{eqn coh '}
   H^*\bigl(\Sh_{K_{\HG}}(\HG),\BC\bigr) \simeq 
	   \bigoplus_{\pi_1}  V_{\pi_1,K_{\HG}}.
\end{equation}

 For a fixed $\pi_{1,f}$, let 
$$
\Pi_\infty(\pi_ {1,f}) := 
   \bigl\{\, \pi_{1,\infty}\in {\Pi}\bigl(\HG(\BR)\bigr) \bigm| \pi_{1,f}\otimes\pi_{1,\infty}\in \Pi_{\rm disc}(\HG) \,\bigr\} .
$$
Here ${\Pi}(\HG(\BR))$ denotes the set of equivalence classes of irreducible admissible representations of $\HG(\BR)$. 
Then we may rewrite \eqref{eqn coh '} as
\begin{equation}\label{eqn coh ''}
   H^*\bigl(\Sh_{K_{\HG}}(\HG),\BC\bigr) \simeq 
	   \bigoplus_{\pi_{1,f}}  V_{\pi_{1,f},K_{\HG}},
\end{equation}  
where 
$$
V_{\pi_{1,f},K_{\HG}}:= \bigoplus_{ \pi_{1,\infty}
\in \Pi_\infty(\pi_ {1,f}) } V_{\pi_{1,\infty}\otimes\pi_{1,f},K_{\HG}}.
$$
Now by \cite[Th.\ 1.4]{MS}, the degree $i$ modulo $2$  such that $H^i(\fkh\fkg,K_\infty;\pi_{1,\infty})\neq 0$ is  constant (in $\BZ/2\BZ$)  as  $\pi_{1,\infty}$ varies through $\Pi_\infty(\pi_{1,f})$ and  $i$ varies through $\BZ$. We denote the constant $i$ mod $2$ above by $\varepsilon(\pi_{1,f})\in\BZ/2\BZ$.

The (finitely many) non-zero direct summands $V_{\pi_{1,f},K_{\HG}}$ are distinct $\sH_{K,\BC}$-modules. Therefore, for each $\pi_{1,f}$, there exists $f_{\pi_{1,f}}\in \sH_{K,\BC}$ that induces the projector to  $V_{\pi_{1,f},K_{\HG}}$. Now set 
$$
   f^+:=\sum_{\substack{\pi_{1,f}\\ \varepsilon(\pi_{1,f})=0}}f_{\pi_{1,f}}
   \quad\text{and}\quad
   f^-:=\sum_{\substack{\pi_{1,f}\\ \varepsilon(\pi_{1,f})=1}}f_{\pi_{1,f}}.
$$
This completes the proof of Theorem \ref{thm Kunneth}.
\end{proof}

\begin{remark}\label{rmk support}
Let $H^\varepsilon_K:=\bigoplus_{i\equiv\varepsilon \bmod  2}H^{i}(\Sh_K(\wtHG),\BQ)$. Then \cite[Th.\ 1.4]{MS} asserts that $H^+_K$ and $H^-_K$, both being semisimple $\sH_{K}$-modules, do not share any  common irreducible $\sH_K$-submodule.
\end{remark}

\subsection{Arithmetic diagonal cycles}
We now apply the considerations of Sections \ref{sec std conj} and \ref{ss:HKproj} to the canonical model $M_K(\wt\HG)$ of $\Sh_K(\wtHG)$ over $E$. When $K=K_{\wt\HG}$ is as in \eqref{K_wtHG decomp}, we have given in Section \ref{moduli problem over E} a moduli interpretation of $M_K(\wt\HG)$. 
 
 We consider  the cycle class map in degree $n-1$ (for the Betti cohomology),
$$
   \cl_{n-1}\colon \Ch^{n-1}\bigl(M_K(\wtHG)\bigr)_\BQ \to H^{2(n-1)}\bigl(\Sh_K(\wtHG), \BQ\bigr).
$$
Note that $\dim M_K(\wtHG))=2n-3$ is odd. In particular the above cohomology group is \emph{not} in the middle degree, but just above.

Let $K_{\wt H}$ be a compact open subgroup of $\wt H(\BA_f)$ contained in $K\cap \wt H(\BA_f)$. We have a finite and unramified morphism
$$
M_{K_{\wt H}}(\wt H)\to M_{K}(\wtHG).
$$
The proper push-forward defines a cycle class $[M_{K_{\wt H}}(\wt H)]\in \Ch^{n-1}(M_K(\wtHG))_\BQ$.
We now fix a Haar measure on $\wt H(\BA)$ such that the volume  $\vol(K_{\wt H})\in \BQ$. We then define the normalized class
\begin{align}
\label{eqn zK}
z_K=\vol(K_{\wt H})[M_{K_{\wt H}}(\wt H)]\in  \Ch^{n-1}\bigl(M_K(\wtHG)\bigr)_\BQ,
\end{align}
which is independent of the choice of the group $K_{\wt H}$. 
We call $z_K$ the \emph{arithmetic diagonal cycle} since it lies in the arithmetic middle dimension, i.e., $2\dim z_K= \dim M_K(\wtHG)+1$. Let $\CZ_{K}$ be the cyclic Hecke submodule of $\Ch^{n-1}(M_K(\wtHG))_\BQ$ generated by $z_K$.

Let $f^-\in\sH_{K}$ be an odd Hecke--K\"unneth projector, cf.\ Definition \ref{def:HK proj}. We obtain a map to the Chow group of cohomologically trivial cycles, 
\begin{align}\label{eqn Rf}
 R(f^-)\colon \Ch^{n-1}\bigl(M_K(\wtHG)\bigr)_\BQ\to \Ch^{n-1}\bigl(M_K(\wtHG)\bigr)_{\BQ,0}.
 \end{align}
 We obtain a cohomologically trivial cycle in the Chow group,
\begin{equation}
   z_{K,0}=R(f^-)( z_K)\in \Ch^{n-1}\bigl(M_K(\wtHG)\bigr)_{\BQ,0}.
\end{equation}
Accordingly we call $z_{K,0}$ the \emph{cohomologically trivial arithmetic diagonal cycle}. 

\begin{remark}
\begin{altenumerate}
\item\label{rct i} Since the Hecke--K\"unneth projector $f^-\in\sH_{K}$ is not unique, we comment on the (in)dependence of the induced  cohomologically trivial cycle in the Chow group $z_{K,0}=R(f^-)( z_K)\in \Ch^{n-1}(M_K(\wtHG))_{\BQ,0}$. 
Conjecturally,  for any smooth projective variety $X$ over a number field,  it should be true that,  given $i$, $\Ch^i(X)_{\BQ}$ is finite dimensional, and if a correspondence induces the zero endomorphism on $H^{2i-1}(X)$, then the induced endomorphism on $\Ch^i(X)_{\BQ, 0}$ is zero, cf.\  \cite[Lem.\ 5.6]{Be} and \cite[Conj.\ 9.12]{Ja}.  Moreover, one expects that $\Ch^{i}(M_K(\wtHG))_{\BQ}$ is a \emph{semisimple} $\sH_K$-module. If these were true for $X=M_K(\wtHG)$ and $i=n-1$, then by Remark \ref{rmk support} it would follow that the simple $\sH_K$-modules appearing in  $\Ch^{n-1}(M_K(\wtHG))_{\BQ, 0}$ and $  \Im(\cl_{n-1})$  are disjoint. Furthermore,  one expects that there is a unique decomposition of (finite dimensional) semisimple $\sH_K$-modules
\begin{equation}\label{dec ashecke}
   \Ch^{n-1}\bigl(M_K(\wtHG)\bigr)_{\BQ}= \Ch^{n-1}\bigl(M_K(\wtHG)\bigr)_{\BQ, 0}\oplus \Im(\cl_{n-1}).
\end{equation}
The endomorphism $R(f^-)$ would then define the projector onto the summand $\Ch^{n-1}(M_K(\wtHG))_{\BQ, 0}$. In particular, the induced endomorphism of $\Ch^{n-1}(M_K(\wtHG))_{\BQ}$, and hence the element $z_{K,0}=R(f^-)( z_K)\in \Ch^{n-1}(M_K(\wtHG))_{\BQ,0}$, would be independent of the choice of  $f^-\in\sH_{K}$.
The summands in \eqref{dec ashecke} would decompose as $\sH_K$-modules,
  \begin{equation}\label{dec bothsums}
  \begin{aligned}
  \Ch^{n-1}(M_K(\wtHG))_{\BC,0}&=\bigoplus_{\pi}\Ch^{n-1}\bigl(M_K(\wtHG)\bigr)_{\BC, 0}[\pi^K_f] , \\
   \Im(\cl_{n-1})_\BC&=\bigoplus_{\sigma}\Im(\cl_{n-1})_{\BC}[\sigma^K_f] . 
  \end{aligned}
  \end{equation}
Here $\pi$ runs through all \emph{automorphic} representations contributing to $H^{2n-3}(\Sh_K(\wtHG), \BC)$, and $\sigma $ runs through all \emph{automorphic} representations contributing to $H^{2n-2}(\Sh_K(\wtHG), \BC)$. Also, $\Ch^{n-1}(M_K(\wtHG))_{\BC,0}[\pi_f^K]$ denotes the $\pi^K_f$-isotypic component of the $\sH_K$-module, i.e., the image under the evaluation map, which is injective, 
$$
   \pi_f^K\otimes \Hom_{\sH_K}\Bigl(\pi_f^K,\Ch^{n-1}\bigl(M_K(\wtHG)\bigr)_{\BC,0}\Bigr) \to \Ch^{n-1}\bigl(M_K(\wtHG)\bigr)_{\BC,0} .
$$ 
Then  
\begin{equation}
   \Ch^{n-1}\bigl(M_K(\wtHG)\bigr)_{\BC,0}[\pi_f^K]\simeq \pi_f^K\otimes \Hom_{\sH_K}\Bigl(\pi_f^K,\Ch^{n-1}\bigl(M_K(\wtHG)\bigr)_{\BC,0}\Bigr).
\end{equation}
Similar definitions apply to   $\Im(\cl_{n-1})_{\BC}[\sigma^K_f]$.  
\item In \cite{Z09} the \emph{Chow--K\"unneth decomposition} was used to modify the arithmetic diagonal cycle to make its cohomology class trivial. However, it is  difficult to show the existence of a Chow--K\"unneth decomposition except in some special cases.  Therefore, the procedure above is preferable.
\end{altenumerate}\label{rem coh triv}
\end{remark}

\subsection{The Arithmetic Gan--Gross--Prasad conjecture, for a fixed level $K\subset \wtHG(\BA_f)$}
\label{sec AGGP}
Let $\CZ_{K,0}$ denote the cyclic Hecke submodule of $\Ch^{n-1}(M_K(\wtHG))_{\BQ,0}$ generated by $z_{K,0}$ or, equivalently, the image of $\CZ_K$ under the map $ R(f^-)$.
We would like to decompose as $\sH_K$-modules, 
$$
\CZ_{K,0}\subset \Ch^{n-1}\bigl(M_K(\wtHG)\bigr)_{\BC,0}.
$$
In the conjectural decomposition \eqref{dec bothsums}, we only consider the \emph{tempered} part of the spectrum. The non-tempered part is also interesting but will be postponed to the future.

Let $\pi$ be an automorphic representation of $\wtHG(\BA)$ with trivial restriction to the central subgroup $Z^\BQ(\BA)$.  By \eqref{proddec} we may consider $\pi$ as an automorphic representation of $(H\times G)(\BA_{F_0})$. Let  $R$ be the tensor product representation of the $L$-group of $H\times G$ defined in \cite[\S 22]{GGP}.  The $L$-function $L(s,\pi,R)$ depends only on the Arthur parameter $\psi$ of $\pi$. 

To explain this $L$-function, we write a tempered Arthur parameter $\psi=\psi^{(n-1)}\boxtimes \psi^{(n)}$ formally as  
\begin{align}\label{A para}
\psi^{(n-1)}=\bigoplus_{i}\psi^{(n-1)}_{i},\quad \psi^{(n)}=\bigoplus_{j}\psi^{(n)}_{j} ,
\end{align}
where $\psi_i^{(n-1)}$ correspond to \emph{distinct} cuspidal automorphic representations of $\GL_{N^{(n-1)}_i}$ such that $\sum N^{(n-1)}_i=n-1$, and similarly for $\psi^{(n)}$. Then the $L$-function in question equals 
\begin{align}\label{RS}
   L(s,\pi,R):=\prod_{i,j} L\bigl(s,\psi^{(n-1)}_{i}\boxtimes \psi^{(n)}_{j}\bigr),
\end{align}
where each factor is a Rankin--Selberg convolution. 

We assume that $\pi$ lies in the packet $\Pi_\psi$ of a cohomological tempered packet $\psi$. Here ``cohomological" is a condition on the archimedean component $\psi_\infty$ and refers to  the trivial coefficient system.

The first version of the Arithmetic Gan--Gross--Prasad conjecture can now be stated as follows.
\begin{conjecture} \label{conj AGGP 1}
Let $K\subset \wtHG(\BA_f)$ be an open compact subgroup. Let $\pi$ be as above, i.e., with trivial restriction to the central subgroup $Z^\BQ(\BA)$ and lying in a cohomological tempered Arthur packet.
Consider  the following conditions on $\pi$. 
\begin{altenumerate}
\renewcommand{\theenumi}{\alph{enumi}}
\item\label{conj AGGP 1 a}  $\dim \Hom_{\sH_K}(\pi_f^K, \CZ_{K,0})=1$.\footnote{Note that we always have $\dim \Hom_{\sH_K}(\pi_f^K, \CZ_{K,0})\leq 1$, because $ \CZ_{K,0}$ is a cyclic $\sH_K$-module. }
\item\label{conj AGGP 1 b} The order of vanishing $\ord_{s=1/2}L(s,\pi,R)$ equals one, the space $\Hom_{\wt H(\BA_f)}(\pi_f,\BC)$ is one-dimensional, and its generator does not vanish on the subspace $\pi_f^K\subset\pi_f$. 
\item\label{conj AGGP 1 c} $\Hom_{\sH_K}(\pi_f^K, \Ch^{n-1}(M_K(\wtHG))_{\BC,0})\neq 0$.
\end{altenumerate}
Then (\ref{conj AGGP 1 a}) and (\ref{conj AGGP 1 b}) are equivalent and imply (\ref{conj AGGP 1 c}). If $E=F$, then  (\ref{conj AGGP 1 a}), (\ref{conj AGGP 1 b}) and (\ref{conj AGGP 1 c'}) are equivalent, where
\begin{altenumerate}
\renewcommand{\theenumi}{\alph{enumi}$'$}
\setcounter{enumi}{2}
\item\label{conj AGGP 1 c'} $\dim\Hom_{\sH_K}(\pi_f^K, \Ch^{n-1}(M_K(\wtHG))_{\BC,0})=1$,  the space $\Hom_{\wt H(\BA_f)}(\pi_f,\BC)$ is one-dimensional, and its generator does not vanish on the subspace $\pi_f^K\subset\pi_f$.  
\end{altenumerate}
\end{conjecture}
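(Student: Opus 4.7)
The conjecture intertwines three statements of very different flavors: a geometric statement about a specific cycle class (a), an analytic/$L$-function statement (b), and a statement about the full Chow group (c). The plan is to reduce all three to a single period integral and then to the (arithmetic) Gan--Gross--Prasad period formula.

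The first step is to interpret $\Hom_{\sH_K}(\pi_f^K, \CZ_{K,0})$ concretely. Since $\CZ_{K,0} = R(f^-)\CZ_K$ is a cyclic $\sH_K$-module generated by $z_{K,0}$, the Hom space is at most one-dimensional, and its non-vanishing amounts to the $\pi_f^K$-isotypic projection of $z_{K,0}$ being non-zero. Using the description of $z_K$ as the pushforward along the finite unramified map $M_{K_{\wt H}}(\wt H) \to M_K(\wtHG)$ from \eqref{modembHHG}, together with the Matsushima decomposition \eqref{eqn coh A'} and Poincar\'e duality, this projection is identified (up to the Abel--Jacobi map, since $z_{K,0}$ is cohomologically trivial) with the period functional
\[
   \phi \longmapsto \int_{[\wt H]} \phi|_{\wt H}, \quad \phi \in \pi,
\]
on the $\wt H$-invariants, restricted to the $K$-invariant vectors. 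The odd Hecke--Kunneth projector $f^-\in\sH_K^{S,\spl}$ from Theorem~\ref{thm Kunneth} kills the contribution of $\Im(\cl_{n-1})$, so only the part of $\pi$ contributing to $H^{2n-3}$ (equivalently, odd-degree cohomology, i.e., arithmetic versus geometric content) survives.

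Next, I would apply the (arithmetic) Gan--Gross--Prasad period conjecture to translate non-vanishing of this period into analytic data. The local multiplicity-one theorem of Sun--Zhu ensures that $\dim \Hom_{\wt H(F_{0,v})}(\pi_v, \BC) \leq 1$ at every place, so $\Hom_{\wt H(\BA_f)}(\pi_f,\BC)$ is at most one-dimensional; both its one-dimensionality and the non-vanishing of the generator on $\pi_f^K$ are local (non-vanishing) conditions. The global non-vanishing is controlled by the refined Ichino--Ikeda formula (for the value) and its arithmetic analog (for the derivative): non-vanishing of the Abel--Jacobi image of the $\pi$-projection of $z_{K,0}$ translates, via the conjectural height formula $(z_{K,0},z_{K,0})_{\mathrm{BB},\pi} \doteq L'(\tfrac12,\pi,R) \cdot \prod_v \alpha_v$, into $\ord_{s=1/2} L(s,\pi,R)=1$ together with the local non-vanishing. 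This gives (a) $\Leftrightarrow$ (b). The implication (a), (b) $\Rightarrow$ (c) is then immediate from the containment $\CZ_{K,0} \subset \Ch^{n-1}(M_K(\wtHG))_{\BC,0}$.

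For the equivalence with (c$'$) in the case $E=F$, the extra input needed is an upper bound
\[
   \dim \Hom_{\sH_K}\bigl(\pi_f^K, \Ch^{n-1}(M_K(\wtHG))_{\BC,0}\bigr) \leq 1,
\]
with the bound realized precisely when the GGP cycle $z_{K,0}$ contributes. This upper bound would follow from the expected decomposition \eqref{dec bothsums}: the $\pi_f^K$-isotypic line of the cohomologically trivial Chow group is cut out by the Hecke action on odd cohomology, where Sun--Zhu multiplicity one forces dimension $\leq 1$. The simplification $E=F$ eliminates the extra reflex-field factor $E_\Phi$ from Remark~\ref{others}(\ref{Z^Q}), so that the Shimura datum $(\wt G,\{h_{\wt G}\})$ descends to $(\Res_{F/F_0}G,\{h_G\})$ and the decomposition matches cleanly with the classical GGP setting of \cite{GGP}. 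Given this upper bound, (c) forces equality, identifies the line with the image of $\CZ_{K,0}$, and thus returns (a).

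The main obstacle is that each of the three equivalences rests on deep unproven conjectures: the Beilinson--Bloch height pairing itself requires Conjectures~\ref{ex reg} and \ref{conj lift}; the analytic side uses the refined Ichino--Ikeda conjecture and, more seriously, its arithmetic derivative version (which is the entire content of arithmetic GGP); and the upper bound for (c$'$) requires the structural conjecture in Remark~\ref{rem coh triv}(\ref{rct i}) asserting that the $\pi$-isotypic projection is controlled by cohomology. In practice, any unconditional progress must proceed (as in the rest of the paper) by reformulating the height pairing through a relative trace formula and attacking the resulting arithmetic fundamental lemma and arithmetic transfer identities locally, rather than trying to prove \ref{conj AGGP 1} by direct global methods.
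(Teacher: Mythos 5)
The statement you are addressing is Conjecture \ref{conj AGGP 1}, which the paper deliberately leaves unproved: it is a variant of the Arithmetic Gan--Gross--Prasad conjecture, and the Introduction explicitly remarks that even the original form in \cite{GGP} has not been proved in a single higher-dimensional case. There is therefore no proof in the paper to compare your proposal against. What you have written is, quite properly, a sketch of why the conjecture is plausible given a web of open standard conjectures, and you correctly identify the essential inputs: Conjectures \ref{ex reg} and \ref{conj lift} to make the Beilinson--Bloch pairing well defined, the (refined) Ichino--Ikeda formula and its conjectural derivative analogue for the analytic side, and the cohomology-controls-cycles hypothesis of Remark \ref{rem coh triv}(\ref{rct i}) for the passage to (\ref{conj AGGP 1 c'}). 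You also note, correctly, that no unconditional direct proof is possible, and that the paper's actual strategy is to sidestep this conjecture via the relative trace formula and arithmetic intersection numbers.

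One inaccuracy worth flagging in your reduction of (\ref{conj AGGP 1 a}) to (\ref{conj AGGP 1 b}): you pass through a conjectural height-pairing formula relating $(z_{K,0},z_{K,0})_{\mathrm{BB}}$ on the $\pi$-isotypic line to $L'(\tfrac12,\pi,R)$ and local factors. But (\ref{conj AGGP 1 a}) is the assertion that the $\pi_f^K$-isotypic projection of $z_{K,0}$ is nonzero \emph{in the Chow group}; nonvanishing of the height pairing $\ell_K$ on that line is instead the content of Conjecture \ref{conj AGGP 2}, which the paper formulates separately. The two become equivalent only under an additional nondegeneracy hypothesis on the Beilinson--Bloch pairing restricted to the $\pi$-isotypic line (or, essentially equivalently, the full Beilinson--Bloch rank conjecture), which is yet another unproved assumption you should make explicit if you want your outline to track the logical structure of the paper.
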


\begin{remark}
If $E=F$, the equivalence between (\ref{conj AGGP 1 b}) and (\ref{conj AGGP 1 c'}) is  part of the Beilinson--Bloch conjecture \cite{Be,Bl} that generalizes the Birch--Swinnerton-Dyer conjecture.
\end{remark}

We would like to test the conjecture quantitatively through height pairings. Now we assume that Conjectures \ref{ex reg} and \ref{conj lift} hold for  $M_K(\wtHG)$. For instance, this is the case when $M_K(\wtHG)$ has everywhere good reduction,\footnote{Instances of everywhere good reduction can in fact be constructed, cf.\ Remark \ref{ex verygood}.} cf.\ Remark \ref{everygood}.  
In particular, we have the Beilinson--Bloch height pairing \eqref{eqn BB} between cohomologically trivial cycles for $i=n-1$. We extend it to a hermitian form on $\Ch^{n-1}(M_K(\wtHG))_{\BC,0}$.
Pairing against the distinguished element $z_{K,0}\in \Ch^{n-1}(M_K(\wtHG))_{\BQ,0}$ then defines a linear functional 
\begin{equation*}
   \ell_{K}\colon 
	\xymatrix@R=0ex{
	   \Ch^{n-1}\bigl(M_K(\wtHG)\bigr)_{\BC,0} \ar[r]  &  \BC\\
      z \ar@{|->}[r]  &  (z, z_{K,0})_\mathrm{BB}.
	}
\end{equation*}

Let  $\CZ_{K,0}[\pi_f^K]$ be the $\pi^K_f$-isotypic component of $\CZ_{K,0}$ as an $\sH_K$-module, so that  
\begin{equation*}
\begin{aligned}
   \CZ_{K,0}[\pi_f^K]&\simeq \pi_f^K\otimes \Hom_{\sH_K}(\pi_f^K,\CZ_{K,0}) .\\
\end{aligned}
\end{equation*}
The second version of the Arithmetic Gan--Gross--Prasad conjecture in terms of the height pairing can be stated as follows.

\begin{conjecture} \label{conj AGGP 2}
Let $K\subset \wtHG(\BA_f)$ be an open compact subgroup.  Let $\pi$ be as above. Then the following conditions on $\pi$ are equivalent.
\begin{altenumerate}
\renewcommand{\theenumi}{\alph{enumi}}
\item   $\ell_{K}|_{\CZ_{K,0}[\pi_f^K]}\neq 0 $.
\item The order of vanishing $\ord_{s=1/2}L(s,\pi,R)$ equals one, the space  $\Hom_{\wt H(\BA_f)}(\pi_f,\BC)$ is one-dimensional, and its generator does not vanish on the subspace $\pi_f^K\subset\pi_f$. 
\item $\ell_{K}|_{\Ch^{n-1}(M_K(\wtHG))_{\BC,0}[\pi_f^K]}\neq 0 $.
\end{altenumerate}
\end{conjecture}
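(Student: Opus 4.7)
The inclusion $\CZ_{K,0}[\pi_f^K] \subset \Ch^{n-1}(M_K(\wtHG))_{\BC,0}[\pi_f^K]$ makes (a) $\Rightarrow$ (c) tautological. For (c) $\Rightarrow$ (a), assuming the standard multiplicity hypothesis that $\pi_f^K$ is an irreducible $\sH_K$-module (part of what condition (b) would guarantee), the cyclic Hecke submodule $\CZ_{K,0}[\pi_f^K]$ is either zero --- in which case $z_{K,0}$ has no $\pi_f$-component and $\ell_K$ restricted to the isotypic part trivially vanishes --- or it exhausts the full $\pi_f$-isotypic component, by irreducibility. This reduces the problem to proving (a) $\Leftrightarrow$ (b).

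\textbf{Arithmetic relative trace formula.} My main tool would be the Jacquet--Rallis arithmetic RTF, concretely in the form of Conjecture \ref{conj semiglob-intro}. For a test function $f \in \sH_K$ whose Hecke--Kunneth-projected transform has regular support at some auxiliary place $\lambda$, one may lift $z_K$ to the Gillet--Soul\'e arithmetic Chow group of a suitable integral model and decompose
\begin{equation*}
   \bigl(R(f) z_K, z_K\bigr)_{\GS} = \sum_{v_0} \Int_{v_0}(f),
\end{equation*}
the sum ranging over places $v_0$ of $F_0$. Applying Conjecture \ref{conj semiglob-intro} at each $v_0$, with $f' = \otimes_v f'_v$ a Jacquet--Rallis smooth transfer of $f$ and with $f'$ chosen (as the conjecture allows) so that the correction distributions at archimedean and AT places vanish, converts each $\Int_{v_0}(f)$ into $-\delJ_{v_0}(f')$. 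Summing and inserting the Hecke--Kunneth projector $f^-$ to pass from $z_K$ to $z_{K,0}$ gives the hoped-for global identity
\begin{equation*}
   \ell_K\bigl(R(f) z_{K,0}\bigr) = -\partial J(f'),
\end{equation*}
where $\partial J = \sum_{v_0} \delJ_{v_0}$ is the first derivative of the Jacquet--Rallis relative trace distribution.

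\textbf{Spectral expansion and $L$-values.} Next I would spectrally expand $\partial J(f')$ along the base-changed cuspidal spectrum on $G' = \Res_{F/F_0}(\GL_{n-1} \times \GL_n)$, as previewed in Section \ref{s:Lfcts}. The contribution to $\partial J$ of a cuspidal $\Pi = \Pi_1 \boxtimes \Pi_2$ on $\GL_{n-1}(\BA_F) \times \GL_n(\BA_F)$ that descends under stable base change to the Arthur parameter of $\pi$ factors as the central derivative $L'(1/2, \pi, R)$ times a product of local Rankin--Selberg factors (transfer factors together with local period pairings), which detect precisely when $\Hom_{\wt H(\BA_f)}(\pi_f, \BC)$ is one-dimensional and its generator is nonzero on $\pi_f^K$. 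Matching this spectral expansion term-by-term with the $\pi_f$-isotypic decomposition of $f \mapsto \ell_K(R(f) z_{K,0})$ would then translate nontriviality of $\ell_K|_{\CZ_{K,0}[\pi_f^K]}$ into the joint conditions of (b), with the simple (order-one) vanishing of $L(s,\pi,R)$ at $s=1/2$ emerging because a double zero would force the $\pi$-eigencomponent of $\partial J$ to vanish.

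\textbf{Main obstacle.} The principal obstruction is that each link in this chain is currently open in general. Conjecture \ref{conj semiglob-intro} is established only for $n \leq 3$ (Theorem \ref{mainthmintro}), and in higher dimensions it requires the arithmetic fundamental lemma at hyperspecial places, arithmetic transfer identities at parahoric and archimedean places, and regularity reductions that presently lie beyond available techniques on Rapoport--Zink spaces. Even granting the full local input, the hardest step is the analytic one: $\partial J$ is only formally defined (whence the regularity assumption on $f$ used to localize it), and one must establish absolute convergence of its spectral expansion, isolate the tempered cuspidal $\pi$-contribution from continuous and non-tempered spectrum, and produce enough test functions $f$ with prescribed $\pi$-eigensupport and with $f \ast f^-$ still having regular support at some $\lambda$ to detect non-triviality on the cyclic Hecke module generated by $z_{K,0}$. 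This last analytic package --- an arithmetic Ichino--Ikeda type formula together with its spectral decomposition --- is where I would expect the real difficulty to concentrate.
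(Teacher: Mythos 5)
The statement you are addressing is Conjecture~\ref{conj AGGP 2}, which the paper states as an open conjecture and does not prove; there is no proof in the paper to compare against. Your proposal is not a proof but a sketch of a long-term research program, and you acknowledge this yourself in the final paragraph. That said, the route you outline --- lift to an integral model, apply the arithmetic relative trace formula in the shape of Conjecture~\ref{conj semiglob-intro}, spectrally decompose $\partial J$ via Proposition~\ref{p:spec} and \eqref{delJ spec} to extract $L'(1/2,\pi,R)$ times local factors, then match with the $\pi_f$-isotypic decomposition of $f\mapsto \ell_K(R(f)z_{K,0})$ --- is indeed the strategy the authors themselves have in mind: this is precisely the role of Section~\ref{s:Lfcts} and of Conjectures~\ref{conj integraltrivlev}, \ref{conj integralnontrivlev}, \ref{conj semiglob}, and it is alluded to in the remarks following them. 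So you have identified the correct conjectural bridge and the correct obstacles (regularity of support, convergence of the spectral side, non-tempered and continuous contributions, availability of AFL/ATC in higher rank).

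There is, however, a genuine flaw in the part of your argument that you present as unconditional: the reduction (c)~$\Rightarrow$~(a). You claim that under irreducibility of $\pi_f^K$ as an $\sH_K$-module, the cyclic submodule $\CZ_{K,0}[\pi_f^K]$ either vanishes or exhausts the full $\pi_f$-isotypic component $\Ch^{n-1}(M_K(\wtHG))_{\BC,0}[\pi_f^K]$. This inference fails whenever the multiplicity of $\pi_f^K$ in the Chow group exceeds one: if that isotypic component is isomorphic to $(\pi_f^K)^{\oplus m}$ with $m\geq 2$, the cyclic module generated by the projection of $z_{K,0}$ is isomorphic to a single copy of $\pi_f^K$, hence is a proper subspace. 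The paper explicitly flags this possibility: when $E\neq F$ (which happens precisely when $F/\BQ$ is not of Harris--Taylor type), the Beilinson--Bloch multiplicity need not be one, and this is why Conjecture~\ref{conj AGGP 1} distinguishes the implication (\ref{conj AGGP 1 a})$\Rightarrow$(\ref{conj AGGP 1 c}) from the stronger equivalence (\ref{conj AGGP 1 a})$\Leftrightarrow$(\ref{conj AGGP 1 b})$\Leftrightarrow$(\ref{conj AGGP 1 c'}) reserved for the case $E=F$. Similarly, even if $z_{K,0}$ has a nonzero $\pi_f$-component $z_{K,0}^\pi$, the hypothesis in (c) only says that $(z,z_{K,0})_{\mathrm{BB}}\neq 0$ for \emph{some} $z$ in the isotypic component, and one cannot deduce $(z',z_{K,0})_{\mathrm{BB}}\neq 0$ for some $z'$ in the one-dimensional-multiplicity slice $\CZ_{K,0}[\pi_f^K]$ without further nondegeneracy input on the height pairing; that input is itself conjectural. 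So even the ``easy'' direction you set aside at the outset hides the same circle of conjectures as the main equivalence.
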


\begin{remark}
Our formulation differs in several aspects from \cite[Conj.\ 27.1]{GGP}. First, in \cite{GGP}, the Shimura varieties are associated to unitary groups,  whereas here we consider Shimura varieties associated to groups which differ from those in loc.~cit.\ by a central subgroup, cf.\ Remark \ref{others}(\ref{ggp}). Correspondingly, the varieties in loc.~cit.\ are defined over $F$, whereas our varieties are defined over the extension $E$ of $F$. As a consequence, we cannot predict the dimension of  $ \Hom_{\sH_K}(\pi_f^K,\Ch^{n-1}(M_K(\wtHG))_{\BC,0})$ in Conjecture \ref{conj AGGP 1}(\ref{conj AGGP 1 b}) when $F\neq E$ (in the version of loc.~cit., this space is one-dimensional if it is non-zero). Note, however, that if $F= K F_0$ for an imaginary quadratic field $K$ and the CM type is induced from $K$ (as in \cite{HT}), then $F=E$. Second, we exploit that the standard sign conjecture is known in our case, and we use it to construct the cohomologically trivial diagonal cycle $z_{K,0}$ and the corresponding linear functional $\ell_K$ that occur in our version of the conjecture. Third,  we work with a fixed level $K$ and specify the compact open subgroup $K$ over which the linear functional $\ell_K$ should be non-zero. Finally, we note that, in the terminology of \cite{GGP}, we are only considering the case of a trivial local system $\CF$. 
\end{remark}

\begin{remark}
The space $\Hom_{\wt H(\BA_f)}(\pi_f,\BC)$ is at most one-dimensional. It is one-dimensional if and only if  the member $\pi$ in the packet $ \Pi_\psi$ is as prescribed by the local Gan--Gross--Prasad conjecture \cite[Conj.\ 17.3]{GGP}. The local Gan--Gross--Prasad conjecture \cite[Conj.\ 17.1]{GGP}  predicts that there is a unique $\pi$ in the packet $\Pi_\psi$ such that $\Hom_{\wt H(\BA_f)}(\pi_f,\BC)\neq 0$ (in which case $\dim\Hom_{\wt H(\BA_f)}(\pi_f,\BC)=1$).
\end{remark}

\begin{remark}
Let us restrict  our attention to the \emph{tempered part} in the decomposition in the first line of \eqref{dec bothsums}, 
$$
\Ch^{n-1}\bigl(M_K(\wtHG)\bigr)_{\BC, 0, \temp} =\bigoplus_{\pi \text{ tempered}}\Ch^{n-1}\bigl(M_K(\wtHG)\bigr)_{\BC, 0}[\pi^K_f] .
$$
Let $\CZ_{K, 0,\temp}$ be the  Hecke submodule of $\Ch^{n-1}(M_K(\wtHG))_{\BC, 0, \text{ temp}}$ generated by (the projection of) $z_{K, 0}$.   Then  Conjecture \ref{conj AGGP 1} (together with the  expectations in Remark \ref{rem coh triv}(\ref{rct i})) implies that when $E=F$,
$$
   \CZ_{K, 0, \temp}=\bigoplus_{\pi}\Ch^{n-1}\bigl(M_K(\wtHG)\bigr)_{\BC, 0}[\pi^K_f] ,
$$
where the sum runs over all tempered automorphic representations $\pi$ such that $$\ord_{s=1/2}L(\pi, s, R)=1$$
and such that the space
$\Hom_{\wt H(\BA_f)}(\pi_f,\BC)$ is one-dimensional, with generator  not vanishing on the subspace $\pi_f^K\subset\pi_f$.
\end{remark}

\begin{remark}
A parallel question is to investigate the structure of the $\sH_K$-submodule in $H^{2(n-2)}(M_K(\wtHG),\BC)$ generated by the cohomology class $\cl_{n-1}(z_{K})$. However, since every automorphic representation contributing to the cohomology in off-middle degree must be non-tempered, the answer to such a question must involve the non-tempered version of the Gan--Gross--Prasad conjecture. We hope to return to this in the future.
\end{remark}

\begin{remark}\label{ex verygood}
As remarked above, the height pairing is defined unconditionally if $M_K(\wtHG)$ has everywhere good reduction. To construct such cases, let us assume now that $K=K_{\wt\HG}=K_{Z^\BQ} \times K_H \times K_G$, where $K_{Z^\BQ}$ is the usual maximal compact subgroup \eqref{K_Z^BQ}, $K_G$ is the stabilizer of a lattice $\Lambda$ in $W$, and $K_H$ is the stabilizer of a lattice $\Lambda^\flat$ in $W^\flat$. We make the following assumptions on the field extensions $F/F_0/\BQ$:
\begin{altitemize}
\item Each finite place $v$ of $F_0$ which is ramified over $\BQ$ or of residue characteristic $2$ is split in $F$.
\item Each finite place $v$ of $F_0$ which ramifies in $F$ is of degree one over $\BQ$.
\end{altitemize}
We also make the following assumptions on the hermitian spaces $W$ and $W^\flat$. We distinguish the case when $n$ is odd from the case when $n$ is even, cf.\ Remark \ref{latticecond herm}.

When $n=2m+1$ is odd, we impose that 
\begin{altitemize}
\item $W$ is split at all finite places of $F_0$ which are inert in $F$; and
\item  $W^\flat$ is split at all finite places, which forces $m$ and $[F_0 : \BQ]$ to be odd, cf.\ Remark \ref{latticecond herm}. 
\end{altitemize}
Then we choose $\Lambda_v$ to be self-dual when $v$ is split or inert in $F$, and almost $\pi_v$-modular when $v$ is ramified in $F$. Furthermore, we choose $\Lambda_v^\flat$ to be self-dual when $v$ is inert in $F$ and $\pi_v$-modular when $v$ is ramified in $F$. Such lattices  exist, even when we impose that $\Lambda=\Lambda^\flat\oplus O_F u$ with $(u,u) \in O_{F_0}^\times$. With these definitions,  $M_K(\wtHG)$ has everywhere good reduction. 

When $n=2m$ is even, we impose that 
\begin{altitemize}
\item $W$ is split at all finite places of $F_0$, which again forces $m$ and $[F_0 : \BQ]$ to be odd, cf.\ Remark \ref{latticecond herm}; and
\item  $W^\flat$ is split at all finite places of $F_0$ which are inert in $F$.
\end{altitemize}
Now we choose $\Lambda_v$ to be self-dual when $v$ is split or inert in $F$, and $\pi_v$-modular when $v$ is ramified in $F$. Furthermore, we choose $\Lambda_v^\flat$ to be self-dual when $v$ is inert in $F$ and almost $\pi_v$-modular when $v$ is ramified in $F$. Such lattices  exist, even when we impose that there exists $u\in W$ with $(u, u) \in O_{F_0}^\times$ such that, for all inert finite places  $\Lambda_v=\Lambda_v^\flat\oplus O_{F,v} u$,  and such that, for all ramified places,  $\Lambda_v^\flat$ and $\Lambda_v$  are related by a chain of inclusions
$$
   \pi_{v} (\Lambda_{v}^\flat \oplus O_{F,v} u)^* \subset^1 \Lambda_{v} \subset^1 \Lambda_{v}^\flat \oplus O_{F,v} u,
$$
 cf.\ \eqref{type 2 lattice relation}.  With these definitions, $M_K(\wtHG)$ has everywhere good reduction. 

One may ask in this connection whether Conjecture \ref{conj AGGP 1} is non-empty for  $M_K(\wtHG)$ (with everywhere good reduction). By our expectations in Remark \ref{rem coh triv}(\ref{rct i}), this comes down to asking whether there are representations $\pi\in\Pi_{\rm disc}(\wt\HG)$ with $\pi_f^K\neq 0$ which contribute to the cohomology  $H^{2n-3}(\Sh_K(\wtHG), \BC)$. Chenevier \cite{Chen} has indicated to us a method of producing such $\pi$ for low values of $n$, when $F/F_0$ is everywhere unramified. The method should also apply when $F/F_0$ is ramified once one has a better understanding of the local Langlands correspondence for unitary groups in ramified cases. 
\end{remark}

\begin{remark}
Assume $n=2$. In this case, the Beilinson--Bloch pairing is defined unconditionally and coincides with the N\'eron--Tate height. Conjecture \ref{conj AGGP 2} is closely related to the Gross--Zagier formula in \cite{YZZ1}. It would be interesting to clarify this relation.
\end{remark}

\section{$L$-functions and the relative trace formula}\label{s:Lfcts}

In this section, we recall certain distributions on the group $G'= \Res_{F/F_0}(\GL_{n-1} \times \GL_n)$ that appear in the context of the relative trace formula. For test functions with some local hypotheses, we follow \cite[\S3.1]{Z12} and \cite[\S2.1--\S2.4]{Z14}. In general, our definition relies on the truncation of relative trace formulas of Zydor \cite{Zy}.  

On one hand, these distributions are related to $L$-functions via the Rankin--Selberg theory (for $\GL_{n-1}\times\GL_n$) of Jacquet, Piatetski-Shapiro, and Shalika \cite{JPSS}. On the other hand, they will serve as the analytic side in our conjectures on arithmetic intersection numbers formulated in the next section.

\subsection{The $L$-function}\label{L-fcn}
Let $\Pi=\Pi_1\boxtimes \Pi_2$ be a cuspidal automorphic representation of $G'(\BA_{F_0})$, where $\Pi_1, \Pi_2$ are automorphic representations of $
\GL_{n-1}(\BA_F)$ and   $
\GL_{n}(\BA_F)$ respectively.  Let $L\left(s,\Pi_1\boxtimes\Pi_2\right)$ be the Rankin--Selberg convolution $L$-function. This is an entire function in $s\in\BC$ and it satisfies a functional equation of the form
$$
L\left(s,\Pi_1\boxtimes\Pi_2\right)=\epsilon(s,\Pi_1\boxtimes\Pi_2)L\left(1-s,\Pi_1^\vee\boxtimes\Pi_2^\vee\right),
$$
cf.\ \cite{JPSS}. Here $\Pi_i^\vee$ denotes the contragredient representation of $\Pi_i$.

The  $L$-function $L\left(s,\Pi_1\boxtimes\Pi_2\right)$ is represented by an integral. Let $\varphi=\otimes_v\varphi_v\in \Pi=\otimes_v\Pi_v$ be a decomposable vector. Consider the integral
\begin{align}\lambda(\varphi,s):=
\int_{H_1'(F_0)\bs H_1'(\BA_{F_0})}\varphi(h)\lv\det(h)\rv_F^{s}\,dh,\quad s\in\BC.
\end{align}
Then by \cite{JPSS}, with the Haar measures specified in \cite[\S2]{Z14b}, we have a decomposition (cf.\ \cite[\S3.3]{Z14b})
$$
   \lambda(\varphi,s)
   = L\biggl(s+\frac{1}{2},\Pi_1\boxtimes\Pi_2\biggr)\prod_{v} \lambda_v(\varphi_v,s).
$$
Here the left-hand side is an entire function in $s\in \BC$, and the local factors $\lambda(\varphi_v,s)$ have the following properties. 
\begin{altenumerate}
\renewcommand{\theenumi}{\arabic{enumi}}
\item For every $\varphi_v\in\Pi_v$, the function $s\mapsto \lambda_v(\varphi_v,s)$ is entire, and there exists $\varphi_v^\circ$ such that $ \lambda_v(\varphi_v^\circ,s) \equiv 1$.
\item For any decomposable $\varphi=\otimes_v\varphi_v\in \Pi=\otimes_v\Pi_v$, we have  $\lambda_v(\varphi_v,s)\equiv 1$ for all but finitely many $v$.
\end{altenumerate}

It follows that if $L\left(\frac{1}{2},\Pi_1\boxtimes\Pi_2\right)=0$ (e.g., $\Pi_1$ and $\Pi_2$ are  self-dual, and  $\epsilon(1/2,\Pi_1\boxtimes\Pi_2)=-1$), then we have 
$$
   \frac{d}{ds}\Big|_{s=0}\int_{H_1'(F_0)\bs H_1'(\BA_{F_0})}\varphi(h)\lv\det(h)\rv_F^{s}\,dh=L'\biggl(\frac{1}{2},\Pi_1\boxtimes\Pi_2\biggr)\prod_{v} \lambda(\varphi_v,0).
$$

We note that, if $\Pi_1$ and $\Pi_2$ correspond to the Arthur parameters $\psi^{(n-1)}$ and $\psi^{(n)}$ in \eqref{A para}, then we may write the $L$-function in \eqref{RS} as 
$$
L(s,\pi,R)=L\left(s,\Pi_1\boxtimes\Pi_2\right).
$$

\subsection{The global distribution on $G'$}
We briefly recall the global distribution on $G'$ from \cite[\S3.1]{Z12}, \cite[\S2]{Z14} (the notation in loc.~cit.\ is slightly different). We denote by $A_{G'}$ and  $A_{H_2'}$ the maximal $F_0$-split tori in the centers of $G'$ and $H_2'$, respectively. We have (via the embedding $H_2'\incl G'$) an equality $A_{G'}=A_{H_2'}$ and both are isomorphic to $\BG_{m,F_0}\times \BG_{m,F_0}$.

Let $f'=\otimes_v f'_v\in \sH(G'(\BA_{F_0}))=C_c^\infty (G'(\BA_{F_0}))$ be a pure tensor. We consider the associated automorphic kernel function 
\begin{equation}\label{kerfct}
   K_{f'}(x,y) := \int_{A_{G'}(F_0)\backslash A_{G'}(\BA_{F_0})}\sum_{\gamma\in
G'(F_0)}f'(x^{-1}z\gamma y)\,dz,\quad x,y\in G'(\BA_{F_0}) .
\end{equation}
We define
\begin{equation}\label{globdist}
   J(f',s) := 
	   \int_{H'_1(F_0)\backslash H_1'(\BA_{F_0}) }\int_{A_{H'_2}(\BA_{F_0})H_2'(F_0)\backslash H_2'(\BA_{F_0})}   
		K_{f'}(h_1,h_2)\eta(h_2)\lv\det(h_1)\rv_F^{s}\,dh_1\,dh_2,
\end{equation}
where the quadratic character $\eta\colon H_2'(\BA_{F_0})\to\{\pm1\}$ is defined as follows: for $h_2=(x_{n-1},x_{n})\in H_2'(\BA_{F_0})$, with $ x_i\in \GL_i(\BA_{F_0})$,
$$
   \eta(h_2)=\eta_{F/F_0}\bigl(\det(x_{n-1})^{n}\det(x_{n})^{n-1})\bigr).
$$
Here $\eta_{F/F_0}\colon \BA_{F_0}^\times\to \{\pm1\}$ is the idele class character associated to the extension of global fields $F/F_0$.

The kernel function \eqref{kerfct} has a spectral decomposition. The contribution of a cuspidal automorphic representation $\Pi$ to the kernel function is given by
$$K_{\Pi,f'}(x,y)=\sum_{\varphi\in \mathrm{OB}(\Pi)}\bigl(\Pi(f')\varphi \bigr)(x)\,\ov{ \varphi(y)},
$$
where the sum runs over an orthonormal basis $\mathrm{OB}(\Pi)$ of $\Pi$. 
Correspondingly, a cuspidal automorphic representation $\Pi$ contributes to the global distribution $J(f',s)$ 
$$
   J_{\Pi}(f',s) := \sum_{\varphi\in \mathrm{OB}(\Pi)}\lambda\bigl(\Pi(f')\varphi ,s\bigr)\,\beta(\ov\varphi),
$$
where $\beta$ is the Flicker--Rallis period integral with respect to the subgroup $H_{2}'$, cf.\ \cite[\S3.2]{Z14b}. It follows from the endoscopic classification for unitary groups that the period $\beta$ does not vanish identically if and only if $\Pi$ is in the image of base change from (quasi-split) unitary groups associated to the quadratic extension $F/F_0$ (cf.\  \cite[Th.\ 2.5.4, Rem.\ 2.5.5]{Mok}, and note that the corresponding Asai $L$-function has a pole if and only if  the period $\beta$ does not vanish identically, cf.\ \cite[Rem.\ 8, p.\ 976]{Z14} and references therein). 

 We again use the Haar measures specified in \cite[\S2]{Z14b}.
\begin{proposition}\label{p:J Pi}
Let $\Pi=\Pi_1\boxtimes\Pi_2$ be cuspidal, and assume that it is the base change of an automorphic representation $\pi$ on (quasi-split) unitary groups. If $L(1/2,\Pi_1\boxtimes\Pi_2)=0$, then
$$
\frac{d}{ds}\Big|_{s=0} J_{\Pi}(f',s)=L(1,\eta)^2\frac{L'(1/2,\Pi_1\boxtimes\Pi_2)}{L(1,\pi,{\rm Ad})}\prod_{v} J_{\Pi_v}(f'_v),
$$
where $ J_{\Pi_v}(f'_v)$ is the local distribution defined in \cite[\S3, (3.31)]{Z14b}, and $L(1,\pi,{\rm Ad})$ is the adjoint $L$-function (cf.\ \cite[\S3.4]{Z14b},  \cite{Mok}, \cite{KMSW}).
\end{proposition}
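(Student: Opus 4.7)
The plan is to compute $J_\Pi(f',s)$ by unfolding the Flicker--Rallis period integral against the Rankin--Selberg integral, then differentiate at $s=0$ using the assumed vanishing $L(1/2,\Pi_1\boxtimes\Pi_2)=0$, and finally identify the resulting Euler product with the local distributions $J_{\Pi_v}(f'_v)$.

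First I would substitute the Rankin--Selberg factorization recalled in Section \ref{L-fcn} into the definition of $J_\Pi(f',s)$. For each $\varphi \in \mathrm{OB}(\Pi)$, writing $\varphi'=\Pi(f')\varphi$ and decomposing $\varphi'=\otimes_v\varphi'_v$, we have
\[
   \lambda(\varphi',s) = C\cdot L\bigl(s+\tfrac12,\Pi_1\boxtimes\Pi_2\bigr)\prod_v \lambda_v(\varphi'_v,s).
\]
Thus $J_\Pi(f',s)$ is the product of $L(s+\tfrac12,\Pi_1\boxtimes\Pi_2)$ with an auxiliary bilinear expression involving the orthonormal basis, the Flicker--Rallis period $\beta$, and the local factors $\lambda_v$. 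The vanishing hypothesis $L(1/2,\Pi_1\boxtimes\Pi_2)=0$ then collapses the product rule at $s=0$ to a single contribution from $L'(1/2,\Pi_1\boxtimes\Pi_2)$, namely
\[
   \tfrac{d}{ds}\big|_{s=0} J_\Pi(f',s)
	= C\cdot L'(1/2,\Pi_1\boxtimes\Pi_2)\sum_{\varphi\in\mathrm{OB}(\Pi)}\beta(\ov\varphi)\prod_v \lambda_v\bigl(\Pi_v(f'_v)\varphi_v,0\bigr).
\]

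Next, I would factor the sum over $\mathrm{OB}(\Pi)$ into a product of local pairings. The key tool is the Ichino--Ikeda-type factorization of the Flicker--Rallis period: under the assumption that $\Pi$ is the base change of $\pi$, the period $\beta$ is non-vanishing on $\Pi$, and its ``square'' against the Petersson inner product factors as an explicit Euler product whose global constant is $L(1,\eta)^2/L(1,\pi,\mathrm{Ad})$ (up to the residue of the Asai $L$-function at $s=1$, which characterizes base change images). Combining this factorization of $\beta$ with the local Rankin--Selberg functionals $\lambda_v$ gives, at each place, exactly the local distribution $J_{\Pi_v}(f'_v)$ defined in \cite[\S3, (3.31)]{Z14b}. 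The result is the stated formula, with the global proportionality constant $L(1,\eta)^2/L(1,\pi,\mathrm{Ad})$ absorbing the Flicker--Rallis factorization constants and our fixed measure normalizations (the constant $C$ from the Rankin--Selberg factorization cancels against conventions built into the definition of $J_{\Pi_v}$).

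The main obstacle is the careful bookkeeping: one must verify that the local factors produced by combining the local Rankin--Selberg integral $\lambda_v$ with the local Flicker--Rallis functional coincide on the nose with the local distributions $J_{\Pi_v}(f'_v)$ of \cite{Z14b}, including the correct normalizations by local $L$-factors at unramified places (so that the Euler product telescopes to $L(1,\eta)^2 L(1/2,\Pi_1\boxtimes\Pi_2)/L(1,\pi,\mathrm{Ad})$ times an almost-everywhere-$1$ product). In particular, the non-vanishing of $\beta$ on $\Pi$ (and of the local functionals at unramified places) is precisely the condition that $\Pi$ lies in the image of base change, which justifies the hypothesis on $\pi$. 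Once the local matching of factors is checked, the proposition follows.
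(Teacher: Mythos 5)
Your proposal is correct and takes essentially the same route as the paper. The paper simply cites \cite[Prop.~3.6]{Z14b} to obtain the factorization $J_\Pi(f',s)=L(1,\eta)^2\,\frac{L(s+1/2,\Pi_1\boxtimes\Pi_2)}{L(1,\pi,\mathrm{Ad})}\prod_v J_{\Pi_v}(f'_v,s)$ (valid for all $s$, with the local factors defined via Whittaker-model linear functionals $\lambda_v^\nat$, $\beta^\nat$, $\theta^\nat$) and then differentiates at $s=0$ using $L(1/2,\Pi_1\boxtimes\Pi_2)=0$; your argument re-derives the content of that cited factorization (the Ichino--Ikeda-style factorization of the Flicker--Rallis period against the Rankin--Selberg integral) rather than invoking it as a black box, but the mathematical substance and the final differentiation step are the same.
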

\begin{proof}
It follows from \cite[Prop.\ 3.6]{Z14b} that
\begin{align}\label{J Pi s}
 J_{\Pi}(f',s)=L(1,\eta)^2\frac{L(s+1/2,\Pi_1\boxtimes\Pi_2)}{L(1,\pi,{\rm Ad})}\prod_{v} J_{\Pi_v}(f'_v,s).
\end{align}
Here the local distribution $J_{\Pi_v}(f'_v,s)$ is defined in an analogous way to that of its value at $s=0$ given by \cite[\S3, (3.31)]{Z14b},
\begin{align}\label{loc J Pi}
   J_{\Pi_v}(f'_v,s) = 
   \sum_{\{\varphi_v\}} \frac{\lambda_v^\nat\bigl(\Pi_v(f_v')\varphi_v,s\bigr)\ov{\beta^\nat}(\varphi_v)} 
       {\theta^\nat(\varphi_v,\varphi_v)},
\end{align}
where the linear functional $\varphi_v\mapsto\lambda_v^\nat(\varphi_v,s)$ on $\Pi_v$  is defined by \cite[\S3, (3.24)]{Z14b}, using the Whittaker model.  We refer to \cite[\S3]{Z14b} for the precise normalization of  measures and the linear functionals $\beta^\nat$ and $\theta^\nat$.
In particular, if $L(1/2,\Pi_1\boxtimes\Pi_2)=0$, then we have
$$
\frac{d}{ds}\Big|_{s=0} J_{\Pi}(f',s)=L(1,\eta)^2\frac{L'(1/2,\Pi_1\boxtimes\Pi_2)}{L(1,\pi,{\rm Ad})}\prod_{v} J_{\Pi_v}(f'_v,0).
$$Since $J_{\Pi_v}(f'_v,0)=J_{\Pi_v}(f'_v)$ by definition,  the proof is complete.
\end{proof}

\begin{proposition}\label{p:spec}
Let $f'=\otimes_v f'_v\in \sH(G'(\BA_{F_0}))=C_c^\infty (G'(\BA_{F_0}))$ be a pure tensor. Suppose that for a split place $v$ the function $f_v'$ has the property that, for every character $\chi_v$ of the center $Z_{G'}(F_{0,v})$ of $G'(F_{0,v})$, the function $f'_{v,\chi_v}:g\mapsto \int_{Z_{G'}(F_{0,v}) }f'_v(zg)\chi^{-1}_v(z)\,dz$ is the sum of matrix coefficients of supercuspidal representations.  Then the integral \eqref{globdist} converges absolutely and it decomposes as
\begin{equation*}
   J(f',s)=\sum_{\Pi} J_{\Pi}(f',s)\notag
      =\sum_{\Pi}L(1,\eta)^2\frac{L(s+1/2,\Pi_1\boxtimes\Pi_2)}{L(1,\pi,{\rm Ad})}\prod_{v} J_{\Pi_v}(f'_v,s) ,
\end{equation*}
where the sum runs through the set of cuspidal automorphic representations $\Pi=\Pi_1\boxtimes\Pi_2$ of $G'(\BA_{F_0})$ coming by base change from  automorphic representations $\pi$ of (quasi-split) unitary groups. Here the distribution $J_{\Pi_v}(f'_v,s) $ is defined by \eqref{loc J Pi}.
 \end{proposition}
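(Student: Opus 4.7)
The plan is to use the supercuspidal matrix coefficient hypothesis at the split place $v$ to reduce the spectral expansion of the kernel $K_{f'}$ to its cuspidal part, and then to invoke Proposition \ref{p:J Pi} representation by representation. Concretely, the absolute convergence and spectral identity will follow from three ingredients: (i) the standard fact that convolving with a matrix coefficient of a supercuspidal representation annihilates every automorphic representation that is not a supercuspidal-at-$v$ cuspidal representation, (ii) the Rankin--Selberg unfolding of Jacquet--Piatetski-Shapiro--Shalika for $\GL_{n-1}\times\GL_n$, and (iii) the Flicker--Rallis criterion that $\beta$ vanishes off the base-change image.

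First I would establish absolute convergence of the double integral \eqref{globdist}. Write $f' = f'_v \otimes f^{\prime v}$, and decompose $f'_v$ modulo the center as essentially a matrix coefficient of a supercuspidal representation $\sigma_v$ of $G'(F_{0,v})$. Then $R(f'_v)$ acts as zero on every automorphic representation of $G'(\BA_{F_0})$ whose local component at $v$ is not isomorphic to $\sigma_v$, and it acts as a rank-one projector onto the $\sigma_v$-isotypic vectors when the local component is $\sigma_v$; moreover, since supercuspidals embed only in the cuspidal part of $L^2$, the image of $R(f')$ lies entirely in the cuspidal spectrum. In particular the spectral expansion of $K_{f'}(x,y)$ collapses to the finite sum $\sum_\Pi K_{\Pi,f'}(x,y)$ over the (finite) set of cuspidal automorphic representations $\Pi = \Pi_1\boxtimes\Pi_2$ whose local component at $v$ is $\sigma_v$. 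Together with the rapid decay of cusp forms on $G'(F_0)\backslash G'(\BA_{F_0})$ and the compact-mod-center support of $f'_v$, this collapse implies that the outer and inner integrals in \eqref{globdist} both converge absolutely (and in fact the integrand is a finite sum of products of a cusp form and a rapidly decaying Flicker--Rallis integrand).

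Second, I would justify switching the sum and the integrals. Because the spectral sum is now a finite sum of products of cusp forms $\Pi(f')\varphi$ and conjugates $\ov\varphi$, Fubini applies and one gets
\[
   J(f',s) = \sum_{\Pi} J_\Pi(f',s),
\]
with $\Pi$ running over the finitely many surviving cuspidal representations. For each such $\Pi$, Proposition \ref{p:J Pi} (or rather the identity \eqref{J Pi s} from its proof, which holds for all $s$ in the region of convergence and then by meromorphic continuation) yields the factorization
\[
   J_\Pi(f',s) = L(1,\eta)^2\,\frac{L(s+1/2,\Pi_1\boxtimes\Pi_2)}{L(1,\pi,\mathrm{Ad})}\prod_v J_{\Pi_v}(f'_v,s)
\]
provided that $\Pi$ is a base change from a unitary group; when $\Pi$ is not such a base change, the Flicker--Rallis period $\beta$ vanishes identically on $\Pi$, so $J_\Pi(f',s) \equiv 0$ and such $\Pi$ contribute nothing to the sum. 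This recovers the stated formula.

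The main obstacle is the first step, i.e.\ controlling the non-cuspidal part of the kernel: a priori one must deal with residual and Eisenstein contributions to $K_{f'}$, whose absolute convergence is only known after the regularization of Zydor \cite{Zy}. The supercuspidal hypothesis at the split place $v$ is exactly what sidesteps this difficulty, since convolving by a supercuspidal matrix coefficient annihilates every induced representation (and every residual representation, which is a subquotient of some parabolic induction); once this vanishing is in hand, all the remaining manipulations are legitimate. The rest of the argument is then a direct assembly of Proposition \ref{p:J Pi} with the Flicker--Rallis dichotomy.
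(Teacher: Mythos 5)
Your plan is correct and takes essentially the same route as the paper, which proves the spectral decomposition by citing the simple relative trace formula of \cite[Th.\ 2.3]{Z14} (noting that of the ``niceness'' hypotheses there only the supercuspidal matrix-coefficient condition at one place is needed, and that the argument works for all $s$) and then invokes Proposition \ref{p:J Pi} for the factorization; you are filling in the details of that citation. One small imprecision worth noting: $R(f'_v)$ restricted to a representation with $v$-component $\sigma_v$ is a rank-one operator but not in general a projector, and it certainly does not surject onto the full $\sigma_v$-isotypic space; however only the annihilation of non-$\sigma_v$ components is used, so this does not affect the argument.
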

\begin{proof}The spectral decomposition, i.e., the first equality, follows from the simple version of the relative trace formula in \cite[Th.\ 2.3]{Z14}. Note that in loc.\ cit., the test function $f'$ is required to be ``nice'' with respect to a character of the center $\chi$ of $G'(\BA_{F_0})$. However, the spectral decomposition only requires the existence of a place $v$ where the function $f'_{v,\chi_v}$ is a matrix coefficient of a super-cuspidal representation. Hence the result holds by the linearity of $J$ under the current assumption. Moreover, though only the case $s=0$ is stated in loc.\ cit., the same proof works for all $s\in\BC$. 
The second  equality follows from Proposition \ref{p:J Pi}.
\end{proof}

\begin{remark}
There are many test functions $f_v'$ with the property in the above statement. It suffices to construct such functions for $\GL_m(F)$ where $F$ is a $p$-adic local field. Let $\pi$ be a supercuspidal representation of  $\GL_m(F)$ and $\wt f$ a matrix coefficient. Let $ {\bf 1}_{[{\rm val}=0]}$ be the characteristic function of the set consisting of $g\in \GL_m(F)$ such that $\det(g)\in O_F^\times$. Then we claim that $f=\wt f\cdot {\bf 1}_{[{\rm val}=0]}$ has the desired property. To see this, let $\chi_0$ be the central character of $\pi$, and $\nu$ an unramified character of $F^\times$ of order (exactly) $m$. Then $f_{\chi_0}= \frac{1}{m}\sum_{i=0}^{m-1} \wt f\cdot  (\nu^i\circ\det)$ (we normalize the measure such that $\vol(O_F^\times)=1$). Now note that $f(zg)=\chi_{0}(z)f(g)$ for $z\in O_F^\times$. Hence $f_{\chi}=0$ unless $\chi\chi_0^{-1}$ is unramified. If $\chi\chi_0^{-1}$ is unramified, we may assume that $\chi\chi_0^{-1}=\xi^m$ for some unramified character $\xi$.   Then $f_{\chi}=\frac{1}{m}\sum_{i=0}^{m-1}\wt f\cdot ((\xi \nu^i)\circ\det)$, hence a sum of matrix coefficients of $\pi\otimes\xi \nu^i$, $0\leq i\leq m-1$.
\end{remark}

It follows that for $f'$ as in Proposition \ref{p:spec}, we have an expansion for the first derivative
\begin{equation}\label{delJ spec}
\begin{aligned}
   \frac{d}{ds}\Big|_{s=0}J(f', s)=&\sum_{\substack{\Pi\\ \epsilon(\Pi)=-1}}L(1,\eta)^2\frac{L'(1/2,\Pi_1\boxtimes\Pi_2)}{L(1,\pi,{\rm Ad})}\prod_{v} J_{\Pi_v}(f'_v,0)\\
   &\quad+\sum_{\substack{\Pi\\ \epsilon(\Pi)=1}}L(1,\eta)^2\frac{L(1/2,\Pi_1\boxtimes\Pi_2)}{L(1,\pi,{\rm Ad})}\cdot \frac{d}{ds}\Bigl(\prod_{v} J_{\Pi_v}(f'_v,s)\Bigr)\bigg|_{s=0}.
\end{aligned}
\end{equation}
Here $\epsilon(\Pi)=\epsilon(1/2,\Pi_1\boxtimes\Pi_2)$ is the global root number for the Rankin--Selberg convolution.

\begin{remark}We note that the contribution to the spectral decomposition  from non-cuspidal automorphic representations is more complicated and we will not touch on this topic in this paper. A full spectral decomposition to remove the local restriction on $f'_v$  in the last proposition is the work in progress by Chaudouard and Zydor, and a coarse spectral decomposition has been obtained by Zydor in \cite{Zy}. 
\end{remark}

\begin{definition}\label{regsuppG'} Let $\lambda$ be a place of $F_0$. A function $f'_\lambda\in C_c^\infty (G'(F_{0, \lambda}))$ \emph{has regular support} if $\supp(f_\lambda')\subset G'(F_{0,\lambda})_\rs$. A pure tensor $f'=\otimes_v f'_v\in \sH(G'(\BA_{F_0}))$ \emph{has regular support at} $\lambda$ if $f'_\lambda$ has regular support. 
\end{definition}
Let us assume that $f'$ has regular support at $\lambda$. Later we will assume that $\lambda$ is non-archimedean.
Then, by \cite[Lem.\ 3.2]{Z12} the integral \eqref{globdist} is absolutely convergent for all $s\in\BC$, and admits a decomposition into a finite sum for a given $f'$ (cf.\ the proof of loc.\ cit.),
\begin{align}
\label{eqn J dec}
J(f',s)=\sum_{\gamma\in G'(F_0)_\rs/H_{1,2}'(F_0)} \Orb(\gamma,f',s),
\end{align}
where each term is a product of local orbital integrals (\cite[(3.2)]{Z12}),
\begin{equation}
\Orb(\gamma,f',s)=\prod_{v}\Orb(\gamma,f'_v,s),
\end{equation}
where in turn
\begin{equation*}
   \Orb(\gamma, f_v', s) := 
	   \int_{H_{1,2}'(F_{0,v})} f_v'(h_1^{-1}\gamma h_2) \lv\det h_1\rv^s \eta(h_2)\, dh_1\, dh_2.
\end{equation*}
We set 
\begin{equation}\label{J(0)}
J(f'):=J(f', 0) .
\end{equation}
Then the decomposition \eqref{eqn J dec} specializes to
\[
   J(f')=\sum_{\gamma\in G'(F_0)_\rs/H_{1,2}'(F_0)} \Orb(\gamma,f'),
\]
where
\[
   \Orb(\gamma,f') := \Orb(\gamma,f',0).
\]
We introduce
\[
   J_v(f',s) := \sum_{\gamma\in G'(F_0)_\rs/H_{1,2}'(F_0)} \Orb(\gamma,f'_v,s)\cdot \prod_{u\neq v} \Orb(\gamma,f'_u).
\]
We set 
\begin{equation}\label{delJ}
\begin{aligned}
   \delJ(f')&:=\frac{d}{ds}\Big|_{s=0}J(f',s),\\
   \delJ_v(f')&:= \frac{d}{ds}\Big|_{s=0}J_v(f',s),\\
   \del(\gamma,f'_v) &:= \frac{d}{ds}\Big|_{s=0} \Orb(\gamma,f_v',s).
  	\end{aligned}
\end{equation} 
Note that
\begin{equation}\label{eqn J'v dec}
   \delJ_v(f')= \sum_{\gamma\in G'(F_0)_\rs/H_{1,2}'(F_0)} \del(\gamma,f'_v)\cdot \prod_{u\neq v} \Orb(\gamma,f'_u). 
\end{equation}
Then we may decompose 
\begin{align}\label{eqn J' dec}
   \delJ(f')=\sum_{v} \delJ_v(f').
\end{align}

Without the regularity assumption on $f'$, the integral \eqref{globdist} may diverge in general. For all $f'\in \sH(G'(\BA_{F_0}))$, the truncation process of Zydor \cite{Zy} allows us to define a meromorphic distribution $J(\cdot,s)$ (\cite[Th.\ 0.1]{Zy}) which is holomorphic away from $s=\pm 1$. This allows us to define \eqref{J(0)} and \eqref{delJ}. We will use these distributions to formulate our conjectures in the next section. Zydor also obtains a coarse decomposition of $J(\cdot,s)$ into a sum of global orbital integrals, although for a non-regular-semisimple orbit there is no natural decomposition into a product of local orbital integrals.

\subsection{Smooth transfer}
The notion of smooth transfer between functions on unitary groups and on linear groups or their symmetric spaces is based on the concept of matching, cf.\ \cite[\S2]{RSZ2} and \cite{Z12,Z14}.  Using the results of Section \ref{orbit matching ss}, we can transpose this concept to our situation at hand.

Our definitions below depend on the choice of the transfer factor $\omega$ and on the choices of Haar measures. For definiteness, we will always take the transfer factor from \cite[\S 2.4]{RSZ2} (this is a slight variant of \cite[\S 2.4]{Z14}), which works for all places $v$,
 \begin{equation}\label{tran fac} 
 \omega(\gamma)=\prod\nolimits_v\omega_v(\gamma_v),\quad \gamma=(\gamma_v)\in G'(\BA_{F_0}).
 \end{equation}
The transfer factor has the properties 
\begin{enumerate}
\item ($\eta$-invariance) For $h_1\in H_1'(\BA_{F_0})$ and $h_2\in  H_2'(\BA_{F_0})$, we have $\omega(h_1^{-1}\gamma h_2)=\eta(h_2)\omega(\gamma)$.
\item (product formula) For $\gamma\in G'(F_0)$ we have
\begin{equation}\label{prod formula} 
\prod\nolimits_v\omega_v(\gamma)=1 .
\end{equation}
\end{enumerate}

Now let $p$ be a rational prime. We fix Haar measures on $Z^\BQ(\BQ_p)$, $H(F_{0,v})$ and $G_W(F_{0,v})$ for $v \mid p$.  We choose the Haar  measures on $\wt H(\BQ_p)$ and $\wtHG(\BQ_p)$ compatible with the product decompositions $\wtHG(\BQ_p)=Z^{\BQ}(\BQ_p)\times \prod_{v \mid p}H(F_{0,v})$ and $
\wtHG(\BQ_p)=Z^{\BQ}(\BQ_p)\times \prod_{v \mid p}G_W(F_{0,v})$, induced by \eqref{proddec}. We choose the quotient measure on $(\wt H(\BQ_p)\times\wt H(\BQ_p)) /\Delta(Z^\BQ(\BQ_p))$. For the archimedean places, we also choose Haar measures satisfying similar compatibilities.

We define the orbital integral for $f_p \in C_c^\infty(\wtHG(\BQ_p))$ and $g\in  \wt H(\BQ_p) \bs \wtHG(\BQ_p)_\rs / \wt H(\BQ_p)$,
\begin{equation}\label{eq Orb wtH}
   \Orb(g,f_p) :=\int_{(\wt H(\BQ_p)\times\wt H(\BQ_p)) /\Delta(Z^\BQ(\BQ_p)) } f_p(h_1^{-1}g h_2)\, dh_1\, dh_2.
\end{equation}

\begin{definition}\label{transfer def}
A function $f_p \in C_c^\infty(\wtHG(\BQ_p))$ and a collection $(f_v') \in \prod_{v\mid p} C_c^\infty(G'(F_{0,v}))$  of functions are \emph{transfers} of each other if for any element $\gamma = (\gamma_v) \in \prod_{v\mid p} G'(F_{0, v})_\rs$, the following identity holds:
\[
   \omega(\gamma) \prod_{v\mid p} \Orb(\gamma_v,f_v') = 
	   \begin{cases}
			\Orb(g,f_p), &  \text{whenever $g$ matches $\gamma$};\\
			0,  &  \text{no $g\in \wtHG(\BQ_p)$ matches $\gamma$}.
		\end{cases}
\]
We make the same definition for a function $f_\infty \in C_c^\infty (\wtHG(\BR))$ and a collection of functions $(f_v') \in \prod_{v\mid \infty} C_c^\infty(G'(F_{0,v}))$. 
\end{definition}

Let us explain the relation to smooth transfer between functions in $C^\infty_c(G'(F_{0, v}))$ and functions in $C^\infty_c(G_W(F_{0, v}))$, as $W$ varies through the isomorphism classes of hermitian spaces of dimension $n$ over $F_v$. This definition is based on the concept of matching between elements of $G'(F_{0, v})_\rs$ and elements of $G_W(F_{0, v})_\rs$ (see\ \cite[\S2]{RSZ1} for non-archimedean places $v$ of $F_0$ that are non-split in $F$; the definition extends in an obvious way to the archimedean places and to the split non-archimedean places).

\begin{definition}\label{defcompdecG}
A function $f_p\in  C_c^\infty(\wtHG(\BQ_p))$ is \emph{completely decomposed} if it is of the form 
\begin{equation}\label{f_p dec}
   f_p = \phi_{p}\otimes\bigotimes_{v\mid p}f_v,
\end{equation}
where $\phi_p\in C_c^\infty(Z^{\BQ}(\BQ_p))$ and $f_v\in C_c^\infty(G_W(F_{0,v}))$. A pure tensor $f=\bigotimes_p f_p\in \sH(\wtHG(\BA_f))$ is \emph{completely decomposed} if $f_p$ is completely decomposed for all $p$. 
\end{definition}

Note that an arbitrary element in $ \sH(\wtHG(\BA_f))$ is a linear combination of completely decomposed pure tensors.
 
\begin{remark}\label{rem match}
Let $ f_p = \phi_{p}\otimes\bigotimes_{v\mid p}f_v$ be completely decomposed. Set $$
c(\phi_p):=\int_{Z^{\BQ}(\BQ_p)}\phi_p(z) \, dz.
$$
By Lemma \ref{matchintilde}, we have, for $g\in \wtHG(\BQ_p)$ corresponding to the collection $g_v\in (H\times G)(F_{0,v})$, 
$$ 
   \Orb(g,f_p) =c(\phi_p)\prod_{v\mid p}  \Orb(g_v,f_v),
   $$
  where 
  $$
   \Orb(g_v,f_v)=\int_{H(F_{0,v})\times H(F_{0,v})} f_v(h_1^{-1}g_v h_2)\,dh_1\,dh_2
  $$
  is the orbital integral in \cite[\S2]{RSZ2}.
If the orbital integrals of $f_p$ do not vanish identically, then $f_p$ and $(f_v')_{v\mid p}$ are transfers of each other in the sense of Definition \ref{transfer def} if and only if for some non-zero  constants $c_v$ such that $c(\phi_p)=\prod_{v\mid p}c_v$, the functions $f_v$ and $c_v f_v'$ are transfers of each other for each $v$ in the sense of \cite[\S2]{RSZ2}. 
\end{remark}

\begin{definition} Let $v$ be an archimedean place of $F_0$. A  function $f'_v\in C^\infty_c(G'(F_{0, v}))$ is a \emph{Gaussian test function} if it transfers to the constant function $\mathbf{1}$ on $G_{W_0}(F_{0, v})$, where $W_0$ denotes the negative-definite hermitian space, and transfers to the zero function on $G_W(F_{0, v})$ for any other hermitian space $W$ (i.e., in the terminology\footnote{But note that here $G_W$ is the product of two unitary groups.}  of \cite[Def.\ 3.5]{Z12}, $f'_v$ is pure of type $W_0$ and a transfer of  $\mathbf{1}_{G_{W_0}(F_{0, v})}$). 

A pure tensor $f'=\otimes_v f'_v\in \sH(G'(\BA_{F_0}))$ is a \emph{Gaussian test function} if the archimedean components $f'_v$ for $v\mid\infty$ are all (up to scalar factor) Gaussian test functions. 
\end{definition}

We have fixed a Haar measure on each (compact) $H_{W_0}(F_{0, v})$. We will assume that the volume of $H_{W_0}(F_{0, v})$ is one. Then $f_v'$ is a Gaussian test function if and only if  for all $\gamma\in G'(F_{0,v})_\rs$,
\begin{align}\label{eq Gaussian}
\omega_v(\gamma)\Orb(\gamma, f'_v)=\begin{cases}1,& \text{there exists $g \in G_{W_0}(F_{0,v})$ matching $\gamma$} ;\\
0,& \text{no $g\in G_{W_0}(F_{0,v})$ matches $\gamma$}.
\end{cases}
\end{align}

The existence of Gaussian test functions is still conjectural. A Gaussian test function does not have regular support, in the sense of Definition \ref{regsuppG'}. 

\begin{definition}
A pure tensor $f=\otimes_p f_p\in \sH(\wtHG(\BA_f))$ and a pure tensor $f'=\otimes_v f'_v\in \sH(G'(\BA_{F_0, f}))$ are \emph{smooth transfers} of each other if they are expressible in a way that $f_p$ and $(f_v')_{v\mid p}$ are transfers of each other for each prime $p$. Here we choose the product measures on $Z^\BQ(\BA_f), \wt \HG(\BA_f)$, and $G'(\BA_{F_0, f})$ (implicitly we require that our local choices are made such that the product is convergent); also, the adelic transfer factor \eqref{tran fac}  is simply the product of the local transfer factors.
\end{definition}

\begin{remark}
The existence of local smooth transfer is known for non-archimedean places \cite{Z14}; hence for any $f\in \sH(\wtHG(\BA_f))$ as above, there exists a smooth transfer $f'\in \sH(G'(\BA_{F_0, f}))$ as above, and conversely. 
\end{remark}

\begin{lemma}\label{analyticsplit}
Let $f'=\otimes_v f'_v\in \sH(G'(\BA_{F_0}))$ be a  Gaussian test function. Assume that $f'$ has regular support at some place $\lambda$ of $F_0$. Then for any place $v_0$ of $F_0$ split in $F$, 
\begin{align*}
 \delJ_{v_0}(f')=0.
\end{align*}
\end{lemma}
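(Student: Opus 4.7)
The plan is to reduce the statement to a local vanishing at $v_0$ via the geometric expansion \eqref{eqn J'v dec}. Since $f'$ has regular support at $\lambda$, the expansion is available and the sum is finite:
\[
   \delJ_{v_0}(f')=\sum_{\gamma\in G'(F_0)_\rs/H_{1,2}'(F_0)} \del(\gamma,f'_{v_0})\cdot \prod_{u\neq v_0} \Orb(\gamma,f'_u).
\]
The Gaussian hypothesis at each archimedean place $u$ further restricts the contributing $\gamma$ to those whose $u$-component matches the totally negative unitary group $G_{W_0}(F_{0,u})$ (by \eqref{eq Gaussian}). The plan is to show the purely local vanishing $\del(\gamma,f'_{v_0})=0$ for each such $\gamma$.

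For the local computation at $v_0$, I would exploit the splitting features of a split place: $\eta_{v_0}\equiv 1$, $F_{v_0}\cong F_{0,v_0}^{(a)}\oplus F_{0,v_0}^{(b)}$, with the non-trivial Galois involution swapping the two factors. Accordingly $G'(F_{0,v_0})\cong G^{(a)}\times G^{(b)}$, where each factor is a copy of $(\GL_{n-1}\times\GL_n)(F_{0,v_0})$; $H_1'(F_{0,v_0})=\GL_{n-1}(F_{0,v_0})^{(a)}\times\GL_{n-1}(F_{0,v_0})^{(b)}$; and $H_2'(F_{0,v_0})$ sits diagonally in $G^{(a)}\times G^{(b)}$. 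Since $\lv\det h_1\rv_{F_{v_0}}^{s}=\lv\det h_1^{(a)}\rv_{v_0}^{s}\lv\det h_1^{(b)}\rv_{v_0}^{s}$, I expect a suitable change of variables---one that implements the matching of regular semisimple orbits at split places and uses the diagonal $H_2'$-action to pair the two $H_1'$-factors---to produce
\[
   \Orb(\gamma,f'_{v_0},s) \;=\; \lv\lambda(\gamma)\rv_{F_{v_0}}^{s}\cdot \Orb(\gamma,f'_{v_0})
\]
for an explicit orbit invariant $\lambda(\gamma)\in F^\times$ satisfying $\lambda(\gamma)\,\overline{\lambda(\gamma)}=1$ (for example of the form $(\det\gamma_n/\det\gamma_{n-1})/\overline{(\det\gamma_n/\det\gamma_{n-1})}$).

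The final step is then immediate: under the decomposition $F_{v_0}=F_{0,v_0}^{(a)}\oplus F_{0,v_0}^{(b)}$, the norm-one condition $\lambda(\gamma)\,\overline{\lambda(\gamma)}=1$ forces $\lambda(\gamma)=(\alpha,\alpha^{-1})$, whence $\lv\lambda(\gamma)\rv_{F_{v_0}}=\lv\alpha\rv_{v_0}\lv\alpha^{-1}\rv_{v_0}=1$. Differentiating at $s=0$ yields $\del(\gamma,f'_{v_0})=\log\lv\lambda(\gamma)\rv_{F_{v_0}}\cdot \Orb(\gamma,f'_{v_0})=0$, and summing gives $\delJ_{v_0}(f')=0$. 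The main obstacle will be the explicit local computation: carrying out the change of variables at split $v_0$ cleanly and identifying the correct invariant $\lambda(\gamma)$. Once the precise $s$-dependence of $\Orb(\gamma,f'_{v_0},s)$ is pinned down in this form, the vanishing at split $v_0$ is automatic from the fact that any norm-one element of $F^\times$ has unit absolute value under $\lv\cdot\rv_{F_{v_0}}$ at a split place.
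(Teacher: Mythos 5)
Your plan is to prove that each term in the expansion \eqref{eqn J'v dec} vanishes because the \emph{local} derivative $\del(\gamma,f'_{v_0})$ vanishes at the split place $v_0$. That is not the mechanism the paper uses, and the local claim on which it rests is false. The identity you propose, $\Orb(\gamma,f'_{v_0},s)=\lv\lambda(\gamma)\rv^s_{F_{v_0}}\Orb(\gamma,f'_{v_0})$, does not hold: the factor $\lv\det h_1\rv^s_{F_{v_0}}$ in the integrand varies over the integration domain and is not constant on the support of $f'_{v_0}$ restricted to the orbit of $\gamma$, and no orbit invariant $\lambda(\gamma)$ extracts it. Concretely, for a regular semisimple $\gamma$ the orbit map identifies the orbit with $H_{1,2}'(F_{0,v_0})$ (trivial stabilizer), and for a generic test function the support meets the orbit in a set on which $\lv\det h_1\rv$ takes many values; the $s$-derivative at $s=0$ is then $\int f'_{v_0}(h_1^{-1}\gamma h_2)\log\lv\det h_1\rv\,dh_1\,dh_2$, which is nonzero in general at a split non-archimedean place (there is no sign character $\eta_{v_0}$ to force cancellation). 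Nothing in the Gaussian hypothesis or the regularity at $\lambda$ constrains $f'_{v_0}$ for $v_0$ split non-archimedean, so you cannot hope to salvage a purely local vanishing at $v_0$.

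The actual argument in the paper is a global incoherence argument, not a local one: the Gaussian test function is \emph{pure of incoherent type} in the sense of \cite[\S3.1]{Z12}. Since $f'$ (at the archimedean places) transfers to the totally negative-definite space $W_0$ while at the non-archimedean places it is attuned to $W$, and since $W_{\varphi_0}$ has signature $(1,n-1)$ rather than $(0,n)$, the collection of local hermitian spaces $\{W_v\}_{v<\infty}\cup\{W_0\otimes F_{0,v}\}_{v\mid\infty}$ violates the product formula \eqref{herm prod fmla}. Consequently, for \emph{every} regular semisimple $\gamma\in G'(F_0)_\rs$ there is at least one place $v$ at which $\gamma$ fails to match any element of the relevant unitary group, and this place is necessarily non-split (at a split place matching is unconditional). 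At such a $v$ one has $\Orb(\gamma,f'_v,0)=0$, and since $v_0$ is split, $v\neq v_0$, so the product $\prod_{u\neq v_0}\Orb(\gamma,f'_u)$ appearing in \eqref{eqn J'v dec} vanishes. Hence each term in $\delJ_{v_0}(f')$ vanishes, regardless of the value of $\del(\gamma,f'_{v_0})$. This is precisely \cite[Prop.\ 3.6(ii)]{Z12}, as the paper indicates.
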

\begin{proof}
This is \cite[Prop.\ 3.6(ii)]{Z12}. Note that implicitly our test function $f'$ is pure of an incoherent type \cite[\S3.1]{Z12}, so that for every regular semisimple $\gamma$, the local orbital integral $\Orb(\gamma,f'_v,s)$ vanishes at $s=0$ for at least one non-split place $v$.
\end{proof}

\section{The conjectures for the arithmetic intersection pairing}\label{s:conjaip}

In this section we formulate  a conjectural formula for the Gillet--Soul\'e arithmetic intersection pairing for cycles on the integral models of $M_K(\wtHG)$ we introduced earlier. This formula uses the distributions introduced in Section \ref{s:Lfcts}.

Throughout this section we assume that the extension $F/F_0$ and the hermitian space $W$ are such that all places $v \in \CV_\AT^W$ (cf.\ \eqref{V_AT^W}) are of degree one over $\BQ$.

\subsection{The global conjecture, trivial level structure}
Let $\Lambda^\flat\subset W^\flat$ and $\Lambda\subset W$ be a pair of $O_F$-lattices related as in Section \ref{global mod prob trivial level}, and recall the models $\CM_{K_{\wt H}^\circ}(\wt H)$, $\CM_{K_{\wt G}^\circ}(\wt G)$, and $\CM_{K_{\wtHG}^\circ}(\wtHG)$ over $\Spec O_E$ defined in loc.\ cit.\ for the Shimura varieties of Section \ref{Shimura varieties}.   In the case $F_0=\BQ$  and $\CM_{{K_\wtHG^\circ}}(\wtHG)$ is non-compact, we are implicitly replacing $\CM_{{K_\wtHG^\circ}}(\wtHG)$ by its toroidal compactification. Then the model $\CM_{K_\wtHG^\circ}(\wtHG)$ is proper and flat over $\Spec O_E$.  Furthermore, it is regular provided that there are no places $\nu$ of $E$ for which $E_\nu$ is ramified over $\BQ_p$ and $(v_0,\Lambda_{v_0})$ is of AT type \eqref{almost self-dual type}, where $v_0$ denotes the place of $F_0$ induced by $\nu$ via $\varphi_0$. Throughout this section, we assume that there are no places $v_0$ for which $(v_0,\Lambda_{v_0})$ is of AT type  \eqref{n=2 self-dual type} (the justification for this assumption is given by Remark \ref{excl type4}). 

The compact open subgroup $K^\circ_{\wt H} \subset \wt H(\BA_f)$ contains ${K_\wtHG^\circ}\cap \wt H(\BA_f)$, with equality when there are no places $v$ of $F_0$ for which $(v,\Lambda_v)$ is of AT type \eqref{pi-mod type}.  In this case, there is a closed embedding
$$
   \CM_{K^\circ_{\wt H}}(\wt H) \inj \CM_{{K_\wtHG^\circ}}(\wtHG).
$$
Like in  \eqref{eqn zK}, we obtain a cycle (with $\BQ$-coefficients) $z_{K_{\wtHG}^\circ}=\vol(K^\circ_{\wt H})[\CM_{K^\circ_{\wt H}}(\wt H)]$ on $\CM_{K_\wtHG^\circ}(\wtHG)$. We denote by  the same symbol its class in the rational Chow group, 
\begin{align}
\label{eqn zK int}
   z_{K_{\wtHG}^\circ}\in  \Ch^{n-1}\bigl(\CM_{K_\wtHG^\circ}(\wtHG)\bigr)_\BQ .
\end{align}
In general, let $v_1,\dotsc,v_m \in \CV_\AT^W$ be the places for which $(v_i,\Lambda_{v_i})$ is of 
AT type \eqref{pi-mod type}.  We use the closed embedding  \eqref{global corresp diag HG} to define the cycle
$
   z_{K_{\wtHG}^\circ} = \vol(K_{\wt H}^{\circ \prime})[\CM_{K_{\wt H}^{\circ \prime}}(\wt H)] 
$ and its class in the Chow group.

\begin{remark}
The definitions above of the cycle class $  z_{K_{\wtHG}^\circ}$  use a Haar measure on $\wt H(\BA_f)$. We will always choose the product of the  measures  used to define the local orbital integrals \eqref{eq Orb wtH}.
\end{remark}

We denote by $\wh\Ch{}^{n-1}(\CM_{K_\wtHG^\circ}(\wtHG))$ the arithmetic Chow group. Elements are represented by pairs $(Z, g_Z)$, where $Z$ is a cycle and $g_Z$ is a Green's current (cf.\ \cite[\S3.3]{GS}). We are going to use the Gillet--Soul\'e  arithmetic intersection pairing, cf.\  \cite{GS}, 
\begin{equation*}\label{eqn GS Q}
   \sform_{\GS}\colon \wh\Ch{}^{n-1}\bigl(\CM_{K_\wtHG^\circ}(\wtHG)\bigr) \times \wh\Ch{}^{n-1}\bigl(\CM_{K_\wtHG^\circ}(\wtHG)\bigr)\to \BR .
\end{equation*}

We extend this from a symmetric pairing to a hermitian pairing on the corresponding $\BC$-vector space ($\BC$-linear combinations of $(Z, g_Z)$), 
\begin{equation}\label{eqn GS}
   \sform_{\GS}\colon \wh\Ch{}^{n-1}\bigl(\CM_{K_\wtHG^\circ}(\wtHG)\bigr)_{\BC}\times \wh\Ch{}^{n-1}\bigl(\CM_{K_\wtHG^\circ}(\wtHG)\bigr)_{\BC}\to \BC .
\end{equation}

We choose a Green's current $g_{z_{K_{\wtHG}^\circ} }$ of the cycle (with $\BQ$-coefficients) $z_{K_{\wtHG}^\circ}$  to get an element in the rational arithmetic Chow group,
\begin{equation}
   \wh{z}_{K_{\wtHG}^\circ}=
   (z_{K_{\wtHG}^\circ},g_{z_{K_{\wtHG}^\circ} })\in \wh\Ch{}^{n-1}\bigl(\CM_{K_\wtHG^\circ}(\wtHG)\bigr)_\BQ .
\end{equation}
The Green's current is not unique. We shall work  in the following with an arbitrary but fixed choice.

Let
\begin{equation}
\label{Hk trivial}
   \sH_{K_\wtHG^\circ}^{\spl,\Phi} \subset \sH_{K_\wtHG^\circ}= \sH\bigl(\wtHG(\BA_f),K_\wtHG^\circ\bigr)
\end{equation}
be the partial Hecke algebra spanned by completely decomposed pure tensors of the form $f=\otimes_p f_p\in \sH_{K_\wtHG^\circ}$, where $f_p=\phi_p\otimes\bigotimes_{v|p}f_v$, as in Definition \ref{defcompdecG}, with $\phi_p=\mathbf{1}_{K_{Z^\BQ, p}}$ for all $p$ and where $f_v=\mathbf{1}_{{K^\circ_{H\times G,v}}}$ unless $v\in \Sigma^{\spl, \Phi}$. Here $\Sigma^{\spl, \Phi}$ is as in \eqref{Sigma spl Phi}. We have 
$$
 \sH_{K_\wtHG^\circ}^{\spl,\Phi}\simeq  \bigotimes_{v\in \Sigma^{\spl, \Phi}}\sH_{\HG,K^\circ_{\HG,v}} .
$$
By Lemma \ref{CM matching lem}(\ref{deg 1 match}), we have $\Sigma^{\spl, \Phi}\supset \Sigma^{\deg=1}:=\{v\in\Sigma^{\spl} \mid\deg_\BQ v=1\}$ and hence the  algebra $\sH_{K_\wtHG^\circ}^{\spl,\Phi}$ is not too small.

For $f\in \sH_{K_\wtHG^\circ}^{\spl,\Phi} $, we introduce via \eqref{globalDrHecke} a Hecke correspondence, hence an induced endomorphism $\wh R(f)$ on the arithmetic Chow group $\wh\Ch{}^{n-1}(\CM_{K_\wtHG^\circ}(\wtHG))_\BC$, cf.\ \cite[5.2.1]{GS}. Using the arithmetic intersection pairing \eqref{eqn GS}, we define
\begin{equation}\label{defofInt}
\begin{aligned}
   \Int^\natural(f) &:= \bigl(\wh R(f)\wh z_{K_\wtHG^\circ}, \wh z_{K_\wtHG^\circ}\bigr)_{ \GS} ,\\
     \Int(f) &:= \frac{1}{\tau(Z^\BQ)[E:F]}\cdot\Int^\natural(f) .
\end{aligned}
\end{equation}
Here 
$$
   \tau(Z^\BQ) := \vol\bigl(Z^\BQ(\BA_f)/Z^\BQ(\BQ) \bigr) .
$$

\begin{conjecture}[Global conjecture, trivial level structure]\label{conj integraltrivlev}
Let $f=\otimes_p f_p\in  \sH_{K_\wtHG^\circ}^{\spl,\Phi}$, and let  $f'=\otimes_{v}f'_v\in \sH(G'(\BA_{F_0}))$ be a Gaussian test function such that  $\otimes_{v<\infty}f'_v$ is a smooth transfer of $f$.  Then
$$
\Int(f)=-\delJ(f')-J(f'_{\corr}),
$$
where $f'_{\corr}\in C_c^\infty(G'(\BA_{F_0}))$ is a correction function. Furthermore, we may choose $f'$ such that $f'_{\corr}=0.$
\end{conjecture}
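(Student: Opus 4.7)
The plan is to reduce the global identity to the semi-global conjecture (Conjecture 1.2 in the introduction) at each place of $F_0$, via a localization of the Gillet--Soul\'e intersection product, and then to sum the local contributions. The first step is to establish a decomposition
\[
   \Int(f) = \sum_{v_0} \Int_{v_0}(f),
\]
where $v_0$ ranges over all places of $F_0$, and $\Int_{v_0}(f)$ is the normalized sum of contributions of the places $\nu$ of $E$ over $v_0$ to the Gillet--Soul\'e pairing of $\wh z_{K_\wtHG^\circ}$ and its image under the Hecke correspondence $\wh R(f)$. To obtain this decomposition, one would impose a suitable regularity condition on $f$ (analogous to the one in Definition \ref{regularsupport}) guaranteeing that the cycles $\CM_{K_{\wt H}^\circ}(\wt H)$ and $\wh R(f)\wh z_{K_\wtHG^\circ}$ have proper (or empty) intersection in the generic fiber, so that only vertical components contribute at each non-archimedean $\nu$ and only the star-product of Green's currents contributes at each archimedean $\nu$. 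The corresponding decomposition on the analytic side is given by \eqref{eqn J' dec}: $\delJ(f') = \sum_v \delJ_v(f')$.

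Next, one would invoke the semi-global conjecture place by place. For split $v_0$, one applies Theorem \ref{mainthmintro split} (the geometric vanishing $\Int_{v_0}(f) = 0$ coming from the splitting of the cycles at the corresponding $p$-divisible groups, matched with the analytic vanishing $\delJ_{v_0}(f') = 0$ from Lemma \ref{analyticsplit}, which uses the Gaussian assumption on $f'$). For non-archimedean $v_0$ that is of hyperspecial type in the sense of Section \ref{subsec hyper}, one would apply Conjecture \ref{conj semiglob-intro}\eqref{intro i}, which reduces to the AFL conjecture on the relevant Rapoport--Zink spaces. For $v_0$ non-archimedean of AT type in the sense of Section \ref{subsec AT}, or archimedean, one would apply Conjecture \ref{conj semiglob-intro}\eqref{intro ii}, which at non-archimedean places reduces to the appropriate AT conjecture from \cite{RSZ1, RSZ2}, and at archimedean places uses the computation of local heights via the star-product of Green's currents. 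Summing over $v_0$, the $-\delJ_{v_0}(f')$ terms add up to $-\delJ(f')$, and the correction terms $J(f'_{\corr}[v_0])$ add up to $J(f'_{\corr})$, giving the displayed identity. The final claim, that $f'$ may be chosen with $f'_{\corr}=0$, follows because the archimedean correction function can be absorbed into a modification of the Gaussian test function at the archimedean places where $f'_{\corr,v_0}$ is nonzero, using the flexibility in choosing Gaussian test functions; the non-archimedean corrections can be adjusted using the freedom in the smooth transfer.

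The main obstacle is of course the semi-global conjecture itself, which at non-archimedean non-split places rests on the AFL and AT conjectures and is presently known only for $n \leq 3$ (cf.\ Theorem \ref{mainthmintro}). A secondary, but genuine, technical obstacle is the passage from the local identities on Rapoport--Zink spaces to the global intersection numbers on $\CM_{K_\wtHG^\circ}(\wtHG)$: this requires a uniformization statement expressing the formal completion of $\CM_{K_\wtHG^\circ}(\wtHG)$ along the basic locus in its special fiber at $\nu$ as a quotient of a product of Rapoport--Zink spaces, in the style of \cite{KR-U2}; combined with the regularity of $f$, this ensures that the supersingular locus carries the entire intersection. A final subtlety is that the passage from the place $v_0$ of $F_0$ to the sum over $\nu | v_0$ involves the factor $\tau(Z^\BQ)[E:F]$ in the normalization \eqref{defofInt}, which must be reconciled with the adelic volume factors entering the orbital integrals on the analytic side through a product-formula computation analogous to \cite[\S8]{KR-U2}; this is the technical reason for the particular normalization of $\Int(f)$ chosen above.
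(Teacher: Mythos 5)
The statement you are trying to prove is a \emph{conjecture}: the paper offers no proof of Conjecture~\ref{conj integraltrivlev}, and in fact explicitly explains, in the remark immediately following it, why the localization strategy you propose does not work here. Your plan hinges on imposing a regularity condition on $f$ (``analogous to the one in Definition~\ref{regularsupport}'') so that the geometric side decomposes as $\Int(f) = \sum_{v_0}\Int_{v_0}(f)$. But functions in $\sH_{K_\wtHG^\circ}^{\spl,\Phi}$ are bi-invariant under the \emph{maximal} compact subgroup $K_\wtHG^\circ$, whose component at every split place is the stabilizer of a self-dual lattice; such a function always has the identity in its support and hence can never have regular support. This is precisely the ``drawback'' the paper records: the intersection on the left may involve self-intersection numbers that cannot be localized, and on the right the non-regular-semisimple terms in Zydor's truncation are not known to factor. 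So the key step of your argument fails for the trivial-level statement.

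What you have actually sketched is the paper's (conditional) proof of the \emph{non}-trivial level structure version, Conjecture~\ref{conj integralnontrivlev}, where Drinfeld level structure allows one to pass to the partial Hecke algebra $\sH_{K_\wtHG^\bm}^{\spl,\Phi}$, to impose regular support at a split place $\lambda$ via Lemma~\ref{lem transfer split}, and then to invoke Theorem~\ref{int generic} to ensure the intersection lies in the basic locus and meets no bad fibers. The paper's theorem is the implication ``Conjecture~\ref{conj semiglob} for all $v_0$ $\Rightarrow$ Conjecture~\ref{conj integralnontrivlev}'', which is what your sketch is tracking. That implication, too, is conditional, and Conjecture~\ref{conj integralnontrivlev} is established by this route only for $n \le 3$ at non-archimedean non-split $v_0$; even there the archimedean semi-global case remains open. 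Your final paragraph about absorbing $f'_{\corr}$ by adjusting the Gaussian at archimedean places and the smooth transfer at non-archimedean places is not justified: the ``Furthermore'' clause of Conjecture~\ref{conj integraltrivlev} is itself a conjectural assertion, and the paper does not claim it follows from the semi-global conjecture.
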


\begin{remark}The notion of smooth transfer at each individual place $v$ depends on the choice of transfer factors, and of Haar measures on various groups. However, the validity of the conjecture does not depend on these choices (use   the product formula \eqref{prod formula}).
\end{remark}

This conjecture has the following drawback. Since we cannot impose any regular support assumptions  on functions in $ \sH_{K_\wtHG^\circ}^{\spl,\Phi}$, the left-hand side of the asserted equality may involve self-intersection numbers, and these are difficult to calculate explicitly.  Analogously, on the right-hand side, the  terms in  Zydor's truncation that are not regular-semisimple orbital integrals are more delicate. Nevertheless, assuming a spectral decomposition of $J(f',s)$ that generalizes the case of special test functions in Proposition \ref{p:spec} and \eqref{delJ spec}, Conjecture \ref{conj integraltrivlev} relates the intersection number $\Int(f)$ to the first derivative of $L$-functions in the Arithmetic Gan--Gross--Prasad conjecture \ref{conj AGGP 1} and \ref{conj AGGP 2}.

\subsection{The global conjecture, non-trivial level structure}\label{section global conj non-triv level}
In this subsection, we use the integral models of the Shimura varieties with deeper level structures depending on the choice of a function $\bm$ as in \eqref{bm}. Note that the models $\CM_{K_\wtHG^\bm}(\wtHG)$ are not regular in the fibers over places lying above the support of $\bm$ (in effect, we are taking here the product of two copies of the Drinfeld moduli scheme). Therefore, the Gillet--Soul\'e pairing \eqref{eqn GS} is not defined for them. However, under certain hypotheses that assure that our physical cycle $z_K$ and its physical transform under a Hecke correspondence do not intersect in the generic fiber, we can define a naive intersection number for them, by the usual derived tensor product formula.

Similarly to the case with trivial level structure, we obtain a cycle (with $\BQ$-coefficients) $z_{K_\wtHG^\bm} =\vol(K_{\wt H}^{\bm \prime} )[\CM_{K_{\wt H}^{\bm \prime}}(\wt H)]$ on $\CM_{K_\wtHG^{\bm}}(\wtHG)$, cf.\ \eqref{eqn zK int} and, again, we denote by  the same symbol its class in the Chow group. We choose a Green's current $g_{z_{K_{\wtHG}^\bm} }$ of the cycle (with $\BQ$-coefficients) $z_{K_{\wtHG}^\bm}$  to get an element in the arithmetic Chow group,
\begin{equation}
   \wh{z}_{K_{\wtHG}^\bm} = (z_{K_{\wtHG}^\bm},g_{z_{K_{\wtHG}^\bm}})\in \wh\Ch{}^{n-1}\bigl(\CM_{K_\wtHG^\bm}(\wtHG)\bigr)_\BQ .
\end{equation}

Let
\begin{equation}
\label{partial Hk}
   \sH_{{K_\wtHG^\bm}}^{\spl,\Phi} := \sH\bigl(\wtHG(\BA_f),{{K_\wtHG^\bm}}\bigr)^{\spl,\Phi} 
\end{equation}
be the partial Hecke algebra spanned by completely decomposed pure tensors of the form $f=\otimes_p f_p\in \sH_{K_\wtHG^\bm}$, where $f_p=\phi_p\otimes\bigotimes_{v|p}f_v$, as in Definition \ref{defcompdecG}, with $\phi_p=\mathbf{1}_{K_{Z^\BQ, p}}$ for all $p$, and  where $f_v=\mathbf{1}_{K^\circ_{H\times G,v}}$  unless $v\in \Sigma^{\spl, \Phi}$. We have 
$$
 \sH_{K_\wtHG^\bm}^{\spl,\Phi}\simeq  \bigotimes_{ v\in \Sigma^{\spl, \Phi}}\sH_{\HG,K^\bm_{\HG,v}} .
$$
By Lemma \ref{CM matching lem}(\ref{deg 1 match}), we have $\Sigma^{\spl, \Phi}\supset \Sigma^{S,\deg=1}:=\{v\in\Sigma^{\spl} \mid v\nmid S\ \text{and}\ \deg_\BQ v=1\}$,  
for any finite set $S$ of places of $\BQ$ such that the places of $F_0$ above $S$ contain the support of $\bm$. 

 To any $f \in \sH_{{K_\wtHG^\bm}}^{\spl,\Phi}$
we associate via \eqref{globalDrHecke} a Hecke correspondence on $\CM_{K_\wtHG^\bm}(\wtHG)$.

\begin{definition}\label{regularsupport} Let $\lambda$ be a non-archimedean place of $F_0$, of residue characteristic $\ell$. Let $f_\ell\in C_c^\infty(\wtHG(\BQ_\ell))$ be completely decomposed, i.e., $f_\ell = \phi_{\ell}\otimes\bigotimes_{v\mid \ell}f_v$, cf.\ Definition \ref{defcompdecG}. Then $f_\ell$ is said to \emph{have regular  support at  $\lambda$} if $\supp f_\lambda\subset G_W(F_{0,\lambda})_\rs$. If $f=\otimes_p f_p\in \sH_{{K_\wtHG^\bm}}^{\spl,\Phi}$ is a completely decomposed pure tensor, then $f$ \emph{has regular  support at   $\lambda$}  if $f_\ell$ has regular support at $\lambda$.
\end{definition}

\begin{theorem}\label{int generic}
Let $f=\otimes_p f_p\in \sH_{{K_\wtHG^\bm}}^{\spl,\Phi}$ be a completely decomposed pure tensor. Assume that  $f$ has  regular support at some place $\lambda$ of $F_0$. Then the following statements on the  support of the intersection of the  cycles $z_{K_\wtHG^\bm}$ and $R(f) z_{K_\wtHG^\bm}$ of $\CM_{K_\wtHG^\bm}(\wtHG)$ hold. 
\begin{altenumerate}
\item\label{int generic i} The support does not meet the generic fiber. 
\item\label{int generic ii} Let $\nu$ be a place of $E$ lying over a place of $F_0$ which splits in $F$. Then the  support does not meet the special fiber 
   $\CM_{K_\wtHG^\bm}(\wtHG)\otimes_{O_E} \kappa_{\nu}$. 
\item\label{int generic iii} Let $\nu$ be a place of $E$ lying over a place of $F_0$ which does not split in $F$. Then the support meets the special fiber 
   $\CM_{K_\wtHG^\bm}(\wtHG)\otimes_{O_E} \kappa_{\nu}$ only in its basic locus.\footnote{In this special case, the basic locus is characterized as follows. Let $\bigl( A_0,\iota_0,\lambda_0,A^\flat,\iota^\flat,\lambda^\flat,\ov\eta^\flat, \varphi^\flat, A,\iota,\lambda,\ov\eta, \varphi\bigr)$  correspond  to a point of $\CM_{K_\wtHG^\bm}(\wtHG)$ with values in  an algebraically closed extension of $\kappa(\nu)$. Consider the decomposition $A[p^\infty]=\prod_{w|p}A[w^\infty]$, resp.\ $A^\flat[p^\infty]=\prod_{w|p}A^\flat[w^\infty]$, of the  $p$-divisible group  of $A$, resp.\ $A^\flat$,  under the action of $O_F\otimes \BZ_p$; then all factors $A[w^\infty]$, resp.\ $A^\flat[w^\infty]$,  are isoclinic. }
\end{altenumerate}
\end{theorem}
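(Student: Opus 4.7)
The plan is to reduce all three parts to showing that an intersection point gives rise to a non-regular-semisimple invariant in $\wt H(\BA_f)\bs\wtHG(\BA_f)/\wt H(\BA_f)$, contradicting the regular support hypothesis at $\lambda$ via the matching of Lemma~\ref{matchintilde}. Part~(i) is the heart of the argument; parts (ii) and (iii) are obtained by lifting intersection points in the special fiber to the generic fiber.

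For part~(i), a geometric point $\ov s$ of the intersection in the generic fiber corresponds, via the moduli description of Section~\ref{moduli problem over E} and the embedding (\ref{modembHG}), to a pair of tuples of the form $(A_0, A^\flat_i, A^\flat_i\times A_0, (\ov\eta^\flat_i,\ov\eta_i))$ for $i=1,2$, together with an $F$-linear polarization-preserving quasi-isogeny $\phi\colon A^\flat_1\times A_0 \to A^\flat_2\times A_0$ intertwining the level structures via an element in the support of $f$. The level-structure trivializations of $\wh V(A_0, A^\flat_i\times A_0)$ with $-W\otimes_F\BA_{F,f}$ then turn $\phi$ into a double coset $[\gamma]\in\wt H(\BA_f)\bs\wtHG(\BA_f)/\wt H(\BA_f)$ lying in the support of $f$, so by hypothesis $\gamma_\lambda$ is regular semisimple. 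On the other hand, the direct-sum structure of both source and target of $\phi$ provides on each hermitian space a common orthogonal decomposition which, under the trivialization, corresponds to $-W = -W^\flat\oplus Fu$; compatibility of $\phi$ with this decomposition places $[\gamma]$ in the image of the proper Levi subgroup of $\wtHG$ stabilizing $W = W^\flat\oplus Fu$. A direct orbit-dimension count shows that $\wt H\times \wt H$-orbits through such elements have strictly smaller dimension than the generic orbit, so none of them is regular semisimple, yielding the required contradiction.

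For part~(ii), at a split place $v_0$ the decomposition $A[v_0^\infty] = A[w_0^\infty]\times A[\ov w_0^\infty]$ exhibits the $p$-divisible group as a Lubin--Tate group times its dual, hence ordinary; by Serre--Tate applied to this decomposition, combined with the rigidity of the imposed (hyperspecial or Drinfeld) level structure at $v_0$, any intersection point in the special fiber over $\nu$ admits a canonical lift to the generic fiber compatible with the Hecke correspondence. Applying (i) to this lift establishes (ii). For part~(iii), at a non-basic geometric point $\ov s$ in the special fiber over $\nu$, at least one factor $A[w^\infty]$ or $A^\flat[w^\infty]$ of the $p$-divisible group has multiple Newton slopes and hence a nonzero \'etale subquotient, which admits a canonical lift via the connected-\'etale sequence. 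The Hecke action (whose regularity is imposed only at $\lambda$) is compatible with this lift, and iterating across the Newton strata produces a lift of $\ov s$ to the generic fiber, reducing to (i).

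The principal obstacle lies in part~(i): one must carefully track the moduli-theoretic identifications (level structures, the quasi-isogeny $\phi$, and the direct-sum decomposition) to produce a well-defined $\gamma\in\wt H\bs\wtHG/\wt H$, and verify that its compatibility with the orthogonal decomposition really does force it into the non-regular-semisimple stratum. Once this is done, the lifting arguments in (ii)--(iii) are essentially deformation-theoretic and should follow from the Serre--Tate theorem applied to the various $p$-divisible factors, together with a bookkeeping of the compatible Hecke-translates at the non-$\lambda$ places.
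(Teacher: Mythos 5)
Your argument for part~(i) has a genuine gap. You claim that the quasi-isogeny $\phi\colon A^\flat_1\times A_0\to A^\flat_2\times A_0$ is ``compatible'' with the direct-sum decomposition and therefore lands in the Levi subgroup stabilizing $W=W^\flat\oplus Fu$, which would make $\gamma$ non-regular-semisimple. But $\phi$ arises as (essentially) the $\wt G$-component of the Hecke correspondence composed with the inverse of the $\wt H$-component, and there is no reason whatsoever for it to preserve the decomposition $A=A^\flat\times A_0$. Indeed, the regular-support hypothesis forces $\gamma_\lambda$ to \emph{be} regular semisimple, which is exactly the statement that $u, \gamma u, \ldots, \gamma^{n-1} u$ span $W\otimes F_{0,\lambda}$ --- i.e.\ that $\gamma$ does \emph{not} stabilize $Fu$ or $W^\flat$. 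So your proposed contradiction runs in the wrong direction, and the whole dimension-count argument collapses at that step. The paper instead \emph{exploits} regular semisimplicity in the opposite way: from $\phi$ and the inclusion $u\colon A_0\hookrightarrow A$, it produces $(\phi^i u)_{0\le i\le n-1}\colon A_0^n\to A$, shows this is an isogeny \emph{because} $g$ is regular semisimple (at the places where the Tate/Dieudonn\'e module is in play), and derives the contradiction from a purely linear-algebraic incompatibility of the two Kottwitz conditions (Lemma~\ref{no isog}). That non-isogeny lemma is the missing key idea in your proposal.

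Your treatment of (ii) and (iii) by lifting to characteristic zero and reducing to (i) is also on shaky ground. At a split place you would need to lift not only the abelian schemes (Serre--Tate) but also the Hecke quasi-isogeny and the Drinfeld level structure compatibly, which is not automatic; and at a non-split non-basic point, the existence of a nonzero \'etale subquotient that you invoke is an ordinariness statement, not a non-basicness statement, so the claimed ``canonical lift via the connected--\'etale sequence'' is not available in general. The paper avoids all of this: the same isogeny $A_0^n\to A$ argument, combined with the positive-characteristic cases of Lemma~\ref{no isog}, shows directly that such an isogeny cannot exist in the split case (by a slope/dimension computation on the $p$-divisible factors) and forces isoclinicity of all the $p$-divisible factors in the non-split case, which is precisely basicness. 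No lifting is required.
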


\begin{remark}\label{rknoncompact}
Let $F_0 = \BQ$ and assume that $\CM_{K_\wtHG^\bm}(\wtHG)$ is non-compact.  Then  the   closure in the toroidal compactification \cite[\S 2]{Ho-kr2} of $\CM_{K_\wtHG^\bm}(\wtHG)$ of the support of the intersection of the  cycles $z_{K_\wtHG^\bm}$ and $R(f) z_{K_\wtHG^\bm}$  does not meet  the boundary. Indeed, this follows from Theorem \ref{int generic}(\ref{int generic iii}) because the basic locus of $\CM_{K_\wtHG^\bm}(\wtHG)\otimes_{O_E} \kappa_{\nu}$  does not meet the boundary. 
\end{remark}

The proof of Theorem \ref{int generic} will be based on the following lemma.

\begin{lemma}\label{no isog}
Let $k$ be an algebraically closed field which is an $O_E$-algebra. Let $(A_0, \iota_0)$ be an abelian variety with $O_F$-action with Kottwitz condition of signature $((0, 1)_{\varphi\in\Phi})$, cf.\ \eqref{KottwitzA0}, and let $(A, \iota)$ be  an abelian variety with $O_F$-action with Kottwitz condition of type $r$ as in Remark \ref{M remarks}(\ref{Kottwitzr}).  Assume there exists an $F$-linear isogeny
$$
A_0^n\to A .
$$
Then $k$ is of positive characteristic $p$. Let $\nu$ be the corresponding place of $E$. The place $v_0$ of $F_0$ induced by $\nu$ is non-split in $F$, and the isogeny classes of $A_0$ and $A$ only depend on the CM type $\Phi$.  If $v_0$ is the only place of $F_0$ above $p$, then $A_0$ and $A$ are supersingular. 
\end{lemma}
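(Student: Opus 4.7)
My plan is to combine the Kottwitz conditions with the decomposition of the $p$-divisible groups $A_0[p^\infty]$ and $A[p^\infty]$ under the $O_F \otimes \BZ_p$-action, together with the classical theory of CM abelian varieties. I will treat the four assertions essentially in the order they are stated.

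I first rule out $\charac k = 0$. If $k$ had characteristic zero, I would spread $A_0$, $A$, and the isogeny over a finitely generated subring and embed that subring into $\BC$. The Kottwitz conditions then give decompositions of $\Lie(A_0^n)$ and $\Lie A$ as $F \otimes \BC$-modules in which the $\varphi_0$-isotypic component of $\Lie(A_0^n)$ vanishes, whereas the $\varphi_0$-isotypic component of $\Lie A$ has dimension $r_{\varphi_0} = 1$. This contradicts the existence of an $F\otimes\BC$-linear isomorphism $\Lie(A_0^n) \isoarrow \Lie A$, so $\charac k > 0$; this characteristic then coincides with the residue characteristic $p$ of $\nu$ by the definition of $\nu$. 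Next, to show $v_0$ is non-split, I use that the isogeny $A_0^n \to A$ induces isogenies $A_0[w^\infty]^n \to A[w^\infty]$ for every place $w$ of $F$ above $p$, forcing $n \cdot \dim A_0[w^\infty] = \dim A[w^\infty]$. If $v_0 = w_0 \ov w_0$ split with $w_{\varphi_0} = w_0$ (without loss of generality), then $w_{\ov\varphi_0} = \ov w_0 \neq w_0$, and a direct tabulation from the Kottwitz conditions gives $\dim A_0[w_0^\infty] = b$ and $\dim A[w_0^\infty] = 1 + nb$, where $b = \#\{\psi \in \ov\Phi : w_\psi = w_0\}$; the extra $1$ comes from $r_{\varphi_0} = 1$. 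This is the desired contradiction.

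For the remaining statements I invoke the standard theory of CM abelian varieties in characteristic $p$. The Kottwitz signature of $A_0$ forces it to be of CM type $(F, \ov\Phi)$; by descent to $\ov\BF_p$ and the Honda--Tate classification, the isogeny class of such an $A_0$ depends only on $\ov\Phi$ and the fixed place $\nu$, hence only on $\Phi$, and consequently the isogeny class of $A \sim A_0^n$ is also so determined. Finally, when $v_0$ is the unique place of $F_0$ above $p$, the non-split assertion already proved gives a unique place $w_0$ of $F$ above $p$ with $[F_{w_0}:\BQ_p] = 2g$, and one sees that $\{\psi \in \ov\Phi : w_\psi = w_0\} = \ov\Phi$ has cardinality $g$; the Shimura--Taniyama formula therefore forces $A_0[w_0^\infty] = A_0[p^\infty]$ to be isoclinic of slope $g/(2g) = 1/2$, so $A_0$ is supersingular, and hence so is $A$.

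The most delicate ingredient is the invocation of Shimura--Taniyama, which requires knowing that each $w$-component of the CM $p$-divisible group $A_0[p^\infty]$ is isoclinic with the predicted slope rather than a Newton stratum with a mixture of slopes averaging to it; this is standard for CM $p$-divisible groups over $\ov\BF_p$, but is the one step that relies on an external theorem rather than a direct computation.
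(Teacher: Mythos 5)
Your proof is correct and follows essentially the same three-step strategy as the paper: ruling out characteristic zero via the Lie algebra version of the Kottwitz conditions, ruling out the split case by comparing dimensions of the $w_0$-components of the $p$-divisible groups modulo $n$, and then reducing the last two assertions to a slope calculation on the Dieudonné module of $A_0[p^\infty]$. The paper's proof carries out the slope computation directly on the rational Dieudonné module of $A_0[p^\infty]$, decomposed according to the $p$-adic places $w$ of $F$, and reads off the slopes $a_w/d_w$ (and $1/2$ for $w$ over a non-split place) without invoking Honda--Tate or Shimura--Taniyama by name; you package the same information as an appeal to the Honda--Tate classification of CM abelian varieties and the Shimura--Taniyama formula. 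These are equivalent; the paper's version is self-contained at the level of isocrystals, while yours is more concise at the cost of citing external theorems. Your concern at the end about whether each $w$-component is isoclinic is addressed by the observation (implicit in the paper, and stated there) that $M(A_0)_{\BQ,w}$ is free of rank one over $F_w \otimes W(\ov k)_\BQ$, which forces isoclinicity.
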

\begin{proof} We prove the first statement by contradiction. Assume that $k$ is of characteristic zero. Then an isogeny as above induces an $F\otimes k$-linear isomorphism
$$
\Lie (A_0)^n\isoarrow \Lie A .
$$
Such an isomorphism cannot exist due to the different Kottwitz conditions on $A_0^n$ and $A$. 

Now suppose that the place $v_0$ of $F_0$ induced by $\nu$ is split in $F$. Then there is a splitting  of the $p$-divisible group of $A_0^n$, resp.\ $A$, according to the two places of $F$ above $F_0$,
$$
A_0^n[p^\infty]=X_0^{(1)}\times X_0^{(2)}, \quad A[p^\infty]=X^{(1)}\times X^{(2)},
$$
An $F$-linear isogeny as above induces isogenies of $p$-divisible groups,
$$
X_0^{(1)}\to X^{(1)}, \quad  X_0^{(2)} \to X^{(2)} .
$$
However, by the Kottwitz conditions, the dimension of $X_0^{(1)}$, resp.\  $X_0^{(2)}$, is divisible by $n$, whereas the dimension of $X^{(1)}$, resp.\  $X^{(2)}$, is $\equiv \pm 1\bmod n$. Hence such an isogeny cannot exist, hence  $v_0$ is non-split in $F$.

  The rational Dieudonn\'e module $M_0$ of  $A_0[p^\infty]$ is a free $F\otimes W(k)$-module of rank one. Consider its decomposition according to the places $w$ of $F$ above $p$,
$$
M_0=\bigoplus_w M_{0, w} .
$$
Then each summand is isoclinic. More precisely, if $w$ lies above a non-split place of $F_0$ (such as the place $w_0$ induced by $\nu$), then $M_{0, w}$ is isoclinic of slope $1/2$; and if $\ov w\neq w$, then the slope of $M_{0, w}$ is equal to $a_w/d_w$, where
$$
a_w := \#\{\,\varphi\in \Phi \mid w_\varphi=w \,\}, \quad d_w := [F_w:\BQ_p] . 
$$
Here we chose an embedding of $\ov\BQ$ into $\ov\BQ_p$ so that any $\varphi\in\Phi$ induces a place $w_\varphi$ of $F$. 
\end{proof}

\begin{proof}[Proof of Theorem \ref{int generic}] (\ref{int generic i}): Suppose that
\[
   ( A_0,\iota_0,\lambda_0,A^\flat,\iota^\flat,\lambda^\flat,\ov\eta^\flat, \varphi^\flat, A,\iota,\lambda,\ov\eta,\varphi)\in \CM_{K^{\bf m}_{\wtHG}}(\wtHG)(k)
\]
is a point in the support, where $k$ is an algebraically closed field of characteristic zero. Then $(A, \iota,\lambda)=(A^\flat \times  A_0,\iota^\flat \times \iota_0,\lambda^\flat \times \lambda_0(u))$, and
there exists $g\in (H\times G)(\BA_{F_0,f})_\rs$ such that there is  an isogeny $$
 \phi\colon A= A^\flat\times A_0\to A
 $$
 which makes the diagram
   \begin{equation}\label{comm hecke}
	\begin{gathered}
   \xymatrix{
	   \wh V(A_0,  A)   \ar[d]_-{\phi} 	   \ar[r]^-{\eta} &   -W\otimes_F\BA_{F, f} \ar[d]^-g \\
	   \wh V(A_0, A)    \ar@{->}[r] ^-{\eta} &   -W\otimes_F\BA_{F, f}
	}
	\end{gathered}
\end{equation}
 commute. From the splitting $A= A^\flat\times A_0$, we also have  
$$
u\colon A_0\to A,
$$
which makes the diagram
  \begin{equation}
	\begin{gathered}
   \xymatrix{
	      \wh V(A_0,  A_0)   \ar[d]_-u 	   \ar[r]^-{\eta_0} &   \BA_{F, f} \ar[d]^-u \\
	 \wh V(A_0, A)    \ar@{->}[r] ^-{\eta} &   -W\otimes_F\BA_{F, f}
	}
	\end{gathered}
\end{equation}
commute, where $\eta_0$ is defined in the the obvious way. 
We consider the homomorphism   
$$
(\phi^i u)_{0\leq i\leq n-1}: A_0^n\to A
$$
whose $i$th component is $\phi^i u: A_0\to A$. We claim that this is an isogeny. It suffices to show that its induced map on any rational Tate module is an isomorphism.   This follows by the commutativity of the diagram \eqref{comm hecke} from the  regular semi-simplicity of $g$. This conclusion contradicts Lemma \ref{no isog}. 

(\ref{int generic ii}), (\ref{int generic iii}): Now let $(A_0,\iota_0,\lambda_0,A^\flat,\iota^\flat,\lambda^\flat,\ov\eta^\flat, \varphi^\flat, A,\iota,\lambda,\ov\eta, \varphi)\in \CM_{K^{\bf m}_{\wtHG}}(\wtHG)(k)$ be a point in the support, where $k$ is an algebraically closed field of positive characteristic $p$.  When there exists $g\in (H\times G)(\BA_{F_0,f}^p)_\rs$ such that there exists a commutative diagram \eqref{comm hecke} (with an upper index $^p$ added everywhere), the argument is as before, by reduction to Lemma \ref{no isog}. 

Now assume that the  place $v_0$ of $F_0$ induced by $\nu$ is split in $F$ and  that there exists $g\in G(F_{0, v_0})_\rs$ such that  there is  an isogeny $$
 \phi\colon A= A^\flat\times A_0\to A 
 $$
 which makes a  diagram analogous to \eqref{comm hecke} commute. To explain this diagram, we use a compatible system of Drinfeld level structures,  
$$
\wt \varphi\colon \Lambda_{w_0}[\pi_{w_0}^{-1}]/\Lambda_{w_0}\to T_{w_0}(A_0, A) ,
$$
which induces  for every $m$ a Drinfeld level  $ {w^m_0}$-structure on the $w_0^m$-torsion subgroup,
$$\varphi\colon \pi^{-m}_{w_0}\Lambda_{w_0}/\Lambda_{w_0}\to T_{w_0}(A_0, A) [w_0^m] .
$$ 
Then the analog of \eqref{comm hecke} is the commutative diagram for sufficiently large $m'$, 
 \begin{equation}\label{comm hecke-p}
	\begin{gathered}
   \xymatrix{
	      \Lambda_{w_0}[\pi_{w_0}^{-1}]/\Lambda_{w_0}\ar[d]_-{\pi^{m'}_{w_0}g}  	   \ar[r]^-{\wt\varphi} &  T_{w_0}(A_0, A)\ar[d]^-{\pi^{m'}_{w_0}\phi} \\
	  \Lambda_{w_0}[\pi_{w_0}^{-1}]/\Lambda_{w_0} \ar[r]^-{\wt\varphi} &   T_{w_0}(A_0, A)  .
	}
	\end{gathered}
\end{equation}
From the splitting $A= A^\flat\times A_0$, we also have  $$
u\colon A_0\to A,
$$
which makes the diagram
  \begin{equation}
	\begin{gathered}
   \xymatrix{
	 O_{F,w_0}[\pi_{w_0}^{-1}]/O_{F,w_0}   \ar[d]_-{u} 	   \ar[r]^-{\wt\varphi_0} &     T_{w_0}(A_0, A_0)  \ar[d]^-u \\
	 \Lambda_{w_0}[\pi_{w_0}^{-1}] /  \Lambda_{w_0}    \ar@{->}[r]^-{\wt\varphi} &  T_{w_0}(A_0, A)
	}
	\end{gathered}
\end{equation}
commute, where $\wt\varphi_0$ is the limit over $m$ of the homomorphisms \eqref{varphi0}. 
We consider the homomorphism   
$$
(\phi^i u)_{0\leq i\leq n-1}: A_0^n\to A
$$
whose $i$th component is $\phi^i u: A_0\to A$. We again claim that this is an isogeny, which would contradict Lemma \ref{no isog}. It suffices to show that its induced map on the $p$-divisible group is an isogeny.   This follows by the commutativity of diagram \eqref{comm hecke-p} from the  regular semi-simplicity of $g$. 
\end{proof}

Let us assume that $f\in \sH_{{K_\wtHG^\bm}}^{\spl,\Phi}$ is a completely decomposed pure tensor which has regular  support at some place $\lambda$ of $F_0$. Then by  Theorem \ref{int generic}\eqref{int generic i}, the generic fibers of the cycles $ z_{K_\wtHG^\bm}$ and $ R(f) z_{K_\wtHG^\bm}$ do not intersect, and we may  define
\begin{equation}\label{int for globwith}
 \begin{aligned}
  \Int_\nu^\natural (f)&:=\bigl\la\wh R(f)\wh z_{K_\wtHG^\bm}, \wh z_{K_\wtHG^\bm}\bigr\ra_{{\nu}} \log q_\nu ,\\
     \Int(f)&:=\frac{1}{\tau(Z^\BQ) [E:F]}\sum_\nu \Int_\nu^\natural (f).
   \end{aligned}
\end{equation}
Here the first quantity is defined for a non-archimedean place $\nu$ through the Euler--Poincar\'e characteristic of a derived tensor product on $\CM_{K_\wtHG^\bm}(\wtHG)\otimes_{O_E} O_{E, (\nu)}$, comp.~\cite[4.3.8(iv)]{GS}.  Note that the intersection numbers  are indeed  defined for a non-archimedean place because the intersection of the cycles $z_{K_\wtHG^\bm}$ and $R(f)z_{K_\wtHG^\bm}$ avoids all fibers of $\CM_{K_\wtHG^\bm}(\wtHG)$ over places $\nu$ lying over the support of $\bm$, as follows from Theorem \ref{int generic}(\ref{int generic ii}). Therefore the intersection of these cycles takes place in the regular locus of $\CM_{K_\wtHG^\bm}(\wtHG)$, cf.\ Theorem \ref{int generic},  and hence the Euler--Poincar\'e characteristic is finite. For an archimedean place $\nu$, the last quantity  is defined by the archimedean component of the arithmetic intersection theory and we have set  $\log q_\nu:=[E_\nu:\BR]=2$, cf.\ \cite{GS}.

\begin{conjecture}[Global conjecture, nontrivial level structure]\label{conj integralnontrivlev}
Let $f=\otimes_p f_p\in  \sH_{{K_\wtHG^\bm}}^{\spl,\Phi}$ be a completely decomposed pure tensor and let  $f'=\otimes_{v}f'_v\in \sH(G'(\BA))$ be a Gaussian test function such that  $\otimes_{v<\infty}f'_v$ is a smooth transfer of $f$. Assume that  $f$ has  regular support at some place $\lambda$ of $F_0$.  
Then
$$
\Int(f)=-\delJ(f')-J(f'_{\corr}),
$$where $f'_{\corr}\in C_c^\infty(G'(\BA))$ is a correction function. Furthermore,  $f'=\otimes_{v}f'_v$ may be chosen such that $f'$ has regular support at $\lambda$ and that $f'_{\corr}=0.$
\end{conjecture}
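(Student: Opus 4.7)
The plan is to deduce this global statement from its semi-global counterparts, namely Conjecture \ref{conj semiglob-intro} (Conjecture \ref{conj semiglob} in the body) at each non-archimedean and archimedean place, combined with the vanishing Theorem \ref{mainthmintro split} at places split in $F$. The regularity of $f$ at $\lambda$ is what makes this reduction meaningful: by Theorem \ref{int generic}\eqref{int generic i}, the supports of $z_{K_\wtHG^\bm}$ and $R(f)\,z_{K_\wtHG^\bm}$ are disjoint in the generic fiber, so the arithmetic intersection number is defined as the finite sum
$$
\Int(f)=\frac{1}{\tau(Z^\BQ)[E:F]}\sum_{\nu}\Int_\nu^\natural(f),
$$
over places $\nu$ of $E$, cf.\ \eqref{int for globwith}. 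Grouping the $\nu$ above a given place $v_0$ of $F_0$ produces the normalized local term $\Int_{v_0}(f)$ that appears in Conjecture \ref{conj semiglob-intro}.

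Next I would separate the sum according to the three regimes for $v_0$. For $v_0$ non-archimedean and split in $F$, Theorem \ref{int generic}\eqref{int generic ii} shows that $\Int_{v_0}(f)=0$, while Theorem \ref{mainthmintro split} simultaneously gives $\delJ_{v_0}(f')=0$; these places contribute nothing to either side. For $v_0$ non-archimedean and non-split in $F$, the pair $(v_0,\Lambda_{v_0})$ is of hyperspecial or AT type by our assumptions on the lattice $\Lambda$, so the semi-global conjecture applies and identifies $\Int_{v_0}(f)$ with $-\delJ_{v_0}(f')$, possibly up to a correction $J(f'_{\corr}[v_0])$ concentrated in the AT case. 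For archimedean $v_0$, the Gaussian hypothesis on $f'_{v_0}$ is exactly what is required to apply the archimedean version of the semi-global conjecture, again yielding $\Int_{v_0}(f)=-\delJ_{v_0}(f')-J(f'_{\corr}[v_0])$. Summing and invoking the decomposition $\delJ(f')=\sum_{v_0}\delJ_{v_0}(f')$ from \eqref{eqn J' dec} yields
$$
\Int(f)=-\delJ(f')-\sum_{v_0}J(f'_{\corr}[v_0]),
$$
and one assembles the finitely many nonzero $f'_{\corr}[v_0]$ into a single correction function $f'_{\corr}$.

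For the last assertion, the idea is that the archimedean semi-global conjecture asserts the vanishing of the correction function can be arranged by a careful choice of $f'_{v_0}$ among Gaussian test functions, and similarly at each of the finitely many AT places. Since the correction functions $f'_{\corr, v}$ at each such $v_0$ only affect orbital integrals on non-regular-semisimple orbits, and since the transfer of $f_p$ at the other primes is determined only up to functions with vanishing regular orbital integrals, one has precisely enough flexibility in the factors $f'_v$ at the AT and archimedean places to cancel each $f'_{\corr}[v_0]$ term by term. The regularity of $f'$ at $\lambda$ can be preserved throughout: the regular support of $f_\lambda$ forces $f'_\lambda$ to be expressible as a transfer with regular support by \cite[Thm.\ 2.6]{Z14}, and modifications at other places do not disturb this property.

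The principal obstacle is the semi-global conjecture itself at general $n$: for non-archimedean $v_0$ this requires the full AFL conjecture at hyperspecial places and the AT conjectures at \emph{all} AT types occurring in the lattice $\Lambda$, of which only very few are currently established, cf.\ Remark \ref{latticecond herm}. For archimedean $v_0$, both the existence of Gaussian test functions and the analytic archimedean semi-global identity remain conjectural. A secondary technical obstacle is the absence of a full spectral decomposition of Zydor's truncated distribution $J(\cdot, s)$, without which the expression $J(f'_{\corr})$ is hard to analyse explicitly; this is circumvented here only because the claim is that $f'_{\corr}$ can be arranged to be zero.
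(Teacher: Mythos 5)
Your reduction of the global conjecture to the semi-global conjecture at each place, together with the vanishing at split places, is exactly the argument the paper gives (in the theorem immediately following Conjecture \ref{conj semiglob}): split places contribute zero to both sides by Proposition \ref{p:split I=J}, non-split and archimedean places are handled by Conjecture \ref{conj semiglob} via the inclusion $\sH_{K_\wtHG^\bm}^{\spl,\Phi}\subset\sH_{K_\wtHG}^p$, and one sums the finitely many local correction terms to form $f'_\corr$ and invokes the ``Furthermore'' clause at each bad place to arrange $f'_\corr=0$. Your account of the obstacles (unproven AFL/AT beyond small $n$, archimedean analysis, lack of a full spectral decomposition of Zydor's distribution) is also accurate.
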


\begin{remark}
Part of the conjecture asserts that a change of the Green's current is compensated by a  change of the correction function $f'_{\corr}$.
\end{remark}

\begin{remark}
Similar to the case  of trivial level structure, when there exists a split place $v$ such that $f_v'$ satisfies the assumption in Proposition \ref{p:spec}, by \eqref{delJ spec},  Conjecture \ref{conj integralnontrivlev} relates the intersection number $\Int(f)$ to the first derivative of $L$-functions in the Arithmetic Gan--Gross--Prasad conjecture \ref{conj AGGP 1} and \ref{conj AGGP 2}. The hypothesis on the existence of such a split place $v$ could  be dropped once a full spectral decomposition of $J(f',s)$ for all test functions is available. 
\end{remark}

Note that here the right-hand side is well-defined (cf.\ Section \ref{L-fcn}). We also point out that if $f\neq 0$ has regular  support at some place $\lambda$ of $F_0$, then  $\lambda$ must be in $\supp \bm$ (in particular $\lambda$ is split in $F$). Since we will not need this statement, we omit the proof.

\begin{lemma}
\begin{altenumerate}
\item 
Let $u \in \supp \bm$ be a place above $p$ (in particular $u$ is split in $F$).  There exists a non-zero function $f_p \in \sH_{{K_\wtHG^\bm,p}}$ that has regular  support at  $u$.
\item
For  any $f_p\in \sH_{{K_\wtHG^\bm,p}}$ with regular  support at the place $u$ above $p$, there exists a transfer $(f'_v)_{v\mid p}$ such that $f'_{u}$ has regular support at $u$.
\end{altenumerate}\label{lem transfer split} 
\end{lemma}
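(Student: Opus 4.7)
The two assertions are proved separately; the first is a construction of a bi-invariant function with prescribed support, the second an exercise in the explicit transfer theory at split places. Throughout, $u$ being split in $F$ induces identifications $H(F_{0,u}) \cong \GL_{n-1}(F_{0,u})$, $G(F_{0,u}) \cong \GL_n(F_{0,u})$, and $G'(F_{0,u}) \cong \GL_{n-1}(F_{0,u})^2 \times \GL_n(F_{0,u})^2$.

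For part (1), the strategy is to produce a single element $g_0 \in G_W(F_{0,u})_\rs$ whose full double coset $K_{\HG,u}^\bm g_0 K_{\HG,u}^\bm$ lies in the open regular semisimple locus; then $f_u := \mathbf{1}_{K_{\HG,u}^\bm g_0 K_{\HG,u}^\bm}$ is a non-zero element of $\sH_{K_{\HG,u}^\bm}$ with regular support, and we assemble $f_p$ by setting the other components equal to characteristic functions of the corresponding maximal compact subgroups (and $\phi_p = \mathbf{1}_{K_{Z^\BQ,p}}$). To exhibit such a $g_0$, we take $g_0 = (1, g)$ with $g = \diag(\pi_u^{a_1},\dotsc,\pi_u^{a_n})$ and $a_1 \gg a_2 \gg \dotsb \gg a_n$ well-separated with respect to the congruence depth of $K_{\HG,u}^\bm$. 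Elements of the Cartan double coset $K_{\HG,u}^\bm g_0 K_{\HG,u}^\bm$ retain the same Smith normal form data and hence remain in the regular semisimple locus (eigenvalues have pairwise distinct valuations, so the invariants of the $H \times H$-orbit in $H \backslash G_W / H$ are generic). An alternative, non-constructive argument: by the openness of $G_W(F_{0,u})_\rs$ and a Fubini/smoothness argument for the multiplication map $K_{\HG,u}^\bm \times K_{\HG,u}^\bm \times G_W(F_{0,u}) \to G_W(F_{0,u})$, the set of $g_0$ whose double coset meets the (measure-zero) non-regular locus has measure zero, so almost every $g_0$ works.

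For part (2), given $f_p = \phi_p \otimes \bigotimes_{v\mid p} f_v$ completely decomposed with $\supp f_u \subset G_W(F_{0,u})_\rs$, Remark \ref{rem match} lets us assemble transfers place by place: it suffices to construct $f'_v$ transferring $c_v f_v$ (for some constants $c_v$ with $c(\phi_p) = \prod c_v$) at each $v \mid p$. At every place $v \neq u$, we invoke the existence of smooth transfer proved by the third author \cite{Z14}. At the split place $u$, the matching of regular semisimple orbits becomes essentially tautological under the splitting $F_u \cong F_{0,u} \times F_{0,u}$, and the Jacquet--Rallis transfer at split places is explicit. Since $\supp f_u$ is compact, it meets only finitely many bi-$K_{\HG,u}^\bm$ double cosets, each of which maps to a compact union of regular semisimple orbits in $G'(F_{0,u})_\rs$. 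For each such matched orbit, we place a suitable locally constant function on a small open bi-invariant neighborhood inside $G'(F_{0,u})_\rs$ so that the orbital integrals match (up to the transfer factor $\omega_u$), and extend by zero to a function $f'_u \in C^\infty_c(G'(F_{0,u}))$ whose support lies in $G'(F_{0,u})_\rs$.

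\textbf{Main obstacle.} The delicate point is the construction in part (1): guaranteeing that the \emph{entire} compact double coset $K_{\HG,u}^\bm g_0 K_{\HG,u}^\bm$ is contained in the open regular semisimple locus. This is not automatic since $K_{\HG,u}^\bm$ is not contained in $H \times H$ (so does not a priori preserve the $H \backslash G_W / H$-orbit structure), and requires either the explicit Cartan-type computation with well-separated invariants, or the soft measure-theoretic existence argument above. Part (2) is comparatively routine, relying on the product decomposition of orbital integrals at completely decomposed functions and on the known explicit form of smooth transfer at split places.
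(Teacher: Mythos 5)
Your constructive choice $g_0=(1,\diag(\pi_u^{a_1},\dotsc,\pi_u^{a_n}))$ in part (1) does not work: it is never regular semisimple in the Jacquet--Rallis sense. Regular semisimplicity for the $H\times H$-action on $G_W$ is \emph{not} the condition of distinct eigenvalues (or distinct eigenvalue valuations / Smith invariants); after contracting to $g=h^{-1}g'\in G$, it requires that the $n$-th basis vector $e_n$ and its images $ge_n, g^2e_n,\dotsc$ span $F_{0,u}^n$ (and dually). For a diagonal $g$ all $g^ie_n$ are multiples of $e_n$, so $(1,g)$ lies in the \emph{non}-regular locus for every choice of exponents, however well separated. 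The ``well-separated Cartan invariants'' argument therefore fails at the start. (Your secondary Fubini-type argument is sound in principle but is only sketched; carried out it would give a non-constructive proof and in that case would diverge from the paper.)

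The paper's proof is constructive and goes as follows. Set $f_{n-1,u}=\mathbf{1}_{K_{H,u}^m}$ so that the homogeneous orbital integral reduces, via the contraction map $(h,g)\mapsto h^{-1}g$, to the case of $f_{n,u}\in\sH(G(F_{0,u}),K_{G,u}^m)$ under $H$-conjugation; regular support on $H\times G$ is equivalent to regular support of $f_{n,u}$ because the preimage of $G_\rs$ under contraction is exactly $(H\times G)_\rs$. The key is that $G(F_{0,u})_\rs$ is the non-vanishing locus of a polynomial $\Delta$ with $\BZ$-coefficients in the matrix entries (the Jacquet--Rallis discriminant), so one can exhibit $g\in\GL_n(O_{F_0,u})$ with $\Delta(g)\in\{\pm1\}$; then for $k,k'$ in the principal congruence subgroup of level $m$ one has $\Delta(kgk')\equiv\Delta(g)\not\equiv 0\bmod\varpi_u^m$, hence the whole double coset $K^m_{G,u}gK^m_{G,u}$ lies in the regular locus. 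You would need to replace your diagonal element by such a $g$ and replace the ``Smith normal form'' stability claim by this congruence argument to repair part (1).

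Part (2) of your proposal is directionally correct but vaguer than the paper's argument: the paper reduces (as in part (1), and via \cite[Prop.~2.5]{Z14}) to the inhomogeneous symmetric-space version on $S_n$, and then simply observes that the split identification $S_n(F_{0,u})\cong\GL_n(F_{0,u})$ makes the two notions of regular semisimplicity coincide, so the transfer is essentially tautological and can be taken with regular support. Your ``bump functions on small bi-invariant neighborhoods of matched orbits'' is a plausible workaround but would need an argument that the resulting function actually transfers and is compactly supported and bi-invariant; invoking the identification $S_n\cong\GL_n$ as the paper does avoids all of that.
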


\begin{proof}Let $m$ be a positive integer. At a place $u$ split in $F$, by choosing a basis of a self-dual lattice in $W^\flat\otimes F_{0,u}$, resp.\  $W\otimes F_{0,u}$,  we may identify $H(F_{0,u})=\GL_{n-1}(F_{0,u})$ and $G(F_{0,u})=\GL_{n}(F_{0,u})$, and $K^m_{H,u}$, resp.\  $K^m_{G,u}$,  with the principal congruence subgroups of level $m$. We may choose the basis of the lattice in $W\otimes F_{0,u}$ by adding the special vector $u$ to the basis of the lattice in $W^\flat\otimes F_{0,u}$. Therefore we may further assume that the embedding $H(F_{0,u})\incl G(F_{0,u})$ has the property that $K^m_{H,u}=K^m_{G,u}\cap H(F_{0,u})$. Moreover, the stabilizers of the lattices are identified with $\GL_{n-1}(O_{F_0,u})$ and  $\GL_{n}(O_{F_0,u})$ respectively, and $\GL_{n-1}\incl \GL_{n}$ sends $h$ to $\diag (h,1)$.

Now let $f_p =  \phi_{p}\otimes\bigotimes_{v\mid p}f_v$ be completely decomposed. It suffices to show that there exists a non-zero $f_u=f_{n-1,u}\otimes f_{n, u}\in \sH((H\times G)(F_{0, u}), K^m_{H, u}\times K^m_{G, u})$ with regular support. We construct such a function by first setting $f_{n-1,u}=\mathbf{1}_{K^m_{H,u}}$. Note that the pair of functions $( f_{n-1,u}, f_{n, u})$ defines  the function $\wt f_u\in  \sH(G(F_{0,u}), K^m_{G,u})$  by ``contraction" under the map 
\begin{equation}\label{contraction}
\begin{gathered}
   \xymatrix@R=0ex{
      H\times G \ar[r]  &  G\\
      (h,g) \ar@{|->}[r]  &  h^{-1}g,   
   }
\end{gathered}
\end{equation}
namely,
$$
\wt f_u(g)=\int_{H(F_{0,u})} f_{n-1,u}(h) f_{n,u}(hg)\,dh.
$$
We then have $\wt f_u=\vol(K^m_{H,u}) f_{n,u}.$ Then the function $f_u=f_{n-1,u}\otimes f_{n, u}$ has regular support (with respect to the $H\times H$-action) if (and only if) $\wt f_u$ or, equivalently, $f_{n,u}$ has regular support (with respect to the conjugation action of $H$). This holds because the inverse image of $G(F_{0,u})_{\rs}$ under the contraction map \eqref{contraction} is exactly $(H\times G)(F_{0,u})_{\rs}$. 

We now choose $f_{n,u}$ supported in $\GL_{n}(O_{F_0,u})$. Recall that $G(F_{0,u})_{\rs}$ is defined by the equation $\Delta\neq 0$ where $\Delta$ is a polynomial in the entries of $\GL_n$ with coefficients in $\BZ$, and it is easy to exhibit an element $g\in \GL_{n}(O_{F_0,u})$ such that $\Delta(g)\in\BZ^\times=\{\pm 1\}$\footnote{For example, $g=\left( \begin{array}{ccccc}
0&1& \cdots& 0& 0\\
0&0&\cdots& 1&0\\
0&0 &\cdots& 0&1\\
1&1 &\cdots&1& 0
 \end{array}\right)\in \GL_{n}(\BZ) $ satisfies $\Delta(g)\in\{\pm 1\}$.}.  Then the function $f_{n,u}=\mathbf{1}_{K^m_{G,u}g K^m_{G,u}}$ has regular support. Indeed, consider the reduction map $\GL_{n}(O_{F_0,u})\to \GL_{n}(O_{F_0,u}/\varpi_u^m)$ where $\varpi_u$ is a uniformizer. It is easy to see that for $k,k'\in 1+\varpi_u^m M_{n}(O_{F_0,u})$ we have $\Delta(kgk')\equiv \Delta(g)\equiv \pm 1\mod \varpi_u^m$. In particular we have $ K^m_{G,u}g K^m_{G,u}\subset \GL_{n}(O_{F_0,u})_\rs$. This completes the proof of the first part.

To show the second part, by a reduction process similar to the first part (cf.\ \cite[Prop.\ 2.5]{Z14}), it suffices to work with the inhomogeneous version, i.e., to show there exists  a smooth transfer $f'_u$ with support in $S_n(F_{0,u})_\rs$. We may identify $S_n(F_{0,u})=\GL_n(F_{0,u})$ and then the notions of regular semi-simplicity on $S_n(F_{0, u})$ and on $G(F_{0,u})$ coincide. This completes the proof.
\end{proof}

The left-hand side of \eqref{int for globwith} can be localized, i.e., we can write it as a sum over all non-archimedean places, 
\begin{equation}\label{locInt}
   \Int(f) = \sum_v \Int_v(f) ,
\end{equation}
where  
\begin{equation}\label{intloc}
   \Int_v(f) :=\frac{1}{\tau(Z^\BQ)\cdot  [E:F]} \sum_{\nu\mid v} \Int^\natural_\nu(f).
\end{equation}

By Lemma \ref{lem transfer split},  the smooth transfer  $f'$ of $f$ can be chosen such that $f'$ has regular support at $\lambda$, which we assume from now on. Then also the right-hand side of Conjecture \ref{conj integralnontrivlev} can be written as a sum of local contributions of each place of $F_0$, cf.\  \eqref{eqn J' dec} for $\delJ(f')$ and \eqref{J(0)} for $J(f'_{\corr})$.
\begin{proposition}\label{p:split I=J}
In the situation of Conjecture \ref{conj integralnontrivlev},
let $v_0$ be a place of $F_0$ that is split in $F$. Then
$$
\Int_{v_0}(f)=\delJ_{v_0}(f')=0.
$$
\end{proposition}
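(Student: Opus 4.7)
The statement splits into two independent assertions: the vanishing of $\Int_{v_0}(f)$ on the geometric side, and the vanishing of $\delJ_{v_0}(f')$ on the analytic side. Both should follow quickly from results already established in the paper.

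For the analytic side, note that by the hypotheses of Conjecture \ref{conj integralnontrivlev} together with Lemma \ref{lem transfer split} (invoked earlier to localize the right-hand side of Conjecture \ref{conj integralnontrivlev} as a sum of local contributions), we may arrange that the Gaussian test function $f' = \otimes_v f'_v$ has regular support at the place $\lambda$ of $F_0$. Then Lemma \ref{analyticsplit} applies verbatim and gives $\delJ_{v_0}(f') = 0$ for every split place $v_0$. This is the easy half.

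For the geometric side, I unwind the definition \eqref{intloc}:
\[
   \Int_{v_0}(f) = \frac{1}{\tau(Z^\BQ)\,[E:F]}\sum_{\nu \mid v_0} \Int^\natural_\nu(f),
	\qquad
	\Int^\natural_\nu(f) = \bigl\langle \wh R(f)\,\wh z_{K_\wtHG^\bm},\, \wh z_{K_\wtHG^\bm}\bigr\rangle_\nu \log q_\nu,
\]
where each $\nu$ above $v_0$ is non-archimedean. The plan is to show that each term $\Int^\natural_\nu(f)$ vanishes. Since $f$ is completely decomposed and has regular support at $\lambda$, Theorem \ref{int generic}\eqref{int generic i} tells us that the support of the intersection of the two cycles $z_{K_\wtHG^\bm}$ and $R(f) z_{K_\wtHG^\bm}$ does not meet the generic fiber of $\CM_{K_\wtHG^\bm}(\wtHG)$. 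Furthermore, because $v_0$ is split in $F$, Theorem \ref{int generic}\eqref{int generic ii} tells us that this support also does not meet the special fiber $\CM_{K_\wtHG^\bm}(\wtHG) \otimes_{O_E} \kappa_\nu$ for any place $\nu$ of $E$ above $v_0$.

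Combining these two vanishing statements, the intersection of $z_{K_\wtHG^\bm}$ and $R(f) z_{K_\wtHG^\bm}$ has empty support on $\CM_{K_\wtHG^\bm}(\wtHG) \otimes_{O_E} O_{E,(\nu)}$. Consequently the Euler--Poincar\'e characteristic of the derived tensor product defining the local intersection number $\langle \wh R(f)\,\wh z_{K_\wtHG^\bm},\, \wh z_{K_\wtHG^\bm}\rangle_\nu$ at the non-archimedean place $\nu$ is zero (there being no Green's current contribution at a non-archimedean place). Hence $\Int^\natural_\nu(f) = 0$ for every $\nu \mid v_0$, and therefore $\Int_{v_0}(f) = 0$. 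I do not anticipate any real obstacle: the work has already been done in Theorem \ref{int generic} and Lemma \ref{analyticsplit}; the present proposition is essentially a bookkeeping consequence, with the mild point to verify being that $f'$ can be chosen with regular support at $\lambda$ so that Lemma \ref{analyticsplit} applies, which is granted by Lemma \ref{lem transfer split}.
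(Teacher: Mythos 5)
Your proposal is correct and follows the same route as the paper: the analytic vanishing $\delJ_{v_0}(f')=0$ is exactly Lemma \ref{analyticsplit} (with the regular-support hypothesis on $f'$ arranged via Lemma \ref{lem transfer split}, as the paper records just before the proposition), and the geometric vanishing $\Int_{v_0}(f)=0$ follows from Theorem \ref{int generic}\eqref{int generic ii} because the intersection avoids every special fiber over $v_0$, making each Euler--Poincar\'e characteristic $\Int^\natural_\nu(f)$ zero. The extra explanation you give is just an unwinding of what the paper's one-line proof leaves implicit.
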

\begin{proof}
In Lemma \ref{analyticsplit} we have proved $\delJ_{v_0}(f')=0$. Now  $
\Int_{v_0}(f)=0$ follows from Theorem \ref{int generic}\eqref{int generic ii}.
\end{proof}

 In the next subsection, we are going to formulate a semi-global conjecture for each non-split place $v$ (including archimedean ones) which refines Conjecture \ref{conj integralnontrivlev}.

\subsection{The semi-global conjecture}
Let  $v_0$ be a place of $F_0$ above the place $p\leq \infty$ of $\BQ$.  By Proposition \ref{p:split I=J}, from now on we may and do assume that $v_0$ is non-split in $F$. 

Now assume that $v_0$ is non-archimedean. We assume that $v_0$ is either of hyperspecial level type or of AT parahoric level type, in the sense of Section \ref{section semi-global}. We also take up the notation of Sections \ref{subsec hyper} and \ref{subsec AT} and denote,  for any place $\nu$ of $E$ lying over $v_0$, by $\CM_{K_{\wt G}}(\wt G)_{(\nu)}$, $\CM_{K_{\wt H}}(\wt H)_{(\nu)}$, and $\CM_{K_{\wtHG}}(\wtHG)_{(\nu)}$ the corresponding semi-global moduli stacks over $\Spec O_{E,(\nu)}$. 
Let $K_\wtHG = K_\wtHG^p \times K_{\wtHG,p} \subset \wtHG(\BA_f)$ be as  in Section \ref{subsec hyper}, resp.\  Section \ref{subsec AT}. Let
\begin{equation}
\label{partial Hk p}
   \sH_{K_\wtHG}^p \subset   \sH_{{K_\wtHG}} = \sH\bigl(\wtHG(\BA_f),{{K_\wtHG}}\bigr)
\end{equation}
be the partial Hecke algebra spanned  by completely decomposed pure tensors of the form $f=\otimes_\ell f_\ell\in \sH_{K_\wtHG}$, where $f_\ell=\phi_\ell\otimes\bigotimes_{v|\ell}f_v$, as in Definition \ref{defcompdecG}, with $\phi_\ell=\mathbf{1}_{K_{Z^\BQ, \ell}}$ for all $\ell$, and where
$$
f_p=\mathbf{1}_{K_{\wtHG, p}}.$$
We note that this defines a bigger Hecke algebra than \eqref{partial Hk} when ${K_\wtHG}={K_\wtHG^\bm}$, 
\begin{equation}\label{HPhi Hp}
  \sH_{{K_\wtHG}}^p \supset \sH_{{K_\wtHG^\bm}}^{\spl,\Phi} .
 \end{equation}

Let $f=\otimes_{\ell} f_\ell\in \sH_{{K_\wtHG}}^p$ be completely decomposed with  regular support  at some place $\lambda$. We define as before in \eqref{intloc}
\begin{equation}\label{int for semi}
 \begin{aligned}
    \Int_\nu^\natural (f)&:=\bigl\la\wh R(f)\wh z_{K_\wtHG}, \wh z_{K_\wtHG}\bigr\ra_{{\nu}} \log q_\nu  ,\\
  \Int_{v_0}(f)&:=\frac{1}{\tau(Z^\BQ) [E:F]}\sum_{\nu|v_0} \Int_\nu^\natural (f) ,
   \end{aligned}
\end{equation}
where again the contribution of the place $\nu$  is defined through the Euler--Poincar\'e characteristic of a derived tensor product on $\CM_{K_\wtHG}(\wtHG)_{(\nu)}$. This extends the definition \eqref{intloc} to the bigger Hecke algebra $\sH_{{K_\wtHG}}^p$.

We proceed similarly for an archimedean place $v_0\in \Hom(F_0,\BR)$. Denote,  for any place $\nu$ of $E$ lying over $v_0$, by   $\CM_{K_{\wt G}}(\wt G)_{(\nu)}$, $\CM_{K_{\wt H}}(\wt H)_{(\nu)}$, and $\CM_{K_{\wtHG}}(\wtHG)_{(\nu)}$ the corresponding complex analytic spaces (in fact, orbifolds).   Note that the Green's current $g_{z_{K_\wtHG }}$ is the multi-set $g_{z_{K_\wtHG },\nu}$ indexed by $\nu\in \Hom(E,\BC)$. We define

\begin{equation}\label{intloc semiglob}
 \begin{aligned}
   \Int_\nu^\natural (f)&:=\bigl\la\wh R(f)\wh z_{K_\wtHG}, \wh z_{K_\wtHG}\bigr\ra_{{\nu}} \log q_\nu ,\\
    \Int_{v_0}(f)&:=\frac{1}{\tau(Z^\BQ) [E:F]}\sum_{\nu|v_0} \Int_\nu^\natural (f),
   \end{aligned}
\end{equation}
 where the first quantity is defined before Conjecture \ref{conj integralnontrivlev}.

A refinement of Conjecture \ref{conj integralnontrivlev} is now given by the following statement.

\begin{conjecture}[Semi-global conjecture]\label{conj semiglob} 
Fix a place $v_0$ of $F_0$ above a place $p\leq \infty$ of $\BQ$. Let $f=\otimes_{\ell} f_\ell\in \sH_{{K_\wtHG}}^p$ ($\sH_{{K_\wtHG}}$ if $p$ is archimedean) be completely decomposed, and let $f'=\otimes_{v}f'_v\in \sH(G'(\BA_{F_0}))$ be a Gaussian test function such that  $\otimes_{v < \infty}f'_v$ is a smooth transfer of $f$.
Assume that for some $\ell$ prime to $v_0$  and some place $\lambda$ above $\ell$, the function $f$ has regular support at $\lambda$ in the sense of Definition \ref{regularsupport} and that $f'$ has regular support at $\lambda$ in the sense of Definition \ref{regsuppG'}. 
\begin{altenumerate}
\item\label{conj hypersp} Assume that $v_0$ is non-archimedean of hyperspecial type, cf.\ Section \ref{subsec hyper}, and that $f_{v_0}'=\mathbf{1}_{G'(O_{F_0,v_0})}$. Then
$$
\Int_{v_0}(f)=-\delJ_{v_0}(f') . 
$$
\item\label{conj ATtype}  Assume that $v_0$ is archimedean, or non-archimedean of AT  type, cf.\ Section \ref{subsec AT}.  Then 
$$
\Int_{v_0}(f)=-\delJ_{v_0}(f')-J(f'_{\corr}[v_0]),
$$where $f'_{\corr}[v_0]=\otimes_{v}f'_{\corr, v}$, with $f'_{\corr, v}=f'_v$ for $v\neq v_0$,  is a correction function. Furthermore,   $f'$ may be  chosen such that $f'_{\corr}[v_0]$ is zero.
\end{altenumerate}
\end{conjecture}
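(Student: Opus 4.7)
[Proof plan for Conjecture \ref{conj semiglob} in the cases covered by Theorem \ref{mainthmintro}]
The plan is to reduce the semi-global intersection number $\Int_{v_0}(f)$ to a weighted sum of local intersection numbers on a Rapoport--Zink space, and then to match these term-by-term against the local derived orbital integrals appearing in $\delJ_{v_0}(f')$ by invoking the AFL conjecture (in case \eqref{conj hypersp}) or the AT conjecture (in case \eqref{conj ATtype}), both known for $n \leq 3$ from \cite{Z12, RSZ1, RSZ2}. First, I would use the regularity hypothesis on $f$ at the auxiliary place $\lambda \neq v_0$, together with Theorem \ref{int generic}, to show that the geometric intersection of $z_{K_\wtHG}$ with its Hecke translate $R(f) z_{K_\wtHG}$ on $\CM_{K_\wtHG}(\wtHG)_{(\nu)}$ is proper and concentrated in the basic locus of the special fiber at any place $\nu$ of $E$ above $v_0$ (for non-archimedean $v_0$); in particular the intersection avoids the boundary in the non-compact case by Remark \ref{rknoncompact}, so that $\Int_\nu^\natural(f)$ is a well-defined finite arithmetic Euler--Poincar\'e characteristic.

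Next, I would apply the non-archimedean uniformization of the basic locus in the style of Rapoport--Zink to express, for each $\nu \mid v_0$, the completion of $\CM_{K_\wtHG}(\wtHG)$ along the basic locus as a double quotient of a product $\CN \times \wtHG(\BA_f^p)/K_\wtHG^p$ by the action of a $\BQ$-inner form of $\wtHG$, where $\CN$ is the relevant unitary Rapoport--Zink space at $v_0$ (this is precisely the setup of \cite{KR-U2} and \cite{Z12}). Combined with a parallel uniformization of the cycle $z_{K_\wtHG}$ in terms of the ``diagonal'' $\CN_{n-1} \to \CN_{n-1} \times_{\CM_0} \CN_n$, this writes $\Int_\nu^\natural(f)$ as a sum over $\wt H(\BA_f^p)$-orbits on $\wtHG(\BA_f^p)/K_\wtHG^p$, with each term a product of a local intersection number $\Int_\nu(g, \CN)$ on $\CN$ and an off-$v_0$ orbital integral of $f^{v_0}$. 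Using the orbit matching of Section \ref{orbit matching ss}, summing over $\nu \mid v_0$ and reorganizing as in \cite[\S 8]{KR-U2} produces
\[
   \Int_{v_0}(f) = \sum_{\gamma} \Int_{v_0}(\gamma) \cdot \omega^{v_0}(\gamma) \prod_{v \neq v_0} \Orb(\gamma_v, f'_v),
\]
where $\gamma$ runs over matched regular semisimple classes and $\Int_{v_0}(\gamma)$ is a purely local arithmetic intersection number on $\CN$.

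The third step is to input the local identities. In case \eqref{conj hypersp}, the AFL conjecture of \cite{Z12} (which is the case $v_0$ hyperspecial, $f'_{v_0} = \mathbf{1}_{G'(O_{F_0,v_0})}$) asserts
\[
   \Int_{v_0}(\gamma) \log q_{v_0} = -\omega_{v_0}(\gamma)\, \del(\gamma, \mathbf{1}_{G'(O_{F_0,v_0})}),
\]
and this is proved in \cite{Z12} for $n \leq 3$. Plugging this identity into the expansion of the previous step, using the product formula \eqref{prod formula} for the transfer factor, and comparing with \eqref{eqn J'v dec} gives exactly $\Int_{v_0}(f) = -\delJ_{v_0}(f')$. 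In case \eqref{conj ATtype} with $v_0$ non-archimedean of AT type, the same scheme applies with the AFL replaced by the AT conjecture of \cite{RSZ1, RSZ2}, which is the local identity
\[
   \Int_{v_0}(\gamma) \log q_{v_0} = -\omega_{v_0}(\gamma)\, \del(\gamma, f'_{v_0}) - \omega_{v_0}(\gamma)\, \Orb(\gamma, f'_{\corr, v_0})
\]
for a suitable atomic $f'_{v_0}$ and a correction function $f'_{\corr, v_0} \in C_c^\infty(G'(F_{0,v_0}))$ that may be taken to vanish for an appropriate choice of $f'_{v_0}$ (the ``atomic'' $v_0$-component inducing the desired transfer); this is proved in \cite{RSZ1, RSZ2} for $n \leq 3$ in all AT types \eqref{almost self-dual type}, \eqref{pi-mod type}, \eqref{almost pi-mod type}, the type \eqref{n=2 self-dual type} being excluded by Remark \ref{excl type4}. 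Summing over $\gamma$ gives the stated formula, with $f'_{\corr}[v_0]$ being the global function with $v_0$-component $f'_{\corr, v_0}$ and all other components equal to those of $f'$.

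The archimedean case of \eqref{conj ATtype} is handled by the same scheme but with the local archimedean intersection number $\Int_{v_0}(\gamma)$ interpreted as a star product of Green currents attached to the cycles $\CM_{K_{\wt H}}(\wt H)$ and its Hecke translate at the archimedean place $\nu \mid v_0$. The identification with $-\del(\gamma, f'_{v_0}) - \Orb(\gamma, f'_{\corr, v_0})$, for an appropriate Gaussian archimedean test function $f'_{v_0}$, is the archimedean AT identity of \cite{RSZ2} (for $n \leq 3$, and in fact for arbitrary $n$ in this instance); here the flexibility in choosing the Gaussian $f'_{v_0}$ corresponds to the flexibility in choosing the Green current, and explains why $f'_{\corr}[v_0]$ may be taken to vanish. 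The main obstacle throughout is the verification of the local AFL/AT identities; once these are granted, the globalization step is formal from the $p$-adic/archimedean uniformization, regularity (which forces properness of the intersection and avoids degenerate contributions), and bookkeeping with the transfer factor.
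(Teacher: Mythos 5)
Your outline for the non-archimedean cases follows the same route as the paper: regularity at $\lambda$ plus Theorem \ref{int generic}\eqref{int generic iii} localizes the intersection to the basic locus; non-archimedean uniformization along the basic locus (plus the product decomposition of the RZ space, which in the paper requires Lemma \ref{notwistbanal} to collapse the banal factors at places $v\in\CV_p\ssm\{v_0\}$ to discrete sets) writes $\Int_{v_0}(f)$ as a sum of local intersection numbers weighted by off-$v_0$ orbital integrals; the AFL/AT identities for $n\leq 3$ plus the product formula for transfer factors and \eqref{eqn J'v dec} close the argument. You do gloss over two points the paper attends to carefully: the bookkeeping with the factor $[E:F]$ coming from $\sum_{\nu\mid w_0}e_{\nu/w_0}f_{\nu/w_0}$ (which converts $\log q_\nu$ into $\log q_{w_0}$ and is why Conjecture \eqref{conj semiglob} is stated with the normalization $\frac{1}{\tau(Z^\BQ)[E:F]}$), and the $\log q_{w_0}$ vs.\ $\log q_{v_0}$ discrepancy between the inert and ramified AT types, which are needed to make \eqref{eq AFL v0} come out cleanly; you also skip the argument needed to pin down the transfer constants $c_v$ from Remark \ref{rem match} at places $v \in \CV_p\ssm\{v_0\}$. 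These are not fatal, but they are exactly where a naive version of your globalization would slip a constant.

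The genuine gap is the final paragraph on the archimedean case. You cite an ``archimedean AT identity of \cite{RSZ2}'' (even asserting it ``for arbitrary $n$''), but no such identity exists in that reference or elsewhere in the paper's bibliography; the archimedean analogue of the AFL/AT conjecture is open. The paper is explicit about this: Theorem \ref{mainthmintro} and the theorem proving Conjecture \ref{conj semiglob}\eqref{conj ATtype} both restrict to \emph{non-archimedean} $v_0$, and nothing in \cite{Z12,RSZ1,RSZ2} supplies the archimedean local input your argument would need. You would have to delete the archimedean paragraph or state its conclusion as conditional on an archimedean local conjecture.
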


\begin{theorem}
The semi-global conjecture Conjecture \ref{conj semiglob} for all  places $v_0$  implies the global conjecture Conjecture \ref{conj integralnontrivlev}.
\end{theorem}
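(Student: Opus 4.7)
The plan is to decompose each side of Conjecture \ref{conj integralnontrivlev} into a sum of local contributions indexed by the places $v$ of $F_0$, and then verify the matching place by place using Conjecture \ref{conj semiglob}. On the geometric side, \eqref{locInt} gives $\Int(f) = \sum_v \Int_v(f)$; on the analytic side, after arranging (via Lemma \ref{lem transfer split}) that $f'$ has regular support at $\lambda$, \eqref{eqn J' dec} gives $\delJ(f') = \sum_v \delJ_v(f')$. The containment \eqref{HPhi Hp} shows $f \in \sH_{K_\wtHG^\bm}^p$ for every rational prime $p$, so Conjecture \ref{conj semiglob} applies to $f$ at every non-archimedean $v$, while the archimedean case is handled analogously.

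At each split place, Proposition \ref{p:split I=J} gives $\Int_v(f) = \delJ_v(f') = 0$. At each non-split non-archimedean $v \notin \CV_\AT^W$, the structure of $\sH_{K_\wtHG^\bm}^{\spl,\Phi}$ forces $f_v = \mathbf{1}_{K_{\wtHG,v}}$; the Jacquet--Rallis fundamental lemma then permits taking $f'_v = \mathbf{1}_{G'(O_{F_0,v})}$, and Conjecture \ref{conj semiglob}\eqref{conj hypersp} yields $\Int_v(f) = -\delJ_v(f')$. At each archimedean place and each $v \in \CV_\AT^W$, Conjecture \ref{conj semiglob}\eqref{conj ATtype} gives $\Int_v(f) = -\delJ_v(f') - J(f'_{\corr}[v])$. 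The ``furthermore'' assertion in part \eqref{conj ATtype} states that $f'_v$ may be chosen so that the correction's $v$-component $f'_{\corr,v}$ vanishes; since this choice only alters $f'_v$ locally, the adjustments at distinct AT and archimedean places (a finite set) are independent and can be made simultaneously, producing an $f'$ with $f'_{\corr}[v] = 0$ at every such $v$.

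Summing over all places yields either the main identity $\Int(f) = -\delJ(f') - J(f'_{\corr})$ with $f'_{\corr} := \sum_v f'_{\corr}[v]$ (a finite sum), or, when the simultaneous adjustments above are performed, the stronger conclusion with $f'_{\corr} = 0$. The regular support of $f'$ at $\lambda$ is preserved, since $\lambda$ is necessarily split (by the remark following Lemma \ref{lem transfer split}) and hence disjoint from the AT and archimedean places at which $f'_v$ is adjusted. The main technical point to confirm is that the local intersection number $\Int_v(f)$ computed from the global model via \eqref{intloc} coincides with its semi-global counterpart from \eqref{int for semi}; this follows from the construction in Theorem \ref{globdrin}, since the global model base-changes to the semi-global model at each place $\nu \mid v$, so the Euler--Poincar\'e characteristics defining the non-archimedean local contributions agree term by term, while the archimedean contributions agree by compatibility of the chosen Green's currents.
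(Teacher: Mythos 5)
Your argument is correct and follows the same route as the paper's own proof: decompose $\Int(f)$, $\delJ(f')$ and the corrections into local contributions indexed by places of $F_0$, invoke Proposition~\ref{p:split I=J} at split places and Conjecture~\ref{conj semiglob}\eqref{conj hypersp}/\eqref{conj ATtype} at non-split places (using the containment~\eqref{HPhi Hp} to legitimize applying the semi-global conjecture to $f\in \sH_{K_\wtHG^\bm}^{\spl,\Phi}$), and set $f'_\corr := \sum_{\text{$v_0$ bad}} f'_\corr[v_0]$. The paper is terser and does not spell out the compatibility of \eqref{intloc} with \eqref{int for semi} or the simultaneous local choices for the ``Furthermore'' clause, but your elaborations on these points are consistent with, not divergent from, the paper's reasoning.
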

\begin{proof}

By \eqref{eqn J' dec}, and \eqref{locInt},  it follows from Proposition \ref{p:split I=J} for split places $v_0$ of $F_0$ and the semi-global conjecture Conjecture \ref{conj semiglob} for non-split places $v_0$ that 
$$
\Int(f)= -  \delJ(f')-\sum_{\text{$v_0$ bad}}J(f'_{\corr}[v_0]) ,
$$
where the sum runs over a finite set of places $v_0$ in Conjecture \ref{conj semiglob}\eqref{conj ATtype}.  Here we note that by \eqref{HPhi Hp} we may apply the semi-global conjecture Conjecture \ref{conj semiglob} for the given test function $f$ in the global conjecture Conjecture \ref{conj integralnontrivlev}.
This proves Conjecture \ref{conj integralnontrivlev} by taking 
\[
   f'_\corr=\sum_{\text{$v_0$ bad}}f'_{\corr}[v_0]. \qedhere
\]
\end{proof}

In the direction towards the semi-global conjecture, there are the following results.

\begin{theorem} 
\label{th: hypersp}
Conjecture \ref{conj semiglob}(\ref{conj hypersp}) holds when $n\leq 3$. 
\end{theorem}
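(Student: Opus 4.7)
The plan is to reduce Conjecture \ref{conj semiglob}(\ref{conj hypersp}) for $n \leq 3$ to the Arithmetic Fundamental Lemma (AFL) in the hyperspecial case, which was established in this range in \cite{Z12}. The reduction follows the general architecture of the global-to-local passage in \cite{KR-U2}, adapted to the inhomogeneous Jacquet--Rallis setting in \cite{Z12}.

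The first step is to apply Theorem \ref{int generic}(\ref{int generic iii}), which, by the regularity of $f$ at $\lambda$, confines the intersection of $z_{K_\wtHG}$ with $R(f) z_{K_\wtHG}$ in $\CM_{K_\wtHG}(\wtHG)_{(\nu)}$ to the basic locus of the special fiber over each place $\nu \mid v_0$ of $E$. This locus admits a $p$-adic uniformization of Rapoport--Zink type, of the form
\[
   I(\BQ) \bs \bigl[ \CN_{v_0}^\flat \times \CN_{v_0} \times \wtHG(\BA_f^p)/K_\wtHG^p \bigr] \otimes_{O_{\breve E_\nu}} (\cdots),
\]
where $I$ is the inner form of $\wtHG$ obtained by replacing $W$ and $W^\flat$ by their nearby negative-definite (at all archimedean places and at $v_0$) hermitian cousins, and where $\CN_{v_0}$, resp.\ $\CN_{v_0}^\flat$, is the unitary RZ space attached to $(W_{v_0},\Lambda_{v_0})$, resp.\ $(W_{v_0}^\flat,\Lambda_{v_0}^\flat)$. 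The factors at places $v \in \CV_p \ssm \{v_0\}$ contribute trivially, since the local models there are trivial by the banal-signature lemmas of Appendix \ref{appendix}.

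Under this uniformization, the cycle $z_{K_\wtHG}$ pulls back to the graph of the natural morphism $\CN_{v_0}^\flat \to \CN_{v_0}^\flat \times \CN_{v_0}$ times a prime-to-$p$ adelic summand, and $R(f) z_{K_\wtHG}$ is described by the $\wtHG(\BA_f^p)$-action twisted by $f$. Consequently the intersection number decomposes over double cosets,
\[
   \Int_{v_0}(f) \;=\; \sum_{g} \Int_{v_0}^{\mathrm{loc}}(g_{v_0}) \cdot \Orb(g^{v_0}, f^{v_0}),
\]
where $g$ ranges over $\wt H(\BA_f^p) \bs \wtHG(\BA_f^p)_\rs / \wt H(\BA_f^p)$ and $\Int_{v_0}^{\mathrm{loc}}(g_{v_0})$ is the intersection multiplicity on $\CN_{v_0}^\flat \times \CN_{v_0}$ of the diagonal cycle with its $g_{v_0}$-translate. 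Via the injection \eqref{matchwtHG}, each $g$ matches a regular semisimple orbit $\gamma \in G'(F_0)_\rs/H'_{1,2}(F_0)$. The AFL, as proved in \cite{Z12} for $n \leq 3$, identifies
\[
   \Int_{v_0}^{\mathrm{loc}}(g_{v_0}) \;=\; -\,\omega_{v_0}(\gamma_{v_0}) \cdot \del(\gamma_{v_0}, \mathbf{1}_{G'(O_{F_0,v_0})}).
\]
Combining with the product formula \eqref{prod formula} for transfer factors, the matching of $f$ with $f'$ at all places $v \nmid v_0$, and the decomposition \eqref{eqn J'v dec}, one obtains $\Int_{v_0}(f) = -\delJ_{v_0}(f')$.

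The principal obstacle in executing this plan is the uniformization step: one must verify that the $p$-adic uniformization of the basic locus in our PEL moduli problem produces precisely the RZ spaces attached to the hermitian data $(W_{v_0},\Lambda_{v_0})$ and $(W_{v_0}^\flat,\Lambda_{v_0}^\flat)$ featured in the AFL, with the correct framing object and no sign ambiguity, and one must track Haar measures and transfer factors so that the AFL identity matches the global decomposition. The sign invariants \eqref{signcond} at the remaining non-split places $v \in \CV_p \ssm \{v_0\}$, together with the Hasse principle for hermitian spaces (cf.\ the proof of Theorem \ref{semi-global hyperspecial smooth}), pin down the unique correct hermitian structure on $\Hom_F\bigl(T_p(A_0),T_p(A)\bigr)$. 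These checks, modelled on \cite{KR-U2} and \cite[\S4]{Z12}, are made appreciably cleaner in our framework by the rigidification by $\CM_0^\xi$ and by the use of $\wt G$ in place of $\GU(W)$, which eliminates the Hasse-principle obstruction present in Kottwitz's setup.
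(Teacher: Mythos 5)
Your proposal follows essentially the same route as the paper: restrict the intersection to the basic locus via Theorem \ref{int generic}(\ref{int generic iii}), uniformize $p$-adically by a Rapoport--Zink space, collapse the banal factors at $v \in \CV_p \ssm \{v_0\}$ using Appendix \ref{appendix} (which is the content of Lemma \ref{notwistbanal}), apply the AFL of \cite{Z12} at $v_0$, and close with the product formula \eqref{prod formula} for transfer factors together with \eqref{eqn J'v dec}. One inaccuracy worth flagging: in your orbit decomposition you index the sum by $g \in \wt H(\BA_f^p) \bs \wtHG(\BA_f^p)_\rs / \wt H(\BA_f^p)$ and then write $g_{v_0}$, which is not meaningful for an adelic element prime to $p$; the correct index set (as in the paper's \eqref{sum}) is the set of \emph{rational} regular semisimple double cosets $\wt H'(\BQ) \bs \wtHG\vphantom{G}'(\BQ)_\rs / \wt H'(\BQ)$ for the inner twist $\wtHG\vphantom{G}'$ arising from the uniformization, and each such rational class furnishes local components at all places including $v_0$. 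You also omit the normalization by $\tau(Z^\BQ)$ and $[E:F]$, and the small case analysis (via Remark \ref{rem match}) needed when $c(\phi_p)$ or some $f_v$ for $v \in \CV_p \ssm \{v_0\}$ has identically vanishing orbital integrals, but these are bookkeeping details rather than conceptual gaps.
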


\begin{proof}
Let $p$ denote the residue characteristic of $v_0$, and as previously in the paper, let $\CV_p$ denote the set of places of $F_0$ lying above $p$.  It suffices to show that the result holds if we assume the Arithmetic Fundamental Lemma (AFL) conjecture, which is known under these circumstances; see \cite[Th.\ 5.5]{Z12}, comp.~also \cite{M-AFL}.

We imitate the proof of \cite[Th.\ 3.11]{Z12}. More precisely, we consider the non-archimedean uniformization along the basic locus,
\begin{equation}\label{eq unif1}
   \bigl(\CM_{(\nu)} \otimes_{O_{E,(\nu)}} O_{\breve E_\nu}\bigr)\sphat \,
	   = \wtHG\vphantom{G}'(\BQ) \Big\bs \Bigl[ \CN'\times \wtHG (\BA_f^p) /K_\wtHG^p \Bigr].
\end{equation}
Here the hat on the left-hand side denotes the completion along the basic locus in the special fiber of $\CM_{(\nu)}:=\CM_{K_\wtHG}(\wtHG)_{(\nu)}$. The group $\wtHG\vphantom{G}'$ is an inner twist of $\wtHG$. More precisely, the group $\wtHG\vphantom{G}'$ is associated to the pair of hermitian spaces $(W^{\prime\flat},W')$, where  $W^{\prime\flat}$ and  $W'$ are negative definite at all archimedean places, and  isomorphic to $W^\flat$, resp.\ $W$, locally at all non-archimedean places except at $v_0$. Furthermore, $\CN'$ is the  RZ space  relevant in this situation. Using Lemma \ref{notwistbanal} below,  can write
\begin{equation}\label{e:N' vs N}
   \CN'\simeq \bigl(Z^\BQ(\BQ_p)/K_{Z^\BQ, p}\bigr)\times\CN_{O_{\breve E_\nu}} \times \prod_{v\in \CV_p\ssm\{v_0\}}(H\times G)(F_{0,v})/(K_{H,v}\times K_{G,v}) . 
\end{equation}
Here $\CN_{O_{\breve E_\nu}}=\CN \mathbin{\wh\otimes}_{O_{\breve F_{w_0}}}O_{\breve E_\nu} $, where  $\CN = \CN_{n-1} \times_{O_{\breve F_{w_0}}} \CN_n$ is the \emph{relative} RZ space of \cite{Z12}.   More precisely, the formal scheme  in the uniformization theorem is  the RZ space of polarized $p$-divisible groups  with action by $O_{F,w_0}$ satisfying the Kottwitz condition  \eqref{kottwitzF} of signature $((1, n-1)_{\varphi_0}, (0, n)_{\varphi\in\Phi_{v_0}\ssm\{\varphi_0\}})$. That this  coincides with the relative RZ space of \cite[\S2.2]{Z12} follows from   \cite[Th.\ 4.1]{M-Th}, where in the notation of loc.~cit.\ we take $E_0=F_{0,v_0}$, $E=F_{w_0}$, $(r, s)=(1, n-1)$, and $K=\BQ_p$.

Therefore we may rewrite \eqref{eq unif1} as 
\begin{equation}\label{eq unif2}
   \bigl(\CM_{(\nu)} \otimes_{O_{E,(\nu)}} O_{\breve E_\nu}\bigr)\sphat \,
	   = \wtHG\vphantom{G}'(\BQ) \Big\bs \Bigl[  \CN_{O_{\breve E_\nu}} \times \wtHG (\BA_f^{v_0}) /K_\wtHG^{v_0} \Bigr].
\end{equation}
Here we denote for simplicity (even though $\wtHG$ is not an algebraic group over $F_0$)
$$
 \wtHG (\BA_f^{v_0})/K_\wtHG^{v_0} = 
    \wtHG (\BA_f^{p})/K_\wtHG^{p}\times \bigl(Z^\BQ(\BQ_p)/K_{Z^\BQ, p}\bigr)\times \prod_{v\in \CV_p\ssm\{v_0\}}(H\times G)(F_{0,v})/(K_{H,v}\times K_{G,v}).
$$
There is also a similar uniformization of the basic locus of $\CM_{K_{\wt H}}(\wt H)_{(\nu)}$ involving the twisted form $\wt H'$ of $H$.

By Theorem \ref{int generic}\eqref{int generic iii}, the intersection of the cycles is supported in the basic locus, and hence we can imitate the proof of \cite[Th.\ 3.9]{Z12}. The difference is that here we have an extra central subgroup $Z^\BQ$. By the same procedure as in  loc.\ cit., we see that \eqref{intloc semiglob}  can be written as a sum 
\begin{equation}\label{sum}
	\Int_{v_0}(f) = 
      \frac 1{[E:F]} \sum_{g \in \BO(\wtHG(\BQ))_\rs}  \Orb(g, f^{p})\cdot 
         \biggl(c(\phi_p) \prod_{v\in \CV_p\ssm\{v_0\}} \Orb(g,f_v)\biggr) \cdot  \Int_{v_0}^\natural (g),
\end{equation}
cf.\  Remark \ref{rem match}. Here the volume factor $\tau(Z^\BQ)=\vol(Z^\BQ(\BA_f)/Z^\BQ(\BQ))$ is canceled with the one in the definition of $\Int_{v_0}(f)$, and 
$$
 \Orb(g, f^{p})=\prod_{\ell \nmid p}  \Orb(g, f_\ell).
$$
Also,  we have set 
$$ 
   \BO\bigl(\wtHG(\BQ)\bigr)_\rs := \wt H'(\BQ) \bs \wtHG\vphantom{G}'(\BQ)_\rs / \wt H'(\BQ) , 
$$ 
and, for $g\in (H\times G)(F_{0, v_0})$, in analogy with \eqref{int for semi}, 
\begin{equation}
\begin{aligned}
   \Int^\natural_{v_0}(g)&:=\sum_{\nu|v_0}\Int^\natural_\nu(g),\\
	   \Int^\natural_\nu(g)&:=\bigl\la \Delta(\CN_{n-1,O_{\breve E_\nu}}), g\Delta(\CN_{n-1,O_{\breve E_\nu}}) \bigr\ra_{\CN_{O_{\breve E_\nu}}} \log q_\nu .
\end{aligned}
\end{equation}
Now note that
\begin{align}   \label{eq:int nat}
\sum_{\nu \mid v_0} \Int^\natural_\nu(g)&= \sum_{\nu \mid v_0} \bigl\la \Delta(\CN_{n-1,O_{\breve E_\nu}}), g\Delta(\CN_{n-1,O_{\breve E_\nu}}) \bigr\ra_{\CN_{O_{\breve E_\nu}}} \log q_\nu \notag\\
		&=[E:F]\bigl\la \Delta(\CN_{n-1}), g\Delta(\CN_{n-1}) \bigr\ra_\CN \log q_{w_0}.
\end{align}
Here we use the equality $\sum_{\nu\mid w_0} e_{\nu/w_0} f_{\nu/w_0}=\sum _{\nu\mid w_0} d_{\nu/w_0}=[E:F]$. By the AFL identity (cf.\ \cite[\S4]{RSZ2}), we have
\begin{align}\label{eq AFL hom}
 2\bigl\la \Delta(\CN_{n-1}), g\Delta(\CN_{n-1}) \bigr\ra_\CN \log q_{v_0}=-\omega_{v_0}(\gamma)\del(\gamma,f_{v_0}'),
\end{align}
for any $\gamma\in G'(F_{0,v_0})_\rs$ matching $g$. Since $v_0$ is inert in $F$, we have $\log q_{w_0}=2 \log q_{v_0}$, and hence 
\begin{align}\label{eq AFL v0}
  \Int^\natural_{v_0}(g)=- [E:F]\omega_{v_0}(\gamma)\del(\gamma,f_{v_0}').
\end{align}
Since $f$ and $f'$ are smooth transfers of each other, we have for $\gamma\in G'(F_{0})$ matching $g\in  \BO(\wtHG(\BQ))_\rs$,
\begin{align}\label{eq p infty}
\Orb(g, f^{p})=\prod_{v<\infty,\, v\nmid p }\omega_v(\gamma)\Orb(\gamma,f_v').
\end{align}

By Remark \ref{rem match}, since the orbital integrals of $f_{v_0}$ and $f'_{v_0}$ do not vanish identically, one of the following holds.
\begin{enumerate}
\item $c(\phi_p)=0$ or one of $f_v$, for $v\in \CV_p\ssm\{v_0\}$, has identically vanishing orbital integrals.
\item the orbital integrals of $f_p$ do not vanish identically. 
\end{enumerate}
In the first case,  one of $f'_v$, for $v\in \CV_p\ssm\{v_0\}$,  has identically vanishing orbital integrals, and we have $$
\Int_{v_0}(f)=0=\delJ_{v_0}(f'),
$$
where the second equality follows from \eqref{eqn J'v dec}.

Therefore we only need to consider the second case. By Remark \ref{rem match}, there are non-zero constants $c_v$ such that $\prod_{v \in \CV_p}c_v=c(\phi_p)$, and for every $v \in \CV_p$, $c_v f_v'$ is a transfer of $f_v$. By computing orbital integrals at some special $g$ and $\gamma$ (e.g., \cite{Y}) we  conclude that $c_{v_0}=1$ (this follows in any case if the FL conjecture holds, which is known when the residue characteristic is big enough). It follows that 
\begin{align}\label{eq p-v0}
c(\phi_p) \prod_{v\in \CV_p\ssm\{v_0\}}\Orb(g,f_v)=\prod_{v\in \CV_p\ssm\{v_0\}}\omega_v(\gamma)\Orb(\gamma,f'_v).
\end{align}

Note that for  $v$ archimedean, $f_v'$ is a Gaussian test function so that we have for regular semisimple $\gamma\in G'(F_{0,v})_\rs$ (cf.\ \eqref{eq Gaussian})
$$
\omega_v(\gamma)\Orb(\gamma, f'_v)=\begin{cases}1,&  \text{there exists $g\in (H\times G)(F_{0,v})$ matching $\gamma$};\\
0,& \text{no $g\in (H\times G)(F_{0,v})$ matches $\gamma$}.
\end{cases}
$$
By the last equality for archimedean $v$, by \eqref{eq p infty} for non-archimedean $v$ with $v\nmid p$, by \eqref{eq p-v0} for $v\in \CV_p\ssm \{v_0\}$, and by \eqref{eq AFL v0} for $v=v_0$, we have (cf.\ \eqref{sum})
 $$
 \Int_{v_0}(f)=-\sum_{\gamma\in G'(F_0)_\rs/H_{1,2}'(F_0)} \omega_{v_0}(\gamma)\del(\gamma,f'_{v_0})\cdot  \prod_{v\neq v_0}  \omega_{v}(\gamma)\Orb(\gamma,f'_v).
 $$
 Here the sum runs  a priori only over those $\gamma$ which match some $g\in G_W(F_0)$. However, those $\gamma$ which do not match any $g\in G_W(F_0)$ have vanishing orbital integrals away from $v_0$, cf.\ \cite[Prop.\ 3.6, eq.\ (3.4)]{Z12}. 

By the product formula for transfer factors \eqref{prod formula}, we have $\prod_{v}\omega_{v}(\gamma) =1$, and hence
 $$
 \Int_{v_0}(f)=-\sum_{\gamma\in G'(F_0)_\rs/H_{1,2}'(F_0)} \del(\gamma,f'_{v_0})\cdot  \prod_{v\neq v_0}\Orb(\gamma,f'_v).
 $$
By \eqref{eqn J'v dec},
the right-hand side equals 
 $-\delJ_{v_0}(f'),$ and this completes the proof.
\end{proof}

In the preceding proof, we used the following lemma.

\begin{lemma}\label{notwistbanal}
Let $L_0/\BQ_p$ be a finite extension and $L/L_0$ an \'etale $L_0$-algebra of rank $2$. Assume $p\neq 2$ if $L$ is a field.  Let $\Phi_L \subset \Hom_{\BQ_p}( L, \ov\BQ_p)$ be a local CM type for $L/L_0$, and let $r_L\colon \Hom_{\BQ_p}(L, \ov\BQ_p)\to \{0, n\}$ be a \emph{banal} generalized CM type of rank $n$. Let $E'$ be the join of the reflex fields of $\Phi_L$ and $r_L$. Let $\ov k$ be an algebraic closure of the residue field of $O_{E'}$. Let $(\BX_0, \iota_{\BX_0}, \lambda_{\BX_0})$ be a local CM triple of type $\ov\Phi_L$ over $\ov k$, where  $\lambda_{\BX_0}$ is principal.  Let $(\BX, \iota_{\BX}, \lambda_{\BX})$ be a local CM triple of type $r$ over $\ov k$ which satisfies the Eisenstein condition, cf.\ \eqref{eis cond lm}. Assume that $\ker \lambda_{\BX}\subset \BX[\pi]$, where $\pi$ denotes a uniformizer\footnote{ If $L=L_0\oplus L_0$, this means, as elsewhere in the paper,  an ordered pair of uniformizers in the usual sense.} of $L$. Let $\CN_{\ov\Phi_L, r_L}$ be the formal scheme  over $\Spf O_{\breve E'}$ that represents the functor which associates to $S\in {\rm Nilp}_{O_{\breve E'}}$ the set of isomorphism classes of tuples $(X_0, \iota_0, \lambda_0, X, \iota, \lambda, \varrho_0, \varrho)$, where $(X_0, \iota_0, \lambda_0)$ is a local CM triple of type $\ov\Phi_L$, and $(X, \iota, \lambda)$,  is a local CM triple of type $r_L$,  over $S$, and where 
$$
\rho_0\colon (X_0, \iota_0, \lambda_0)\times_S \ov S\to \BX_0\times_{\Spec \ov k} \ov S, \quad \rho\colon (X, \iota, \lambda)\times_S \ov S\to \BX\times_{\Spec \ov k} \ov S
$$
are quasi-isogenies respecting the $O_L$-actions and the  polarizations.  Here $(X, \iota, \lambda)$ is supposed to satisfy the Eisenstein condition. Then 
$$
\CN_{\ov\Phi_L, r_L}\simeq G(L_0)/K ,
$$
where $G$ is the unitary group of an  $L/L_0$-hermitian vector space  of invariant $\inv^{r_L}(\BX_0, \BX)$, and where $K$ is the stabilizer of a vertex lattice of type $t := \log_q \lv\ker \lambda_\BX\rv$, $q := \# O_L/\pi O_L$.

\textup{Here the invariant  $\inv^{r_L}(\BX_0, \BX)$ is defined in Remark \ref{rm:pdivgps}.}
\end{lemma}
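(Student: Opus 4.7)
The plan is to exploit banality twice: first to reduce the formal-scheme structure of $\CN_{\ov\Phi_L, r_L}$ to a discrete one, and second to parametrize its $\ov k$-points by the homogeneous space $G(L_0)/K$. For the first reduction, I would invoke the local model diagram. Because $r_L$ is banal and $\ov\Phi_L$ is a local CM type (hence automatically banal in the same sense), Lemmas \ref{banal LM GL_n triv} and \ref{banal LM GU_n triv} of Appendix \ref{appendix} show that the local model attached to the moduli problem for $(\BX_0, \BX)$ decomposes as a product of local factors — one for the $O_L$-action on $\Lie$ of each of $\BX_0$ and $\BX$, and one for the polarization condition — each of which is trivial, equal to $\Spec O_{E'}$. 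By the local model diagram together with the rigidity of quasi-isogenies of $p$-divisible groups, this forces $\CN_{\ov\Phi_L, r_L}$ to be formally \'etale over $\Spf O_{\breve E'}$; combined with the fact that each connected component is proper over $\Spf O_{\breve E'}$ (being defined by quasi-isogenies to fixed framing objects), this forces each component to be isomorphic to $\Spf O_{\breve E'}$.

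For the second step, I would pass to rational Dieudonn\'e modules $N_0 := M(\BX_0)[1/p]$ and $N := M(\BX)[1/p]$, equipped with their $L$-actions, Frobenius operators $F$, and the alternating forms induced by $\lambda_{\BX_0}$ and $\lambda_\BX$. A point of $\CN_{\ov\Phi_L, r_L}(\ov k)$ corresponds, via the quasi-isogenies $\rho_0, \rho$, to a pair of $O_L \otimes W(\ov k)$-lattices $M_0 \subset N_0$ and $M \subset N$ which are $F$- and $V$-stable, compatible with the $O_L$-actions and polarizations, and such that the induced polarization on $M$ has kernel of the prescribed rank and type. By banality of $\ov\Phi_L$, the lattice $M_0 = M(\BX_0)$ is uniquely determined; the moduli thus reduces to the locus of admissible lattices $M$ in $N$.

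Define the $L$-vector space
$$
\BV := \Hom_L(N_0, N)^{F=1},
$$
which has $L$-dimension $n$ by banality. The polarizations equip $\BV$ with an $L/L_0$-hermitian form whose invariant, by Remark \ref{rm:pdivgps}, is precisely $\inv^{r_L}(\BX_0, \BX)$; its unitary group is therefore the group $G$ of the statement. The assignment
$$
M \longmapsto \Lambda := \Hom_{O_L}\bigl(M(\BX_0), M\bigr)^{F=1}
$$
identifies admissible lattices $M \subset N$ with $O_L$-lattices $\Lambda \subset \BV$ satisfying $\Lambda \subset^t \Lambda^\vee \subset \pi\i \Lambda$, where $t = \log_q \lv \ker\lambda_\BX\rv$; that is, with vertex lattices in $\BV$ of type $t$ in the sense of the Notation section. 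Such lattices form a single $G(L_0)$-orbit with stabilizer $K$, giving the bijection $\CN_{\ov\Phi_L, r_L}(\ov k) \cong G(L_0)/K$ and hence, combined with the first step, the desired isomorphism of formal schemes.

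The main technical obstacle is the verification that the hermitian invariant of $\BV$ produced by this construction really does agree with $\inv^{r_L}(\BX_0, \BX)$, and that the polarization condition on $M$ translates cleanly into the duality condition on $\Lambda$ given above. This requires careful bookkeeping across the three possibilities for the \'etale $L_0$-algebra $L$ — split, unramified field, ramified field — and across the distribution of values of $r_L$ among the places of $L$. The split case $L = L_0 \oplus L_0$ is essentially combinatorial. The inert and ramified cases follow the template of \cite[Th.\ 4.1]{M-Th}, and the banal hypothesis eliminates all formal-deformation issues (no Hodge filtration to deform), reducing the computation to finite-dimensional linear algebra over $L$ with a semilinear Frobenius action.
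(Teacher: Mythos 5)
Your first step — invoking Lemmas \ref{banal LM GL_n triv} and \ref{banal LM GU_n triv} to conclude $\CN_{\ov\Phi_L,r_L}$ is formally \'etale over $\Spf O_{\breve E'}$ — is exactly what the paper does. (The aside about properness of the components is unnecessary: formally \'etale and locally formally of finite type over the strictly henselian $O_{\breve E'}$ already forces each component to be $\Spf O_{\breve E'}$; you don't need properness, and properness is in any case not a typical feature of RZ spaces.)

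For the second step the paper takes a different route: it passes to the rigid generic fiber and applies the crystalline period map $\CN^{\rm rig} \to \CF \otimes_{E'}\breve E'$, observes that by banality the Grassmannian $\CF$ is a single point, and identifies the fiber with $G(L_0)/K$ via the structure of the RZ tower. Your approach stays on the special fiber and works directly with Dieudonn\'e modules, constructing the $F$-invariant hermitian space $\BV = \Hom_L(N_0,N)^{F=1}$ and classifying lattices. This is essentially the unwound form of the same argument — what you are doing is reproving the $J$-group description that the paper invokes implicitly — and it is in any case carried out in the paper's Remark \ref{JandG}. Both routes are fine; yours is more hands-on and avoids the detour through rigid geometry.

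However, there is a genuine error in your step 2. You assert that the hermitian invariant of $\BV$ ``is precisely $\inv^{r_L}(\BX_0,\BX)$ by Remark \ref{rm:pdivgps}.'' That is not what Remark \ref{rm:pdivgps} says: it \emph{defines} $\inv^{r_L}$ as $\sgn(r_L)\cdot\inv$. The hermitian space $\BV$ of Frobenius invariants has naive invariant $\inv(\BX_0,\BX)$, which differs from the $r$-adjusted invariant $\inv^{r_L}(\BX_0,\BX)$ by the sign factor $\sgn(r_L) = (-1)^{\sum_{\varphi\in\Phi_L}(r_L)_\varphi}$; this is precisely Remark \ref{JandG}. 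So in general the two invariants disagree. The conclusion $\U(\BV)\simeq G$ nonetheless holds, but for a reason you must supply: by banality the exponent $\sum_{\varphi\in\Phi_L}(r_L)_\varphi$ is a multiple of $n$, so $\sgn(r_L)=1$ when $n$ is even; and when $n$ is odd all $n$-dimensional $L/L_0$-hermitian spaces have isomorphic unitary groups. You flag the invariant match as ``the main technical obstacle,'' which is the right instinct — but your sketch asserts the wrong resolution, identifying the two invariants when in fact it is only the unitary groups, not the hermitian spaces, that must (and do) coincide.
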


\begin{proof} By Lemmas \ref{banal LM GL_n triv} and \ref{banal LM GU_n triv}, $\CN_{\ov\Phi_L, r_L}$ is formally \'etale over $\Spf O_{\breve E'}$. So it remains to determine the point set $\CN_{\ov\Phi_L, r_L}(\ov k)$ or, equivalently, the point set $\CN_{\ov\Phi_L, r_L}^{\rm rig}(\breve E')$, where $\CN_{\ov\Phi_L, r_L}^{\rm rig}$ denotes the generic fiber. Consider the crystalline period map $\pi\colon \CN_{\ov\Phi_L, r_L}^{\rm rig}\to { \CF}\otimes_{E'}\breve E'$, cf.\ \cite{RZ}. However, by the banality of $r_L$, the Grassmannian $\CF$ consists of a single point. Furthermore, the fiber over this point is identified with $G(L_0)/K$, since $\CN_{\ov\Phi_L, r_L}^{\rm rig}$ corresponds in the RZ tower to the level $K$.
\end{proof}

\begin{remark} We have considered local CM triples of type $\ov\Phi_L$ in the lemma because these are what naturally arise from the Kottwitz condition \eqref{KottwitzA0} in the moduli problem for $\CM_0$.  Of course, one could just as well consider the moduli space $\CN_{\Phi_L,r_L}$; then one replaces $\Phi_L$ with $\ov\Phi_L$ in the definition of $\inv^{r_L}(\BX_0,\BX)$, cf.\ Remark \ref{tired}.
\end{remark}

\begin{remark}\label{JandG}
Consider the $J$-group in the sense of Kottwitz  \cite{RZ} associated to the situation of Lemma \ref{notwistbanal}, when  $L$ is a field. Let $M(\BX_0)$, resp.\ $M(\BX)$,  be the rational Dieudonn\'e module of $\BX_0$, resp.\ $\BX$. Let $M=\Hom_L(M(\BX_0), M(\BX))$. Then $M$ is a rational Dieudonn\'e module free of rank $n$ over $L\otimes_{\BZ_p} W(\ov k)$ which, by our assumption, is isoclinic of slope $0$. It is equipped with a hermitian form $h \colon M\times M\to L\otimes_{\BZ_p} W(\ov k)$. Let $C$ be the space of Frobenius invariants in $M$. Then the restriction of $h$ makes  $C$ into an $L/L_0$-hermitian vector space of dimension $n$. The $J$-group is the unitary group $J=\U(C)$. We claim that $J\simeq G$. Indeed, the difference between $\inv^{r_L}(\BX_0, \BX)$ and $\inv(C)$ is equal to $\sgn(r_L)$, where 
$$
\sgn(r_L)=(-1)^{\sum_{\varphi \in \Phi_L} (r_L)_\varphi} , 
$$
cf.\ \eqref{sgn r}, \eqref{locinveq}.  Since $r_L$ is banal, the exponent is a multiple of $n$. Therefore the assertion follows in the case when $n$ is even. When $n$ is odd, any two unitary groups of $L/L_0$-hermitian vector spaces of dimension $n$ are isomorphic. 
\end{remark}

\begin{theorem}
Conjecture \ref{conj semiglob}(\ref{conj ATtype}) holds when $n\leq 3$ and $v_0$ is non-archimedean. 
\end{theorem}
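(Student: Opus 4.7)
The plan is to imitate the proof of Theorem \ref{th: hypersp}, replacing the AFL input by the AT conjecture proved for $n\leq 3$ in \cite{RSZ1,RSZ2}. First, by Theorem \ref{int generic}\eqref{int generic iii} combined with Remark \ref{rknoncompact}, the intersection $z_{K_\wtHG}\cdot R(f)z_{K_\wtHG}$ is supported in the basic locus of the fibers of $\CM_{K_\wtHG}(\wtHG)_{(\nu)}$ for $\nu\mid v_0$. Hence we can apply the non-archimedean uniformization theorem of the basic locus analogous to \eqref{eq unif1}, but with $\CN$ replaced by the product over $v\in\CV_p$ of the local RZ space appropriate to the AT type of $(v,\Lambda_v)$; by Lemma \ref{notwistbanal} the factors at $v\in\CV_p\ssm\{v_0\}$ are discrete and contribute the coset space $(H\times G)(F_{0,v})/(K_{H,v}\times K_{G,v})$. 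This reduces the geometric intersection at $v_0$ to an intersection of the diagonal cycle $\Delta(\CN_{n-1})$ and its translate by a regular semisimple element $g\in (H\times G)(F_{0,v_0})$ on the relevant relative RZ space over $\Spf O_{\breve F_{w_0}}$, in the sense of \cite{RSZ2}. In the $\pi$-modular AT type \eqref{pi-mod type} (Variant 1 or Variant 2 of Section \ref{subsec AT}) one has to be careful: the diagonal cycle is defined via the auxiliary correspondence $\CP_{\wtHG}'\to \CP_{\wtHG}\leftarrow \CM_{K_{\wt H}'}(\wt H)$ of \eqref{semi-global corresp diag HG}, so the local intersection number corresponds precisely to the one appearing on the geometric side of the AT conjecture of \cite[\S11, \S12]{RSZ2}.

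Repeating the bookkeeping of \eqref{sum} and computing $\sum_{\nu\mid v_0}\Int_\nu^\natural(g)$ as in \eqref{eq:int nat}, one arrives at an expression
\[
   \Int_{v_0}(f)
      = \frac{1}{[E:F]} \sum_{g \in \BO(\wtHG(\BQ))_\rs} \Orb(g,f^p)\cdot \Bigl(c(\phi_p)\prod_{v\in\CV_p\ssm\{v_0\}}\Orb(g,f_v)\Bigr)\cdot \Int_{v_0}^\natural(g),
\]
where the transition $\log q_{w_0}=[F_{w_0}:F_{0,v_0}]\log q_{v_0}$ yields the same normalization $[E:F]$-factor as in the proof of Theorem \ref{th: hypersp}. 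Next, by the AT identities of \cite[Conj.\ 5.16, Conj.\ 8.13, Conj.\ 11.5, Conj.\ 12.5]{RSZ2}, which are theorems for $n\leq 3$ by \cite[Th.\ 1.2, Th.\ 1.3]{RSZ1} and \cite[Th.\ 14.1]{RSZ2}, one obtains, for any $\gamma\in G'(F_{0,v_0})_\rs$ matching $g$,
\[
   \Int_{v_0}^\natural(g)
      = -[E:F]\bigl(\omega_{v_0}(\gamma)\,\del(\gamma,f'_{v_0}) + \omega_{v_0}(\gamma)\,\Orb(\gamma,f'_{\corr,v_0})\bigr),
\]
where $f'_{\corr,v_0}$ is the local correction function furnished by the AT conjecture (and which vanishes if $(v_0,\Lambda_{v_0})$ is of AT type \eqref{almost pi-mod type} with a suitable choice of framing, as in loc.\ cit.). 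In the remaining AT types the conjecture moreover asserts that $f'_{v_0}$ can be chosen so as to annihilate the correction term, which is precisely the statement ``$f'$ may be chosen such that $f'_\corr[v_0]$ is zero.''

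To assemble the semi-global identity, one proceeds as in the end of the proof of Theorem \ref{th: hypersp}: at archimedean places the Gaussian condition gives $\omega_v(\gamma)\Orb(\gamma,f'_v)\in\{0,1\}$ according to whether $\gamma$ matches an element of $G_{W_0}(F_{0,v})$, cf.\ \eqref{eq Gaussian}; at finite places $v\neq v_0$ with $v\nmid p$ one uses that $f^p$ and $\otimes_{v\nmid p,\,v<\infty}f'_v$ are smooth transfers; at $v\in\CV_p\ssm\{v_0\}$ one uses the identity \eqref{eq p-v0} coming from Remark \ref{rem match}, noting that the transfer constant $c_{v_0}=1$ at the AT place (here one invokes the computation of matching on a single pair as in the hyperspecial case, or more precisely the explicit description of the transfer in loc.\ cit.). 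Applying the product formula \eqref{prod formula} for transfer factors $\prod_v\omega_v(\gamma)=1$ and the decomposition \eqref{eqn J'v dec}, everything collapses to
\[
   \Int_{v_0}(f) = -\delJ_{v_0}(f') - J(f'_\corr[v_0]),
\]
where $f'_\corr[v_0]=f'_{\corr,v_0}\otimes\bigotimes_{v\neq v_0}f'_v$; and $f'_\corr[v_0]$ can be made zero by an appropriate choice of $f'_{v_0}$ when the AT conjecture allows such a choice. The hardest step is verifying that the intersection-theoretic set-up on the global side — in particular the auxiliary correspondences \eqref{semi-global corresp diag G} and \eqref{semi-global corresp diag HG} in AT type \eqref{pi-mod type}, and the accompanying rescaling of the diagonal cycle by the degree of $\pi_1$ — exactly matches the local intersection number of $\Delta(\CN_{n-1})$ on the RZ space that occurs in the AT conjecture of \cite{RSZ2}. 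This compatibility is made possible by the construction in Section \ref{subsec AT}, which was designed for precisely this purpose, so no essentially new local input is required beyond what is already contained in \cite{RSZ1,RSZ2}.
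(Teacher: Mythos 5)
Your proposal follows essentially the same strategy as the paper's proof: reduce to the basic locus via Theorem \ref{int generic}(\ref{int generic iii}), uniformize using Lemma \ref{notwistbanal} and the relative RZ space, repeat the bookkeeping of \eqref{sum}--\eqref{eq:int nat}, and then replace the AFL identity by the AT identity of \cite{RSZ1,RSZ2} for $n\leq 3$. Two points deserve comment.

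First, a small but non-trivial factual error: you assert the transition ``$\log q_{w_0}=[F_{w_0}:F_{0,v_0}]\log q_{v_0}$.'' The correct relation is $\log q_{w_0}=f_{w_0/v_0}\log q_{v_0}$ where $f_{w_0/v_0}$ is the residue degree, which equals $[F_{w_0}:F_{0,v_0}]$ only when $w_0/v_0$ is unramified. For the ramified AT types \eqref{pi-mod type}--\eqref{n=2 self-dual type}, one has $\log q_{w_0}=\log q_{v_0}$. This matters: the AFL identity \eqref{eq AFL hom} carries an explicit factor of $2$ on the left-hand side which combines with $\log q_{w_0}=2\log q_{v_0}$ (inert case) to yield \eqref{eq AFL v0}, whereas the ramified AT identity \eqref{eq ATC hom} has no factor of $2$ and this pairs with $\log q_{w_0}=\log q_{v_0}$. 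If you had followed your stated formula literally, you would have picked up a spurious factor of $2$ in the ramified case. This is precisely the factor-of-$2$ discrepancy the paper flags in the remark following the proof. Your final output identity happens to be correct because you quoted the AT identity in the correct form, but the normalization argument as written is inconsistent with it.

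Second, the paper's proof explicitly ``only prove[s] the Furthermore part'' of Conjecture \ref{conj semiglob}(\ref{conj ATtype}), i.e.\ the case where $f'_{v_0}$ is chosen so that the correction term vanishes. Your proposal attempts the more general statement with a nonzero local correction function $f'_{\corr,v_0}$. This is a reasonable extension of the paper's argument and presumably the intended meaning, but note that this goes beyond what the paper actually carries out, and the separation $\Int_{v_0}^\natural(g)=-[E:F]\bigl(\omega_{v_0}(\gamma)\del(\gamma,f'_{v_0})+\omega_{v_0}(\gamma)\Orb(\gamma,f'_{\corr,v_0})\bigr)$ is not uniformly written this way across the four AT types in \cite{RSZ1,RSZ2}: for $n$ odd your AT reference should be \cite[\S5]{RSZ1}, whereas for $n$ even it is \cite[\S12]{RSZ2}.

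Everything else — the uniformization reduction, the use of the auxiliary correspondences \eqref{semi-global corresp diag G}--\eqref{semi-global corresp diag HG} in AT type \eqref{pi-mod type}, the treatment of the split and archimedean places, the transfer constant argument, and the product formula for transfer factors — tracks the paper's argument closely and is correct.
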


\begin{proof} This follows in the same way as in the proof of the  previous Theorem \ref{th: hypersp}, from the AT conjecture proved for $n\leq 3$ in  \cite{RSZ1} and \cite{RSZ2}. 

We indicate where we need to modify the proof.  To simplify, we only prove the ``Furthermore" part of Conjecture \ref{conj semiglob}(\ref{conj ATtype}).
We first consider the ramified case in Section \ref{subsec AT}. In \eqref{e:N' vs N}, we have
$\CN_{O_{\breve E_\nu}}=\CN \mathbin{\wh\otimes}_{O_{\breve F_{w_0}}}O_{\breve E_\nu} $, where  $\CN = \CN_{n-1} \times_{O_{\breve F_{w_0}}} \CN_n$ is the relative RZ space in \cite{RSZ1,RSZ2}. Then we replace the AFL identity \eqref{eq AFL hom} by the AT identity (cf.\ \cite[\S 5]{RSZ1} when $n$ is odd, and \cite[\S 12]{RSZ2} when $n$ is even), 
\begin{align}\label{eq ATC hom}
 \bigl\la \Delta(\CN_{n-1}), g\Delta(\CN_{n-1}) \bigr\ra_\CN \log q_{v_0}=-\omega_{v_0}(\gamma)\del(\gamma,f_{v_0}'),
\end{align}
where $f_{v_0}'$ is a function in the AT conjecture.
Now since $v_0$ is ramified in $F$, we have $\log q_{w_0}=\log q_{v_0}$ and hence the equation  \eqref{eq AFL v0} remains true by \eqref{eq:int nat}.

In the case of unramified AT type, we replace the corresponding space $\CN$ after \eqref{e:N' vs N} by the RZ space in \cite{RSZ2}.

In both cases, the rest of the proof is the same.
\end{proof}

\begin{remark}The proof in the ramified case explains the discrepancy of  
the factor $2$ in the ramified ATC \cite{RSZ1,RSZ2} and the AFL. In these identities, it would be most natural to normalize the intersection number by the factor $\log q_{w_0}$.
\end{remark}

\appendix
\section{Sign invariants}\label{sign invariants}
In this appendix we adapt the sign invariants of \cite{KR-new} to the setting of the moduli problems introduced in Sections \ref{section semi-global} and \ref{section global}; see also \cite{KRZ}.  We continue with the notation in the main body of the paper, with $F/\BQ$ a CM field, $F_0$ its maximal totally real subfield of index $2$, and $\Phi$ a CM type for $F$.  However, we will allow for more general functions $r$ than in \eqref{r main body}. Set $d := [F_0 : \BQ]$, and let $v$ be a finite place of $F_0$ which is non-split in $F$.  (In the case $v$ splits in $F$, the analog of the theory in this appendix is trivial, insofar as the value group $F_{0,v}^\times/ \Nm F_v^\times$ is trivial.)  Let $k$ be an arbitrary field.  We are first going to define an invariant 
\begin{equation}
   \inv_v(A_0,\iota_0,\lambda_0,A,\iota,\lambda)^\nat \in F_{0,v}^\times / \Nm F_v^\times
\end{equation}
attached to the following objects over $k$:
\begin{altitemize}
\item abelian varieties $A_0$ and $A$ over $k$ of  respective dimensions $d$ and $nd$;
\item rational actions $\iota_0\colon F \to \End^\circ(A_0)$ and $\iota \colon F \to \End^\circ(A)$; and
\item quasi-polarizations $\lambda_0 \in \Hom^\circ(A_0,A_0^\vee)$ and $\lambda \in \Hom^\circ(A,A^\vee)$ whose corresponding Rosati involutions induce the non-trivial automorphism on $F/F_0$.
\end{altitemize}
We will then write
\begin{equation}
   \inv_v(A_0,\iota_0,\lambda_0,A,\iota,\lambda) \in \{\pm 1\}
\end{equation}
for the image of $\inv_v(A_0,\iota_0,\lambda_0,A,\iota,\lambda)^\nat$ under the identification $F_{0,v}^\times / \Nm F_v^\times \cong \{\pm 1\}$.  We note in advance that this invariant will depend on $A_0$ and $A$ only up to isogeny.  We give the definition separately in the cases that $\reschar v \neq \charac k$ and $\reschar v = \charac k$, where $\reschar v$ denotes the residue characteristic of $v$.

\begin{altenumerate}
\renewcommand{\theenumi}{\alph{enumi}}
\item \emph{$\reschar v \neq \charac k$.  Let $\ell := \reschar v$}, and let $V_\ell(A_0)$ and $V_\ell(A)$ denote the respective rational $\ell$-adic Tate modules of $A_0$ and $A$. Set
\[
   V_\ell(A_0, A) := \Hom_F\bigl(V_\ell(A_0), V_\ell(A)\bigr).
\]
Then $V_\ell(A_0, A)$ is a free $F \otimes_\BQ \BQ_\ell$-module of rank $n$. The polarizations $\lambda_0$ and $\lambda$ endow $V_\ell(A_0, A)$ with a non-degenerate hermitian form $h$, defined by
\[
   h(\phi_1,\phi_2) := \lambda_0\i \circ \phi_2^\vee \circ \lambda \circ \phi_1 \in \End_{F \otimes \BQ_\ell}\bigl( V_\ell(A_0, A) \bigr) \cong F \otimes \BQ_\ell.
\]
The decomposition $F_0 \otimes \BQ_\ell =  \prod_{v'\mid \ell} F_{0,v'}$, where $v'$ runs through the places of $F_0$ over $\ell$, induces a decomposition
\[
   V_\ell(A_0, A) = \bigoplus_{v'\mid \ell} V_{v'}(A_0, A),
\]
and each $V_{v'}(A_0, A)$ is a free $F_{v'}$-module of rank $n$.  By assumption $F_v$ is a field, and we define the invariant at $v$ as for any $n$-dimensional $F_v/F_{0,v}$-hermitian space in the main body of the paper \eqref{Hasse def},
\[
   \inv_v(A_0,\iota_0,\lambda_0,A,\iota,\lambda)^\nat := (-1)^{n(n-1)/2} \det V_v(A_0, A) \in F_{0,v}^\times / \Nm F_v^\times,
\]
where $\det V_v(A_0, A)$ is the class mod $\Nm F_v^\times$ of any hermitian matrix representing the component $h_v$.  We note that $\inv_v(A_0,\iota_0,\lambda_0,A,\iota,\lambda)^\nat$ is unchanged after any base change $k \to k'$.
\item \emph{$\reschar v = \charac k = p$}.  Let $\ov k$ be an algebraic closure of $k$.  Let $W = W(\ov k)$ denote the ring of Witt vectors of $\ov k$, and let $\sigma$ denote the Frobenius operator on $W$.  The decomposition $O_{F_0} \otimes \BZ_{p} =  \prod_{v'\mid  p} O_{F_0,v'}$ induces decompositions of $p$-divisible groups $A_0[p^\infty] = \prod_{v'\mid  p}A_0[v^{\prime \infty}]$ and $A[p^\infty] = \prod_{v'\mid  p}A[v^{\prime \infty}]$, and we denote by $M(A_0)_{\BQ,v}$ and $M(A)_{\BQ,v}$ the respective rational Dieudonn\'e modules of $A_0[v^\infty]$ and $A[v^\infty]$ over $W_\BQ$. Let $\uF_0$ and $\uF$ denote the respective Frobenius operators of $M(A_0)_{\BQ,v}$ and $M(A)_{\BQ,v}$.  Then $M(A_0)_{\BQ,v}$ and $M(A)_{\BQ,v}$ carry actions of $F_v$ which commute with the Frobenius operators and make $M(A_0)_{\BQ,v}$ and $M(A)_{\BQ,v}$ into free $F_v \otimes_{\BQ_p} W_\BQ$-modules of respective ranks $1$ and $n$. Furthermore $M(A_0)_{\BQ,v}$ is isoclinic of slope $1/2$.  We consider the internal Hom in the category of isocrystals with $F_v$-action,
\[
   M_{\BQ,v} := \Hom_{F_v \otimes W_\BQ}\bigl(M(A_0)_{\BQ,v}, M(A)_{\BQ,v}\bigr),
\]
which is a free $F_v \otimes_{\BQ_p}W_\BQ$-module of rank $n$.
Here, as for any internal Hom object, the Frobenius operator $\uF_{M_{\BQ,v}}$ on $M_{\BQ,v}$ sends $\phi \mapsto \uF \circ \phi \circ \uF_0\i$.  The polarizations $\lambda_0$ and $\lambda$ endow $M_{\BQ,v}$ with an $F_v \otimes_{\BQ_p} W_\BQ/F_{0,v} \otimes_{\BQ_p} W_\BQ$-hermitian form $h$, defined by
\[
   h(\phi_1,\phi_2) := \lambda_0\i \circ \phi_2^\vee \circ \lambda \circ \phi_1 \in \End_{F_v \otimes W_\BQ}\bigl(M(A_0)_{\BQ,v}\bigr) \cong F_v \otimes_{\BQ_p} W_\BQ.
\]
Let
\[
   N_{\BQ,v} := \bigwedge\nolimits_{F_v \otimes W_\BQ}^n M_{\BQ,v}.
\]
Then $N_{\BQ,v}$ is isoclinic of slope zero, and $h$ induces a hermitian form $\sform$ on it satisfying
\[
   (\uF_{N_{\BQ,v}} x, \uF_{N_{\BQ,v}} y) = (x,y)^\sigma, \quad x,y \in N_{\BQ,v}.
\]
For any element $x_0 \in N_{\BQ,v}$ fixed by $\uF_{N_{\BQ,v}}$, the class $(x_0,x_0) \in F_{0,v}^\times/\Nm F_v^\times$ is independent of the choice of $x_0$, and we define
\[
   \inv_v(A_0,\iota_0,\lambda_0,A,\iota,\lambda)^\nat := (-1)^{n(n-1)/2}(x_0,x_0) \in F_{0,v}^\times/\Nm F_v^\times
\]
for such $x_0$.
\end{altenumerate}

Now let $r\colon \Hom_\BQ(F,\ov\BQ) \to \BZ_{\geq 0}$ be a generalized CM type for $F$ of rank $n$, i.e., a function $\varphi \mapsto r_\varphi$ satisfying $r_\varphi + r_{\ov\varphi} = n$ for all $\varphi \in \Hom_\BQ(F,\ov\BQ)$, cf.\ \cite[Def.\ 2.1]{KR-new}.  Also, let $r_0$ be the \emph{opposite} of the canonical generalized CM type for $F$ of rank one attached to the CM type $\Phi$,
\begin{equation}\label{r0}
   r_{0,\varphi} = 
	\begin{cases}
		0,  &  \varphi\in \Phi;\\
		1,  &  \varphi\notin\Phi,
	\end{cases}
	\quad \varphi \in \Hom_\BQ(F,\ov\BQ).
\end{equation}
Let $E$ be the subfield of $\ov\BQ$ characterized by 
\begin{equation}\label{genE}
   \Gal(\ov\BQ/E) = \bigl\{\, \sigma\in\Gal(\ov\BQ/\BQ) \bigm| \sigma \circ\Phi=\Phi \text{ and } r_{\sigma\varphi}= r_\varphi \text{ for all } \varphi\in\Phi \,\bigr\}.
\end{equation}
Thus $E$ is the join of the reflex fields of $r$ and of $r_0$, in the sense of \cite[\S2]{KR-new}. Note that, in the situation of the main body of the paper, this definition of $E$ agrees with \eqref{defE}; but in constrast to the main body, in general $F$ need not admit an embedding into $E$. 

Recall from loc.\ cit.\ that a \emph{triple of CM type $r$} over an $O_E$-scheme $S$ is a triple $(A,\iota,\lambda)$ consisting of an abelian scheme $A$ over $S$, an action $\iota\colon O_F \to \End_S(A)$ satisfying the Kottwitz condition of type $r$, and a polarization $\lambda\colon A \to A^\vee$ such that $\Ros_\lambda$ induces on $O_F$, via $\iota$, the nontrivial Galois automorphism of $F/F_0$.
We denote by $\CM_{r_0,r}$ the stack over $\Spec O_E$ of tuples $(A_0,\iota_0,\lambda_0,A,\iota,\lambda)$, where $(A_0,\iota_0,\lambda_0)$ is a CM triple of type $r_0$ and $(A,\iota,\lambda)$ is a CM triple of type $r$.

Let $k$ be a field which is an $O_E$-algebra, and let $(A_0,\iota_0,\lambda_0,A,\iota,\lambda) \in \CM_{r_0,r}(k)$.  Again let $v$ be a finite place of $F_0$ which is non-split in $F$.  We are going to define the \emph{$r$-adjusted invariant} $\inv_v^r(A_0,\iota_0,\lambda_0,A,\iota,\lambda)$ (it depends on both $r_0$ and $r$). If the residue characteristic of $v$ is different from the characteristic of $k$, then we set
\begin{equation}\label{deftrmodsign}
   \inv_v^r(A_0,\iota_0,\lambda_0,A,\iota,\lambda) := \inv_v(A_0,\iota_0,\lambda_0,A,\iota,\lambda).
\end{equation}

Now suppose that the residue characteristic of $v$ is equal to the characteristic $p$ of $k$.
Let $\nu$ be the place of $E$ determined by the structure map $O_E \to k$, and let $\wt\nu\colon \ov\BQ \to \ov\BQ_p$ be an embedding which induces $\nu$. Let
\[
   \Phi_{\nu,v} := \bigl\{ \, \varphi \in \Phi \bigm| \wt\nu \circ \varphi|_{F_0} \text{ induces } v\, \bigr\}.
\]
Then the set $\Phi_{\nu,v}$ is independent of the choice of $\wt\nu$ inducing $\nu$, and using $\wt\nu$ we may identify
\begin{equation}\label{F_v embed}
   \Hom_{\BQ_p}(F_v,\ov\BQ_p) \simeq \Phi_{\nu,v} \sqcup \ov\Phi_{\nu,v}.
\end{equation}
Let
\[
   r_{\nu,v} := r|_{\Phi_{\nu,v} \sqcup \ov\Phi_{\nu,v}}.
\]
Then we define
\begin{equation}\label{sgn r}
   \sgn(r_{\nu,v}) := (-1)^{\sum_{\varphi\in \Phi_{\nu,v}} r_\varphi}
\end{equation}
and
\begin{equation}\label{defmodsign}
   \inv_v^r(A_0,\iota_0,\lambda_0,A,\iota,\lambda) := \sgn(r_{\nu,v}) \cdot \inv_v(A_0,\iota_0,\lambda_0,A,\iota,\lambda).
\end{equation}

The analog of \cite[App.]{KRZ} (which corrects  \cite[Prop.\ 3.2]{KR-new}) is now as follows.

\begin{proposition}\label{inv_v^r const}
Let $(A_0,\iota_0,\lambda_0,A,\iota,\lambda) \in \CM_{r_0,r}(S)$, where $S$ is a connected scheme over $\Spec O_E$. Then for every non-archimedean place $v$ of $F_0$ which is non-split in $F$, the function
\[
   s \mapsto \inv_v^r(A_{0,s},\iota_{0,s},\lambda_{0,s},A_s,\iota_s,\lambda_s)
\]
is constant on $S$.
\end{proposition}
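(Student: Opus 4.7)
The plan is to reduce to the case of a Noetherian connected $S$ (by a standard limit argument, since $\CM_{r_0,r}$ is of finite presentation over $\Spec O_E$), and then to treat three situations: the locus $S[1/\ell] \subset S$ where $\ell$, the residue characteristic of $v$, is invertible; the closed complement $S_\ell$; and finally the comparison between the two via a mixed-characteristic specialization. Since $F_{0,v}^\times/\Nm F_v^\times = \{\pm 1\}$ is discrete, ``locally constant'' is what I ultimately need on each of the two pieces, together with an equality at one specialization point joining them.

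\textbf{Local constancy on $S[1/\ell]$.} The rational $\ell$-adic Tate modules $V_\ell(A_0)$ and $V_\ell(A)$ define lisse $F \otimes \BQ_\ell$-sheaves on $S[1/\ell]$, and so does $\wh V_\ell(A_0,A) = \Hom_F(V_\ell(A_0),V_\ell(A))$. The polarizations endow it with a nondegenerate $F \otimes \BQ_\ell/F_0 \otimes \BQ_\ell$-hermitian form which varies continuously. Projecting onto the $v$-component gives a lisse $F_v$-sheaf of rank $n$ with nondegenerate hermitian form, whose determinant in $F_{0,v}^\times/\Nm F_v^\times$ is therefore locally constant in the \'etale topology of $S[1/\ell]$; hence it is constant on each connected component. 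This handles $\inv_v$ (and therefore $\inv_v^r$) on $S[1/\ell]$.

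\textbf{Local constancy on $S_\ell$.} Over the characteristic-$p$ locus, the rational Dieudonn\'e modules $M(A_0)_{\BQ,v}$ and $M(A)_{\BQ,v}$ of the $v$-components of the $p$-divisible groups, together with the induced hermitian form on $M_{\BQ,v} = \Hom_{F_v \otimes W_\BQ}(M(A_0)_{\BQ,v}, M(A)_{\BQ,v})$, vary in a crystal of isocrystals. Passing to the top exterior power gives a rank-one isocrystal $N_{\BQ,v}$ of slope zero with a hermitian form. The class $(x_0,x_0) \in F_{0,v}^\times/\Nm F_v^\times$ of a Frobenius-fixed generator is well-defined (independent of $x_0$, since two such generators differ by an element of $F_v^\times$) and depends only on the isomorphism class of the hermitian $F$-isocrystal. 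Crystalline base change in a formal neighborhood of any point and standard rigidity of isocrystals then force this class to be constant on each connected component of $S_\ell$. In particular the sign factor $\sgn(r_{\nu,v})$ is also constant on $S_\ell$, because the place $\nu$ of $E$ induced by the structure map is constant on each connected component; hence $\inv_v^r$ is constant on $S_\ell$.

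\textbf{Bridging the two pieces.} Take a point $\bar s \in S_\ell$ and a nearby point in $S[1/\ell]$; more precisely, choose a complete DVR $R$ with an $O_E$-algebra structure whose closed point hits $\bar s$ and whose generic fiber is of characteristic $0$, together with a morphism $\Spec R \to S$ compatible with specialization. Pulling back $(A_0,\iota_0,\lambda_0,A,\iota,\lambda)$ to $\Spec R$, it suffices to show the equality of the two invariants (in the char $0$ sense and the char $p$ sense) at the generic and special points of $\Spec R$. Let $(\wt A_0, \wt A, \dotsc)$ denote the pullback to $\Spec R$. The crystalline--\'etale comparison isomorphism identifies $V_\ell(\wt A)_{\ov\eta} \otimes \BQ_p$ with the filtered $F$-isocrystal of $\wt A$, and the polarization pairing on the former is identified with the hermitian form on $N_{\BQ,v}$ up to a sign which one computes in terms of the Hodge filtration on $\Lie \wt A$ and $\Lie \wt A_0$. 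The Kottwitz conditions at $r$ and $r_0$ of \eqref{r0} pin down this filtration explicitly, and the local CM reflex datum identifies the places $\varphi \in \Phi_{\nu,v}$ as exactly those $\varphi$ which contribute to the Hodge line of $\Lie \wt A_0$ via the embedding $\wt\nu$ chosen in \eqref{F_v embed}. A direct computation (following the calculation in \cite[App.]{KRZ}) with the hermitian top exterior power then shows that the $\ell$-adic invariant at the generic point and the Dieudonn\'e invariant at the special point differ by precisely the sign $\sgn(r_{\nu,v}) = (-1)^{\sum_{\varphi \in \Phi_{\nu,v}} r_\varphi}$, so their \emph{$r$-adjusted} counterparts coincide after inserting the correction \eqref{defmodsign}.

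\textbf{Main obstacle.} The substantive step is the last one, the bookkeeping of signs in the crystalline--\'etale comparison; it is precisely the point at which \cite{KR-new} originally erred and which \cite{KRZ} corrected, and the sign factor $\sgn(r_{\nu,v})$ was introduced specifically so that the two definitions \eqref{deftrmodsign} and \eqref{defmodsign} agree on any specialization. Once that sign computation is in hand, the three steps above assemble to show that $s \mapsto \inv_v^r$ is constant on every connected component of $S$, proving the proposition.
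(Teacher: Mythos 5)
Your proposal follows the same strategy as the paper's proof: local constancy on the prime-to-$\ell$ locus via the lisse hermitian sheaf $\wh V_\ell(A_0,A)$, local constancy in characteristic $p$ via the hermitian isocrystal $N_{\BQ,v}$, and a bridge over a mixed-characteristic DVR using the crystalline comparison theorem, with the $r$-adjustment $\sgn(r_{\nu,v})$ exactly reconciling the two definitions \eqref{deftrmodsign} and \eqref{defmodsign}. One small imprecision in your bridging step is worth flagging: the crystalline comparison isomorphism \eqref{comparison} identifies the hermitian forms on the nose over $B_\crys$, so no sign appears as a discrepancy between pairings; rather, the \'etale line $S_v$ is identified with $\CF\bigl(\CI(N_{\BQ,v}^{\text{Frob}=1})\bigr)$ --- Fontaine's functor applied to the \emph{filtered} isocrystal --- and the class $\cl(N_{\BQ,v}^{\text{Frob}=1}, S_v) \in H^1(\BQ_p,T) \cong X_*(T)_\Gamma$ is then computed via \cite[Prop.~1.20]{RZ} to be $\mu^\sharp$, where $\mu$ records the jumps of the Hodge filtration; summing $\mu_\varphi = nr_{0,\varphi} - r_\varphi$ over the half-system $\Phi_{\nu,v}$ (where $r_{0,\varphi}=0$) gives $-\sum_{\varphi\in\Phi_{\nu,v}} r_\varphi$, which in $\BZ/2\BZ$ is precisely $\sgn(r_{\nu,v})$.
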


\begin{proof} The proof is easy when the residue characteristic $ \ell$ of $v$ is invertible in $\CO_S$, in which case $\wedge^{\rm max} V_\ell(A_0, A)$ is a lisse \'etale sheaf on $S$, comp. the proof of  \cite[Prop.\ 3.2]{KR-new}.  A similar argument proves the assertion  when $S$ is a scheme over $\BF_p$ and $\ell = p$, cf.~\cite[Lem.\ 8.2.2]{KRZ}.\footnote{The corresponding passage in  the proof of  \cite[Prop.\ 3.2]{KR-new} is an over-simplification.}  
The remaining cases are reduced to the case $\ell = p$ and $S = \Spec O_L$, where $L$ is the completion of a subfield of $\ov\BQ_p$ which contains $E$ and such that its ring of integers $O_L$ is a discrete valuation ring with residue field $k = \ov\BF_p$. To compare the invariants at the generic and closed points of $S$, as in loc.\ cit.\ we are going to use $p$-adic Hodge theory. Let $A_L$ and $A_{0, L}$ denote the respective generic fibers of $A$ and $A_0$, and let $A_k$ and $A_{0,k}$ denote the respective special fibers.  Let $\nu$ denote the induced place of $E$.

We decompose the homomorphism module of the rational $p$-adic Tate modules of $ A_{0,L}$ and $ A_L$, resp.\ the homomorphism module of the rational Dieudonn\'e modules of $A_{0, k}$ and $A_k$, with respect to the actions of $F \otimes_\BQ \BQ_p \cong \prod_{w \mid p} F_w$,
\[
\begin{aligned}
   V_p &:= \Hom_{F \otimes \BQ_p}\bigl(V_p(A_{0, L}),V_p(A_L)\bigr)= \bigoplus\nolimits_{w\mid p} V_{w} ,\\
	\quad
	M_\BQ &:= \Hom_{F \otimes \breve \BQ_p}\bigl(M(A_{0,k})_\BQ,M(A_k)_\BQ\bigr) = \bigoplus\nolimits_{w\mid p} M_{\BQ,w}.
\end{aligned}	
\]
Here $w$ runs through the places of $F$, and we recall the notation $\breve \BQ_p = W(k)_\BQ$. Furthermore, for each place $w \mid p$, the summand $V_{w}$ is a free $F_{w}$-vector space of rank $n$, and $M_{\BQ,w}$ is a free $F_{w} \otimes_{\BQ_p} \breve \BQ_p$-module of rank $n$. At our distinguished place $v$, we set
\[
   S_v := \bigwedge\nolimits_{F_v}^n V_v
	\quad\text{and}\quad
	N_{\BQ,v} := \bigwedge\nolimits_{F_v \otimes \breve \BQ_p}^n M_{\BQ,v}.
\]
Then $S_v$ and $N_{\BQ,v}^{\text{Frob}=1}$ are one-dimensional $F_v$-vector spaces (the latter because $N_{\BQ,v}$ is isoclinic of slope zero) equipped with natural $F_v/F_{0,v}$-hermitian forms.  Our problem is to compare these hermitian spaces, which we will do via \cite[Prop.\ 1.20]{RZ}.

To explain the group-theoretic setup in our application of loc.\ cit., let $T$ be the torus over $\BQ_p$ which is the kernel in the exact sequence
\[
   1 \to T \to \Res_{F_v /\BQ_p} \BG_{m,F_v} \xra{\Nm} \Res_{F_{0,v} /\BQ_p} \BG_{m,F_{0,v}} \to 1.
\]
Then $H^1(\BQ_p,T) = F_{0,v}^\times/\Nm F_v^\times$. The spaces $S_v$ and $N_{\BQ,v}^{\text{Frob}=1}$ are natural $\BQ_p$-rational representations of $T$, and we may regard the isomorphisms of hermitian vector spaces $\Isom(N_{\BQ,v}^{\text{Frob}=1},S_v)$ as an \'etale sheaf on $\Spec \BQ_p$.
This is a $T$-torsor. To calculate its class $\cl(N_{\BQ,v}^{\text{Frob}=1},S_v)$ via loc.\ cit., we seek to express the filtered isocrystal $N_{\BQ,v}$ in the form $\CI(N_{\BQ,v}^{\text{Frob}=1})$ for an admissible pair $(\mu,b)$ in $T$, in the notation of \cite[1.17]{RZ}.

Since $N_{\BQ,v}$ is isoclinic of slope zero, we take $b \in T(\breve \BQ_p)$ to be the identity. To determine the cocharacter $\mu$, we need to identify the filtration on $N_{\BQ,v} \otimes_{\breve\BQ_p} \ov{\breve\BQ}_p$.  Choose any embedding $\ov\BQ_p \to \ov{\breve \BQ}_p$, and identify $\Hom_{\BQ_p}(F_v,\ov\BQ_p) \simeq \Phi_{\nu,v} \sqcup \ov\Phi_{\nu,v}$ as in (\ref{F_v embed}). By the Kottwitz condition, the filtration on $M(A_k)_{\BQ,v} \otimes_{\breve\BQ_p} \ov{\breve\BQ}_p \cong \bigoplus_{\varphi \in \Phi_{\nu, v} \sqcup \ov\Phi_{\nu,v}} M(A_k)_{\BQ,\varphi}$ is given by, for each $\varphi$,
\[
   M(A_k)_{\BQ, \varphi} \supset^{r_\varphi} \Fil_\varphi^1 \supset 0,
\]
where the displayed terms are in respective degrees $0$, $1$, and $2$, and the upper index on the first containment means that the cokernel is of dimension $r_\varphi$.  The filtration on $M(A_{0,k})_{\BQ,v} \otimes_{\breve\BQ_p} \ov{\breve\BQ}_p \cong \bigoplus_{\varphi \in \Phi_{\nu, v} \sqcup \ov\Phi_{\nu, v}} M(A_{0,k})_{\BQ, \varphi}$ is similarly given by, for each $\varphi$,
\[
   M(A_{0,k})_{\BQ,\varphi} \supset^{r_{0,\varphi}} \Fil_{0,\varphi}^1 \supset 0,
\]
where $r_{0,\varphi}$ is given in \eqref{r0}. The unique jump in this filtration occurs in degree $1-r_{0, \varphi}$. The filtration on the dual space $M(A_{0,k})_{\BQ,\varphi}^\vee$ therefore has unique jump in degree $r_{0, \varphi}-1$.  Therefore in the filtration on the one-dimensional space
\[
   N_{\BQ,\varphi} = \bigwedge\nolimits_{\ov{\breve\BQ}_p}^n M_{\BQ,\varphi} 
	   \cong \bigl(M(A_{0,k})_{\BQ,\varphi}^\vee\bigr)^{\otimes n} \otimes \bigwedge\nolimits_{\ov{\breve\BQ}_p}^n M(A_k)_{\BQ,\varphi},
\]
the unique jump occurs in degree $n - r_\varphi + n(r_{0,\varphi} -1) = nr_{0,\varphi} - r_\varphi$.

Now consider the natural identification
\[
   X_*(T) \cong \ker\bigl[\Ind_{F_{0,v}}^{F_v} \bigl(\Ind_{\BQ_p}^{F_{0,v}}\BZ\bigr) \rightarrow \Ind_{\BQ_p}^{F_{0,v}}\BZ \bigr] .
\]
The corresponding filtration on $N_{\BQ,\varphi}$ is then given by the cocharacter $\mu \in X_*(T)$ with
\[
   \mu_\varphi = n r_{0,\varphi} - r_\varphi, \quad \varphi \in \Phi_{\nu,v} \sqcup \ov\Phi_{\nu,v}.
\]
Then $N_{\BQ,\varphi} = \CI(N_{\BQ,v}^{\text{Frob}=1})$ for the above choices of $\mu$ and $b$.

Now, by $p$-adic Hodge theory, in the case of the abelian scheme $A$, there is a canonical isomorphism
\begin{equation}\label{comparison}
   V_p(A_L) \otimes_{\BQ_p} B_\crys \cong M(A_k)_{\BQ} \otimes_{\breve \BQ_p} B_\crys
\end{equation}
compatible with all structures on both sides (e.g.\ the Frobenii, the $F$-actions, and the polarization forms), cf.\ \cite{Fal,T}. Here $B_\crys$ is the crystalline period ring of Fontaine \cite{Fon}.  Moreover, after extension of scalars under the inclusion $B_\crys \subset B_\text{dR}$, this isomorphism is compatible with the filtrations on both sides.  Furthermore, there is an analogous isomorphism with $A_0$ in place of $A$.  Taking homomorphism modules on both sides between the $v$-components of (\ref{comparison}) and its analog for $A_0$, and then passing to top exterior powers, we obtain an isomorphism between free $F_v \otimes_{\BQ_p} B_\crys$-modules of rank one,
\[
   S_v \otimes_{\BQ_p} B_\crys \cong N_{\BQ,v} \otimes_{\breve \BQ_p} B_\crys,
\]
again compatible with all structures on both sides, and in particular with the hermitian forms.  Taking Frobenius-fixed elements in the zeroth filtration modules, we obtain an isometry of $F_v/F_{0,v}$-hermitian spaces,
\[
   S_v \cong \CF\bigl( \CI(N_{\BQ,v}^{\text{Frob}=1}) \bigr),
\]
where $\CF$ denotes Fontaine's functor from admissible filtered isocrystals to Galois representations.

We conclude that the class $\cl(N_{\BQ,v}^{\text{Frob}=1},S_v)$ is computed by the formula $\kappa(b) - \mu^\sharp$ in \cite[Prop.\ 1.20]{RZ}.  Here $\mu^\sharp$ denotes the image of $\mu$ in the coinvariants $X_*(T)_\Gamma$, where $\Gamma:= \Gal(\ov\BQ_p/\BQ_p)$.  Since $b$ is trivial, under the identification $X_*(T)_\Gamma \cong H^1(\BQ_p,T) \cong \BZ/2\BZ$, we obtain
\begin{equation}\label{musharp}
   \cl(N_{\BQ,v}^{\text{Frob}=1},S_v) = \mu^\sharp = \sum_{\varphi \in \Xi_v} \mu_\varphi ,
\end{equation}
where $\Xi_v$ is any half-system, i.e., $ \Xi_v \sqcup \ov\Xi_v \cong \Hom_{\BQ_p}(F_v,\ov\BQ_{p})$. In the formula for $\sgn (r_{\nu,v})$, we took $\Xi_v=\Phi_{\nu,v}$. 
\end{proof}

\begin{remark}\label{tired}
In the definition of the $r$-adjusted invariant above, we took the function $r_0$ in (\ref{r0}) to be the opposite of the canonical rank one function for $\Phi$ because this is what occurs in the moduli problem for $\CM_0^\fka$
 in the main body of the paper, cf.\ (\ref{KottwitzA0}).  Of course, we could have instead worked with respect to the canonical function (sending $\varphi \mapsto 1$ for $\varphi \in \Phi$ and $\varphi \mapsto 0$ for $\varphi \notin \Phi$), which is tantamount to replacing $\Phi$ by $\ov\Phi$.  In this case one defines the $r$-adjusted invariant at a place $v$ dividing $\charac k$ to be $(-1)^{\sum_{\varphi\in \ov\Phi_v} r_\varphi}\cdot\inv_v(A_0,\iota_0,\lambda_0,A,\iota,\lambda)$, and the statement and proof of Proposition \ref{inv_v^r const} for this version go through virtually without change.
\end{remark}

\begin{remark}\label{rm:pdivgps}
There is an obvious variant of the sign invariant for $p$-divisible groups. More precisely, let $L_0$ be a finite extension of $\BQ_p$, and let $L/L_0$ be a quadratic extension. Fix a local CM type $\Phi_L\subset \Hom_{\BQ_p}(L, \ov\BQ_p)$ for $L/L_0$, and let $r_L\colon \Hom_{\BQ_p}(L, \ov\BQ_p)\to \BZ_{\geq 0}$ be a local generalized CM type of rank $n$ for $L/L_0$, cf.\ \cite[\S5]{KR-new}. Let $k$ be a field of characteristic $p$ which is an $O_{E_{\Phi_L,r_L}}$-algebra, where $E_{\Phi_L,r_L} \subset \ov\BQ_p$ denotes the join of the reflex fields for $\Phi_L$ and for $r_L$. Let $(X_0, \iota_0, \lambda_0)$ and  $(X, \iota, \lambda)$ be  $p$-divisible groups over $k$ with actions by $O_L$ and quasi-polarizations whose associated Rosati involutions induce on $F$ the Galois conjugation over $L_0$. Assume that  $(X_0, \iota_0)$ is of CM type $\ov\Phi_L$ and that $(X, \iota)$ is of generalized CM type $r_L$, cf.\ \cite[\S5]{KR-new}. 
Then there is a sign invariant
\begin{equation}\label{locinveq}
   \inv^{r_L}(X_0,\iota_0,\lambda_0,X,\iota,\lambda) := \sgn(r_L) \cdot \inv(X_0,\iota_0,\lambda_0,X,\iota,\lambda) ,
\end{equation}
with  properties analogous to the case of abelian varieties. In fact, returning to all of our notation from the global setting, suppose that $k$ is an $O_E$-algebra, and let $(A_0,\iota_0,\lambda_0,A,\iota,\lambda) \in \CM_{r_0,r}(k)$.
Consider the decomposition of the corresponding $p$-divisible groups  induced by the decomposition $O_{F_0}\otimes\BZ_p=\prod_{v\mid p} O_{F_{0}, v}$, 
\[
   A_0[p^\infty]=\prod_{v\mid p}A_0[v^\infty] 
   \quad\text{and}\quad
   A[p^\infty]=\prod_{v\mid p}A[v^\infty] .
\]
Let $\nu$ denote the place of $E$ determined by $k$, and for a place $v \mid p$ which is non-split in $F$, take $L := F_v$, $\Phi_L := \Phi_{\nu,v}$, and $r_L := r_{\nu,v}$ (where we implicitly choose an identification $\Hom_{\BQ_p}(F_v,\ov\BQ_p) \simeq \Phi_{\nu,v} \sqcup \ov\Phi_{\nu,v}$ as above).
Then
$$
 \inv_v^r(A_0,\iota_0,\lambda_0,A,\iota,\lambda) = \inv^{r_{\nu,v}}\bigl(A_0[v^\infty],\iota_0[v^\infty],\lambda_0[v^\infty],A[v^\infty],\iota[v^\infty],\lambda[v^\infty]\bigr).
$$
\end{remark}

\section{Local models in the case of banal signature}\label{appendix}

In this appendix, we prove that local models attached to Weil restrictions of $\GL_n$ and $\GU_n$, defined using an analog of the Eisenstein condition of \cite{RZ14}, are trivial in the case of \emph{banal signature}.  Let $L/K$ be a finite separable extension of discretely valued henselian fields, with respective valuation rings $O_L$ and $O_K$.  Let $\pi$ be a uniformizer for $L$, and fix an algebraic closure $\ov K$ of $K$.  
The material in this appendix applies to the main body of the paper in the case $K = \BQ_p$ and $L = F_w$ for $w$ a $p$-adic place of the number field $F$.

\subsection{The $\GL_n$ case}
Let $n$ be a positive integer, and fix a function
\begin{equation}\label{r}
	\begin{gathered}
   r\colon
	\xymatrix@R=0ex{
	   \Hom_K(L,\ov K)\ar[r]  &  \{0,n\}\\
		\varphi \ar@{|->}[r]  &  r_\varphi.
	}
	\end{gathered}
\end{equation}
The \emph{reflex field} attached to $r$ is the fixed field $E_r \subset \ov K$ of the subgroup of the Galois group,
\[
   \bigl\{\, \sigma \in \Gal(\ov K/K) \bigm| r_{\sigma \circ \varphi} = r_\varphi \text{ for all } \varphi \in \Hom_K(L,\ov K)\,\bigr\}.
\]
Then $E_r$ is a finite extension of $K$, contained in the normal closure of $L$ in $\ov K$. Note that in contrast to the analogous global situation considered in the main body of the paper (with the particular choice of $r$ in \eqref{r main body}), $L$ need not admit an embedding into $E_r$. Let $\CL$ be a periodic $O_L$-lattice chain in $L^n$.

The \emph{local model} attached to the group $\Res_{L/K} \GL_n$, the function $r$, and the lattice chain $\CL$ is the scheme $M = M_{\Res_{L/K} \GL_n,r,\CL}$ over $\Spec O_{E_r}$ representing the following functor.  To each $O_{E_r}$-scheme $S$, the functor associates the set of isomorphism classes of families $(\Lambda \otimes_{O_K} \CO_S \surj \CP_\Lambda)_{\Lambda \in \CL}$ such that
\begin{altitemize}
\item for each $\Lambda$, $\CP_\Lambda$ is an $O_L \otimes_{O_K} \CO_S$-linear quotient of $\Lambda \otimes_{O_K} \CO_S$, locally free on $S$ as an $\CO_S$-module;
\item for each inclusion $\Lambda \subset \Lambda'$ in $\CL$, the arrow $\Lambda \otimes_{O_K} \CO_S \to \Lambda' \otimes_{O_K} \CO_S$ induces an arrow $\CP_\Lambda \to \CP_{\Lambda'}$;
\item for each $\Lambda$, the isomorphism $\Lambda \otimes_{O_K} \CO_S \xra[\undertilde]{\pi \otimes 1} (\pi \Lambda) \otimes_{O_K} \CO_S$ identifies $\CP_\Lambda \isoarrow \CP_{\pi\Lambda}$; and
\item for each $\Lambda$, $\CP_\Lambda$ satisfies the \emph{Kottwitz condition}
\begin{equation}\label{kottwitz cond lm}
   \charac_{\CO_S} (a \otimes 1 \mid \CP_\Lambda) = \prod_{\varphi \in \Hom_K(L,\ov K)} \bigr(T - \varphi(a)\bigr)^{r_\varphi}
	\quad\text{for all}\quad
	a \in O_L.
\end{equation}
\end{altitemize}
We further require that the family $(\Lambda \otimes_{O_K} \CO_S \surj \CP_\Lambda)_{\Lambda \in \CL}$ satisfies the (analog of the) \emph{Eisenstein condition} of \cite[(8.2)]{RZ14}, which in our case takes the following form.  Let $L^t$ denote the maximal unramified extension of $K$ in $L$.  We first formulate the condition when $S$ is an $O_{E_r \wt L^t}$-scheme, where $E_r \wt L^t$ is the compositum in $\ov K$ of $E_r$ and the normal closure $\wt L^t$ of $L^t$; by Lemma \ref{banal LM GL_n triv} below, the condition will descend over $O_{E_r}$ (and yield $M \isoarrow \Spec O_{E_r}$).  For each $\psi \in \Hom_K(L^t,\ov K)$, set
\begin{align*}
   A_\psi &:= \bigl\{\, \varphi \in \Hom_K(L,\ov K)\bigm| \varphi|_{F^t} = \psi \text{ and } r_\varphi = n \,\bigr\}, \\
	B_\psi &:= \bigl\{\, \varphi \in \Hom_K(L,\ov K)\bigm| \varphi|_{F^t} = \psi \text{ and } r_\varphi = 0 \,\bigr\}.
\end{align*}
Further set
\[
   Q_{A_\psi}(T) := \prod_{\varphi \in A_\psi} \bigl(T - \varphi(\pi)\bigr)
	\quad\text{and}\quad
	Q_{B_\psi}(T) := \prod_{\varphi \in B_\psi} \bigl(T - \varphi(\pi)\bigr).
\]
Then $Q_{A_\psi}$ and $Q_{B_\psi}$ are polynomials with coefficients in $O_{E_r\wt L^t}$.  Since we assume that $S$ is an $O_{E_r \wt L^t}$-scheme, there is a natural isomorphism
\begin{equation}\label{unram decomp}
   O_{L^t} \otimes_{O_K} \CO_S \isoarrow \prod_{\psi \in \Hom_K(L^t,\ov K)} \CO_S,
\end{equation}
whose $\psi$-component is $\psi \otimes \id$.
This induces a decomposition, for each $\Lambda$,
\begin{equation}\label{unram decompP}
   \CP_\Lambda \isoarrow \bigoplus_{\psi \in \Hom_K(L^t,\ov K)} (\CP_\Lambda)_\psi.
\end{equation}
The Eisenstein condition is that, for each $\Lambda$,
\begin{equation}\label{eis cond lm}
   Q_{A_\psi}(\pi \otimes 1)|_{(\CP_\Lambda)_\psi} = 0
	\quad\text{for all}\quad
	\psi \in \Hom_K(L^t,\ov K).
\end{equation}

To complete the definition of the moduli problem, an isomorphism from $(\Lambda \otimes_{O_K} \CO_S \surj \CP_\Lambda)_{\Lambda \in \CL}$ to $(\Lambda \otimes_{O_K} \CO_S \surj \CP'_\Lambda)_{\Lambda \in \CL}$ consists of an isomorphism $\CP_\Lambda \isoarrow \CP_\Lambda'$ for each $\Lambda$, compatible with the given epimorphisms in the obvious way.  Note that such an isomorphism is unique if it exists.

The main result is that the moduli scheme $M$ we have defined is trivial, in the following sense.

\begin{lemma}\label{banal LM GL_n triv}
Let $S$ be an $O_{E_r \wt L^t}$-scheme.  Then $M(S)$ consists of a single point.
\end{lemma}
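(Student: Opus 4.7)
The plan is to show that for each $\Lambda \in \CL$ the quotient $\CP_\Lambda$, and hence the full tuple $(\Lambda \otimes_{O_K}\CO_S \surj \CP_\Lambda)_\Lambda$, is uniquely determined by the Kottwitz and Eisenstein conditions, and that such a quotient always exists canonically. The banality assumption $r_\varphi \in \{0,n\}$ is what drives the rigidity: it forces the Eisenstein polynomial $E_\psi(T) := \prod_{\varphi|_{L^t}=\psi}(T-\varphi(\pi))$ of $\pi$ over $L^t$ to factor cleanly as $E_\psi(T) = Q_{A_\psi}(T) \cdot Q_{B_\psi}(T)$ over $O_{E_r \wt L^t}$, with no control on mutual coprimality needed.

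First, I would use the decomposition \eqref{unram decomp} available over any $O_{E_r \wt L^t}$-scheme $S$ to split $\Lambda \otimes_{O_K}\CO_S = \bigoplus_\psi \Lambda_\psi$ and $\CP_\Lambda = \bigoplus_\psi (\CP_\Lambda)_\psi$, reducing everything to a $\psi$-by-$\psi$ analysis. Setting $R_\psi := O_L \otimes_{O_{L^t},\psi}\CO_S \cong \CO_S[T]/(E_\psi(T))$, with $\pi \otimes 1$ acting as $T$, the module $\Lambda_\psi$ is locally free of rank $n$ over $R_\psi$. The rank of $(\CP_\Lambda)_\psi$ over $\CO_S$ forced by the Kottwitz condition \eqref{kottwitz cond lm} is $\sum_{\varphi|_{L^t}=\psi}r_\varphi = n \cdot |A_\psi|$.

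For existence, I would observe that $\Lambda_\psi/Q_{A_\psi}(\pi)\Lambda_\psi$ is canonically a quotient of $\Lambda_\psi$, it is locally free over $R_\psi/(Q_{A_\psi}) \cong \CO_S[T]/(Q_{A_\psi}(T))$ of rank $n$, hence locally free over $\CO_S$ of rank $n \cdot |A_\psi|$, and on it the action of $\pi$ is $T$ modulo $Q_{A_\psi}(T)$, whose characteristic polynomial is precisely $Q_{A_\psi}(T)^n = \prod_{\varphi \in A_\psi}(T-\varphi(\pi))^n$, matching the $\psi$-component of the Kottwitz condition. By construction $Q_{A_\psi}(\pi)$ annihilates this quotient, so the Eisenstein condition \eqref{eis cond lm} holds.

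For uniqueness, given any $(\CP_\Lambda)_\psi$ satisfying the conditions, the Eisenstein condition $Q_{A_\psi}(\pi)(\CP_\Lambda)_\psi = 0$ forces the quotient map $\Lambda_\psi \surj (\CP_\Lambda)_\psi$ to factor through the canonical quotient $\Lambda_\psi \surj \Lambda_\psi/Q_{A_\psi}(\pi)\Lambda_\psi$. Both of these are locally free $\CO_S$-modules of the same rank $n \cdot |A_\psi|$, so the induced surjection between them is an isomorphism (a surjective endomorphism of a finitely generated locally free module of fixed rank has unit determinant locally, hence is invertible). This identifies $(\CP_\Lambda)_\psi$ with the canonical quotient as $R_\psi$-modules. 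Assembling the $\psi$-components and then varying $\Lambda$, the compatibilities with $\Lambda \subset \Lambda'$ and with $\pi\Lambda \cong \Lambda$ are automatic since every quotient is canonically determined functorially from the data of $R_\psi$, $\Lambda_\psi$ and $Q_{A_\psi}$. Hence $M(S)$ has exactly one element.

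The main step to handle carefully is the uniqueness argument: one must check that the surjection between two locally free $\CO_S$-modules of equal rank is an isomorphism in the generality of an arbitrary base $\CO_S$-algebra, and that the canonical model indeed satisfies the Kottwitz condition on the nose (not merely up to some local factor), which is where the banal hypothesis $r_\varphi\in\{0,n\}$ is crucial --- it is precisely what allows $Q_{A_\psi}(T)^n$ to occur as a characteristic polynomial on a free module of rank $n$ over $\CO_S[T]/(Q_{A_\psi}(T))$.
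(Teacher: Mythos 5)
Your proof is correct and follows essentially the same route as the paper's: decompose over the unramified factors $\psi$ using \eqref{unram decomp}, then observe that the Eisenstein condition together with the rank forced by the Kottwitz condition pins $(\CP_\Lambda)_\psi$ down to the canonical quotient $\Lambda_\psi/Q_{A_\psi}(\pi\otimes 1)\Lambda_\psi$. The paper states this in three lines (and first reduces to a single homothety class of lattices); you spell out the existence half and the rank bookkeeping in more detail, but the underlying argument is identical.
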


\begin{proof}
It suffices to consider the case that $\CL$ consists of the homothety class of a single lattice $\Lambda$; the general case then follows immediately.  Let $\Lambda \otimes_{O_K} \CO_S \cong \bigoplus_{\psi \in \Hom_K(L^t,\ov K)} (\Lambda \otimes_{O_K} \CO_S)_\psi$ denote the decomposition induced by \eqref{unram decomp}.  Then the Eisenstein condition forces
\begin{equation}\label{P_psi}
   (\CP_\Lambda)_\psi = (\Lambda \otimes_{O_K} \CO_S)_\psi \big/ Q_{A_\psi}(\pi \otimes 1) \cdot (\Lambda \otimes_{O_K} \CO_S)_\psi,
\end{equation}
which completes the proof.
\end{proof}

As we have already noted, it follows by descent that $M \cong \Spec O_{E_r}$.  This also shows that the Eisenstein condition is independent of the choice of uniformizer $\pi$.

\begin{remark}\label{n=1}
In the special case $n = 1$, we may take $\Lambda = O_L$ in the proof of Lemma \ref{banal LM GL_n triv}, and then the Kottwitz condition already implies (\ref{P_psi}).  Thus the Eisenstein condition is redundant in this case.
\end{remark}

We also note that the Eisenstein condition is redundant in the unramified case, comp.\ \cite[Prop.~2.2]{RZ14}.

\begin{lemma}\label{kott=>eis}
Suppose that $L/K$ is unramified.  Then the Kottwitz condition \eqref{kottwitz cond lm} on $\CP_\Lambda$ implies the Eisenstein condition \eqref{eis cond lm}.
\end{lemma}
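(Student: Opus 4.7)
The plan is to exploit the fact that in the unramified case $L = L^t$, so the index sets $\Hom_K(L,\ov K)$ and $\Hom_K(L^t,\ov K)$ coincide, and the decomposition \eqref{unram decompP} already separates each embedding $\varphi$ into its own summand $(\CP_\Lambda)_\varphi$. First I would note that $(\CP_\Lambda)_\varphi$ is a direct summand of $\CP_\Lambda$ (cut out by an idempotent of $O_L \otimes_{O_K} \CO_S \cong \prod_\varphi \CO_S$), hence locally free over $\CO_S$, and that every $a \in O_L$ acts on $(\CP_\Lambda)_\varphi$ as multiplication by the scalar $\varphi(a)$.

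Next I would show that the Kottwitz condition \eqref{kottwitz cond lm} forces $\rank_{\CO_S} (\CP_\Lambda)_\varphi = r_\varphi$ for every $\varphi$. Indeed, for $a \in O_L$ the characteristic polynomial of $a \otimes 1$ on $\CP_\Lambda$ factors as $\prod_\varphi (T - \varphi(a))^{s_\varphi}$, where $s_\varphi$ is the rank of $(\CP_\Lambda)_\varphi$. Comparing with the right-hand side of \eqref{kottwitz cond lm} for $a = \alpha$ a primitive element of $L/K$, one concludes $s_\varphi = r_\varphi$ by unique factorization, after checking that the factors $T - \varphi(\alpha)$ are pairwise coprime in $\CO_S[T]$. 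This coprimality is the place where the unramified hypothesis enters: for $\varphi \neq \varphi'$, the two embeddings induce distinct embeddings of residue fields, so $\varphi(\alpha) - \varphi'(\alpha)$ is a unit in $O_{\wt L^t}$, hence a unit in $\CO_S$; reducing to stalks (or to residue fields at points of $S$) reduces the claim to unique factorization of monic polynomials in a field.

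Once the ranks are identified, verifying the Eisenstein condition \eqref{eis cond lm} is immediate. Since $L = L^t$ in the unramified case, for each $\psi \in \Hom_K(L^t,\ov K) = \Hom_K(L,\ov K)$ the set $A_\psi$ is either $\{\psi\}$ (when $r_\psi = n$) or empty (when $r_\psi = 0$). If $r_\psi = 0$, the rank count gives $(\CP_\Lambda)_\psi = 0$, and the condition holds vacuously. If $r_\psi = n$, then $Q_{A_\psi}(T) = T - \psi(\pi)$; but by the first paragraph, $\pi \otimes 1$ acts on $(\CP_\Lambda)_\psi$ by the scalar $\psi(\pi)$, so $Q_{A_\psi}(\pi \otimes 1)$ vanishes on $(\CP_\Lambda)_\psi$, as required.

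The main obstacle is the rank-identification step, i.e.\ passing from the single polynomial identity of the Kottwitz condition to the individual numerical invariants $r_\varphi$. This is where unramifiedness is essential; once the coprimality of the linear factors $T - \varphi(\alpha)$ is established over $\CO_S$, the rest of the argument is purely formal.
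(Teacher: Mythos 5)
Your proof is correct and follows essentially the same line as the paper's: both observe that when $L = L^t$ the decomposition \eqref{unram decompP} is already the eigenspace decomposition for the $O_L$-action, so $Q_{A_\psi}(\pi\otimes 1)$ vanishes automatically on $(\CP_\Lambda)_\psi$ when $A_\psi = \{\psi\}$, while the vanishing of $(\CP_\Lambda)_\psi$ when $A_\psi = \emptyset$ (equivalently $r_\psi = 0$) is forced by the Kottwitz condition. You spell out the rank-identification step that the paper leaves implicit; the only small care needed there is that ``primitive element of $L/K$'' should be taken to mean an element whose reduction generates the residue extension (equivalently, an $O_K$-algebra generator of $O_L$), since an arbitrary field generator need not have pairwise-unit differences $\varphi(\alpha)-\varphi'(\alpha)$.
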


\begin{proof}
When $L=L^t$ is unramified over $K$, then  all sets $A_\psi$  have at most one element.  If $A_\psi$ is empty,  then  $(\CP_\Lambda)_\psi=0$ and the condition \eqref{eis cond lm} is empty. If $A_\psi$ is non-empty,  then   the condition \eqref{eis cond lm} is equivalent to the definition of the $\psi$-eigenspace in the decomposition \eqref{unram decompP}.
\end{proof}

\subsection{The unitary case}
In this subsection we assume that the residue characteristic $p$ of $K$ is not $2$.  We retain the setup of the previous subsection, and we assume in addition that $L$ is a quadratic extension of a field $L_0/K$.  Let $a \mapsto \ov a$ denote the nontrivial automorphism of $L/L_0$, and for each $\varphi \in \Hom_K(L,\ov K)$, define $\ov\varphi(a) := \varphi(\ov a)$.  We assume that the function $r$ in \eqref{r} satisfies $r_\varphi + r_{\ov\varphi} = n$ for all $\varphi \in \Hom_K(L,\ov K)$.  Furthermore, we endow $L^n$ with a non-degenerate $L/L_0$-hermitian form $h$, and we assume that the lattice chain $\CL$ is self-dual for $h$.  We define the alternating $K$-bilinear form $\aform \colon L^n \times L^n \to K$ as follows.  Let $\vartheta_{L_0/K}\i$ be a generator of the inverse different $\fkd_{L_0/K}\i$.   If $L/L_0$ is unramified, then choose an element $\zeta \in O_L^\times$ such that $\ov \zeta = -\zeta$ (since $p \neq 2$, such a $\zeta$ always exists), and set
\[
   \la x,y \ra := \tr_{L/K}\bigl(\vartheta_{L_0/K}\i\zeta h(x,y)\bigr), \quad x,y \in L^n.
\]
If $L/L_0$ is ramified, then choose the uniformizer $\pi$ to satisfy $\pi^2 \in L_0$ (since $p \neq 2$, such a $\pi$ always exists), and set
\[
   \la x,y \ra := \tr_{L/K}\bigl(\vartheta_{L_0/K}\i\pi\i h(x,y)\bigr), \quad x,y \in L^n.
\]
Then in both cases, the dual $\Lambda^*$ of an $O_L$-lattice $\Lambda$ in $L^n$ is the same with respect to $h$ as it is with respect to \aform.

The \emph{local model} attached to the group $\Res_{L_0/K} \GU(h)$, the function $r$, and the lattice chain $\CL$ is by definition the closed subscheme $M_{\Res_{L_0/K} \GU(h),r,\CL}$ of $M_{\Res_{L/K}\GL_n,r,\CL}$ defined by the additional condition
\begin{altitemize}
\item for each $\Lambda$, the perfect pairing
\[
   (\Lambda \otimes_{O_K} \CO_S) \times (\Lambda^* \otimes_{O_K} \CO_S) \xra{\aform\otimes \CO_S} \CO_S
\]
identifies $\ker[\Lambda \otimes_{O_K} \CO_S \surj \CP_\Lambda]^\perp$ with $\ker[\Lambda^* \otimes_{O_K} \CO_S \surj \CP_{\Lambda^*}]$.
\end{altitemize}

It is a trivial consequence of Lemma \ref{banal LM GL_n triv} that this additional condition is redundant, and that we again have the following.

\begin{lemma}\label{banal LM GU_n triv}
$M_{\Res_{L_0/K} \GU(h),r,\CL} \cong \Spec O_{E_r}$. \qed
\end{lemma}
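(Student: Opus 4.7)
The plan is to deduce Lemma B.3 from Lemma B.2 by showing that the additional duality condition imposed in the unitary case is automatically satisfied at the unique point of the $\GL_n$-local model. More precisely, $M_{\Res_{L_0/K}\GU(h),r,\CL}$ is a closed subscheme of $M_{\Res_{L/K}\GL_n,r,\CL} \cong \Spec O_{E_r}$; since a closed immersion into $\Spec O_{E_r}$ with an $O_{E_r}$-point mapping to the identity is automatically an isomorphism, it suffices to produce such a point, i.e.\ to verify that the unique family $(\Lambda \otimes \CO_S \surj \CP_\Lambda)_{\Lambda \in \CL}$ given by the proof of Lemma B.2 automatically satisfies the perpendicularity condition $\ker[\Lambda \otimes \CO_S \surj \CP_\Lambda]^\perp = \ker[\Lambda^\vee \otimes \CO_S \surj \CP_{\Lambda^\vee}]$.

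I would then base change to $O_{E_r \wt L^t}$ (faithful flatness allows this, and Lemma B.2 already descends the construction to $O_{E_r}$), so that the unramified decomposition \eqref{unram decomp} is available and the explicit formula $(\CP_\Lambda)_\psi = (\Lambda \otimes_{O_K} \CO_S)_\psi / Q_{A_\psi}(\pi \otimes 1)(\Lambda \otimes_{O_K} \CO_S)_\psi$ applies. The first substantive step is to decompose the pairing: for $a \in O_{L^t}$ the relation $\la ax, y\ra = \la x, \bar a y\ra$ combined with the fact that $a \mapsto \bar a$ swaps $\psi$ with $\bar\psi$ on $\Hom_K(L^t, \ov K)$ forces the pairing between $(\Lambda \otimes)_\psi$ and $(\Lambda^\vee \otimes)_{\psi'}$ to vanish unless $\psi' = \bar\psi$, in which case it is a perfect $\CO_S$-bilinear pairing. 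Therefore the verification reduces, for each $\psi$, to the identity of submodules of $(\Lambda^\vee \otimes_{O_K} \CO_S)_{\bar\psi}$,
\begin{equation*}
   \bigl(Q_{A_\psi}(\pi \otimes 1)(\Lambda \otimes_{O_K} \CO_S)_\psi\bigr)^\perp
   = Q_{A_{\bar\psi}}(\pi \otimes 1)(\Lambda^\vee \otimes_{O_K} \CO_S)_{\bar\psi} .
\end{equation*}

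The adjointness $\la \pi x, y\ra = \la x, \bar\pi y\ra$ and the fact that $(\Lambda^\vee \otimes)_{\bar\psi}$ pairs perfectly with $(\Lambda \otimes)_\psi$ show that the left-hand side is the kernel of the operator $Q_{A_\psi}(\bar\pi \otimes 1)$ on $(\Lambda^\vee \otimes)_{\bar\psi}$. Here I would invoke the specific choice of uniformizer built into the definitions: when $L/L_0$ is unramified, take $\pi$ to be a uniformizer of $L_0$, so $\bar\pi = \pi$; when $L/L_0$ is ramified, the given choice $\pi^2 \in L_0$ gives $\bar\pi = -\pi$. The condition $r_\varphi + r_{\ov\varphi} = n$ gives the bijection $A_\psi \leftrightarrow B_{\bar\psi}$, $\varphi \mapsto \ov\varphi$, which after evaluating at $\pi$ translates (in either case) to an identity of the form $Q_{A_\psi}(\bar\pi) = c \cdot Q_{B_{\bar\psi}}(\pi)$ for an invertible scalar $c \in \{\pm 1\}$. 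So the orthogonal complement is the kernel of $Q_{B_{\bar\psi}}(\pi \otimes 1)$.

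The final step is a purely algebraic identity: the free module $(\Lambda^\vee \otimes)_{\bar\psi}$ over $R := O_L \otimes_{O_{L^t}, \bar\psi} \CO_S$ is, by the usual Eisenstein presentation, isomorphic to the direct sum of copies of $R \cong \CO_S[T]/E_\pi^{\bar\psi}(T)$ with $E_\pi^{\bar\psi} = Q_{A_{\bar\psi}} \cdot Q_{B_{\bar\psi}}$; since $Q_{A_{\bar\psi}}$ and $Q_{B_{\bar\psi}}$ are monic in $\CO_S[T]$ and hence non-zero-divisors, one easily shows in $\CO_S[T]/(fg)$ (for $f, g$ monic) that $\ker[\text{mult by } g] = f \cdot \CO_S[T]/(fg)$. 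This yields $\ker[Q_{B_{\bar\psi}}(\pi)] = Q_{A_{\bar\psi}}(\pi)(\Lambda^\vee \otimes)_{\bar\psi}$, completing the verification. The main obstacle in carrying this out is the careful bookkeeping in Step 5 — specifically, tracking how the choice of uniformizer and the involution interact with the indexing sets $A_\psi, B_\psi$ — but no deeper ingredient beyond Lemma B.2 is required.
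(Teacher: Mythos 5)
The paper offers no proof of this lemma, stating only that it is a ``trivial consequence'' of Lemma~\ref{banal LM GL_n triv}; your argument correctly supplies the omitted verification that the perpendicularity condition is automatically satisfied by the unique $\GL_n$-quotient.  All the key steps check out: the pairing decomposes into perfect pairings between $(\Lambda\otimes\CO_S)_\psi$ and $(\Lambda^\vee\otimes\CO_S)_{\ov\psi}$, the adjoint of $Q_{A_\psi}(\pi\otimes 1)$ under this pairing is $Q_{A_\psi}(\ov\pi\otimes 1)=\pm\,Q_{B_{\ov\psi}}(\pi\otimes 1)$ for the uniformizer chosen with $\ov\pi=\pm\pi$ (using that $\varphi\mapsto\ov\varphi$ carries $A_\psi$ bijectively onto $B_{\ov\psi}$), and the annihilator of $Q_{B_{\ov\psi}}(\pi\otimes 1)$ in the free $\CO_S[T]/(Q_{A_{\ov\psi}}Q_{B_{\ov\psi}})$-module is $Q_{A_{\ov\psi}}(\pi\otimes 1)$ times it; the closed-immersion-with-a-section framing plus faithfully flat descent along $O_{E_r}\to O_{E_r\wt L^t}$ is also sound.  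One shortcut worth noting: since $M_{\Res_{L_0/K}\GU(h),r,\CL}$ is a closed subscheme of the Dedekind spectrum $\Spec O_{E_r}$, it suffices to check that its generic fiber is nonempty, and in characteristic zero (over a splitting field) the quotient decomposes into $\varphi$-eigenspaces directly, so the duality condition reduces to the single identity $r_\varphi+r_{\ov\varphi}=n$ with no uniformizer or Eisenstein-polynomial bookkeeping.
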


\end{document}